\theoremstyle{definition}
\newtheorem{theorem}{Theorem}
\newtheorem{lemma}[theorem]{Lemma}
\newtheorem{proposition}[theorem]{Proposition}
\newtheorem{corollary}[theorem]{Corollary}
\numberwithin{equation}{section}
\numberwithin{theorem}{section}
\newcommand{\C}{\mathbb{C}}
\newcommand{\Z}{\mathbb{Z}}
\newcommand{\Res}{\textrm{Res}}
\begin{document}

\begin{center}
{\bf{\Large Rational solutions of \\
the Sasano systems of types $B_4^{(1)},$ $D_4^{(1)}$ and $D_5^{(2)}$}}
\end{center}

\begin{center}
By Kazuhide Matsuda
\end{center}

\begin{center}
Department of Engineering Science, Niihama National College of Technology,\\
7-1 Yagumo-chou, Niihama, Ehime, 792-8580, Japan. 
\end{center}

{\bf Abstract}
In this paper, we completely classify the rational solutions of the Sasano system of types $B_4^{(1)},$ $D_4^{(1)}$ and $D_5^{(2)},$ 
which are all given by coupled $P_{\rm III}$ systems and have the affine Weyl group symmetries of types $B_4^{(1)},$ $D_4^{(1)}$ and $D_5^{(2)}.$ 
The rational solutions are classified as one type by the B\"acklund transformation group. 
\newline
\newline
{\bf Key Words and Phrases:} Affine Weyl group, Rational solutions, the Sasano system 
\newline
\newline
{\bf MSC(2010)} Primary 33E17; Secondary 34M55

\section*{Introduction}
Paul Painlev\'e and his colleagues 
\cite{Painleve, Gambier} 
intended to find new transcendental functions defined by second order nonlinear differential equations. 
For the purpose, 
they investigated which second order ordinary differential equations of the form 
$$
y^{\prime\prime}=F(t,y,y^{\prime})
$$
where ${}^{\prime}=d/dt$ and $F$ is rational in $y,y^{\prime}$ and analytic in $t,$ 
have the property that 
the singularities other than poles of any of the solutions are independent of the particular solution and so are dependent only upon 
the equation. 
This property is called the Painlev\'e property. 
As a result,  they discovered the six non-linear differential equations, 
$P_{\rm I},$ $P_{\rm II},$ $P_{\rm III},$ $P_{\rm IV},$ $P_{\rm V},$ and $ P_{\rm VI}, $ 
which are called the Painlev\'e equations. 
\par
While generic solutions of the Painlev\'e equations are 
``new transcendental functions,'' 
there are special solutions which are expressible 
in terms of rational, algebraic or classical special functions. 
Especially, 
the rational solutions of $P_J \,\,({\rm J}={\rm II,III,IV,V,VI})$ 
were classified by 
Yablonski and Vorobev \cite{Yab:59,Vorob}, 
Gromak \cite{Gr:83,Gro}, 
Murata \cite{Mura1, Mura2}, 
Kitaev, Law and McLeod \cite{Kit-Law-McL}, 
Mazzoco \cite{Mazzo}, and 
Yuang and Li \cite{YuangLi}. 
Especially, 
Murata \cite{Mura1} 
classified the rational solutions of 
the second and fourth Painlev\'e equations 
by using the B\"acklund transformations, 
which transform a solution into another solution of the same equation 
with different parameters. 
\par
$P_J \,\,(\rm J=II,III,IV,V,VI)$ have the B\"acklund transformation group. 
It was shown by Okamoto \cite{oka1, oka2, oka3, oka4} that 
the B\"acklund transformation groups of the Painlev\'e equations except for $P_{\rm I}$ are isomorphic 
to the extended affine Weyl groups. 
For $ P_{\rm II},$ $P_{\rm III},$ $P_{\rm IV},$ $P_{\rm V},$ and $ P_{\rm VI}$, 
the B\"acklund transformation groups correspond to 
$A^{ ( 1 ) }_1,$ 
$A^{ ( 1 ) }_1 \bigoplus A^{ ( 1 ) }_1,$ 
$A^{ ( 1 ) }_2,$ 
$A^{ ( 1 ) }_3,$ 
and 
$D^{ ( 1 ) }_4$, 
respectively.
\par
Many fourth order Painlev\'e type equations have now been found. 
The examples are the Noumi and Yamada systems, Sasano systems, Fuji and Suzuki systems, etc. 
Our aim is to classify the rational solutions of all the fourth order Painlev\'e type equations. 
For this purpose, 
we will mainly use the residue calculus of their solutions and Hamiltonians. 
\par
Noumi and Yamada \cite{NoumiYamada-B} discovered the equations of type $A^{(1)}_l \,(l\geq 2),$ 
whose B\"acklund 
transformation group is isomorphic to the extended affine Weyl group $\tilde{W}(A^{(1)}_l)$. 
The Noumi and Yamada systems of types $A_2^{(1)}$ and $A_3^{(1)}$ correspond to the fourth and fifth 
Painlev\'e equations, respectively. 
Furthermore, 
we \cite{Matsuda1, Matsuda2} classified the rational solutions of the Noumi and Yamada systems of types $A_4^{(1)}$ and $A_5^{(1)},$ 
which are fourth order versions of the fourth and fifth Painlev\'e equations, respectively. 
\par
Sasano \cite{Sasano-1, Sasano-6}
obtained the coupled Painlev\'e III, V and VI systems from a higher dimensional
generalization of Okamoto's space of initial conditions, 
which have 
the affine Weyl group symmetries of types $B_4^{(1)},$ $D^{(1)}_5$ and $D_6^{(1)},$ 
and are called the Sasano systems of types $B_4^{(1)},$ $D^{(1)}_5$ and $D_6^{(1)},$ 
respectively. 
Moreover, he \cite{Sasano-2, Sasano-3, Sasano-4, Sasano-5}
obtained the equations of many different affine
Weyl group symmetries,  
which are all called the Sasano systems. 
Especially, for the Sasano systems of types $B_4^{(1)},$ $D_4^{(1)}$ and $D_5^{(2)},$ see \cite{Sasano-2, Sasano-6}. 
We \cite{Matsuda3, Matsuda4, Matsuda5, Matsuda6, Matsuda7} classified the rational solutions of the Sasano systems of types 
$A_5^{(2)},$ $A_4^{(2)},$ $A_1^{(1)},$ $D_3^{(2)},$ and $D_5^{(1)}.$ 
\par
Fuji and Suzuki \cite{Fuji-Suzuki-1, Fuji-Suzuki-2} obtained the equation of type $A_5^{(1)}$ from a similarity
reduction of the Drinfel'd-Sokolov hierarchy, 
which is called the Fuji and Suzuki system of type $A_5^{(1)}.$ 
Moreover, 
the Noumi and Yamada system of type $A_5^{(1)}$ is expected to be obtained from the degeneration of the 
Fuji and Suzuki system of type $A_5^{(1)}.$ 
\par
Following Oshima's \cite{Oshima} classification of the irreducible Fuchsian equation
with four accessory parameters, 
Sakai \cite{Sakai} obtained four ``source'' equations of all the 
fourth order Painlev\'e type equations, that is, 
the well-known Garnier system with two variables, 
the Fuji and Suzuki system of type $A_5^{(1)},$ 
the Sasano system of type $D_6^{(1)},$ 
and a new one. 
\par
Therefore, 
the Sasano system of type $D_6^{(1)}$ is obtained from the isomonodromic deformations of the Fuchsian type, 
which implies that from Miwa's theorem \cite{Miwa}, the system has the Painlev\'e property. 
Therefore, any other Sasano system, including the Sasano systems of types $B_4^{(1)},$ $D_4^{(1)}$ and $D_5^{(2)},$ 
is expected to be obtained by the degeneration from the Sasano system of type $D_6^{(1)}$ and have the Painleve property. 
\par
In this paper, 
we first classify the rational solutions of 
the Sasano system of type $B_4^{(1)},$ 
which is defined by 
\begin{equation*}
B_4^{(1)}(\alpha_j)_{0\leq j \leq 4}
\begin{cases}
\displaystyle 
t x^{\prime}=
2x^2y-x^2+(1-2\alpha_2-2\alpha_3-2\alpha_4)x+2\alpha_3z+2z^2w+t, \\
\displaystyle 
t y^{\prime}=
-2xy^2+2xy-(1-2\alpha_2-2\alpha_3-2\alpha_4)y+\alpha_1, \\
\displaystyle 
t z^{\prime}=
2z^2w-z^2+(1-2\alpha_4)z+2yz^2+t, \\
\displaystyle 
t w^{\prime}=
-2zw^2+2zw-(1-2\alpha_4)w-2\alpha_3y-4yzw+\alpha_3, \\
\alpha_0+\alpha_1+2\alpha_2+2\alpha_3+2\alpha_4=1.
\end{cases}
\end{equation*}
This system of ordinary differential equations is also expressed by the Hamiltonian system: 
\begin{equation*}
t \frac{dx}{dt}=\frac{\partial H_{B_4^{(1)}}}{\partial y}, \quad
t \frac{dy}{dt}=-\frac{\partial H_{B_4^{(1)}}}{\partial x},   \quad
t \frac{dz}{dt}=\frac{\partial H_{B_4^{(1)}}}{\partial w},  \quad 
t \frac{dw}{dt}=-\frac{\partial H_{B_4^{(1)}}}{\partial z},
\end{equation*}
where 
the Hamiltonian $H$ is given by 
\begin{alignat*}{5}
H_{B_4^{(1)}}=&
x^2y(y-1)& &+
x
\{
(1-2\alpha_2-2\alpha_3-2\alpha_4)y-\alpha_1
\} 
& 
&+ty \\
+&z^2w(w-1)& &+
z
\{
(1-2\alpha_4)w-\alpha_3
\} & 
&+tw
+
2yz(zw+\alpha_3).
\end{alignat*}

\par
Our main theorem is as follows.
\begin{theorem}
\label{them:B4}
{\it
For a rational solution of $B_4^{(1)}(\alpha_j)_{0\leq j \leq 4},$ 
by some B\"acklund transformations, 
the solution and parameters can be transformed so that 
\newline
\hspace{15mm}
$
x\equiv 0, \,y\equiv 1/2, \,
z=1/\{2\alpha_4\}\cdot t, \,
w\equiv 0
$
and 
$\alpha_0-\alpha_1=0, \,\alpha_3+\alpha_4=0, \alpha_4\neq 0,$ 
respectively. 
\par
Moreover, 
for $B_4^{(1)}(\alpha_j)_{0\leq j \leq 4},$ 
there exists a rational solution 
if and only if 
the parameters satisfy 
one of the following conditions:
\begin{align*}
&{\rm (1)} &  &\alpha_0-\alpha_1\in\Z, & &2\alpha_3+2\alpha_4\in \Z, & &\alpha_0-\alpha_1\equiv 2\alpha_3+2\alpha_4  & &\mathrm{ mod} \,\,2,   \\
&{\rm (2)} & &\alpha_0-\alpha_1\in\Z, & &2\alpha_4\in \Z,               &  &\alpha_0-\alpha_1\equiv 2\alpha_4 & &\mathrm{ mod} \,\,2,  \\
&{\rm (3)} & &\alpha_0+\alpha_1\in\Z, & &2\alpha_3+2\alpha_4\in \Z,  &  &\alpha_0+\alpha_1\equiv 2\alpha_3+2\alpha_4 & &\mathrm{ mod} \,\,2,  \\
&{\rm (4)} & &\alpha_0+\alpha_1\in\Z, & &2\alpha_4\in \Z,               &   &\alpha_0+\alpha_1\equiv 2\alpha_4 & &\mathrm{ mod} \,\,2,  \\
&{\rm (5)} & &\alpha_0-\alpha_1\in\Z, & &\alpha_0+\alpha_1 \in \Z,   &    &\alpha_0-\alpha_1\not\equiv \alpha_0+\alpha_1 & &\mathrm{ mod} \,\,2,  \\
&{\rm (6)}  &  &2\alpha_3\in\Z, & &2\alpha_4\in \Z,                      &     &2\alpha_3\equiv 1 & &\mathrm{ mod} \,\,2.
\end{align*}
}
\end{theorem}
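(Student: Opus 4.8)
The plan is to follow the standard strategy for classifying rational solutions of Painlev\'e-type systems via residue calculus and B\"acklund transformations, as developed in the author's previous papers \cite{Matsuda1, Matsuda2, Matsuda3, Matsuda4, Matsuda5, Matsuda6}. First I would establish that any rational solution $(x,y,z,w)$ of $B_4^{(1)}(\alpha_j)$ has no poles outside $t=0$ and $t=\infty$: expanding the Hamiltonian system in a Laurent series about a putative finite pole $t=c\neq 0$, one checks that the indicial/resonance conditions force the residues to vanish, so the only admissible singularities are at $0$ and $\infty$. Thus a rational solution is determined by its Laurent data at $t=0$ and its expansion at $t=\infty$, i.e. by finitely many constants subject to the differential recursions coming from the four equations. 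The key bookkeeping device will be the residues of $x,y,z,w$ and of the Hamiltonian $H_{B_4^{(1)}}$ at $t=0$ and $t=\infty$; differentiating $H$ along the flow gives $t\,dH/dt$ as an explicit polynomial in the unknowns, and taking residues yields linear relations among the parameters $\alpha_j$ and the leading coefficients.

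Next I would carry out a case analysis on the orders of vanishing/poles of $x,y,z,w$ at $t=0$ and $t=\infty$. For each branch one substitutes the ansatz $x=\sum a_k t^k$, etc. (with the appropriate leading exponent) into the four ODEs, solves the resulting recursion for the coefficients, and reads off the constraints on $(\alpha_0,\dots,\alpha_4)$. The point is that consistency of the recursion at the resonant orders produces precisely congruence conditions of the form ``some $\Z$-linear combination of the $\alpha_j$ is an integer of prescribed parity.'' I expect the ``seed'' solution to be the explicit one named in the theorem, namely $x\equiv 0$, $y\equiv 1/2$, $z=t/(2\alpha_4)$, $w\equiv 0$ with $\alpha_0=\alpha_1$, $\alpha_3=-\alpha_4$, $\alpha_4\neq 0$; one verifies directly that this quadruple satisfies the system, and it corresponds to case (1) (and by the symmetry $\alpha_3=-\alpha_4$ also sits on the boundary of several other cases).

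The final step is to invoke the B\"acklund transformation group, which by the structure of the Sasano system of type $B_4^{(1)}$ is (an extension of) the affine Weyl group $W(B_4^{(1)})$ generated by reflections $s_0,\dots,s_4$ in the simple roots $\alpha_0,\dots,\alpha_4$, together with the diagram automorphisms. I would show that the orbit of the parameter point $\{\alpha_0-\alpha_1=0,\ \alpha_3+\alpha_4=0\}$ under this group, intersected with the parameter space, is exactly the union of the six families (1)--(6): each generator $s_j$ shifts the $\alpha$'s by a root, so integrality and parity of the relevant linear combinations $\alpha_0\pm\alpha_1$, $2\alpha_3+2\alpha_4$, $2\alpha_4$, $2\alpha_3$ are the invariants that survive, and tracking how $s_j$ permutes these invariants reproduces the six listed congruence systems. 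Conversely, given parameters in any of (1)--(6), one applies a suitable word in the $s_j$ to bring them to the seed locus, and then applies the inverse transformation to the seed solution to produce the desired rational solution; this establishes the ``if'' direction and simultaneously the normalization claim in the first paragraph of the theorem.

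The main obstacle, as usual in this circle of results, will be the exhaustiveness of the case analysis on pole orders at $t=0$ and $t=\infty$: because the system is coupled and fourth order, there are many combinations of leading exponents for $(x,y,z,w)$, and for each one must push the Laurent recursion far enough to see either a contradiction or a constraint, while keeping careful track of the interaction terms $2yz(zw+\alpha_3)$ and $2\alpha_3 z$ that couple the two $P_{\rm III}$ blocks. Controlling this combinatorial explosion — and in particular ruling out spurious branches — is where the residue identities for $H_{B_4^{(1)}}$ do the real work, since they cut down the admissible leading behaviors before the recursion is run. Once the finite list of surviving branches is in hand, matching them against the affine Weyl group orbit of the seed is comparatively mechanical.
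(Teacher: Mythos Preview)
Your proposal contains a genuine error at the very first step: rational solutions of $B_4^{(1)}(\alpha_j)$ \emph{do} in general have poles at points $t=c\in\mathbb{C}^{*}$, and the indicial analysis does \emph{not} force these residues to vanish. The paper devotes all of Section~3 to cataloguing the admissible local behaviours at such points; for instance $x$ can have a simple pole at $t=c$ with residue $\pm c$ or $\pm 2c$, $w$ can have a double pole, $x$ and $z$ can have simultaneous simple poles, and so on. What is true, and what replaces your vanishing claim, is that in every admissible case $\Res_{t=c}x=nc$ for some $n\in\Z$ and $\Res_{t=c}H_{B_4^{(1)}}=mc$ for some $m\in\{0,1,3\}$. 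Applying the residue theorem to $t^{-1}x$ and to $t^{-1}H_{B_4^{(1)}}$ then gives the two integrality constraints $a_{\infty,0}-a_{0,0}\in\Z$ and $h_{\infty,0}-h_{0,0}\in\Z$, and it is these (not absence of finite poles) that feed into the case analysis producing the necessary conditions. If you try to run your plan as stated, you will either reach a false contradiction or miss most of the branches.

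A second gap concerns the ``only if'' direction. Your orbit argument implicitly assumes that the Weyl group orbit of the seed locus is exactly the union of (1)--(6), but the paper's analysis shows something finer: the necessary conditions from residue calculus first reduce any rational solution to one of two standard forms, $\alpha_0-\alpha_1=0,\ \alpha_3+\alpha_4=0$ (standard form~I) or $\alpha_0-\alpha_1=0,\ \alpha_3+\alpha_4=1/2$ (standard form~II), and then three further sections are needed to push standard form~II either back to standard form~I or down to the single point $(\alpha_0,\dots,\alpha_4)=(0,0,0,0,1/2)$, for which one proves directly (via the Hamiltonian residue again) that no rational solution exists. It is this non-existence result that excludes the ``bad'' parities and yields precisely the six congruence families; a pure orbit computation will not see it.
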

In order to explain the method to prove our main theorem, 
let us define the coefficients of the Laurent series 
of $(x,y,z,w)$ at $t=\infty, \,\,0, \,\,c\in\C^{*}$ by 
\begin{equation*}
\begin{cases}
a_{\infty,k}, \,\,a_{0,k}, \,\,a_{c,k} \,\,(k\in\Z), \,\,
b_{\infty,k}, \,\,b_{0,k}, \,\,b_{c,k} \,\,(k\in\Z), \\
c_{\infty,k}, \,\,c_{0,k}, \,\,c_{c,k} \,\,(k\in\Z), \,\,
d_{\infty,k}, \,\,d_{0,k}, \,\,d_{c,k} \,\,(k\in\Z).
\end{cases}
\end{equation*}
For example, if $x,y,z,w$ all have a pole at $t=\infty,$ 
we define 
\begin{equation*}
\begin{cases}
x=a_{\infty, n_0}t^{n_0}+a_{\infty,n_0-1}t^{n_0-1}+\cdots+a_{\infty,0}+a_{\infty,-1}t^{-1}+\cdots,   \\
y=b_{\infty, n_1}t^{n_1}+b_{\infty,n_1-1}t^{n_1-1}+\cdots+b_{\infty,0}+b_{\infty,-1}t^{-1}+\cdots,   \\
z=c_{\infty, n_2}t^{n_2}+c_{\infty,n_2-1}t^{n_2-1}+\cdots+c_{\infty,0}+c_{\infty,-1}t^{-1}+\cdots, \\
w=d_{\infty, n_3}t^{n_3}+d_{\infty,n_3-1}t^{n_3-1}+\cdots+d_{\infty,0}+d_{\infty,-1}t^{-1}+\cdots,   
\end{cases}
\end{equation*}
where $n_0, \,n_1, \,n_2, \,n_3$ are all positive integers and $a_{\infty,n_0}b_{\infty,n_1}c_{\infty,n_2}d_{\infty,n_3}\neq 0.$ 
Moreover, 
we define 
the coefficients of the Laurent series of $H_{B_4^{(1)}}$ at $t=\infty, 0, c\in\mathbb{C}^{*}$ 
by 
$h_{\infty,k}, h_{0,k},h_{c,k} \,\,(k\in\mathbb{Z}),$ 
respectively.  
\par
This paper is organized as follows. 
In Section 1, 
for $B_4^{(1)}(\alpha_j)_{0\leq j \leq 4},$ 
we determine the meromorphic solutions near $t=\infty$ 
and prove that 
for a  meromorphic solution near $t=\infty,$ 
$z$ has a pole of order $n \,(n\geq 1)$ at $t=\infty$ 
and 
$x,y,w$ are all 
holomorphic at $t=\infty.$ 
Furthermore, 
we find that $a_{\infty,0}=\alpha_0-\alpha_1.$
\par
In Section 2, 
for $B_4^{(1)}(\alpha_j)_{0\leq j \leq 4},$ 
we deal with the meromorphic solutions near $t=0$ 
and find that $a_{0,0}=0, \alpha_0-\alpha_1.$
\par
In Section 3, 
for $B_4^{(1)}(\alpha_j)_{0\leq j \leq 4},$ 
we treat the meromorphic solutions near $t=c\in\mathbb{C}^{*}$ 
and 
show that for a rational solution of $B_4^{(1)}(\alpha_j)_{0\leq j \leq 4},$ 
$a_{\infty,0}-a_{0,0} \in\mathbb{Z}.$  
\par
In Section 4, 
for a meromorphic solution of $B_4^{(1)}(\alpha_j)_{0\leq j \leq 4}$ near $t=\infty, 0, c\in \mathbb{C}^{*},$ 
we study the Hamiltonian $H_{B_4^{(1)}}$ and 
compute $h_{\infty,0}, h_{0,0}, h_{c,-1},$ 
where $h_{\infty,0}, h_{0,0}$ are both expressed by the parameters and 
$h_{c,-1}$ is $nc \,(n\in\mathbb{Z}).$ Therefore, 
we find that for a rational solution of $B_4^{(1)}(\alpha_j)_{0\leq j \leq 4},$ 
the parameters satisfy $h_{\infty,0}-h_{0,0}\in\mathbb{Z}.$ 
\par
In Section 5, 
we define the B\"acklund transformations, 
$s_0,$ $s_1,$ $s_2,$ $s_3,$ $s_4,$ $\pi_1,$ $\pi_2$ and investigate their properties. 
We note that the B\"acklund transformation group 
$\langle s_0, s_1, s_2, s_3, s_4, \pi_1, \pi_2 \rangle$ is isomorphic to the affine Weyl group of type 
$B_4^{(1)},$ $\tilde{W}(B_4^{(1)}).$ 
\par
In Section 6, 
we treat the infinite solutions, 
that is, 
solutions such that some of $x,y,z,w$ are identically equal to $\infty.$
\par
In Section 7, 
we treat a rational solution such that 
$z$ has a pole of order one at $t=\infty$ 
and 
obtain the necessary conditions for $B_4^{(1)}(\alpha_j)_{0\leq j \leq 4}$ 
to have such rational solutions. 
For this purpose, we use the formulas, $a_{\infty,0}-a_{0,0}\in\mathbb{Z}$ and 
$h_{\infty,0}-h_{0,0}\in\mathbb{Z}.$ 
\par
In Section 8, 
we deal with a rational solution such that 
$z$ has a pole of order $n \,(n\geq 2)$ at $t=\infty$ 
and 
obtain necessary conditions for $B_4^{(1)}(\alpha_j)_{0\leq j \leq 4}$ 
to have such rational solutions. 
For this purpose, 
we mainly use the formula, $a_{\infty,0}-a_{0,0}\in\mathbb{Z}.$
\par
In Section 9, 
we summarize the necessary conditions in Sections 7 and 8 
and 
transform the parameters so that 
either of the following occurs:
$$
\mathrm{I}: \,\,\alpha_0-\alpha_1=0, \,\alpha_3+\alpha_4=0, \,\alpha_4\neq 0,\,\,\,\mathrm{II}: \,\,\alpha_0-\alpha_1=0,\,\alpha_3+\alpha_4=1/2.
$$
In this paper, cases I and II are called 
the standard forms I and II, respectively. 
For the standard form I, 
we determine the rational solution of Corollary \ref{coro:solution-1} in Section 1.
\par
In Section 10, we treat the standard form II. 
We then transform the parameters so that either of the following occurs:
$$
\mathrm{(1)}\,\,\alpha_0-\alpha_1=0, \,\alpha_3+\alpha_4=0, \,\alpha_4\neq 0,\,\,\,\mathrm{(2)}\,\,\alpha_0-\alpha_1=\alpha_2=0,\,\alpha_3+\alpha_4=1/2.
$$ 
\par
In Section 11, we deal with the case where $\alpha_0-\alpha_1=\alpha_2=0,\,\alpha_3+\alpha_4=1/2.$ 
We then transform the parameters so that either of the following occurs:
$$
\mathrm{(1)}\,\,\alpha_0-\alpha_1=0, \,\alpha_3+\alpha_4=0, \,\alpha_4\neq 0,\,\,\,\mathrm{(2)}\,\,\alpha_0=\alpha_1=\alpha_2=\,\alpha_3=0,\,\alpha_4=1/2.
$$ 
\par
In Section 12, 
we prove that for $B_4^{(1)}(0,0,0,0,1/2),$ there exists no rational solution. 
In Section 13, 
we prove our main theorem.
\par
In the appendix, 
following Sasano \cite{Sasano-2}, 
we introduce the Sasano systems of types $D_4^{(1)}$ and $D_5^{(2)}$ 
and show that 
the Sasano systems of types $B_4^{(1)},$ $D_4^{(1)}$ and $D_5^{(2)}$ are all 
equivalent by birational transformations. 
Using Theorem \ref{them:B4}, 
we classify the rational solutions of the Sasano systems of types $D_4^{(1)}$ and $D_5^{(2)}.$ 

\subsubsection*{Remark}
(1)\quad
We \cite{Matsuda3} classified the rational solutions of the Sasano system of type $A_5^{(2)},$ 
which is also given by the coupled $P_{\rm III}$ system. 
For this purpose, we did not have to consider the infinite solutions. 
For $B_4^{(1)}(\alpha_j)_{0\leq j \leq 4},$ we first treat the infinite solutions. 
\newline
(2)\quad
We \cite{Matsuda3} computed the meromorphic solutions and proved that they are deteimined by the differential equations. 
However, 
Proposition \ref{prop:inf-summary} shows that for $B_4^{(1)}(\alpha_j)_{0\leq j \leq 4},$ in some cases, 
the meromorphic solutions at $t=\infty$ are not unique.  
\newline
(3)\quad
Noted is that for $B_4^{(1)}(\alpha_j)_{0\leq j \leq 4},$
we use not only $a_{\infty,0}-a_{0,0}\in\mathbb{Z}$ but also $h_{\infty,0}-h_{0,0}\in\mathbb{Z}$ 
in order to obtain necessary conditions for $B_4^{(1)}(\alpha_j)_{0\leq j \leq 4}$ to have rational solutions. 
For the Sasano system of type $D_3^{(2)},$  
we \cite{Matsuda3} only used the residue calculus of the rational solutions for the necessary conditions.

\subsubsection*{Acknowledgments.}
The author thanks 
Professor Yousuke Ohyama 
for the careful guidance.

\section{Meromorphic solutions at $t=\infty$}

In this section, 
we determine the meromorphic solution at $t=\infty.$

\subsection{The case where $x,y,z,w$ are all holomorphic at $t=\infty$}

In this subsection, 
we deal with the case where 
$x,y,z,w$ are all holomorphic at $t=\infty.$

\begin{proposition}
{\it
For $B_4^{(1)}(\alpha_j)_{0\leq j \leq 4},$ 
there exists no solution such that 
$x,y,z,w$ are all holomorphic at $t=\infty.$ 
}
\end{proposition}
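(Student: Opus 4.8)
The plan is to suppose, for contradiction, that $x,y,z,w$ are all holomorphic at $t=\infty$ and derive constraints on their limiting values by substituting the Laurent (here, Taylor) expansions at $t=\infty$ into the four differential equations and comparing leading-order terms. Write $x=a_{\infty,0}+a_{\infty,-1}t^{-1}+\cdots$, $y=b_{\infty,0}+\cdots$, $z=c_{\infty,0}+\cdots$, $w=d_{\infty,0}+\cdots$. On the left-hand sides, $t x^{\prime}, t y^{\prime}, t z^{\prime}, t w^{\prime}$ are all $O(t^{-1})$ as $t\to\infty$, so their constant ($t^0$) coefficients vanish. The obstruction, and the reason the statement is true, is that the right-hand sides of the first and third equations each contain an explicit $+t$ term, which cannot be cancelled by the constant parts of polynomial expressions in the holomorphic quantities $x,y,z,w$; so at order $t^1$ something must blow up. I would make this precise as follows.

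First I would look at the $t^1$-coefficient in the first equation $t x^{\prime}=2x^2y-x^2+(1-2\alpha_2-2\alpha_3-2\alpha_4)x+2\alpha_3 z+2z^2w+t$. Since every term on the right except $+t$ is a polynomial in $x,y,z,w$ (hence $O(1)$) and the left side is $O(t^{-1})$, matching the $t^1$ coefficients gives $0=1$, a contradiction — unless one has been too hasty: the point is that if $x,y,z,w$ are genuinely holomorphic at infinity then $2x^2y-x^2+\cdots+2z^2w$ is bounded, so indeed the $+t$ term is unbalanced. Thus the first equation alone already forces a contradiction. The same argument applies verbatim to the third equation $t z^{\prime}=2z^2w-z^2+(1-2\alpha_4)z+2yz^2+t$. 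So if the argument really is this short, I would present it via the first equation and remark that the third gives an independent confirmation.

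The one subtlety I would be careful about is whether "holomorphic at $t=\infty$" is being used in the sense of holomorphic as a function of the local coordinate $1/t$, i.e. bounded near $t=\infty$, which is the standard convention and the one that makes the above work; the definitions set up before the Proposition (the expansion $x=a_{\infty,n_0}t^{n_0}+\cdots$ with $n_0>0$ only in the pole case) confirm this reading, so "holomorphic at $t=\infty$" means precisely that the Laurent series at $t=\infty$ has no positive powers of $t$. Granting that, I expect no real obstacle: the proof is a one-line leading-coefficient comparison in the $x$-equation (or the $z$-equation), exploiting the inhomogeneous $+t$. I would write it out by substituting the Taylor expansions, isolating the coefficient of $t^1$ on both sides, and concluding $1=0$. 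If the referee wanted more, I would additionally record what the lower-order coefficients say — e.g. the $t^0$ coefficient of the $y$-equation gives a relation among $a_{\infty,0},b_{\infty,0},\alpha_1,\alpha_2,\alpha_3,\alpha_4$ — but these are not needed for the contradiction.
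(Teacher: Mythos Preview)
Your proposal is correct and matches the paper's proof exactly: the paper simply compares the coefficient of $t$ in the first equation $t x^{\prime}=2x^2y-x^2+(1-2\alpha_2-2\alpha_3-2\alpha_4)x+2\alpha_3z+2z^2w+t$ to obtain the contradiction, just as you do. Your additional remark that the third equation gives an independent confirmation is true but not used in the paper.
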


\begin{proof}
Comparing the coefficients of the term $t$ in 
\begin{equation*}
t x^{\prime}=
2x^2y-x^2+(1-2\alpha_2-2\alpha_3-2\alpha_4)x+2\alpha_3z+2z^2w+t, 
\end{equation*}
we can prove the proposition.
\end{proof}

\subsection{The case where one of $(x,y,z,w)$ has a pole at $t=\infty$}
In this subsection, 
we deal with the case in which 
one of $(x,y,z,w)$ has a pole at $t=\infty$ 
and 
consider the following four cases:
\newline
{\rm (1)}\quad $x$ has a pole at $t=\infty$ and $y,z,w$ are all holomorphic at $t=\infty,$
\newline
{\rm (2)}\quad $y$ has a pole at $t=\infty$ and $x,z,w$ are all holomorphic at $t=\infty,$
\newline
{\rm (3)}\quad $z$ has a pole at $t=\infty$ and $x,y,w$ are all holomorphic at $t=\infty,$
\newline
(4)\quad $w$ has a pole at $t=\infty$ and $x,y,z$ are all holomorphic at $t=\infty.$

\subsubsection{The case where $x$ has a pole at $t=\infty$}

\begin{proposition}
{\it
For $B_4^{(1)}(\alpha_j)_{0\leq j \leq 4},$ 
there exists no solution such that 
$x$ has a pole at $t=\infty$ and $y,z,w$ are all holomorphic at $t=\infty.$ 
}
\end{proposition}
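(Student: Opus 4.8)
Assume for contradiction that $(x,y,z,w)$ is a solution of $B_4^{(1)}(\alpha_j)_{0\leq j \leq 4}$ near $t=\infty$ such that $x$ has a genuine pole at $t=\infty$ while $y,z,w$ are all holomorphic there. Write the Laurent expansions
\begin{equation*}
x=a_{\infty,n}t^n+\cdots, \quad y=b_{\infty,0}+b_{\infty,-1}t^{-1}+\cdots, \quad z=c_{\infty,0}+\cdots, \quad w=d_{\infty,0}+\cdots,
\end{equation*}
with $n\geq 1$ and $a_{\infty,n}\neq 0$. The plan is to feed these into the four equations of the system and compare the leading-order terms in $t$, exactly as in the two preceding propositions, until an inconsistency appears.

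The natural first step is to look at the second equation, $ty^{\prime}=-2xy^2+2xy-(1-2\alpha_2-2\alpha_3-2\alpha_4)y+\alpha_1$. Since $y$ is holomorphic at $t=\infty$, the left-hand side $ty^{\prime}$ is $O(1)$ (indeed it tends to $0$), whereas on the right-hand side the dominant contribution is $-2xy^2+2xy=-2xy(y-1)$, which has order $n$ unless the holomorphic function $y(y-1)$ vanishes to sufficiently high order at $t=\infty$. Balancing orders forces $b_{\infty,0}(b_{\infty,0}-1)=0$, i.e. $b_{\infty,0}\in\{0,1\}$, and then one must track the next coefficients: the full product $x\cdot y(y-1)$ still has order $n$ minus the vanishing order of $y(y-1)$, and matching against the $O(1)$ left side pins down the expansion of $y$ in terms of that of $x$. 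One expects to conclude that $y(y-1)$ must vanish to order $\geq n$ at $t=\infty$, which (since $y$ is a convergent Laurent series) is a strong constraint.

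Next I would turn to the first equation, $tx^{\prime}=2x^2y-x^2+(1-2\alpha_2-2\alpha_3-2\alpha_4)x+2\alpha_3z+2z^2w+t$. The left side $tx^{\prime}$ has order $n$ (its leading coefficient is $n\,a_{\infty,n}t^n$). On the right, $2x^2y-x^2=x^2(2y-1)$ has order $2n$ unless $2y-1$ vanishes at $t=\infty$; but $2y-1$ cannot vanish identically (it would contradict $b_{\infty,0}\in\{0,1\}$ after the previous step, and in any case would be incompatible), so $x^2(2y-1)$ generically has order $2n>n$, and this term must be cancelled. Since $z,w$ are holomorphic, the terms $2\alpha_3 z+2z^2w$ are $O(1)$ and $t$ is order $1\leq n$, so nothing on the right can cancel a term of order $2n$. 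Hence we need $2y-1$ to vanish to order $\geq n$ at $t=\infty$, i.e. $y\equiv 1/2+O(t^{-n})$; combined with the constraint from the second equation ($b_{\infty,0}\in\{0,1\}$), this is already a contradiction, since $1/2\notin\{0,1\}$.

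The main obstacle is bookkeeping rather than conceptual: one must be careful that the "balancing" arguments account for the possibility that $y(y-1)$ or $2y-1$ vanishes to high but finite order, rather than identically, and that the orders of $tx^\prime$ and $ty^\prime$ are computed correctly when $x$ has a pole of arbitrary order $n$. Once the two leading-coefficient conditions $b_{\infty,0}\in\{0,1\}$ and $b_{\infty,0}=1/2$ are extracted they are plainly incompatible, so the case cannot occur; if for some reason the first equation's argument needs the refined form ($2y-1$ vanishing to order exactly matching $n$), one falls back on comparing successive coefficients in the first two equations simultaneously to derive the same contradiction. As in the earlier propositions, the proof should reduce to "compare the coefficients of the appropriate power of $t$."
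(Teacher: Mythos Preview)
Your proposal is correct and follows essentially the same approach as the paper: compare the leading coefficients in the first two equations to extract incompatible conditions on $b_{\infty,0}$ (namely $b_{\infty,0}=1/2$ from the $x$-equation and $b_{\infty,0}\in\{0,1\}$ from the $y$-equation). The paper does this in the opposite order and more tersely, simply reading off the coefficient of $t^{2n_0}$ in the first equation and of $t^{n_0}$ in the second, but the content is identical; your worries about higher-order bookkeeping are unnecessary since the contradiction already appears at the top coefficient.
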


\begin{proof}
Suppose that $B_4^{(1)}(\alpha_j)_{0\leq j \leq 4}$ has such s solution. 
We then note that $n_0\geq 1,$ $n_1, n_2, n_3\leq 0$ and $a_{\infty, n_0}\neq 0.$ 
Comparing the coefficients of the term $t^{2n_0}$ in 
$$
t x^{\prime}=
2x^2y-x^2+(1-2\alpha_2-2\alpha_3-2\alpha_4)x+2\alpha_3z+2z^2w+t, 
$$
we have $0=2a_{\infty,n_0}^2b_{\infty, 0}-a_{\infty, n_0}^2,$ which implies that $b_{\infty,0}=1/2.$ 
\par
On the other hand, 
by comparing the coefficients of the term $t^{n_0}$ in 
$$
t y^{\prime}=
-2xy^2+2xy-(1-2\alpha_2-2\alpha_3-2\alpha_4)y+\alpha_1, 
$$
we obtain $0=-2a_{\infty,n_0}b_{\infty, 0}^2+2a_{\infty,n_0}b_{\infty,0},$ 
which implies that $b_{\infty,0}=0,1.$ 
This is impossible. 
\end{proof}

\subsubsection{The case where $y$ has a pole at $t=\infty$}

\begin{lemma}
\label{leminf(y,w)-1}
{\it
Suppose that for $B_4^{(1)}(\alpha_j)_{0\leq j \leq 4},$ 
there exists a meromorphic solution near $t=\infty$ such that 
$y$ has a pole of order $n \,(n\geq 1)$ at $t=\infty$ 
and 
$x$ is holomorphic at $t=\infty.$ 
Then, 
$$
a_{\infty,0}=a_{\infty,-1}=\cdots=a_{\infty,-(n-1)}=0, \,\,
a_{\infty,-n}=\frac{-n-\alpha_0-\alpha_1}{2b_{\infty,n}}, \,\,b_{\infty,n}\neq 0,
$$
which implies that 
$$
2\alpha_3z+2z^2w+t
=
t x^{\prime}
-\left\{   
2x^2y-x^2+(1-2\alpha_2-2\alpha_3-2\alpha_4)x
\right\}=O(t^{-1}).
$$
}
\end{lemma}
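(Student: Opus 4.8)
The plan is to read off everything from the second equation of $B_4^{(1)}$,
\[
ty'=-2xy^2+2xy-(1-2\alpha_2-2\alpha_3-2\alpha_4)y+\alpha_1,
\]
under the standing hypotheses $x=a_{\infty,0}+a_{\infty,-1}t^{-1}+\cdots$ (holomorphic at $t=\infty$) and $y=b_{\infty,n}t^n+\cdots$ with $b_{\infty,n}\neq0$ (a pole of order exactly $n\geq1$; note $b_{\infty,n}\neq 0$ is just the hypothesis restated). The left side is $O(t^n)$, and on the right side $2xy$, $(1-2\alpha_2-2\alpha_3-2\alpha_4)y$ and $\alpha_1$ are all $O(t^n)$ as well, so the only term able to overshoot degree $n$ is $-2xy^2$.

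First I would run an induction on $k=0,1,\dots,n-1$ to prove $a_{\infty,-k}=0$. Assuming $a_{\infty,0}=\cdots=a_{\infty,-(k-1)}=0$, we have $x=O(t^{-k})$, hence $2xy=O(t^{n-k})$, and the remaining right-hand terms together with $ty'$ are $O(t^n)$; since $2n-k\ge n+1$ for $k\le n-1$, the coefficient of $t^{2n-k}$ on the right comes only from $-2xy^2$ and equals $-2a_{\infty,-k}b_{\infty,n}^2$, while on the left it is $0$. Therefore $a_{\infty,-k}=0$, and by induction $a_{\infty,0}=a_{\infty,-1}=\cdots=a_{\infty,-(n-1)}=0$, i.e.\ $x=O(t^{-n})$.

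With $x=a_{\infty,-n}t^{-n}+\cdots$ I would then compare the coefficients of $t^n$ in the same equation. The left side contributes $nb_{\infty,n}$; on the right, $-2xy^2$ contributes $-2a_{\infty,-n}b_{\infty,n}^2$, the term $2xy$ is now only $O(1)$ and contributes nothing, and $-(1-2\alpha_2-2\alpha_3-2\alpha_4)y$ contributes $-(1-2\alpha_2-2\alpha_3-2\alpha_4)b_{\infty,n}$. Dividing by $b_{\infty,n}\neq0$ and using the parameter relation $\alpha_0+\alpha_1+2\alpha_2+2\alpha_3+2\alpha_4=1$, so that $1-2\alpha_2-2\alpha_3-2\alpha_4=\alpha_0+\alpha_1$, yields $2a_{\infty,-n}b_{\infty,n}=-n-\alpha_0-\alpha_1$, which is the asserted value of $a_{\infty,-n}$.

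Finally, the displayed consequence is just the first equation of $B_4^{(1)}$ solved for $2\alpha_3z+2z^2w+t$, namely $tx'-\{2x^2y-x^2+(1-2\alpha_2-2\alpha_3-2\alpha_4)x\}$. Since $x=O(t^{-n})$ with $n\geq1$, each of these pieces is $O(t^{-1})$: $tx'=O(t^{-n})$, $x^2y=O(t^{-n})$, $x^2=O(t^{-2n})$ and $x=O(t^{-n})$, so the whole expression is $O(t^{-1})$. I do not anticipate a genuine obstacle here; the only point requiring care is keeping the induction airtight — at each stage one must confirm that neither $ty'$ nor any of $2xy$, $(1-2\alpha_2-2\alpha_3-2\alpha_4)y$, $\alpha_1$ reaches degree $2n-k$, so that the vanishing of that coefficient is forced solely by the $-2xy^2$ term.
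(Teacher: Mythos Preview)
Your proposal is correct and follows essentially the same approach as the paper, which simply says to substitute the Laurent series of $x,y$ at $t=\infty$ into the equation $ty'=-2xy^2+2xy-(1-2\alpha_2-2\alpha_3-2\alpha_4)y+\alpha_1$ and compute. Your careful induction and explicit coefficient comparison at degree $t^n$ are exactly what that one-line proof is gesturing at, and your verification of the $O(t^{-1})$ bound from the first equation is correct as well.
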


\begin{proof}
Substituting the Laurent series of $x,y$ at $t=\infty$ in 
$$
t y^{\prime}=
-2xy^2+2xy-(1-2\alpha_2-2\alpha_3-2\alpha_4)y+\alpha_1, 
$$
we can obtain the lemma.

\end{proof}

By Lemma \ref{leminf(y,w)-1}, 
we can easily prove the following proposition:

\begin{proposition}
\label{prop:y-inf-sol}
{\it
For $B_4^{(1)}(\alpha_j)_{0\leq j \leq 4},$ 
there exists no solution such that 
$y$ has a pole at $t=\infty$ and $x,z,w$ are all holomorphic at $t=\infty.$
}
\end{proposition}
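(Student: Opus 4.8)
The plan is to argue by contradiction, supposing that $B_4^{(1)}(\alpha_j)_{0\leq j \leq 4}$ admits a solution in which $y$ has a pole of order $n\geq 1$ at $t=\infty$ while $x,z,w$ are all holomorphic there. Since this is exactly the setting of Lemma \ref{leminf(y,w)-1} (with the extra hypothesis that $z,w$ are also holomorphic), I would first invoke that lemma to extract the crucial consequence
\[
2\alpha_3 z + 2z^2 w + t = O(t^{-1})
\]
as $t\to\infty$. The point is that the left-hand side is, under our standing assumption, a function holomorphic at $t=\infty$ whose Laurent expansion has a nonzero coefficient in degree $1$ unless that coefficient cancels.

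The key step is then to read off the coefficient of $t^{1}$ in $2\alpha_3 z + 2z^2 w + t$. Because $z$ and $w$ are holomorphic at $t=\infty$, the term $2\alpha_3 z + 2z^2 w$ is holomorphic at $t=\infty$, hence contributes nothing to the coefficient of $t^{1}$ (its expansion only has terms of degree $\leq 0$). Therefore the coefficient of $t^{1}$ in the whole expression equals $1$, coming from the summand $t$ alone. But Lemma \ref{leminf(y,w)-1} forces this expression to be $O(t^{-1})$, so the coefficient of $t^{1}$ must vanish, giving $1=0$. This contradiction shows no such solution exists.

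I expect no real obstacle here: the entire content is packaged into Lemma \ref{leminf(y,w)-1}, and the proposition is the immediate specialization obtained by additionally demanding $z,w$ holomorphic, which kills the $2\alpha_3 z + 2z^2 w$ contribution and leaves the bare inhomogeneous term $t$ with nowhere to go. The only thing to be slightly careful about is that the hypothesis of Lemma \ref{leminf(y,w)-1} only requires $x$ holomorphic (not $z,w$), so one should note explicitly that the present hypotheses are stronger and hence the lemma applies; after that the degree-one coefficient comparison in the first equation of the system is what finishes the argument. In the write-up I would simply say: "By Lemma \ref{leminf(y,w)-1}, $2\alpha_3 z + 2z^2 w + t = O(t^{-1})$; since $z,w$ are holomorphic at $t=\infty$, comparing the coefficients of $t$ yields $1=0$, a contradiction."
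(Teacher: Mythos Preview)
Your proposal is correct and follows essentially the same approach as the paper: the paper simply states that the proposition follows easily from Lemma~\ref{leminf(y,w)-1}, and your argument spells out precisely this deduction by comparing the coefficient of $t$ in $2\alpha_3 z + 2z^2 w + t = O(t^{-1})$.
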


\subsubsection{The case where $z$ has a pole at $t=\infty$}

\begin{proposition}
\label{prop:z-inf}
{\it
Suppose that 
for $B_4^{(1)}(\alpha_j)_{0\leq j \leq 4},$ 
there exists a solution 
such that 
$z$ has a pole of order $n \,\,(n\geq 1)$ at $t=\infty$ 
and 
$x,y,w$ are all holomorphic at $t=\infty.$ 
\newline
{\rm (1)}\quad If $n\geq 2,$ then 
\begin{equation*}
\begin{cases}
x=\displaystyle(\alpha_0-\alpha_1)-\frac{\{(n-1)+2\alpha_2+2\alpha_3+2\alpha_4\}\{(n-1)+2\alpha_3+2\alpha_4\}}{c_{\infty,n}} t^{-n}+\cdots, \\
y=\displaystyle \frac12+\frac{(n-1)+2\alpha_3+2\alpha_4}{2c_{\infty,n}} t^{-n}+\cdots, \\
z=c_{\infty,n}t^n+c_{\infty,n-1}t^{n-1}+\cdots, \\
w=\displaystyle -\frac{\alpha_3}{c_{\infty,n}}t^{-n}+\cdots.
\end{cases}
\end{equation*}
{\rm (2)}\quad 
 If $n=1,$ then $\alpha_4\neq 0$ and 
\begin{equation*}
\begin{cases}
x=\displaystyle(\alpha_0-\alpha_1)-2\alpha_4(2\alpha_2+2\alpha_3+2\alpha_4)(2\alpha_3+2\alpha_4)t^{-1}\cdots, \\
y=\displaystyle \frac12 +2\alpha_4(\alpha_3+\alpha_4) t^{-1}+\cdots, \\
z=\displaystyle \frac{1}{2\alpha_4}t+\cdots, \\
w=\displaystyle 
-2\alpha_4(\alpha_3+\alpha_4)
t^{-1}+\cdots.
\end{cases}
\end{equation*}
}
\end{proposition}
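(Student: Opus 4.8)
The plan is to substitute the Laurent expansions of $x,y,z,w$ at $t=\infty$ into the four defining equations of $B_4^{(1)}$ and compare coefficients order by order, starting from the highest powers of $t$. By hypothesis we have $n_2 = n \geq 1$ with $c_{\infty,n}\neq 0$, while $x,y,w$ are holomorphic at $t=\infty$, so their expansions begin at the constant term: $x = a_{\infty,0} + a_{\infty,-1}t^{-1}+\cdots$, $y = b_{\infty,0}+b_{\infty,-1}t^{-1}+\cdots$, $w = d_{\infty,0}+d_{\infty,-1}t^{-1}+\cdots$. First I would look at the $z$-equation $tz' = 2z^2w - z^2 + (1-2\alpha_4)z + 2yz^2 + t$: the leading term on the left is $nc_{\infty,n}t^n$, and on the right the dominant contribution is $2d_{\infty,0}c_{\infty,n}^2 t^{2n}$ (from $2z^2w$) together with $(2b_{\infty,0}-1)c_{\infty,n}^2 t^{2n}$ (from $-z^2+2yz^2$). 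Since $2n > n$ for $n\geq 1$, the $t^{2n}$ coefficient must vanish, giving $2d_{\infty,0} + 2b_{\infty,0} - 1 = 0$. A parallel examination of the $w$-equation $tw' = -2zw^2 + 2zw - (1-2\alpha_4)w - 2\alpha_3 y - 4yzw + \alpha_3$: here the highest power on the right is $t^n$, coming from $(-2d_{\infty,0}^2 + 2d_{\infty,0} - 4b_{\infty,0}d_{\infty,0})c_{\infty,n}t^n$, and the left side is holomorphic, so this coefficient vanishes: $d_{\infty,0}(-2d_{\infty,0} + 2 - 4b_{\infty,0}) = 0$. Combined with the relation from the $z$-equation, one forces $d_{\infty,0} = 0$ and hence $b_{\infty,0} = 1/2$.

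Next I would feed $d_{\infty,0}=0$, $b_{\infty,0}=1/2$ back in and push to the next order. In the $x$-equation $tx' = 2x^2y - x^2 + (1-2\alpha_2-2\alpha_3-2\alpha_4)x + 2\alpha_3 z + 2z^2 w + t$, the term $2z^2w$ now has leading behaviour $2c_{\infty,n}^2 d_{\infty,-n}\,t^{n}$ (whatever the leading order of $w$ turns out to be), $2\alpha_3 z$ contributes $2\alpha_3 c_{\infty,n}t^n$, and $2x^2 y - x^2$ contributes a constant; matching the absence of positive powers on the holomorphic left side yields $a_{\infty,0}^2(2b_{\infty,0}-1)=0$ trivially and, at order $t^0$ after the cancellations, the relation $a_{\infty,0} = \alpha_0-\alpha_1$ once the parameter relation $\alpha_0+\alpha_1+2\alpha_2+2\alpha_3+2\alpha_4 = 1$ is used (this matches Section 1's assertion that $a_{\infty,0}=\alpha_0-\alpha_1$). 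From the $w$-equation at the next order I would extract $d_{\infty,-n} = -\alpha_3/c_{\infty,n}$ (for $n\geq 2$) or the sharper $n=1$ formula, and from the $y$-equation $ty' = -2xy^2 + 2xy - (1-2\alpha_2-2\alpha_3-2\alpha_4)y+\alpha_1$ I would get the coefficient of $t^{-n}$ in $y$. The $n=1$ case is genuinely different because then $2n = n+1$ is only one step above $n$, so more terms collide at each order and in particular the equation that would determine $c_{\infty,1}$ becomes $1 = 2\alpha_4 c_{\infty,1}$ (from matching the $t^1$ coefficient of the $z$-equation, where now the inhomogeneous $t$ on the right participates), which both pins down $c_{\infty,1} = 1/(2\alpha_4)$ and forces $\alpha_4\neq 0$.

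The main obstacle I anticipate is bookkeeping in the $n=1$ case: because the pole order of $z$ is minimal, the ``source'' term $t$ in both the $x$- and $z$-equations sits at the same order as genuinely nonlinear contributions, so the recursion does not cleanly separate into ``leading term fixes a normalization, next term fixes a residue'' as it does for $n\geq 2$. I would handle this by treating $n=1$ separately from the outset rather than specializing a general-$n$ computation, carefully tracking which of $2z^2w$, $2yz^2$, $-z^2$, $2\alpha_3 z$, and $t$ land at orders $t^2$, $t^1$, $t^0$, and then solving the resulting small nonlinear system for $(c_{\infty,1}, a_{\infty,-1}, b_{\infty,-1}, d_{\infty,-1})$. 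For $n\geq 2$ the computation is more mechanical: the $t^{2n}$, $t^{2n-1},\dots$ coefficients of the products $z^2w$, $yz^2$ successively vanish or determine lower coefficients of $w$ and $y$, and one only needs to descend far enough ($n$ steps) to read off the stated leading corrections to $x,y,w$ at order $t^{-n}$; consistency of the whole system then follows because $B_4^{(1)}$ has the Painlevé property, so no further obstruction can arise. Finally I would double-check the claimed formulas by verifying that the $t^{-n}$ coefficients are mutually consistent across all four equations, which is the one place a sign or factor-of-two error would surface.
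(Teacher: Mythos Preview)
Your approach is correct and is exactly what the paper does---the paper's own proof is simply ``It can be proved by direct calculation,'' and you have spelled out that calculation. Two small corrections. First, the relation $a_{\infty,0}=\alpha_0-\alpha_1$ comes most cleanly from the $y$-equation at order $t^0$, not the $x$-equation: with $b_{\infty,0}=\tfrac12$ already in hand, the constant term of $-2xy^2+2xy-(1-2\alpha_2-2\alpha_3-2\alpha_4)y+\alpha_1$ is $\tfrac12 a_{\infty,0}-\tfrac12(\alpha_0+\alpha_1)+\alpha_1=0$, whence $a_{\infty,0}=\alpha_0-\alpha_1$; by contrast the $x$-equation at order $t^0$ still contains the unknown constant terms of $2\alpha_3 z$ and $2z^2w$. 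Second, the closing appeal to the Painlev\'e property is not needed, since the proposition \emph{assumes} a meromorphic solution exists and only computes the first few Laurent coefficients; no existence or consistency argument is required.
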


\begin{proof}
It can be proved by direct calculation.
\end{proof}

From Proposition \ref{prop:z-inf}, 
let us consider the relationship between case (1) and 
the B\"acklund transformation, $s_3.$

\begin{corollary}
\label{coro:z(n)-s_3}
{\it
Suppose that 
for $B_4^{(1)}(\alpha_j)_{0\leq j \leq 4},$ 
there exists a solution 
such that 
$z$ has a pole of order $n \,\,(n\geq 2)$ at $t=\infty$ 
and 
$x,y,w$ are all holomorphic at $t=\infty.$ 
Moreover, 
assume that $\alpha_3\neq 0.$ 
$s_3(x,y,z,w)$ 
is then a solution of 
$B_4^{(1)}(\alpha_0,\alpha_1,\alpha_2+\alpha_3,-\alpha_3,\alpha_4+\alpha_3)$ 
such that 
$z$ has a pole of order one at $t=\infty$ 
and 
$x,y,w$ are all holomorphic at $t=\infty.$
}
\end{corollary}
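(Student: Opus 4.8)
The plan is to read off the behaviour of $s_3(x,y,z,w)$ at $t=\infty$ directly from Proposition \ref{prop:z-inf}(1) together with the explicit formula for the B\"acklund transformation $s_3$ established in Section 5. Recall that $s_3$ sends the parameter vector $(\alpha_0,\alpha_1,\alpha_2,\alpha_3,\alpha_4)$ to $(\alpha_0,\alpha_1,\alpha_2+\alpha_3,-\alpha_3,\alpha_4+\alpha_3)$, which is precisely the target system in the statement (and one checks at once that this preserves the relation $\alpha_0+\alpha_1+2\alpha_2+2\alpha_3+2\alpha_4=1$), and that it acts on the dependent variables by a birational map whose denominators are controlled by an $\alpha_3$-dependent function; the hypothesis $\alpha_3\neq0$ is therefore exactly what makes $s_3$ regular on the solution at hand, so that $s_3(x,y,z,w)$ is again a genuine meromorphic solution near $t=\infty$ and Proposition \ref{prop:z-inf} applies to it.

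The computation I would then carry out is to substitute the expansions of Proposition \ref{prop:z-inf}(1) into the formula for $s_3$. The relevant features are all visible there: $x\to\alpha_0-\alpha_1$ and $y\to1/2$ are finite, $z$ has a pole of order $n$ with leading coefficient $c_{\infty,n}$, and $w$ has a \emph{zero} of order exactly $n$ with leading coefficient $-\alpha_3/c_{\infty,n}$ (nonzero because $\alpha_3\neq0$), so that $zw\to-\alpha_3$ and $zw+\alpha_3$ vanishes at $t=\infty$. Propagating these through the birational formula, one checks that the corrections $s_3$ makes to $x$, $y$ and $w$ are $O(t^{-1})$, so $s_3(x),s_3(y),s_3(w)$ are holomorphic at $t=\infty$, while the cancellation in $zw+\alpha_3$ forces the $z$-component of $s_3(x,y,z,w)$ to have a pole of order far below $n$; that this order is exactly one is the point taken up next. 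Combined with the parameter action above, this gives the assertion of the corollary.

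I expect the genuine obstacle to be pinning down that last ``order exactly one'': it requires knowing how fast $zw+\alpha_3$ vanishes at $t=\infty$, hence more Laurent coefficients of $z$ and $w$ than the two displayed in Proposition \ref{prop:z-inf}(1). I would generate them recursively, substituting the series into $tz'=2z^2w-z^2+(1-2\alpha_4)z+2yz^2+t$ and $tw'=-2zw^2+2zw-(1-2\alpha_4)w-2\alpha_3y-4yzw+\alpha_3$ and matching coefficients order by order; this determines the subleading terms uniquely in terms of $c_{\infty,n}$ and shows that $zw+\alpha_3$ has a zero of order exactly $n-1$, which is what makes the new $z$ have a simple pole. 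Once this order count is settled, the remaining verification — that $s_3(x,y,z,w)$ solves $B_4^{(1)}(\alpha_0,\alpha_1,\alpha_2+\alpha_3,-\alpha_3,\alpha_4+\alpha_3)$ with $x,y,w$ still holomorphic and $z$ of pole order one at $t=\infty$ — is the routine birational bookkeeping supplied by the B\"acklund transformations of Section 5.
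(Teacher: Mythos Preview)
Your overall direction is right, but note first that $s_3$ acts by $s_3(x,y,z,w)=(x,\,y,\,z+\alpha_3/w,\,w)$: it leaves $x,y,w$ \emph{untouched}, so their holomorphy at $t=\infty$ is immediate, not merely an $O(t^{-1})$ correction. From Proposition~\ref{prop:z-inf}(1) one has $w=-\alpha_3/c_{\infty,n}\,t^{-n}+\cdots$, hence $\alpha_3/w=-c_{\infty,n}t^n+\cdots$, and the leading $t^n$-terms of $z$ and $\alpha_3/w$ cancel; so $s_3(z)$ has a pole of order at most $n-1$. Because $s_3(x,y,z,w)$ is again a meromorphic solution at $t=\infty$, the classification of Section~1 forces that pole order to be some $m\geq 1$.

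Where your plan diverges from the paper is precisely the step you flag as the obstacle, pinning down $m=1$. You propose to grind out enough Laurent coefficients of $z$ and $w$ to show $zw+\alpha_3$ vanishes to order exactly $n-1$; for arbitrary $n$ this is an open-ended recursion and not obviously closable into a finite argument. The paper sidesteps it with a self-referential trick: since $s_3$ fixes $w$, we know $s_3(w)=-\alpha_3/c_{\infty,n}\,t^{-n}+\cdots$ exactly. But if $s_3(z)$ had a pole of order $m\geq 2$, Proposition~\ref{prop:z-inf}(1) applied to the \emph{transformed} solution (with new third parameter $-\alpha_3\neq 0$) would force $s_3(w)=-(-\alpha_3)/c'_{\infty,m}\,t^{-m}+\cdots$, a zero of order $m<n$. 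These two descriptions of the same function $w$ are incompatible, so $m=1$. This costs no additional Laurent coefficients at all; it simply reuses Proposition~\ref{prop:z-inf} on the image and compares the $w$-component, which $s_3$ conveniently leaves alone.
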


\begin{proof}
By direct calculation, 
we find that 
$s_3(x,y,z,w)$ 
is a solution of 
$B_4^{(1)}(\alpha_0,\alpha_1,\alpha_2+\alpha_3,-\alpha_3,\alpha_4+\alpha_3)$ 
such that 
$s_3(z)$ has a pole of order $m \,\,(1\leq m \leq n-1)$ at $t=\infty$ 
and all of $s_3(x,y,w)$ are holomorphic at $t=\infty.$ 
\par
We assume that $s_3(z)$ has a pole of order $m \,\,(2 \leq m \leq n-1)$ at $t=\infty$ 
and show a contradiction. 
By the definition of $s_3,$ 
we see that 
$s_3(w)=-\alpha_3/c_{\infty,n} t^{-n}+\cdots.$ 
On the other hand, 
we observe that 
$$
s_3(z)=c^{\prime}_{\infty, m}t^{m}+\cdots, \,\,s_3(w)=-(-\alpha_3)/c^{\prime}_{\infty,m}t^{-m}+\cdots.
$$ 
It then follows that 
$$s_3(w)=-\alpha_3/c_{\infty,n} t^{-n}+\cdots=-(-\alpha_3)/c^{\prime}_{\infty,m}t^{-m}+\cdots,
$$ 
which is a contradiction.

\end{proof}

By Corollary \ref{coro:z(n)-s_3}, 
we can obtain the necessary conditions for $B_4^{(1)}(\alpha_j)_{0\leq j \leq 4}$ 
to have a solution such that 
$z$ has a pole of order $n \,(n\geq 2)$ at $t=\infty.$

\begin{corollary}
\label{coro:order-n}
{\it
Suppose that 
for $B_4^{(1)}(\alpha_j)_{0\leq j \leq 4},$ 
there exists a solution 
such that 
$z$ has a pole of order $n \,\,(n\geq 2)$ at $t=\infty$ 
and 
$x,y,w$ are all holomorphic at $t=\infty.$ 
One of the following then occurs: 
{\rm (1)}\quad $\alpha_4=0,$
\quad
{\rm (2)}\quad $\alpha_0=\alpha_1=\alpha_2=\alpha_3=0, \,\,\alpha_4=1/2.$
}
\end{corollary}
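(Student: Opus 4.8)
The plan is to split the argument according to whether $\alpha_3$ vanishes, treating the case $\alpha_3\ne 0$ by means of the B\"acklund transformation $s_3$ (through Corollary \ref{coro:z(n)-s_3}) and the case $\alpha_3=0$ by a direct Laurent computation. In both cases I expect to reach the stronger conclusion $\alpha_4=0$, so that alternative (1) holds; alternative (2), the configuration $B_4^{(1)}(0,0,0,0,1/2)$, is then the degenerate one that it is natural to carry along at this stage (it resurfaces and is eliminated in Section 12).

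First I would take $\alpha_3\ne 0$. By Corollary \ref{coro:z(n)-s_3}, $s_3(x,y,z,w)$ is a solution of $B_4^{(1)}(\alpha_0,\alpha_1,\alpha_2+\alpha_3,-\alpha_3,\alpha_3+\alpha_4)$ for which $z$ has a pole of order one and $x,y,w$ are holomorphic at $t=\infty$. Applying Proposition \ref{prop:z-inf}(2) to this solution yields $\alpha_3+\alpha_4\ne 0$ together with
$$
s_3(w)=-2(\alpha_3+\alpha_4)\bigl((-\alpha_3)+(\alpha_3+\alpha_4)\bigr)t^{-1}+\cdots=-2(\alpha_3+\alpha_4)\alpha_4\,t^{-1}+\cdots .
$$
On the other hand, from the computation in the proof of Corollary \ref{coro:z(n)-s_3} one has $s_3(w)=-\alpha_3 c_{\infty,n}^{-1}t^{-n}+\cdots$ with $n\ge 2$, so the coefficient of $t^{-1}$ in $s_3(w)$ vanishes. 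Hence $(\alpha_3+\alpha_4)\alpha_4=0$, and since $\alpha_3+\alpha_4\ne 0$ this forces $\alpha_4=0$.

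Next I would take $\alpha_3=0$ and assume towards a contradiction that $\alpha_4\ne 0$. Setting $\alpha_3=0$ in Proposition \ref{prop:z-inf}(1) gives $x=(\alpha_0-\alpha_1)+O(t^{-n})$, $y=\frac12+\frac{(n-1)+2\alpha_4}{2c_{\infty,n}}t^{-n}+\cdots$, $z=c_{\infty,n}t^{n}+\cdots$ and $w=O(t^{-n-1})$. In the $x$-equation
$$
tx'=2x^2y-x^2+(1-2\alpha_2-2\alpha_4)x+2z^2w+t
$$
the left side is $O(t^{-1})$ while $2x^2y-x^2+(1-2\alpha_2-2\alpha_4)x=x^2(2y-1)+(1-2\alpha_2-2\alpha_4)x$ is $O(1)$, so $2z^2w$ must cancel $+t$ and carry no positive power beyond the first; since $z^2w$ has a leading term proportional to $t^{2n-m}$, where $m$ is the pole order of $w$, this forces $m=2n-1$ and fixes the leading coefficient of $w$ to be $-1/(2c_{\infty,n}^2)$, in particular $w\not\equiv 0$. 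Substituting this into the $w$-equation, which with $\alpha_3=0$ reads $tw'=w\bigl(-2zw+2z-(1-2\alpha_4)-4yz\bigr)$, and using $2z-4yz=2z(1-2y)=-2\{(n-1)+2\alpha_4\}+\cdots$ together with $zw=O(t^{1-n})$, the coefficient of $t^{-(2n-1)}$ on the two sides gives $-(2n-1)=-2n+1-2\alpha_4$, whence $\alpha_4=0$, contrary to assumption. So $\alpha_4=0$ here as well, and alternative (1) holds.

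The step I expect to be the main obstacle is this last ($\alpha_3=0$) analysis: there is no B\"acklund transformation available to reduce it to the order-one situation, so everything has to be extracted directly from the Laurent series, and since every leading-order identity in Proposition \ref{prop:z-inf}(1) is already an identity, the parameter constraint only appears after the pole order of $w$ has been pinned down from the $x$-equation and the resulting expansion pushed through the $w$-equation. The bookkeeping must also cover the degenerate subcases in which $(n-1)+2\alpha_4$ or $2n-1+2\alpha_4$ vanishes, where $2z(1-2y)$ starts at a lower order and one has to go one step further in the expansion; checking that each of these subcases again yields $\alpha_4=0$ (or is outright impossible for $n\ge 2$) is where the real work lies.
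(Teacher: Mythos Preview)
Your treatment of $\alpha_3\ne 0$ matches the paper exactly. For $\alpha_3=0$ you take a genuinely different route. The paper does \emph{not} attack this case by direct Laurent analysis; instead it uses the B\"acklund transformations $s_0,s_1,s_2$ to shift the zero off the $\alpha_3$-slot. For instance, if $\alpha_3=0$ but $\alpha_2\ne 0$, then $s_2$ (which fixes $z$, hence preserves the pole order $n$) produces a solution of $B_4^{(1)}(\alpha_0+\alpha_2,\alpha_1+\alpha_2,-\alpha_2,\alpha_2,\alpha_4)$ whose new third parameter is $\alpha_2\ne 0$, and the $\alpha_3\ne 0$ argument applies to give $\alpha_4=0$; if $\alpha_2=0$ too, one first applies $s_0$ or $s_1$ to make $\alpha_2$ nonzero. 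The only configuration that escapes this cascade is $\alpha_0=\alpha_1=\alpha_2=\alpha_3=0$, whence $\alpha_4=1/2$: this is precisely why alternative~(2) appears in the paper's statement. So your remark that ``there is no B\"acklund transformation available'' is mistaken---the paper's whole strategy for $\alpha_3=0$ is to manufacture one.

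Your direct computation is nevertheless valid, and in fact \emph{sharper} than what the paper proves here. The $x$-equation (with $\alpha_3=0$) forces $d_{\infty,0}=\cdots=d_{\infty,-(2n-2)}=0$ and $d_{\infty,-(2n-1)}=-1/(2c_{\infty,n}^2)$; matching the $t^{-(2n-1)}$-coefficients in the $w$-equation then gives $-(2n-1)=-(2n-1)-2\alpha_4$, i.e.\ $\alpha_4=0$, with no residual case. The degenerate subcases you flag are not real obstacles: if $(n-1)+2\alpha_4=0$ the constant contribution of $2z(1-2y)$ drops out and the matching becomes $-(2n-1)=-(1-2\alpha_4)=-n$, impossible for $n\ge 2$; and if the full constant term $-(2n-1)-2\alpha_4$ of the bracket were zero, the right-hand side of the $w$-equation would start strictly below $t^{-(2n-1)}$ while $tw'$ does not. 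Thus your argument actually shows that alternative~(2) never occurs for a meromorphic solution at $t=\infty$; the paper eliminates that configuration only later (Section~12), and by a global residue argument rather than a local one.
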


\begin{proof}
Let us first prove that 
$\alpha_4=0$ if $\alpha_3\neq 0.$ 
For this purpose, 
we note that 
$s_3(x,y,z,w)$ is a solution of $B_4^{(1)}(\alpha_0,\alpha_1,\alpha_2+\alpha_3,-\alpha_3,\alpha_4+\alpha_3)$ 
such that 
$s_3(z)$ has a pole of order one at $t=\infty$ 
and all of $s_3(x,y,w)$ are holomorphic at $t=\infty.$ 
By direct calculation and Proposition \ref{prop:z-inf}, 
we then see that $\alpha_4+\alpha_3\neq0$ and 
$$
-\alpha_3/c_{\infty,n} \cdot t^{-n}+\cdots=s_3(w)=-2(\alpha_4+\alpha_3)\alpha_4t^{-1}+\cdots,
$$ 
which implies that $\alpha_4=0.$
\par
Let us show that $\alpha_4=0$ if $\alpha_3=0$ and $\alpha_2\neq 0.$ 
For this purpose, 
we note that 
$s_2(x,y,z,w)$ is a solution of $B_4^{(1)}(\alpha_0+\alpha_2,\alpha_1+\alpha_2,-\alpha_2,\alpha_2,\alpha_4)$ 
such that 
$s_2(z)$ has a pole of order $n$ at $t=\infty$ 
and all of $s_2(x,y,w)$ are holomorphic at $t=\infty.$ 
Based on the above discussion, it then follows that $\alpha_4=0$
\par
If $\alpha_2=\alpha_3=0$ and $\alpha_0\neq 0,$ or 
if $\alpha_2=\alpha_3=0$ and $\alpha_1\neq 0,$ 
we can show that $\alpha_4=0$ by using $s_0$ or $s_1.$ 
\par
The remaining case is that $\alpha_0=\alpha_1=\alpha_2=\alpha_3=0$ and $\alpha_4=1/2,$ 
which proves the corollary.

\end{proof}

Let us treat the case where 
$z$ has a pole of order one at $t=\infty.$

\begin{proposition}
\label{prop:uniqueness}
{\it
Suppose that 
for $B_4^{(1)}(\alpha_j)_{0\leq j \leq 4},$ 
there exists a solution 
such that 
$z$ has a pole of order one at $t=\infty$ 
and 
$x,y,w$ are holomorphic at $t=\infty.$ 
It is then unique.
}
\end{proposition}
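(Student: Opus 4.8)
The plan is to show that the Laurent series at $t=\infty$ of any such solution is uniquely determined, coefficient by coefficient, by the recursion obtained from substituting the series into the system $B_4^{(1)}(\alpha_j)$. From Proposition \ref{prop:z-inf}(2) we already know the leading behaviour: $z = (1/2\alpha_4)t + \cdots$, $x = (\alpha_0-\alpha_1) + \cdots$, $y = 1/2 + \cdots$, $w = 0 + \cdots$, and in particular $\alpha_4 \neq 0$. So the top coefficient of $z$ and the constant terms of $x,y,w$ are forced. The remaining task is to run the recursion downward in powers of $t$ and check that at each step the next coefficient is uniquely solvable.

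First I would write $z = \sum_{k\le 1} c_{\infty,k} t^k$, $x = \sum_{k\le 0} a_{\infty,k} t^k$, $y = \sum_{k\le 0} b_{\infty,k} t^k$, $w = \sum_{k\le 0} d_{\infty,k} t^k$, and substitute into the four equations. The key structural observation is which coefficient first appears "linearly and alone" at each order. In the $y$-equation $ty' = -2xy^2 + 2xy - (1-2\alpha_2-2\alpha_3-2\alpha_4)y + \alpha_1$, comparing the coefficient of $t^{-j}$ ($j\ge 1$) isolates $b_{\infty,-j}$ with nonzero coefficient $2a_{\infty,0}(2b_{\infty,0}-1) + \cdots$; since $2b_{\infty,0}-1 = 0$ one must look more carefully — the relevant linear term is $-2 a_{\infty,0}^2 \cdot(\text{contribution}) $ together with $2 a_{\infty,0}$, and the net coefficient of the unknown $b_{\infty,-j}$ is $2a_{\infty,0} - 2a_{\infty,0}\cdot 2b_{\infty,0} = 2a_{\infty,0}(1-1)=0$ as well, so one instead reads $b_{\infty,-j}$ off from the $z$- or $w$-equation. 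Concretely: the $w$-equation $tw' = -2zw^2 + 2zw - (1-2\alpha_4)w - 2\alpha_3 y - 4yzw + \alpha_3$, comparing $t^{-(j-1)}$, has leading unknown term $2c_{\infty,1}d_{\infty,-j} - 4 b_{\infty,0} c_{\infty,1} d_{\infty,-j} = 2c_{\infty,1}(1 - 2b_{\infty,0})d_{\infty,-j}$, again vanishing — so in fact the system's Jacobian at the leading order is degenerate along some directions, which is exactly why the constraints $\alpha_0-\alpha_1=0$ type relations will later appear. The honest route is: use the $z$-equation $tz' = 2z^2w - z^2 + (1-2\alpha_4)z + 2yz^2 + t$ to solve for $d_{\infty,-j}$ (coefficient of $t^{-j+2}$ gives $2c_{\infty,1}^2 d_{\infty,-j}$ as the top unknown term, and $c_{\infty,1}\ne 0$), then the $w$-equation to solve for the next $b$, then the $y$-equation for the next $a$, then the $x$-equation to close the loop and determine $c_{\infty,-j+1}$, all against already-known lower-index data plus the parameters. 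One checks that at every stage the coefficient multiplying the single new unknown is a nonzero power of $c_{\infty,1} = 1/2\alpha_4$, hence invertible because $\alpha_4\ne 0$.

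Therefore I would argue by strong induction on $m\ge 0$: assume $c_{\infty,1},\dots,c_{\infty,1-m}$, $a_{\infty,0},\dots,a_{\infty,-m}$, $b_{\infty,0},\dots,b_{\infty,-m}$, $d_{\infty,0},\dots,d_{\infty,-m}$ are all uniquely determined; then the order-$(-m)$ (suitably shifted) comparisons in the four equations, taken in the cyclic order $z\to w\to y\to x$, express $d_{\infty,-m-1}$, $b_{\infty,-m-1}$, $a_{\infty,-m-1}$, $c_{\infty,-m}$ each as an explicit rational expression in the previously determined coefficients and the $\alpha_j$, with denominators that are nonzero powers of $\alpha_4$. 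This forces all coefficients, so two meromorphic solutions near $t=\infty$ with $z$ having a simple pole and $x,y,w$ holomorphic must have identical Laurent expansions at $\infty$; since they satisfy the same analytic ODE system they coincide, proving uniqueness.

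The main obstacle I anticipate is precisely the degeneracy noted above: because $b_{\infty,0}=1/2$ and $w,x$ have vanishing constant terms, the naive Jacobian of the leading-order map is not invertible, so one cannot blindly invoke an implicit-function-style recursion on a fixed equation. One must identify the correct equation to read each new coefficient from, and verify that the "off-diagonal" dependence makes the overall triangular system solvable with $\alpha_4$ in the denominators — this bookkeeping, rather than any single hard estimate, is where the real work lies. (The author's one-line "direct calculation" presumably hides exactly this triangulation.)
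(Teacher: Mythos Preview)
Your approach is essentially the paper's: determine the Laurent coefficients at $t=\infty$ recursively, noting that the naive recursion is degenerate because $b_{\infty,0}=1/2$ and $d_{\infty,0}=0$, and that one must choose the right equation to extract each new unknown. There is one slip in your triangulation, however. You propose to read $d_{\infty,-j}$ from the $z$-equation, claiming the coefficient of $t^{-j+2}$ has top unknown term $2c_{\infty,1}^2 d_{\infty,-j}$. But the $z$-equation contains $2yz^2$ as well as $2z^2w$, so that same order contributes $2c_{\infty,1}^2(b_{\infty,-j}+d_{\infty,-j})$: the $z$-equation only gives you the \emph{sum} $b_{\infty,-j}+d_{\infty,-j}$ in terms of $c_{\infty,-(j-2)}$, not $d_{\infty,-j}$ alone.

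The paper's actual triangulation (carried out explicitly for $j=2$) is: the $x$-equation at order $t^{2-j}$ expresses $d_{\infty,-j}$ linearly in $c_{\infty,-(j-2)}$ and known data (the $2z^2w$ term dominates, and $y$ does not appear); the $z$-equation at the same order then gives $b_{\infty,-j}+d_{\infty,-j}$ in terms of $c_{\infty,-(j-2)}$, hence $b_{\infty,-j}$ in terms of $c_{\infty,-(j-2)}$; the $w$-equation at order $t^{-j}$ supplies a third linear relation among $b_{\infty,-j}$, $d_{\infty,-j}$, $c_{\infty,-(j-2)}$, which now pins down $c_{\infty,-(j-2)}$ (for $j=2$ one finds $c_{\infty,0}=(\alpha_0+\alpha_1)(\alpha_0-\alpha_1)(\alpha_0+\alpha_1+2\alpha_2)/4\alpha_4^2$); finally the $y$-equation at order $t^{-j}$ gives $a_{\infty,-j}$ in terms of $b_{\infty,-j}$. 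So the correct cycle is $x\to z\to w\to y$ rather than your $z\to w\to y\to x$, and the nonvanishing denominators are indeed powers of $c_{\infty,1}=1/(2\alpha_4)$. With this correction your inductive argument goes through exactly as the paper intends.
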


\begin{proof}
Let us prove that 
the coefficients $a_{\infty,-2}, b_{\infty,-2}, c_{\infty,0}, d_{\infty,-2}$ are uniquely determined. 
The coefficients, $a_{\infty,-k}, b_{\infty,-k}, c_{\infty,-(k-2)}, d_{\infty,-k} \,\,(k=3,4,\ldots)$ 
can be computed in the same way. 
\par 
Comparing the constant terms in 
\begin{equation*}
t x^{\prime}=
2x^2y-x^2+(1-2\alpha_2-2\alpha_3-2\alpha_4)x+2\alpha_3z+2z^2w+t, 
\end{equation*}
we have 
\begin{equation}
\label{eqn:1-coeff}
d_{\infty,-2}=1/\{2c_{\infty,1}^2\}\cdot \{(2\alpha_3+4\alpha_4)c_{\infty,0}-(\alpha_0+\alpha_1)a_{\infty,0}\},
\end{equation} 
where $a_{\infty,0}, c_{\infty,1}$ both have been determined.  
Comparing the coefficients of the term $t^{-2}$ in 
\begin{equation*}
t y^{\prime}=
-2xy^2+2xy-(1-2\alpha_2-2\alpha_3-2\alpha_4)y+\alpha_1, 
\end{equation*}
we obtain 
\begin{equation}
\label{eqn:2-coeff}
a_{\infty,-2}=2(\alpha_0+\alpha_1-2)b_{\infty,-2}+4a_{\infty,0} b_{\infty,-1}^2,
\end{equation} 
where $b_{\infty,-1}$ has been determined.
Comparing the constant terms in 
\begin{equation*}
t z^{\prime}=
2z^2w-z^2+(1-2\alpha_4)z+2yz^2+t, 
\end{equation*}
we have 
$
b_{\infty,-2}+d_{\infty,-2}=1/\{2c_{\infty,1}^2\} \cdot \{(2\alpha_4-1)c_{\infty,0} \},$ 
which implies that 
\begin{equation}
\label{eqn:3-coeff}
b_{\infty,-2}=1/\{2c_{\infty,1}^2\} \cdot \{(-2\alpha_3-2\alpha_4-1)c_{\infty,0}+(\alpha_0+\alpha_1)a_{\infty,0} \}.
\end{equation}
Moreover, 
comparing  the coefficients of the term $t^{-2}$ in 
\begin{equation*}
t w^{\prime}=
-2zw^2+2zw-(1-2\alpha_4)w-2\alpha_3y-4yzw+\alpha_3,
\end{equation*}
we obtain 
\begin{equation}
\label{eqn:4-coeff}
2c_{\infty,0}d_{\infty,-1}^2 +(1+2\alpha_4)d_{\infty,-2}+(2\alpha_3+4\alpha_4)b_{\infty,-2}. 
\end{equation}
From (\ref{eqn:1-coeff}),  (\ref{eqn:3-coeff}) and (\ref{eqn:4-coeff}), 
we have 
\begin{equation*}
c_{\infty,0}=(\alpha_0+\alpha_1)(\alpha_0-\alpha_1)(\alpha_0+\alpha_1+2\alpha_2)/4\alpha_4^2,
\end{equation*}
which determines $a_{\infty,-2}, \,\,b_{\infty,-2}, \,\,d_{\infty,-2}.$ 
\end{proof}


\subsubsection{The case where $w$ has a pole at $t=\infty$}

\begin{proposition}
{\it
For $B_4^{(1)}(\alpha_j)_{0\leq j \leq 4},$ 
there exists no solution such that 
$w$ has a pole at $t=\infty$ and $x,y,z$ are holomorphic at $t=\infty.$
}
\end{proposition}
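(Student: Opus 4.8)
The plan is to derive a contradiction from the equations for $w$ and $z$ alone; the holomorphy of $x$ plays no role. Suppose such a solution exists and write $w=d_{\infty,n}t^{n}+\cdots$ with $n\ge 1$ and $d_{\infty,n}\neq 0$, while $x,y,z$ are holomorphic at $t=\infty$. First note that $z\not\equiv 0$: otherwise the third equation $tz'=2z^{2}w-z^{2}+(1-2\alpha_{4})z+2yz^{2}+t$ reduces to $0=t$. Thus we may set $\ell:=\mathrm{ord}_{t=\infty}z\le 0$ and let $c\neq 0$ denote the coefficient of $t^{\ell}$ in the Laurent expansion of $z$ at $t=\infty$.

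The first step is to bound $\ell$ by means of the fourth equation
\begin{equation*}
tw'=-2zw^{2}+2zw-(1-2\alpha_{4})w-2\alpha_{3}y-4yzw+\alpha_{3}.
\end{equation*}
Its left-hand side has degree $n$ with leading coefficient $nd_{\infty,n}\neq 0$, whereas on the right-hand side the term $-2zw^{2}$ has degree $\ell+2n$ and leading coefficient $-2cd_{\infty,n}^{2}\neq 0$, and every remaining term has degree at most $\max(n,\ell+n,0)$. If $\ell>-n$, then, since $n\ge 1$, the exponent $\ell+2n$ strictly exceeds each of $n$, $\ell+n$ and $0$, so $-2zw^{2}$ is the unique term of top degree in the whole equation; comparing the coefficients of $t^{\ell+2n}$ would then force $-2cd_{\infty,n}^{2}=0$, which is impossible. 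Hence $\ell\le -n$.

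The second step feeds this bound back into the third equation $tz'=2z^{2}w-z^{2}+(1-2\alpha_{4})z+2yz^{2}+t$. Since $z$ is holomorphic with $\ell\le -n\le -1$, the left-hand side $tz'$ has degree $\ell\le -1$; and because $y$ is holomorphic and $2\ell+n\le -n\le -1$, every term on the right-hand side except $+t$ has degree at most $-1$. Comparing the coefficients of $t^{1}$ then yields $0=1$, the desired contradiction.

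The only mildly delicate point is the degree bookkeeping in the first step: that the leading term of $-2zw^{2}$ cannot be cancelled by $2zw-4yzw$ (impossible, because $n\ge 1$ makes its degree strictly larger) and that $\ell+2n$ also beats the degrees of the constant terms $-2\alpha_{3}y+\alpha_{3}$ (which holds since $\ell>-n$ forces $\ell+2n>0$). Everything else is a routine comparison of leading coefficients.
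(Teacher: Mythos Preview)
Your proof is correct and follows essentially the same route the paper indicates (``in the same way as Proposition~\ref{prop:y-inf-sol}''): use the $w$-equation to force $z$ to vanish to order at least $n$ at $t=\infty$, then read off the contradiction $0=1$ from the coefficient of $t$ in the $z$-equation. Your degree-counting formulation is a clean way to package the coefficient comparisons implicit in the paper's Lemma~\ref{leminf(y,w)-1} analogue, and your observation that the holomorphy of $x$ is never used is accurate.
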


\begin{proof}
It can be proved in the same way as Proposition \ref{prop:y-inf-sol}. 

\end{proof}

\subsection{The case where two of $(x,y,z,w)$ have a pole at $t=\infty$}

In this subsection, 
we consider the following four cases:
\newline
(1)\quad $x,y$ have a pole at $t=\infty$ and $z,w$ are holomorphic at $t=\infty,$
\newline
(2)\quad $x,z$ have a pole at $t=\infty$ and $y,w$ are holomorphic at $t=\infty,$
\newline
(3)\quad $x,w$ have a pole at $t=\infty$ and $y,z$ are holomorphic at $t=\infty,$
\newline
(4)\quad $y,z$ have a pole at $t=\infty$ and $x,w$ are holomorphic at $t=\infty,$ 
\newline
(5)\quad $y,w$ have a pole at $t=\infty$ and $x,z$ are holomorphic at $t=\infty,$ 
\newline
(6)\quad $z,w$ have a pole at $t=\infty$ and $x,y$ are holomorphic at $t=\infty.$

\subsubsection{The case where $x,y$ have a pole at $t=\infty$}

\begin{proposition}
{\it
For $B_4^{(1)}(\alpha_j)_{0\leq j \leq 4},$ 
there exists no solution such that 
$x,y$ have a pole at $t=\infty$ and $z,w$ are holomorphic at $t=\infty.$
}
\end{proposition}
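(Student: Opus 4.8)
The plan is to argue by contradiction, and the key observation is that the second equation of the system,
$$
t y^{\prime}=-2xy^2+2xy-(1-2\alpha_2-2\alpha_3-2\alpha_4)y+\alpha_1,
$$
involves only $x$ and $y$, so the holomorphy of $z$ and $w$ plays no role whatsoever; what is really being shown is that $x$ and $y$ cannot \emph{simultaneously} have a pole at $t=\infty$, which is the relevant subcase here.

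Suppose such a solution exists. Then $x$ has a pole of some order $n_0\geq 1$ and $y$ a pole of some order $n_1\geq 1$ at $t=\infty$, so that $x=a_{\infty,n_0}t^{n_0}+\cdots$ and $y=b_{\infty,n_1}t^{n_1}+\cdots$ with $a_{\infty,n_0}b_{\infty,n_1}\neq 0$. I would substitute these Laurent expansions into the $y$-equation displayed above and compare the $t$-degrees of the terms. The left-hand side $ty'$ has degree $n_1$; on the right-hand side $-2xy^2$ has degree $n_0+2n_1$, the term $2xy$ has degree $n_0+n_1$, the term $-(1-2\alpha_2-2\alpha_3-2\alpha_4)y$ has degree $n_1$, and $\alpha_1$ has degree $0$. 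Since $n_0$ and $n_1$ are positive integers, $n_0+2n_1$ is strictly larger than each of $n_0+n_1$, $n_1$ and $0$, so the monomial $t^{n_0+2n_1}$ occurs on the right-hand side with coefficient $-2a_{\infty,n_0}b_{\infty,n_1}^2$ and is matched by nothing else in the equation.

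Comparing the coefficients of $t^{n_0+2n_1}$ therefore forces $-2a_{\infty,n_0}b_{\infty,n_1}^2=0$, contradicting $a_{\infty,n_0}b_{\infty,n_1}\neq 0$. Hence $B_4^{(1)}(\alpha_j)_{0\leq j\leq 4}$ admits no solution with $x,y$ having a pole at $t=\infty$ and $z,w$ holomorphic there. There is essentially no obstacle in this argument: it is a one-line top-order degree count, entirely parallel to the proof that $x$ alone cannot have a pole at $t=\infty$. The only point requiring a moment's care is the strictness of the inequality $n_0+2n_1>\max\{n_0+n_1,\,n_1,\,0\}$, which is immediate from $n_0,n_1\geq 1$; note in particular that no cancellation between $-2xy^2$ and $2xy$ can occur, since these terms have different degrees when $n_1\geq 1$.
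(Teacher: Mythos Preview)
Your proof is correct and takes exactly the same approach as the paper: compare the coefficient of $t^{n_0+2n_1}$ in the equation $ty'=-2xy^2+2xy-(1-2\alpha_2-2\alpha_3-2\alpha_4)y+\alpha_1$ to obtain $-2a_{\infty,n_0}b_{\infty,n_1}^2=0$, a contradiction. Your write-up is in fact more detailed than the paper's, which simply asserts that this coefficient comparison yields the contradiction.
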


\begin{proof}
Suppose that $B_4^{(1)}(\alpha_j)_{0\leq j \leq 4}$ has such a solution. We then note that $n_0, n_1\geq 1,$ $n_1, n_2\leq 0$ and $a_{\infty, n_0}, b_{\infty,n_1}\neq 0.$ 
\par
On the other hand, 
comparing the coefficients of the term $t^{n_0+2n_1}$ in 
$$
t y^{\prime}=
-2xy^2+2xy-(1-2\alpha_2-2\alpha_3-2\alpha_4)y+\alpha_1, 
$$
we have $0=-2a_{\infty,n_0}b_{\infty, n_1}^2,$ which is impossible. 
\end{proof}

\subsubsection{The case where $x,z$ have a pole at $t=\infty$}

\begin{proposition}
{\it
For $B_4^{(1)}(\alpha_j)_{0\leq j \leq 4},$ 
there exists no solution such that 
$x,z$ have a pole at $t=\infty$ and $y,w$ are holomorphic at $t=\infty.$
}
\end{proposition}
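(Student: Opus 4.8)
The plan is to substitute the Laurent expansions at $t=\infty$ into the four defining equations of $B_4^{(1)}(\alpha_j)_{0\leq j \leq 4}$ and to extract leading coefficients step by step until a contradiction appears, as in the preceding propositions. Suppose such a solution exists and write $x=a_{\infty,n_0}t^{n_0}+\cdots$, $z=c_{\infty,n_2}t^{n_2}+\cdots$ with $n_0,n_2\geq 1$ and $a_{\infty,n_0},c_{\infty,n_2}\neq 0$, and $y=b_{\infty,0}+b_{\infty,-1}t^{-1}+\cdots$, $w=d_{\infty,0}+d_{\infty,-1}t^{-1}+\cdots$ holomorphic at $t=\infty$. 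First I would use the second equation
\[
ty'=-2xy^2+2xy-(1-2\alpha_2-2\alpha_3-2\alpha_4)y+\alpha_1.
\]
Since $y$ is holomorphic at $t=\infty$, the left-hand side is $O(t^{-1})$, while on the right-hand side only $-2xy^2+2xy$ can have positive order, namely order $n_0$; comparing the coefficients of $t^{n_0}$ gives $b_{\infty,0}(b_{\infty,0}-1)=0$, so $b_{\infty,0}\in\{0,1\}$, and in particular $2b_{\infty,0}-1\neq 0$, i.e.\ $2y-1$ is a unit at $t=\infty$. (Only the pole of $x$ and the holomorphy of $y$ enter here.)

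Next I would rewrite the third equation $tz'=2z^2w-z^2+(1-2\alpha_4)z+2yz^2+t$ in the form
\[
2z^2w=tz'-z^2(2y-1)-(1-2\alpha_4)z-t.
\]
By the previous step the right-hand side has a pole of order exactly $2n_2$, the term $-z^2(2y-1)$ dominating $tz'$, which has order $n_2$; since $w$ is holomorphic at $t=\infty$, the left-hand side $2z^2w$ has order at most $2n_2$, hence exactly $2n_2$, which forces $d_{\infty,0}\neq 0$. Matching the coefficients of $t^{2n_2}$ then yields $2c_{\infty,n_2}^2 d_{\infty,0}=-c_{\infty,n_2}^2(2b_{\infty,0}-1)$, that is, $2d_{\infty,0}=1-2b_{\infty,0}$.

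Finally I would rewrite the fourth equation $tw'=-2zw^2+2zw-(1-2\alpha_4)w-2\alpha_3y-4yzw+\alpha_3$ as
\[
2zw(1-w-2y)=tw'+(1-2\alpha_4)w+2\alpha_3y-\alpha_3.
\]
Since $y$ and $w$ are holomorphic at $t=\infty$, the right-hand side is $O(1)$, whereas the left-hand side has order at most $n_2$ with $t^{n_2}$-coefficient $2c_{\infty,n_2}d_{\infty,0}(1-d_{\infty,0}-2b_{\infty,0})$; as $c_{\infty,n_2}d_{\infty,0}\neq 0$, this coefficient must vanish, so $d_{\infty,0}=1-2b_{\infty,0}$. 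Together with $2d_{\infty,0}=1-2b_{\infty,0}$ from the previous step this gives $1-2b_{\infty,0}=2(1-2b_{\infty,0})$, hence $b_{\infty,0}=1/2$, which contradicts $b_{\infty,0}\in\{0,1\}$ and proves the proposition.

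The only delicate point I anticipate is the pole-order bookkeeping in the step based on the third equation: the hypothesis that $w$ is holomorphic at $t=\infty$ is not enough by itself and must first be sharpened to $d_{\infty,0}\neq 0$, since it is precisely this nonvanishing that lets the fourth equation produce a second, incompatible value of $d_{\infty,0}$. Everything else is a routine comparison of leading coefficients, relying on the fact that the second equation already pins $b_{\infty,0}$ to $\{0,1\}$ and thereby keeps $z^2(2y-1)$ at its full pole order $2n_2$.
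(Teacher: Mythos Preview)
Your proof is correct and follows essentially the same coefficient-comparison strategy as the paper, only with the steps reordered: the paper first reads off $b_{\infty,0}+d_{\infty,0}=1/2$ and $d_{\infty,0}(1-d_{\infty,0}-2b_{\infty,0})=0$ from the $t^{2n_2}$ and $t^{n_2}$ coefficients of the third and fourth equations (which together force $b_{\infty,0}=1/2$, $d_{\infty,0}=0$), and only then invokes the second equation to get $b_{\infty,0}\in\{0,1\}$ for the contradiction. Your detour through $d_{\infty,0}\neq 0$ is unnecessary but harmless; the paper avoids it by keeping the factor $d_{\infty,0}$ in the fourth-equation relation and combining directly with $b_{\infty,0}+d_{\infty,0}=1/2$.
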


\begin{proof}
Suppose that $B_4^{(1)}(\alpha_j)_{0\leq j \leq 4}$ has such a solution. We then note that $n_0, n_2\geq 1,$ $n_1, n_3\leq 0,$ and 
$a_{\infty,n_0}, c_{\infty, n_2}\neq 0.$ 
\par
Comparing the coefficients of the term $t^{2n_2}$ and $t^{n_2}$ in 
\begin{equation*}
\begin{cases}
t z^{\prime}=
2z^2w-z^2+(1-2\alpha_4)z+2yz^2+t, \\
t w^{\prime}=
-2zw^2+2zw-(1-2\alpha_4)w-2\alpha_3y-4yzw+\alpha_3,
\end{cases}
\end{equation*}
we have $b_{\infty,0}+d_{\infty,0}=1/2$ and $-2d_{\infty,0}^2+2d_{\infty,0}-4b_{\infty,0}d_{\infty,0}=0,$ which implies that 
$b_{\infty,0}=1/2, \,d_{\infty,0}=0.$ 
\par
On the other hand, 
comparing the coefficients of the term $t^{n_0}$ in 
$$
t y^{\prime}=
-2xy^2+2xy-(1-2\alpha_2-2\alpha_3-2\alpha_4)y+\alpha_1, 
$$
we have $0=-2a_{\infty, n_0}b_{\infty, 0}^2+2a_{\infty,n_0}b_{\infty, 0},$ which implies that 
$b_{\infty,0}=0,1.$ This is impossible. 
\end{proof}

\subsubsection{The case where $x,w$ have a pole at $t=\infty$}

\begin{proposition}
{\it
For $B_4^{(1)}(\alpha_j)_{0\leq j \leq 4},$ 
there exists no solution such that 
$x,w$ have a pole at $t=\infty$ and $y,z$ are holomorphic at $t=\infty.$
}
\end{proposition}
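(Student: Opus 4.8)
The plan is to rule out such a solution by bookkeeping the orders of the monomials at $t=\infty$ in the system, using the inhomogeneous terms of the equations as leverage. Set up the Laurent series at $t=\infty$ with $x$ having a pole of order $n_0\ge 1$, $w$ a pole of order $n_3\ge 1$, and $y,z$ holomorphic, so that $n_1,n_2\le 0$ and $a_{\infty,n_0}d_{\infty,n_3}\ne 0$. First note that $z\not\equiv 0$, since otherwise the third equation $tz'=2z^2w-z^2+(1-2\alpha_4)z+2yz^2+t$ would read $0=t$; write $c_{\infty,n_2}t^{n_2}$, with $c_{\infty,n_2}\ne 0$ and $n_2\le 0$, for the leading term of $z$.

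The first step is to prove $n_2+n_3\le 0$ from the fourth equation
\[
t w' = -2zw^2 + 2zw - (1-2\alpha_4)w - 2\alpha_3 y - 4yzw + \alpha_3 .
\]
If $n_2+n_3>0$, then $-2zw^2$ has order $n_2+2n_3$, which strictly exceeds the orders of all the other terms on the right-hand side (namely $n_2+n_3$, $n_3$, or $\le 0$) and also the order $n_3$ of $tw'$; comparing the coefficients of $t^{n_2+2n_3}$ then forces $-2c_{\infty,n_2}d_{\infty,n_3}^2=0$, a contradiction. Hence $n_2+n_3\le 0$, so $2z^2w$ has order $2n_2+n_3\le n_2\le 0$ while the inhomogeneity $+t$ has order $1$, both strictly below $2n_0\ge 2$.

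The second step is the coefficient comparison already used for the case where only $x$ has a pole. In the first equation $tx'=2x^2y-x^2+(1-2\alpha_2-2\alpha_3-2\alpha_4)x+2\alpha_3z+2z^2w+t$, the only terms reaching order $2n_0$ are $-x^2$ and, when $n_1=0$, $2x^2y$; comparing the coefficients of $t^{2n_0}$ gives $-a_{\infty,n_0}^2=0$ when $n_1<0$, already absurd, and $2a_{\infty,n_0}^2b_{\infty,0}-a_{\infty,n_0}^2=0$, i.e. $b_{\infty,0}=1/2$, when $n_1=0$. In the latter case, comparing the coefficients of $t^{n_0}$ in the second equation $ty'=-2xy^2+2xy-(1-2\alpha_2-2\alpha_3-2\alpha_4)y+\alpha_1$ gives $-2a_{\infty,n_0}b_{\infty,0}^2+2a_{\infty,n_0}b_{\infty,0}=0$, so $b_{\infty,0}\in\{0,1\}$, contradicting $b_{\infty,0}=1/2$. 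Either way we reach a contradiction, so no such solution exists.

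The main obstacle, and the reason a single coefficient comparison does not suffice, is the extra monomial $2z^2w$ in the first equation: its order $2n_2+n_3$ can be large when $z$ has a nonzero constant term, so a priori it could compete with $-x^2$ at order $2n_0$. Bounding $n_2+n_3$ via the fourth equation in the first step is exactly what eliminates this competitor, after which the comparisons in the second step go through verbatim.
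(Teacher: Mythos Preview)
Your argument is correct. Both your first step and the paper's intended argument begin the same way: from the fourth equation one reads off that if $w$ has a pole of order $n_3$ and $y,z$ are holomorphic, then the leading order of $z$ satisfies $n_2\le -n_3$, i.e.\ $n_2+n_3\le 0$. The paper, following the analogy with Lemma~\ref{leminf(y,w)-1} and Proposition~\ref{prop:y-inf-sol}, then finishes on the $(z,w)$ side: with $z=O(t^{-n_3})$ every term in the third equation $tz'=2z^2w-z^2+(1-2\alpha_4)z+2yz^2+t$ has order $\le 0$ except the inhomogeneity $+t$, so the coefficient of $t$ gives $0=1$. In particular the paper's argument never touches $x$ at all, which is why it simultaneously handles the case ``$w$ alone has a pole'' (Section~1.2.4) and the present case. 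You instead finish on the $(x,y)$ side, using $n_2+n_3\le0$ only to ensure that $2z^2w$ does not interfere at order $2n_0$ in the first equation, and then recycling verbatim the coefficient comparison from the case where $x$ alone has a pole. Both routes are equally short; yours pairs this proposition with the ``only $x$'' case, the paper's pairs it with the ``only $w$'' case.
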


\begin{proof}
It can be proved in the same way as Proposition \ref{prop:y-inf-sol}. 

\end{proof}

\subsubsection{The case where $y,z$ have a pole at $t=\infty$}

\begin{proposition}
{\it
For $B_4^{(1)}(\alpha_j)_{0\leq j \leq 4},$ 
there exists no solution such that 
$y,z$ have a pole at $t=\infty$ and $x,w$ are holomorphic at $t=\infty.$
}
\end{proposition}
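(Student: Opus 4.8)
The plan is to derive a contradiction by a degree count at $t=\infty$ in the third equation of the system, $tz^{\prime}=2z^2w-z^2+(1-2\alpha_4)z+2yz^2+t$. First I would fix notation: assuming such a solution exists, $y$ has a pole of order $n_1\geq 1$ at $t=\infty$ and $z$ a pole of order $n_2\geq 1$, with leading coefficients $b_{\infty,n_1}\neq 0$ and $c_{\infty,n_2}\neq 0$, while $x$ and $w$ are holomorphic at $t=\infty$, so their Laurent expansions there carry only nonpositive powers of $t$; in particular the order of $w$ at $t=\infty$ is $\leq 0$ (with the convention that $w\equiv 0$ is permitted).

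The key step is to observe that the term $2yz^2$ has degree exactly $n_1+2n_2\ (\geq 3)$ at $t=\infty$, with nonzero leading coefficient $2b_{\infty,n_1}c_{\infty,n_2}^2$, and that this strictly exceeds the degree of every other term in the $z$-equation: $tz^{\prime}$ has degree $n_2$, $2z^2w$ has degree $\leq 2n_2$ (and vanishes identically if $w\equiv 0$), $-z^2$ has degree $2n_2$, $(1-2\alpha_4)z$ has degree $\leq n_2$, and $t$ has degree $1$. Because $n_1\geq 1$ one has $n_1+2n_2>2n_2$, $n_1+2n_2>n_2$, and $n_1+2n_2\geq 3>1$, so no term other than $2yz^2$ contributes to the coefficient of $t^{n_1+2n_2}$. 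Comparing the coefficients of $t^{n_1+2n_2}$ on the two sides then forces $0=2b_{\infty,n_1}c_{\infty,n_2}^2$, contradicting $b_{\infty,n_1}c_{\infty,n_2}\neq 0$, which proves the proposition.

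I do not anticipate a genuine obstacle here; the only point requiring care is the bookkeeping of the order of $w$ at $t=\infty$, together with the degenerate subcase $w\equiv 0$ (in which the term $2z^2w$ simply disappears and the argument becomes only easier). An alternative route, parallel to Proposition \ref{prop:y-inf-sol}, would be to invoke Lemma \ref{leminf(y,w)-1} first — its hypotheses require only that $x$ be holomorphic at $t=\infty$ — to obtain $2\alpha_3z+2z^2w+t=O(t^{-1})$ and then feed this relation back into the $z$-equation; but the direct degree count above is shorter and self-contained.
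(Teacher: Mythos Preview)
Your proposal is correct and follows essentially the same approach as the paper: both argue by comparing the coefficient of $t^{n_1+2n_2}$ in the $z$-equation to obtain $0=2b_{\infty,n_1}c_{\infty,n_2}^2$, a contradiction. Your write-up is simply more explicit about why the other terms have strictly lower degree.
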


\begin{proof}
Suppose that $B_4^{(1)}(\alpha_j)_{0\leq j \leq 4}$ has such a solution. 
We then note that $n_1, n_2\geq 1,$ $n_0, n_2\leq 0,$ and $b_{\infty, n_1}, c_{\infty, n_2}\neq 0.$ 
\par
Comparing the coefficients of the term $t^{n_1+2n_2}$ in
$$
t z^{\prime}=
2z^2w-z^2+(1-2\alpha_4)z+2yz^2+t,
$$
we have $0=2b_{\infty, n_1}c_{\infty, n_2}^2,$ which is impossible. 
\end{proof}

\subsubsection{The case where $y,w$ have a pole at $t=\infty$}

By Lemma \ref{leminf(y,w)-1}, 
we obtain the following lemma:

\begin{lemma}
\label{leminf(y,w)-2}
{\it
Suppose that 
for $B_4^{(1)}(\alpha_j)_{0\leq j \leq 4},$ 
there exists a solution such that 
$y,w$ both have a pole at $t=\infty$ and $x,z$ are both holomorphic at $t=\infty.$ 
$w$ then has a pole of order $n_3$ at $t=\infty,$ 
where $n_3$ is an odd number and $n_3\geq 3.$
}
\end{lemma}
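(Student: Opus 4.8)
The plan is to extract everything from the single identity furnished by Lemma \ref{leminf(y,w)-1} together with a pole‑order count in the $z$‑equation. Since the hypotheses of Lemma \ref{leminf(y,w)-1} are met ($x$ holomorphic at $t=\infty$, $y$ with a pole of order $n_1\ge 1$ there), that lemma gives $2\alpha_3z+2z^2w+t=O(t^{-1})$. Because $z$ is holomorphic at $t=\infty$ we have $2\alpha_3z=O(1)$, so this identity already forces $z\not\equiv 0$ (otherwise $t=O(t^{-1})$) and, more usefully,
\[
2z^2w=-t-2\alpha_3z+O(t^{-1}).
\]

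The first step is to show that $z$ vanishes at $t=\infty$, i.e.\ $c_{\infty,0}=0$. Suppose not. Substituting $2z^2w+t=-2\alpha_3z+O(t^{-1})$ into
\[
tz'=2z^2w-z^2+(1-2\alpha_4)z+2yz^2+t
\]
and moving $2yz^2$ to the left yields $tz'-2yz^2=-2\alpha_3z-z^2+(1-2\alpha_4)z+O(t^{-1})$. Every term on the right is $O(1)$ because $z$ is holomorphic, whereas on the left $tz'=O(t^{-1})$ while $2yz^2=2c_{\infty,0}^2y+\cdots$ has a pole of order $n_1\ge 1$ with nonzero leading coefficient $2c_{\infty,0}^2b_{\infty,n_1}$; this contradiction proves $c_{\infty,0}=0$.

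For the second step, since $z\not\equiv 0$ and $z(\infty)=0$ the function $z$ has a zero of some finite order $m\ge 1$ at $t=\infty$, so $z^2$ has a zero of order $2m$ and $z=O(t^{-1})$; hence the displayed identity becomes $2z^2w=-t+O(t^{-1})$, i.e.\ $z^2w$ has a pole of order exactly one at $t=\infty$. On the other hand, since $w$ has a pole of order $n_3$, the leading term of $z^2w$ is a nonzero multiple of $t^{\,n_3-2m}$, so $z^2w$ has a pole of order $n_3-2m$. Comparing the two descriptions, $n_3-2m=1$, so $n_3=2m+1$ is odd, and $m\ge 1$ forces $n_3\ge 3$.

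The only delicate point is the first step: one must be sure that the pole of $2yz^2$ in the rearranged $z$‑equation cannot be cancelled, and this is clear because, once $2z^2w+t$ has been replaced using the Lemma \ref{leminf(y,w)-1} identity, all the remaining right‑hand‑side terms are manifestly bounded near $t=\infty$. Everything else is routine coefficient bookkeeping.
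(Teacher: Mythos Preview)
Your proof is correct and follows essentially the same approach as the paper: both derive the conclusion from the identity $2\alpha_3 z+2z^2w+t=O(t^{-1})$ of Lemma~\ref{leminf(y,w)-1}. The paper states the lemma as an immediate consequence of Lemma~\ref{leminf(y,w)-1} without further detail; in the analogous situation at $t=0$ (Lemma~\ref{lem:t=0-(y,w)-2}) it proceeds by assuming $n_3$ is even and comparing coefficients case by case, whereas your argument is slightly more streamlined, first forcing $c_{\infty,0}=0$ via the $z$-equation and then reading off $n_3=2m+1$ directly from the pole order of $z^2w$.
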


By Lemma \ref{leminf(y,w)-2}, 
we can prove the following proposition:

\begin{proposition}
{\it
For $B_4^{(1)}(\alpha_j)_{0\leq j \leq 4},$ 
there exists no solution such that 
$y,w$ both have a pole at $t=\infty$ and $x,z$ are both holomorphic at $t=\infty.$
}
\end{proposition}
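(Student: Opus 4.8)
The plan is to argue by contradiction, feeding the conclusions of Lemmas \ref{leminf(y,w)-1} and \ref{leminf(y,w)-2} into the third and fourth equations of $B_4^{(1)}(\alpha_j)_{0\leq j \leq 4}$. Suppose such a solution exists, and let $n_1\geq 1$ and $n_3\geq 1$ denote the orders of the poles of $y$ and $w$ at $t=\infty$. Since $y$ has a pole and $x$ is holomorphic at $t=\infty$, Lemma \ref{leminf(y,w)-1} gives $2\alpha_3z+2z^2w+t=O(t^{-1})$; because $z$ is holomorphic at $t=\infty$, this forces $z^2w$ to have a pole of order exactly one at $t=\infty$, so in particular $z\not\equiv 0$. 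By Lemma \ref{leminf(y,w)-2}, $n_3$ is odd with $n_3\geq 3$; since $w$ has a pole of order $n_3$ while $z^2w$ has a pole of order one, $z$ must have a zero of order $m:=(n_3-1)/2\geq 1$ at $t=\infty$. Write $z=c_{\infty,-m}t^{-m}+\cdots$ and $w=d_{\infty,n_3}t^{n_3}+\cdots$ with $c_{\infty,-m}d_{\infty,n_3}\neq 0$. Because $z$ now vanishes at $t=\infty$, the term $2\alpha_3z$ is itself $O(t^{-1})$, hence in fact $2z^2w+t=O(t^{-1})$.

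Next I would bound $n_1$ from above using $tz'=2z^2w-z^2+(1-2\alpha_4)z+2yz^2+t$. Substituting $2z^2w+t=O(t^{-1})$ into it yields
\begin{equation*}
tz'=2yz^2-z^2+(1-2\alpha_4)z+O(t^{-1}).
\end{equation*}
Here $tz'$, $z^2$ and $(1-2\alpha_4)z$ are all $O(t^{-m})\subseteq O(t^{-1})$, so every term on the right-hand side apart from $2yz^2$ is $O(t^{-1})$, whence $2yz^2=O(t^{-1})$. But the leading term of $2yz^2$ is $2b_{\infty,n_1}c_{\infty,-m}^{2}t^{\,n_1-2m}$ with $b_{\infty,n_1}c_{\infty,-m}^{2}\neq 0$, so necessarily $n_1-2m\leq -1$, that is, $n_1\leq 2m-1$.

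Finally I would derive a contradiction from $tw'=-2zw^2+2zw-(1-2\alpha_4)w-2\alpha_3y-4yzw+\alpha_3$. With the leading orders now fixed, $-2zw^2$ has a pole at $t=\infty$ of order $(-m)+2n_3=3m+2$, with nonzero leading coefficient $-2c_{\infty,-m}d_{\infty,n_3}^{2}$, whereas $tw'$ has a pole of order $n_3=2m+1<3m+2$ and every remaining term on the right-hand side has order at most $\max\{m+1,\ 2m+1,\ n_1,\ n_1+m+1\}\leq 3m<3m+2$ (using $n_1\leq 2m-1$ and $m\geq 1$). Comparing the coefficients of $t^{3m+2}$ then gives $c_{\infty,-m}d_{\infty,n_3}^{2}=0$, contradicting $c_{\infty,-m}d_{\infty,n_3}\neq 0$; this contradiction proves the proposition.

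The crux I expect is the middle step. It is precisely the bound $n_1\leq 2m-1$ that makes $-2zw^2$ the unique top-order term of the $w$-equation in the last step, and obtaining that bound rests on first noticing — via Lemma \ref{leminf(y,w)-1} — that $z$ does not merely stay bounded but actually vanishes at $t=\infty$, which is what renders $2\alpha_3z$ negligible. The remaining work is only the routine bookkeeping of which Laurent coefficients are forced to vanish, together with the verification that no cancellation among the listed terms can occur at the decisive degrees.
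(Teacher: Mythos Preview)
Your proof is correct and follows essentially the same route as the paper's: use Lemma~\ref{leminf(y,w)-1} to force $z$ to vanish at $t=\infty$ to a precise order, bound $n_1$ via the $z$-equation, and then obtain a contradiction from the dominant $-2zw^2$ term in the $w$-equation. The only difference is packaging: the paper treats $n_3=3$ explicitly (with $m=1$, the weaker bound $n_1\le 2$, and a comparison at $t^5$) and asserts the higher odd cases are analogous, whereas you carry out the argument uniformly for all odd $n_3\ge 3$ and obtain the sharper bound $n_1\le 2m-1$.
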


\begin{proof}
Let us assume that $w$ has a pole of order $n_3=3$ at $t=\infty.$ 
If $n_3=5,7,9,\ldots,$ 
the proposition can be proved in the same way. 
\par
By Lemma \ref{leminf(y,w)-1}, 
we find that $2z^2w+t=O(t^{-1})$ and 
$c_{\infty,0}=0, \,2c_{\infty,-1}^2d_{\infty,3}+1=0,$ 
which implies that $c_{\infty,-1}\neq 0.$ 
By considering that 
$$
t z^{\prime}=
2z^2w-z^2+(1-2\alpha_4)z+2yz^2+t, 
$$
we then observe that 
$y$ has a pole of order $n_1=1,2$ at $t=\infty.$ 
Thus, 
comparing the coefficients of the term $t^5$ in 
$$
t w^{\prime}=
-2zw^2+2zw-(1-2\alpha_4)w-2\alpha_3y-4yzw+\alpha_3, 
$$
we see that 
$-2c_{\infty,-1}d_{\infty,3}^2=0,$ 
which is impossible.
\end{proof}

\subsubsection{The case where $z,w$ have a pole at $t=\infty$}

\begin{proposition}
{\it
For $B_4^{(1)}(\alpha_j)_{0\leq j \leq 4},$ 
there exists no solution such that 
$z,w$ both have a pole at $t=\infty$ and $x,y$ are both holomorphic at $t=\infty.$
}
\end{proposition}

\begin{proof}
$B_4^{(1)}(\alpha_j)_{0\leq j \leq 4}$ has such a solution. We then note that $n_2, n_3\geq 1,$ $n_0, n_1\leq 0,$ 
and $c_{\infty, n_0}, d_{\infty, n_3}\neq 0.$ 
\par
Comparing the coefficients of the term $t^{2n_2+n_3}$ in 
$$
t z^{\prime}=
2z^2w-z^2+(1-2\alpha_4)z+2yz^2+t, 
$$ 
we have $0=2c_{\infty, n_2}^2d_{\infty, n_3},$ which is impossible. 
\end{proof}

\subsection{The case where three of $(x,y,z,w)$ have a pole at $t=\infty$}

In this subsection, 
we consider the following four cases:
\newline
(1)\quad $x,y,z$ all have a pole at $t=\infty$ and $w$ is holomorphic at $t=\infty,$
\newline
(2)\quad $x,y,w$ all have a pole at $t=\infty$ and $z$ is holomorphic at $t=\infty,$
\newline
(3)\quad $x,z,w$ all have a pole at $t=\infty$ and $y$ is holomorphic at $t=\infty,$
\newline
(4)\quad $y,z,w$ all have a pole at $t=\infty$ and $x$ is holomorphic at $t=\infty.$

\subsubsection{The case where $x,y,z$ have a pole at $t=\infty$}

\begin{proposition}
{\it
For $B_4^{(1)}(\alpha_j)_{0\leq j \leq 4},$ 
there exists no solution such that 
$x,y,z$ all have a pole at $t=\infty$ and $w$ is holomorphic at $t=\infty.$
}
\end{proposition}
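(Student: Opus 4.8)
The plan is to reuse, essentially verbatim, the leading-order argument that ruled out the case where $x,y$ have a pole at $t=\infty.$ The key point is that the second equation
\[
t y^{\prime}=-2xy^2+2xy-(1-2\alpha_2-2\alpha_3-2\alpha_4)y+\alpha_1
\]
involves only $x$ and $y,$ not $z$ or $w.$ Hence the extra hypothesis that $z$ also has a pole at $t=\infty$ (while $w$ remains holomorphic) plays no role in this equation, and the same contradiction that appeared in the $x,y$-pole case should reappear here.

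Concretely, I would argue by contradiction. Assuming such a solution exists, I first record the orders: $n_0,n_1,n_2\geq 1,$ $n_3\leq 0,$ and $a_{\infty,n_0}b_{\infty,n_1}c_{\infty,n_2}\neq 0.$ I then examine the right-hand side of the $y$-equation. Since $n_0\geq 1$ and $n_1\geq 1,$ the term $-2xy^2$ has order $n_0+2n_1,$ which strictly exceeds the orders $n_0+n_1,$ $n_1,$ and $0$ of the remaining terms on the right-hand side, and also strictly exceeds the order $n_1$ of the left-hand side $t y^{\prime}.$ Therefore, comparing the coefficients of $t^{n_0+2n_1}$ gives $0=-2a_{\infty,n_0}b_{\infty,n_1}^2,$ which contradicts $a_{\infty,n_0}b_{\infty,n_1}\neq 0.$

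I do not expect any obstacle; the only point requiring a moment's care is confirming that no cancellation elsewhere can promote another term to order $t^{n_0+2n_1},$ which is immediate from $n_0,n_1\geq 1.$ In fact this one-line comparison in the $y$-equation settles, uniformly, every sub-case in which both $x$ and $y$ acquire a pole at $t=\infty,$ irrespective of the behaviour of $z$ and $w,$ so the present proposition is just another instance of that observation.
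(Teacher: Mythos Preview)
Your argument is correct and is precisely the approach the paper has in mind: the paper records only ``It can be easily checked,'' but the same leading-order comparison in the $y$-equation used earlier for the $(x,y)$-pole case goes through verbatim here, since that equation does not involve $z$ or $w$.
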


\begin{proof}
It can be easily checked.

\end{proof}

\subsubsection{The case where $x,y,w$ have a pole at $t=\infty$}

\begin{proposition}
{\it
For $B_4^{(1)}(\alpha_j)_{0\leq j \leq 4},$ 
there exists no solution such that 
$x,y,w$ all have a pole at $t=\infty$ and $z$ is holomorphic at $t=\infty.$
}
\end{proposition}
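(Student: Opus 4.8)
The plan is to observe that this case is controlled entirely by the second equation of $B_4^{(1)}(\alpha_j)_{0\leq j \leq 4}$, namely
\[
t y^{\prime}=-2xy^2+2xy-(1-2\alpha_2-2\alpha_3-2\alpha_4)y+\alpha_1,
\]
which contains neither $z$ nor $w$. Consequently the contradiction is obtained exactly as in the case where $x,y$ have a pole at $t=\infty$ and $z,w$ are holomorphic: the holomorphy of $z$ and the presence of a pole in $w$ play no role whatsoever in the relevant computation, so the same leading-order comparison applies verbatim.

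Concretely, suppose such a solution exists. Then $n_0,n_1,n_3\geq 1$, $n_2\leq 0$, and $a_{\infty,n_0},b_{\infty,n_1},d_{\infty,n_3}\neq 0$. In the $y$-equation the left-hand side $ty^{\prime}$ has degree $n_1$ in $t$ at $t=\infty$, while on the right-hand side the terms $-2xy^2$, $2xy$, $-(1-2\alpha_2-2\alpha_3-2\alpha_4)y$, $\alpha_1$ have degrees $n_0+2n_1$, $n_0+n_1$, $n_1$, $0$, respectively. Since $n_0,n_1\geq 1$, one has $n_0+2n_1>\max\{n_0+n_1,\,n_1,\,0\}$, so $t^{n_0+2n_1}$ is the unique top-degree term on the right; comparing its coefficient with the left-hand side (which contributes nothing in that degree) yields $0=-2a_{\infty,n_0}b_{\infty,n_1}^2$, which is impossible because $a_{\infty,n_0}b_{\infty,n_1}\neq 0$.

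I expect essentially no obstacle here; the only point that requires a moment's care is the strict degree inequality $n_0+2n_1>\max\{n_0+n_1,n_1,0\}$, which is immediate from $n_0\geq 1$ and $n_1\geq 1$. Accordingly, in the write-up I would simply remark that the proposition follows in the same way as the earlier proposition treating the case where $x,y$ have a pole at $t=\infty$.
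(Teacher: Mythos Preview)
Your proposal is correct and matches the paper's intent: the paper simply writes ``It can be easily checked,'' and your argument---comparing the $t^{n_0+2n_1}$ coefficient in the $y$-equation exactly as in the earlier $(x,y)$-pole case---is precisely the verification one would supply.
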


\begin{proof}
It can be easily checked.

\end{proof}

\subsubsection{The case where $x,z,w$ have a pole at $t=\infty$}

\begin{proposition}
{\it
For $B_4^{(1)}(\alpha_j)_{0\leq j \leq 4},$ 
there exists no solution such that 
$x,z,w$ all have a pole at $t=\infty$ and $y$ is holomorphic at $t=\infty.$
}
\end{proposition}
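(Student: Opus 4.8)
The plan is to suppose such a solution exists and derive a contradiction by looking at the leading-order behaviour of the four differential equations at $t=\infty$. Set $n_0,n_2,n_3\geq 1$ and $n_1\leq 0$, with $a_{\infty,n_0},c_{\infty,n_2},d_{\infty,n_3}\neq 0$. First I would extract the leading coefficient relations. From
\begin{equation*}
t x^{\prime}=2x^2y-x^2+(1-2\alpha_2-2\alpha_3-2\alpha_4)x+2\alpha_3z+2z^2w+t,
\end{equation*}
the dominant term on the right is either $2x^2y$ (order $2n_0+n_1$), $-x^2$ (order $2n_0$), or $2z^2w$ (order $2n_2+n_3$); the left side has order $n_0$. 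Since $n_1\le 0$, if $2n_0+n_1$ or $2n_0$ exceeded $n_0$ we would need a cancellation, forcing $b_{\infty,n_1}=1/2$ if $n_1=0$ (from $2a_{\infty,n_0}^2b_{\infty,0}-a_{\infty,n_0}^2=0$ at order $2n_0$), exactly as in the earlier "$x$ has a pole" proposition. I would first rule out $n_1=0$ by the same two-equation argument used there: comparing order $t^{n_0}$ in the $y$-equation
\begin{equation*}
t y^{\prime}=-2xy^2+2xy-(1-2\alpha_2-2\alpha_3-2\alpha_4)y+\alpha_1
\end{equation*}
gives $-2a_{\infty,n_0}b_{\infty,0}^2+2a_{\infty,n_0}b_{\infty,0}=0$, so $b_{\infty,0}\in\{0,1\}$, contradicting $b_{\infty,0}=1/2$. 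Hence $n_1<0$, i.e. $y$ is holomorphic and vanishing at $t=\infty$.

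With $y=O(t^{-1})$, the $z$- and $w$-equations decouple from $y$ at leading order, and I would look at
\begin{equation*}
t z^{\prime}=2z^2w-z^2+(1-2\alpha_4)z+2yz^2+t,\qquad
t w^{\prime}=-2zw^2+2zw-(1-2\alpha_4)w-2\alpha_3y-4yzw+\alpha_3.
\end{equation*}
The highest-order term in the $z$-equation is $2z^2w$ of order $2n_2+n_3\geq 3$, while the left side has order $n_2$ and the other terms have order at most $2n_2$; since $n_3\geq 1$ this term cannot be cancelled by anything, so the leading coefficient relation reads $2c_{\infty,n_2}^2d_{\infty,n_3}=0$, which is impossible. (If instead a cancellation between $2z^2w$ and $-z^2$ were forced, that would require $n_3=0$, contradicting $n_3\geq 1$.) This is essentially the same mechanism that killed the "$z,w$ have a pole" case earlier in the section, and I expect the argument to be short.

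The only subtlety — and the one place I would be careful — is the bookkeeping of which term actually dominates the $x$-equation when $n_1<0$: one must check that ruling out $n_1=0$ is genuinely necessary before invoking it, and that no degenerate balance (e.g. $2n_0+n_1=n_0$ with $n_1=-n_0$) reintroduces $b_{\infty,0}=1/2$ type constraints that interact with the $z,w$ analysis. But since the $z$-equation contradiction above does not use the $x$-equation at all once $y=O(t^{-1})$ is known, the real content reduces to: (i) $n_1=0$ is impossible (two-equation argument), hence $n_1<0$; (ii) then $2z^2w$ is the uncontested leading term of the $z$-equation and forces $c_{\infty,n_2}^2 d_{\infty,n_3}=0$, a contradiction. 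The main obstacle is therefore merely organising step (i) cleanly; step (ii) is immediate.
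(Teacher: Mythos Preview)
Your step (ii) is correct and is exactly the paper's approach: comparing the coefficient of $t^{2n_2+n_3}$ in the $z$-equation immediately yields $2c_{\infty,n_2}^2 d_{\infty,n_3}=0$, a contradiction.

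Step (i), however, is unnecessary. The $z$-equation argument requires only that $y$ be holomorphic at $t=\infty$ (i.e.\ $n_1\leq 0$), not that $y=O(t^{-1})$: the term $2yz^2$ has order at most $2n_2$ regardless of whether $b_{\infty,0}$ vanishes, and $2n_2<2n_2+n_3$ since $n_3\geq 1$, so $2z^2w$ is already the uncontested top-order term. You can therefore delete step (i) entirely --- which is just as well, since extracting $b_{\infty,0}=1/2$ from the $x$-equation at order $t^{2n_0}$ tacitly assumes that $2z^2w$ does not also contribute at that order, a case ($2n_2+n_3=2n_0$) you have not addressed.
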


\begin{proof}
It can be easily checked.

\end{proof}

\subsubsection{The case where $y,z,w$ have a pole at $t=\infty$}

\begin{proposition}
{\it
For $B_4^{(1)}(\alpha_j)_{0\leq j \leq 4},$ 
there exists no solution such that 
$y,z,w$ all have a pole at $t=\infty$ and $x$ is holomorphic at $t=\infty.$
}
\end{proposition}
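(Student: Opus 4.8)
The plan is to reduce this to Lemma \ref{leminf(y,w)-1}, exactly as in the case where $y,w$ both have a pole at $t=\infty$. In the present situation $y$ has a pole of some order $n_1\geq 1$ at $t=\infty$ while $x$ is holomorphic there, so that lemma applies and yields
\[
2\alpha_3 z+2z^2w+t
=tx^{\prime}-\left\{2x^2y-x^2+(1-2\alpha_2-2\alpha_3-2\alpha_4)x\right\}
=O(t^{-1}).
\]
It therefore suffices to contradict this identity using the pole assumptions on $z$ and $w$.

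First I would set $z=c_{\infty,n_2}t^{n_2}+\cdots$ and $w=d_{\infty,n_3}t^{n_3}+\cdots$ with $n_2,n_3\geq 1$ and $c_{\infty,n_2}d_{\infty,n_3}\neq 0$. Then $2z^2w$ has a pole at $t=\infty$ of order $2n_2+n_3\geq 3$ with leading coefficient $2c_{\infty,n_2}^2d_{\infty,n_3}\neq 0$, while $2\alpha_3 z$ has order at most $n_2$ and $t$ has order $1$, both strictly smaller than $2n_2+n_3$. Hence $2\alpha_3 z+2z^2w+t$ has a genuine pole of order $2n_2+n_3\geq 3$ at $t=\infty$, which is incompatible with its being $O(t^{-1})$. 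This rules out such a solution and proves the proposition.

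The argument has essentially no obstacle: the only point needing care is that the orders $2n_2+n_3$, $n_2$ and $1$ of the three summands are strictly separated, so that the leading term of $2z^2w$ cannot be cancelled; this is immediate from $n_2,n_3\geq 1$. One could equally avoid the lemma and run the same order count directly in the $x$-equation, using that $x$ holomorphic at $t=\infty$ forces $tx^{\prime}=O(t^{-1})$ together with the vanishing $x=O(t^{-n_1})$ that the proof of Lemma \ref{leminf(y,w)-1} establishes; either way the dominant term $2z^2w$ has no partner.
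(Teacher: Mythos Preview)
Your argument is correct and in the spirit of the paper's methods: the paper itself records only ``It can be easily checked'' for this case, but your reduction via Lemma~\ref{leminf(y,w)-1} is exactly how the paper handles the analogous $(y,w)$ case, and here the order count is even cleaner because the leading term $2c_{\infty,n_2}^{2}d_{\infty,n_3}\,t^{\,2n_2+n_3}$ of $2z^{2}w$ has strictly higher order than both $2\alpha_3 z$ and $t$ and so cannot be cancelled.
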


\begin{proof}
It can be easily checked.

\end{proof}

\subsection{The case where all of $(x,y,z,w)$ have a pole at $t=\infty$}

\begin{proposition}
{\it
For $B_4^{(1)}(\alpha_j)_{0\leq j \leq 4},$ 
there exists no solution such that 
all of $(x,y,z,w)$ have a pole at $t=\infty.$
}
\end{proposition}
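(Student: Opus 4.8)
The plan is to reduce this immediately to the two-variable obstruction already isolated for the case where $x,y$ have a pole at $t=\infty$. The key observation is that the second equation of $B_4^{(1)}(\alpha_j)_{0\leq j\leq 4}$,
\[
t y^{\prime}=-2xy^2+2xy-(1-2\alpha_2-2\alpha_3-2\alpha_4)y+\alpha_1,
\]
involves only $x$ and $y$: the variables $z$ and $w$ do not enter it at all. Hence any hypothetical solution in which all of $x,y,z,w$ have a pole at $t=\infty$ is, in particular, a solution in which both $x$ and $y$ have a pole at $t=\infty$, and the degree count in the $y$-equation is completely insensitive to whatever $z$ and $w$ are doing.

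Concretely, suppose such a solution exists, and write $x=a_{\infty,n_0}t^{n_0}+\cdots$ and $y=b_{\infty,n_1}t^{n_1}+\cdots$ with $n_0,n_1\geq 1$ and $a_{\infty,n_0}b_{\infty,n_1}\neq 0$. In the $y$-equation the monomial $-2xy^2$ has degree $n_0+2n_1$, while $2xy$ has degree $n_0+n_1$, the term $y$ and the left-hand side $t y^{\prime}$ have degree $n_1$, and $\alpha_1$ has degree $0$; since $n_0\geq 1$ we get $n_0+2n_1>n_0+n_1>n_1$, so $t^{n_0+2n_1}$ is matched by no other term. Comparing the coefficients of $t^{n_0+2n_1}$ then gives $0=-2a_{\infty,n_0}b_{\infty,n_1}^2$, which contradicts $a_{\infty,n_0}b_{\infty,n_1}\neq 0$.

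I do not expect any real obstacle here: the argument is a one-line reduction, and the only thing worth checking is that the leading degree $n_0+2n_1$ in the $y$-equation is genuinely strictly larger than the degree of every other term, which is immediate from $n_0,n_1\geq 1$. No new case analysis involving the $z$- and $w$-equations is needed, so the proof can simply be stated as "It can be proved in the same way as the case where $x,y$ have a pole at $t=\infty$," or spelled out in the two lines above.
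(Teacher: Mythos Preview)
Your argument is correct and is exactly the approach the paper implicitly uses: the paper's own proof reads simply ``It can be easily checked,'' and the intended check is precisely the one you wrote out, namely the $t^{n_0+2n_1}$ coefficient in the $y$-equation, which is the same obstruction already used for the case where $x,y$ alone have a pole at $t=\infty$. Your observation that the $y$-equation involves only $x$ and $y$, so the behavior of $z,w$ is irrelevant, is the whole point.
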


\begin{proof}
It can be easily checked.

\end{proof}

\subsection{Summary}

Let us summarize the results in this section.

\begin{proposition}
\label{prop:inf-summary}
{\it
Suppose that 
for $B_4^{(1)}(\alpha_j)_{0\leq j \leq 4},$ 
there exists a meromorphic solution at $t=\infty.$ 
$z$ then has a pole of order $n \,\,(n\geq 1)$ at $t=\infty$ 
and 
all of $x,y,w$ are holomorphic at $t=\infty.$ 
\newline
{\rm (1)}\quad If $n\geq 2,$ then 
\begin{equation*}
\begin{cases}
x=\displaystyle(\alpha_0-\alpha_1)-\frac{\{(n-1)+2\alpha_2+2\alpha_3+2\alpha_4\}\{(n-1)+2\alpha_3+2\alpha_4\}}{c_{\infty,n}} t^{-n}+\cdots, \\
y=\displaystyle \frac12+\frac{(n-1)+2\alpha_3+2\alpha_4}{2c_{\infty,n}} t^{-n}+\cdots, \\
z=c_{\infty,n}t^n+c_{\infty,n-1}t^{n-1}+\cdots, \\
w=\displaystyle -\frac{\alpha_3}{c_{\infty,n}}t^{-n}+\cdots,
\end{cases}
\end{equation*}
and 
either of the following occurs: 
{\rm (i)}\quad $\alpha_4=0,$ \,\,{\rm (ii)}\quad $\alpha_0=\alpha_1=\alpha_2=\alpha_3=0, \,\alpha_4=1/2.$ 
\newline
{\rm (2)}\quad 
 If $n=1,$ then $\alpha_4\neq 0$ and $x,y,z,w$ are uniquely expanded as follows: 
\begin{equation*}
\begin{cases}
x=\displaystyle(\alpha_0-\alpha_1)-2\alpha_4(2\alpha_2+2\alpha_3+2\alpha_4)(2\alpha_3+2\alpha_4)t^{-1}\cdots, \\
y=\displaystyle \frac12 +2\alpha_4(\alpha_3+\alpha_4)t^{-1}+\cdots, \\
z=\displaystyle \frac{1}{2\alpha_4} t+\cdots, \\
w=\displaystyle 
-2\alpha_4(\alpha_3+\alpha_4)
t^{-1}+\cdots.
\end{cases}
\end{equation*}
}
\end{proposition}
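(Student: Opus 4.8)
The plan is simply to assemble Proposition~\ref{prop:inf-summary} from the case analysis carried out throughout this section. A meromorphic solution of $B_4^{(1)}(\alpha_j)_{0\leq j\leq 4}$ near $t=\infty$ falls into exactly one of sixteen mutually exclusive cases, according to which subset $S\subseteq\{x,y,z,w\}$ consists of those functions having a pole at $t=\infty$. First I would rule out $S=\emptyset$ by the proposition stating that there is no solution with $x,y,z,w$ all holomorphic at $t=\infty$. Next, among the four cases $|S|=1$: the pole-in-$x$ case and the pole-in-$w$ case are excluded by the corresponding propositions; the pole-in-$y$ case is excluded by Lemma~\ref{leminf(y,w)-1} together with Proposition~\ref{prop:y-inf-sol}; only the pole-in-$z$ case, i.e.\ $S=\{z\}$, survives, and there $z$ has a pole of some order $n\geq 1$. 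Finally, the propositions covering $|S|=2$, $|S|=3$ and $|S|=4$ eliminate every remaining configuration. Hence the only possibility is that $z$ has a pole of order $n\geq 1$ at $t=\infty$ while $x,y,w$ are holomorphic there, which is the first assertion.

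It remains to record the refined data in the two sub-cases. For $n\geq 2$, the displayed leading Laurent coefficients of $x,y,z,w$ are precisely those furnished by Proposition~\ref{prop:z-inf}(1), and the dichotomy ``$\alpha_4=0$'' versus ``$\alpha_0=\alpha_1=\alpha_2=\alpha_3=0,\ \alpha_4=1/2$'' is exactly Corollary~\ref{coro:order-n}. For $n=1$, Proposition~\ref{prop:z-inf}(2) gives $\alpha_4\neq 0$ and the first two Laurent coefficients of each of $x,y,z,w$ (in particular $a_{\infty,0}=\alpha_0-\alpha_1$, $b_{\infty,0}=1/2$, $c_{\infty,1}=1/(2\alpha_4)$ and the $t^{-1}$-coefficients $a_{\infty,-1},b_{\infty,-1},d_{\infty,-1}$); Proposition~\ref{prop:uniqueness} then shows that $c_{\infty,0}$ and all further coefficients $a_{\infty,-k},b_{\infty,-k},c_{\infty,-(k-2)},d_{\infty,-k}$ for $k\geq 2$ are uniquely determined by recursion from the defining system. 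Together these yield the claimed unique expansion, completing the proof.

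Since every component is already in hand, there is no real obstacle here; the statement is essentially a bookkeeping corollary. The only points deserving attention are verifying that the sixteen pole-configurations are genuinely exhaustive and pairwise disjoint, and checking that in the surviving configuration the hypotheses ``$x,y,w$ holomorphic at $t=\infty$'' of Proposition~\ref{prop:z-inf}, Corollary~\ref{coro:order-n} and Proposition~\ref{prop:uniqueness} are met, so that those results apply verbatim.
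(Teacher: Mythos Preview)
Your proposal is correct and mirrors the paper's approach exactly: the proposition is stated in the paper as a summary of the section with no separate argument, and your write-up simply makes explicit the case-by-case bookkeeping (the sixteen pole configurations, reduced by the preceding propositions to the single surviving case $S=\{z\}$, then refined via Proposition~\ref{prop:z-inf}, Corollary~\ref{coro:order-n}, and Proposition~\ref{prop:uniqueness}).
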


By the uniqueness in case (2), 
we can determine the rational solutions of $B_4^{(1)}(\alpha\j)_{0\leq j \leq 4}$ if $\alpha_0-\alpha_1=0, \,\,\alpha_3+\alpha_4=0$ and $\alpha_4\neq 0.$

\begin{corollary}
\label{coro:solution-1}
{\it
If $\alpha_0-\alpha_1=0, \,\,\alpha_3+\alpha_4=0$ and $\alpha_4\neq 0,$ 
then 
for $B_4^{(1)}(\alpha_j)_{0\leq j \leq 4},$ 
there exists a rational solution such that 
$$
x\equiv 0, \,\,y\equiv 1/2, \,\,z=1/(2\alpha_4)\cdot t, \,\,w\equiv 0,
$$
and it is unique. 
}
\end{corollary}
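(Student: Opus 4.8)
The plan is to verify directly that the proposed tuple $(x,y,z,w)=(0,\,1/2,\,t/(2\alpha_4),\,0)$ solves the system $B_4^{(1)}(\alpha_j)_{0\leq j\leq 4}$ under the stated constraints $\alpha_0-\alpha_1=0$, $\alpha_3+\alpha_4=0$, $\alpha_4\neq 0$, and then to invoke Proposition \ref{prop:uniqueness} for uniqueness. First I would record what the parameter constraints give us: from $\alpha_0+\alpha_1+2\alpha_2+2\alpha_3+2\alpha_4=1$ together with $\alpha_3=-\alpha_4$ we get $\alpha_0+\alpha_1+2\alpha_2=1$, and combined with $\alpha_0=\alpha_1$ this yields $2\alpha_1+2\alpha_2=1$, i.e. $1-2\alpha_2-2\alpha_3-2\alpha_4=1-2\alpha_2=2\alpha_1=\alpha_0+\alpha_1$. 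Also $1-2\alpha_4 = 1+2\alpha_3$, and $\alpha_1=\alpha_0$. These identities are what make the candidate work.

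Next I would substitute into each of the four ODEs in turn. For the first equation, $tx'=0$ on the left, and on the right $2x^2y-x^2+(1-2\alpha_2-2\alpha_3-2\alpha_4)x+2\alpha_3z+2z^2w+t$ collapses: the $x$-terms vanish since $x\equiv 0$, the $w$-term vanishes since $w\equiv 0$, leaving $2\alpha_3 z + t = 2\alpha_3\cdot t/(2\alpha_4) + t = (-\alpha_4/\alpha_4+1)\,t\cdot\text{(sign bookkeeping)}$; one checks $2\alpha_3/(2\alpha_4) = \alpha_3/\alpha_4 = -1$, so $2\alpha_3 z + t = -t + t = 0$, matching the left side. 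For the second equation $ty'=0$ and the right side is $-2xy^2+2xy-(1-2\alpha_2-2\alpha_3-2\alpha_4)y+\alpha_1 = -\,(\alpha_0+\alpha_1)/2 + \alpha_1$; using $\alpha_0=\alpha_1$ this is $-\alpha_1+\alpha_1=0$, as required. For the third equation, $tz' = t\cdot(1/(2\alpha_4))\cdot t'\cdot t /t$— more carefully, $z=t/(2\alpha_4)$ gives $z'=1/(2\alpha_4)$ so $tz'=t/(2\alpha_4)=z$, while the right side is $2z^2w - z^2 + (1-2\alpha_4)z + 2yz^2 + t = -z^2 + (1-2\alpha_4)z + z^2 + t = (1-2\alpha_4)z + t$ (the $w$-term drops, and $2yz^2 = z^2$); so I need $z = (1-2\alpha_4)z + t$, i.e. $2\alpha_4 z = t$, which is exactly $z=t/(2\alpha_4)$. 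For the fourth equation $tw'=0$ and the right side is $-2zw^2+2zw-(1-2\alpha_4)w-2\alpha_3 y - 4yzw+\alpha_3 = -2\alpha_3\cdot(1/2) + \alpha_3 = -\alpha_3+\alpha_3 = 0$. All four check out, so the tuple is a genuine rational solution.

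Finally, for uniqueness: the solution just exhibited has $z$ with a pole of order one at $t=\infty$ (indeed $z=t/(2\alpha_4)$) and $x,y,w$ holomorphic there, and since $\alpha_4\neq 0$ we are in case (2) of Proposition \ref{prop:z-inf} / Proposition \ref{prop:inf-summary}; Proposition \ref{prop:uniqueness} then says any solution with $z$ having a pole of order one at $t=\infty$ and $x,y,w$ holomorphic there is unique, so this is the only rational solution with these constraints. I expect no real obstacle here — the whole argument is a direct substitution made to work by the parameter identities, plus a citation. The only thing to be careful about is the sign bookkeeping in $2\alpha_3/(2\alpha_4)=-1$ and making sure the $2yz^2$ term is correctly identified as $z^2$ when $y\equiv 1/2$; those are the spots where an arithmetic slip would be easy.
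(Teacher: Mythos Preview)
Your approach is essentially the same as the paper's, which simply says ``the corollary follows from the direct calculation and Proposition \ref{prop:inf-summary}.'' Your verification of the four equations is correct.

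There is, however, a small imprecision in your uniqueness argument. You observe that the \emph{exhibited} solution has $z$ with a pole of order one at $t=\infty$ and then invoke Proposition \ref{prop:uniqueness}. But to conclude that it is the \emph{only} rational solution, you must argue that \emph{every} rational solution of $B_4^{(1)}(\alpha_j)_{0\leq j\leq 4}$ under these parameter constraints falls into case (2) of Proposition \ref{prop:inf-summary}. By that proposition, any meromorphic solution at $t=\infty$ has $z$ with a pole of some order $n\geq 1$; case (1) ($n\geq 2$) requires either $\alpha_4=0$ or $(\alpha_0,\alpha_1,\alpha_2,\alpha_3,\alpha_4)=(0,0,0,0,1/2)$. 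You rule out the first with $\alpha_4\neq 0$, but you should also note that $\alpha_3+\alpha_4=0$ with $\alpha_4\neq 0$ forces $\alpha_3\neq 0$, ruling out the second. Once both sub-cases of (1) are excluded, every rational solution is in case (2), and Proposition \ref{prop:uniqueness} finishes the job.
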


\begin{proof}
The corollary follows from the direct calculation and Proposition \ref{prop:inf-summary}.

\end{proof}

\subsubsection*{Remark}
Proposition \ref{prop:inf-summary} shows that 
if $z$ has a pole of order $n \,(n\geq 2)$ at $t=\infty,$ 
the meromorphic solutions at $t=\infty$ cannot be uniquely determined.

\section{Meromorphic solutions at $t=0$}

In this section, 
we treat the meromorphic solutions near $t=0.$ 
We then obtain the following proposition 
in the same way as Proposition \ref{prop:inf-summary}.

\begin{proposition}
\label{prop:t=0behavior}
{\it
Suppose that 
for $B_4^{(1)}(\alpha_j)_{0\leq j \leq 4},$ 
there exists a meromorphic solution at $t=0.$ 
One of the following then occurs:
\newline
{\rm (1)}\quad 
$x,y,z,w$ are all holomorphic at $t=0,$
\newline
{\rm (2)}\quad 
$z$ has a pole of order $n \,\,(n\geq 1)$ at $t=0$ 
and $x,y,w$ are all holomorphic at $t=0,$ 
\newline
{\rm (3)}\quad 
$y,w$ both have a pole at $t=0$ and 
$x,z$ are both holomorphic at $t=0.$
}
\end{proposition}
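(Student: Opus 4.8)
The strategy mirrors the analysis of $t=\infty$ carried out in Section 1: exhaust all $2^4 = 16$ possible pole/holomorphy patterns for $(x,y,z,w)$ at $t=0$, and eliminate all but the three patterns listed in the statement. The key structural difference from the $t=\infty$ case is that the inhomogeneous term $+t$ in the $x$- and $z$-equations now vanishes to first order at $t=0$ rather than being dominant, while the Euler operator $t\,d/dt$ annihilates constants; this changes which terms are leading but not the overall bookkeeping. So the plan is: rewrite each of the four equations as $t\cdot(\text{derivative}) = (\text{polynomial in }x,y,z,w\text{ and }t)$, substitute Laurent expansions $x = \sum a_{0,k} t^k$, etc., with lowest exponents $n_0,n_1,n_2,n_3$, and compare the coefficients of the lowest-order terms on each side, noting that $t\cdot t^{k} = t^{k+1}$ contributes nothing to order $t^{k}$, i.e. the left-hand side of each equation has order strictly greater than the order of its holomorphic part.

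First I would dispose of the patterns where $x$ has a pole: examining $t x' = 2x^2 y - x^2 + (1-2\alpha_2-2\alpha_3-2\alpha_4)x + 2\alpha_3 z + 2z^2 w + t$, the term $2x^2 y - x^2 = x^2(2y-1)$ forces either $y$ to have a pole or $b_{0,0} = 1/2$ at leading order, and then the $y$-equation $t y' = -2xy^2 + 2xy - (\cdots)y + \alpha_1$, whose leading behavior gives $-2a_{0,n_0}b_{0,0}(b_{0,0}-1) = 0$, i.e. $b_{0,0} \in \{0,1\}$, producing the same contradiction $1/2 \in \{0,1\}$ as in the proofs of the propositions in Subsections 1.2.1 and 1.3.2. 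This kills every pattern containing a pole of $x$ (alone, or with $y$, $z$, $w$, or combinations), exactly as at $t=\infty$. Next, the analog of Lemma \ref{leminf(y,w)-1} — if $y$ has a pole and $x$ is holomorphic, the $y$-equation forces the leading coefficients of $x$ to vanish down through order $t^{-(n_1-1)}$ — shows that whenever $y$ has a pole, $z$ must also have a pole (otherwise $2\alpha_3 z + 2z^2 w + t = O(t)$ cannot balance), and the parity argument analogous to Lemma \ref{leminf(y,w)-2} forces $w$ to have a pole too; the patterns "$y$ alone", "$y,z$", "$y,w$" with the wrong companion set, "$y,z,w$ but not matching", etc., then collapse, leaving only "$y,w$ have a pole, $x,z$ holomorphic" as a surviving candidate (case (3)), and "$z$ has a pole, $x,y,w$ holomorphic" (case (2)). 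Finally the all-holomorphic pattern is case (1), and one checks directly using the $w$-equation that patterns with $w$ alone, or $z,w$ together, or $z$ with $y$ but not $w$, lead to contradictions by comparing the lowest-order coefficients in $t w' = -2zw^2 + 2zw - (1-2\alpha_4)w - 2\alpha_3 y - 4yzw + \alpha_3$ and $t z' = 2z^2 w - z^2 + (1-2\alpha_4)z + 2yz^2 + t$.

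The main obstacle I anticipate is not any single elimination but keeping the case bookkeeping honest: several of the sixteen patterns require a genuine two-equation analysis (the pair of equations in $z,w$, or in $y,w$, coupled through $2yz\cdot(\text{terms})$ in the Hamiltonian), and for the surviving cases (2) and (3) one must confirm — as opposed to merely failing to contradict — that a consistent Laurent expansion actually exists, with the leading-order relations solvable; this is where one uses that the indicial-type equations for the recursion coefficients are nondegenerate. Since the proposition only asserts that these three patterns are the *only possibilities* (it does not claim all three are realized for given parameters), it suffices to rule out the other thirteen, so the existence question for (2) and (3) can be deferred to later sections; the write-up can therefore be compressed to "in the same way as Proposition \ref{prop:inf-summary}", flagging only the two points where the $+t$ inhomogeneity changes the leading balance compared to $t=\infty$.
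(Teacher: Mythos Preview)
Your overall approach---exhaust the sixteen pole/holomorphy patterns by mirroring the $t=\infty$ analysis---is exactly what the paper does (its proof is literally ``in the same way as Proposition \ref{prop:inf-summary}''), and your treatment of the patterns with a pole of $x$ is correct.

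However, your handling of case~(3) contains a real error. You claim the analog of Lemma~\ref{leminf(y,w)-1} ``shows that whenever $y$ has a pole, $z$ must also have a pole (otherwise $2\alpha_3 z + 2z^2 w + t = O(t)$ cannot balance)''. This is false at $t=0$, and it is precisely the point where the $t=0$ analysis diverges from $t=\infty$. At $t=\infty$ the constraint from Lemma~\ref{leminf(y,w)-1} is $2\alpha_3 z + 2z^2 w + t = O(t^{-1})$: the $+t$ term must be canceled by something that grows, forcing a pole in $z$ or $w$. At $t=0$ the analog (Lemma~\ref{lem:t=0-(y,w)-1}) gives $2\alpha_3 z + 2z^2 w + t = O(t^2)$; since $t$ is itself $O(t)$, this only imposes two scalar conditions on the low-order coefficients. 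If $z$ is holomorphic with $c_{0,0}=0$ and $w$ has a simple pole, then $z^2 w = O(t)$ and the resulting equation $2\alpha_3 c_{0,1} + 2c_{0,1}^2 d_{0,-1} + 1 = 0$ is solvable---this is exactly the mechanism behind Proposition~\ref{prop:t=0-(y,w)}. Your argument is also internally inconsistent: you assert $y$-pole forces $z$-pole, yet conclude that the surviving pattern has $z$ holomorphic.

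To repair this, drop the ``$z$ must have a pole'' claim and instead run the direct order/parity analysis of Lemmas~\ref{lem:t=0-(y,w)-2}--\ref{lem:t=0-(y,w)-4}: starting from $2\alpha_3 z + 2z^2 w + t = O(t^2)$ with $z$ holomorphic, one shows $c_{0,0}=0$, that $n_3$ must be odd, that $n_1\le n_3$, and finally that $n_1=n_3=1$. This is the one place where the $t=0$ case genuinely requires new work beyond ``same as $t=\infty$''.
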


\subsection{The case where $x,y,z,w$ are all holomorphic at $t=0$}

\begin{proposition}
\label{prop:a0-determine}
{\it
Suppose that 
for $B_4^{(1)}(\alpha_j)_{0\leq j \leq 4},$ 
there exists a solution 
such that 
$x,y,z,w$ are all holomorphic at $t=0.$ 
Then, $a_{0,0}=0,\alpha_0-\alpha_1.$

}
\end{proposition}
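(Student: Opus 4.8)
The plan is to substitute the Taylor expansions of $x,y,z,w$ at $t=0$ into the system and to extract the constant terms of each of the four equations, obtaining a system of algebraic equations for the leading coefficients $a_{0,0},b_{0,0},c_{0,0},d_{0,0}$. Writing $x=a_{0,0}+a_{0,1}t+\cdots$, $y=b_{0,0}+b_{0,1}t+\cdots$, $z=c_{0,0}+\cdots$, $w=d_{0,0}+\cdots$, the left-hand side $t x'$ of the first equation has vanishing constant term, and likewise for the other three; so the constant terms of the right-hand sides must all vanish. From the second equation, $t y'=-2xy^2+2xy-(1-2\alpha_2-2\alpha_3-2\alpha_4)y+\alpha_1$, the constant term gives
\begin{equation*}
0=-2a_{0,0}b_{0,0}^2+2a_{0,0}b_{0,0}-(1-2\alpha_2-2\alpha_3-2\alpha_4)b_{0,0}+\alpha_1,
\end{equation*}
and from the fourth equation, $t w'=-2zw^2+2zw-(1-2\alpha_4)w-2\alpha_3 y-4yzw+\alpha_3$, the constant term gives a companion relation among $b_{0,0},c_{0,0},d_{0,0}$. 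The first and third equations similarly give two more relations. The strategy is to solve this finite system and show that the only possible values of $a_{0,0}$ are $0$ and $\alpha_0-\alpha_1$ (recall $\alpha_0+\alpha_1+2\alpha_2+2\alpha_3+2\alpha_4=1$, so $1-2\alpha_2-2\alpha_3-2\alpha_4=\alpha_0+\alpha_1$).

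First I would handle the first equation: its constant term reads
\begin{equation*}
0=2a_{0,0}^2 b_{0,0}-a_{0,0}^2+(\alpha_0+\alpha_1)a_{0,0}+2\alpha_3 c_{0,0}+2c_{0,0}^2 d_{0,0},
\end{equation*}
and the third equation's constant term reads
\begin{equation*}
0=2c_{0,0}^2 d_{0,0}-c_{0,0}^2+(1-2\alpha_4)c_{0,0}+2b_{0,0}c_{0,0}^2.
\end{equation*}
Subtracting, the $2c_{0,0}^2 d_{0,0}$ terms cancel, leaving a relation that together with the second-equation relation should pin down $b_{0,0}$; I expect $b_{0,0}$ is forced to take one of two values (analogous to the recurring dichotomy $b_{\infty,0}=1/2$ versus $b_{\infty,0}\in\{0,1\}$ seen in Section 1). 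One then back-substitutes into the second-equation relation, which is quadratic in $a_{0,0}$ once $b_{0,0}$ is known, and factors it. The claim $a_{0,0}\in\{0,\alpha_0-\alpha_1\}$ suggests the quadratic factors as $a_{0,0}\bigl(a_{0,0}-(\alpha_0-\alpha_1)\bigr)=0$ after using the parameter relation; verifying this factorization is the crux of the argument.

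The main obstacle I anticipate is the bookkeeping when $c_{0,0}=0$ versus $c_{0,0}\neq 0$: if $c_{0,0}=0$ the third equation degenerates and one may need to go to the next order in $t$ (comparing coefficients of $t^1$) to get enough constraints, whereas if $c_{0,0}\neq 0$ one can divide through and the relations simplify. A clean way to organize this is to treat the case $c_{0,0}=0$ separately — there the first-equation constant term immediately reduces to $2a_{0,0}^2 b_{0,0}-a_{0,0}^2+(\alpha_0+\alpha_1)a_{0,0}=0$, i.e. $a_{0,0}\bigl(2a_{0,0}b_{0,0}-a_{0,0}+\alpha_0+\alpha_1\bigr)=0$, so either $a_{0,0}=0$ or $a_{0,0}(1-2b_{0,0})=\alpha_0+\alpha_1$, and then the second-equation relation with the value of $b_{0,0}$ forced there should yield $a_{0,0}=\alpha_0-\alpha_1$. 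For $c_{0,0}\neq 0$ I would eliminate $d_{0,0}$ between the first and third constant-term equations and proceed as above. In all branches the outcome should collapse to $a_{0,0}\in\{0,\alpha_0-\alpha_1\}$, completing the proof.
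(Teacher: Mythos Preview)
Your starting point is the same as the paper's: extract the constant terms of the four equations to obtain the system
\begin{align*}
&2a_{0,0}^2b_{0,0}-a_{0,0}^2+(\alpha_0+\alpha_1)a_{0,0}+2\alpha_3c_{0,0}+2c_{0,0}^2d_{0,0}=0, \\
&-2a_{0,0}b_{0,0}^2+2a_{0,0}b_{0,0}-(\alpha_0+\alpha_1)b_{0,0}+\alpha_1=0, \\
&2c_{0,0}^2d_{0,0}-c_{0,0}^2+(1-2\alpha_4)c_{0,0}+2b_{0,0}c_{0,0}^2=0, \\
&-2c_{0,0}d_{0,0}^2+2c_{0,0}d_{0,0}-(1-2\alpha_4)d_{0,0}-2\alpha_3b_{0,0}-4b_{0,0}c_{0,0}d_{0,0}+\alpha_3=0.
\end{align*}
From there, however, your route and the paper's diverge. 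The paper does \emph{not} case-split on $c_{0,0}$, and it does \emph{not} determine $b_{0,0}$ at this stage (in fact $b_{0,0}$ is not forced to two values here; later propositions show it can take several forms). Instead, the paper multiplies the first relation by $b_{0,0}$, the second by $a_{0,0}$, and adds, yielding
\[
a_{0,0}^2b_{0,0}+2\alpha_3b_{0,0}c_{0,0}+2b_{0,0}c_{0,0}^2d_{0,0}+\alpha_1a_{0,0}=0;
\]
symmetrically, multiplying the third by $d_{0,0}$, the fourth by $c_{0,0}$, and adding gives
\[
c_{0,0}^2d_{0,0}-2\alpha_3b_{0,0}c_{0,0}-2b_{0,0}c_{0,0}^2d_{0,0}+\alpha_3c_{0,0}=0.
\]
Adding these two eliminates all the mixed $b_{0,0}c_{0,0}$ terms at once:
\[
a_{0,0}^2b_{0,0}+\alpha_1a_{0,0}+c_{0,0}^2d_{0,0}+\alpha_3c_{0,0}=0.
\]
Subtracting twice this from the first relation kills every term involving $b_{0,0},c_{0,0},d_{0,0}$ simultaneously and leaves exactly $a_{0,0}^2-(\alpha_0-\alpha_1)a_{0,0}=0$.

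Your proposed case-split does work in the $c_{0,0}=0$ branch (combining the first and second relations alone gives $a_{0,0}(1-b_{0,0})=\alpha_0$ and $a_{0,0}b_{0,0}=-\alpha_1$, hence $a_{0,0}=\alpha_0-\alpha_1$). But your sketch for $c_{0,0}\neq 0$ is too vague: after eliminating $d_{0,0}$ between the first and third relations you still have $c_{0,0}$ and $b_{0,0}$ entangled, and you will genuinely need the fourth relation (not just the second) to close the argument. The paper's multiply-and-add trick is what buys the uniform, case-free proof; your intuition that $b_{0,0}$ gets pinned down first is the wrong guide here.
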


\begin{proof}
Comparing the constant terms in 
\begin{equation*}
\begin{cases}
\displaystyle 
t x^{\prime}=
2x^2y-x^2+(1-2\alpha_2-2\alpha_3-2\alpha_4)x+2\alpha_3z+2z^2w+t, \\
\displaystyle 
t y^{\prime}=
-2xy^2+2xy-(1-2\alpha_2-2\alpha_3-2\alpha_4)y+\alpha_1, \\
\displaystyle 
t z^{\prime}=
2z^2w-z^2+(1-2\alpha_4)z+2yz^2+t, \\
\displaystyle 
t w^{\prime}=
-2zw^2+2zw-(1-2\alpha_4)w-2\alpha_3y-4yzw+\alpha_3, 
\end{cases}
\end{equation*}
we have 
\begin{align}
&2a_{0,0}^2b_{0,0}-a_{0,0}^2+(1-2\alpha_2-2\alpha_3-2\alpha_4)a_{0,0}+2\alpha_3c_{0,0}+2c_{0,0}^2d_{0,0}=0,      \label{eqn:relation-1}      \\
&-2a_{0,0}b_{0,0}^2+2a_{0,0}b_{0,0}-(1-2\alpha_2-2\alpha_3-2\alpha_4)b_{0,0}+\alpha_1=0,    \label{eqn:relation-2}                  \\
&2c_{0,0}^2d_{0,0}-c_{0,0}^2+(1-2\alpha_4)c_{0,0}+2b_{0,0}c_{0,0}^2=0,               \label{eqn:relation-3}      \\
&-2c_{0,0}d_{0,0}^2+2c_{0,0}d_{0,0}-(1-2\alpha_4)d_{0,0}-2\alpha_3b_{0,0}-4b_{0,0}c_{0,0}d_{0,0}+\alpha_3=0,    \label{eqn:relation-4}        
\end{align}
respectively.
From (\ref{eqn:relation-1}) and (\ref{eqn:relation-2}), we see that 
\begin{equation}
\label{eqn:relation-5}  
a_{0,0}^2b_{0,0}+2\alpha_3b_{0,0}c_{0,0}+2b_{0,0}c_{0,0}^2d_{0,0}+\alpha_1a_{0,0}.     
\end{equation} 
From (\ref{eqn:relation-3}) and (\ref{eqn:relation-4}), we find that 
\begin{equation}
\label{eqn:relation-6}  
c_{0,0}^2d_{0,0}-2\alpha_3b_{0,0}c_{0,0}-2b_{0,0}c_{0,0}^2d_{0,0}+\alpha_3c_{0,0}=0.
\end{equation} 
From (\ref{eqn:relation-5}) and (\ref{eqn:relation-6}), 
we then have 
\begin{equation}
\label{eqn:relation-7}  
a_{0,0}^2b_{0,0}+\alpha_1a_{0,0}+c_{0,0}^2d_{0,0}+\alpha_3c_{0,0}=0.
\end{equation} 
By  (\ref{eqn:relation-1}) and (\ref{eqn:relation-7}), 
we obtain 
\begin{equation}
a_{0,0}\left\{a_{0,0}-(\alpha_0-\alpha_1)\right\}=0,
\end{equation} 
which implies that $a_{0,0}=0, \alpha_0-\alpha_1.$

\end{proof}

\subsubsection{The case where $a_{0,0}=0$}

Let us assume that $a_{0,0}=0$

\begin{proposition}
\label{prop:t=0holo-a_{0,0}=0}
{\it
Suppose that 
for $B_4^{(1)}(\alpha_j)_{0\leq j \leq 4},$ 
there exists a solution 
such that 
$x,y,z,w$ are all holomorphic at $t=0.$ 
Moreover, assume that $a_{0,0}=0.$ 
One of the following then occurs:
\newline
{\rm (1)}\quad $\alpha_0=\alpha_1=0$ and 
$a_{0,0}=c_{0,0}=0, \,\, 
2\alpha_3b_{0,0}+(2\alpha_2+2\alpha_3)d_{0,0}=\alpha_3,
$
\newline
{\rm (2)}\quad $\alpha_0=\alpha_1=0$ and 
$a_{0,0}=0, \,\,c_{0,0}d_{0,0}=-\alpha_3, \,\, 
2\alpha_3b_{0,0}-2\alpha_2d_{0,0}=\alpha_3,
$
\newline
{\rm (3)}\quad 
$\alpha_0=\alpha_1\neq 0, \,\,\alpha_4= 1/2$ and 
$
a_{0,0}=0, \,\,b_{0,0}=1/2, \,\,c_{0,0}=0,
$ 
\newline
{\rm (4)}\quad $\alpha_0+\alpha_1\neq 0, \,\,\alpha_3=0, \,\,\alpha_4=1/2$ 
and $a_{0,0}=0, \,\,b_{0,0}=\alpha_1/(\alpha_0+\alpha_1), \,\,c_{0,0}=0,$
\newline
{\rm (5)}\quad $\alpha_0+\alpha_1\neq 0, \,\,\alpha_4\neq 1/2$ 
and 
$$
a_{0,0}=0, \,\,b_{0,0}=\alpha_1/(\alpha_0+\alpha_1), \,\,c_{0,0}=0, \,\,d_{0,0}=-\alpha_3(\alpha_0-\alpha_1)/\{(2\alpha_4-1)(\alpha_0+\alpha_1)\},
$$
{\rm (6)}\quad 
$\alpha_0=\alpha_1\neq 0, \,\,\alpha_3+\alpha_4=1/2$ and 
$a_{0,0}=0, \,\,b_{0,0}=1/2, \,\,c_{0,0}d_{0,0}=-\alpha_3,$
\newline
{\rm (7)}\quad 
$\alpha_0+\alpha_1\neq 0, \,\,\alpha_0-\alpha_1\neq 0, \,\,$ and 
$$
a_{0,0}=0, \,\,
b_{0,0}=\frac{\alpha_1}{(\alpha_0+\alpha_1)}, \,\,
c_{0,0}=\frac{(\alpha_0+\alpha_1)(1-2\alpha_3-2\alpha_4)}{(\alpha_0-\alpha_1)}\neq 0, \,\,
d_{0,0}=-\frac{\alpha_3(\alpha_0-\alpha_1)}{ (\alpha_0+\alpha_1)(1-2\alpha_3-2\alpha_4) }.
$$
}
\end{proposition}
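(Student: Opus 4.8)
The plan is to push the computation in the proof of Proposition~\ref{prop:a0-determine} one step further: instead of extracting only the relation $a_{0,0}\{a_{0,0}-(\alpha_0-\alpha_1)\}=0$, I keep all of (\ref{eqn:relation-1})--(\ref{eqn:relation-4}), impose $a_{0,0}=0$, and solve the resulting algebraic system for $b_{0,0},c_{0,0},d_{0,0}$, branching on which of the parameters vanish. At the outset I would record that the affine relation $\alpha_0+\alpha_1+2\alpha_2+2\alpha_3+2\alpha_4=1$ gives $1-2\alpha_2-2\alpha_3-2\alpha_4=\alpha_0+\alpha_1$ and $1-2\alpha_4=\alpha_0+\alpha_1+2\alpha_2+2\alpha_3$, which is what puts the coefficient formulas into the normalized shape displayed in the statement.

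Setting $a_{0,0}=0$, equation (\ref{eqn:relation-1}) becomes $2c_{0,0}(\alpha_3+c_{0,0}d_{0,0})=0$ and equation (\ref{eqn:relation-2}) becomes $(\alpha_0+\alpha_1)b_{0,0}=\alpha_1$. This gives two dichotomies: first, $c_{0,0}=0$ or $c_{0,0}d_{0,0}=-\alpha_3$; second, $\alpha_0+\alpha_1=0$ (which forces $\alpha_0=\alpha_1=0$) or $\alpha_0+\alpha_1\neq0$ (which forces $b_{0,0}=\alpha_1/(\alpha_0+\alpha_1)$). I would then treat the four resulting combinations in turn. In each one, (\ref{eqn:relation-3}) --- after cancelling the factor $c_{0,0}$ when $c_{0,0}\neq0$ --- reduces to a linear relation among $b_{0,0},c_{0,0},d_{0,0}$ that either solves for $c_{0,0}$ directly or, when the coefficient of $c_{0,0}$ (essentially $2b_{0,0}-1$) vanishes, forces an extra identity among the $\alpha_j$ such as $\alpha_4=1/2$, $\alpha_0=\alpha_1$ or $\alpha_3+\alpha_4=1/2$; equation (\ref{eqn:relation-4}) then either determines $d_{0,0}$ or turns out to be a consequence of the others. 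Tracking the sub-branches one finds: $\alpha_0=\alpha_1=0$ together with $c_{0,0}=0$ yields (1) and together with $c_{0,0}d_{0,0}=-\alpha_3$ yields (2); $\alpha_0+\alpha_1\neq0$ with $c_{0,0}=0$ splits, according to whether $\alpha_4=1/2$ and, if so, whether $\alpha_3=0$ or $\alpha_0=\alpha_1$, into (3), (4) and (5); and $\alpha_0+\alpha_1\neq0$ with $c_{0,0}d_{0,0}=-\alpha_3$ splits, according to whether $\alpha_0=\alpha_1$, into (6) and (7).

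I expect the difficulty to lie not in any single computation --- each branch is a short piece of linear algebra --- but in the bookkeeping: making the case tree exhaustive, checking that the degenerate boundary situations (notably $\alpha_3=0$, where the alternative $c_{0,0}d_{0,0}=-\alpha_3$ collapses to $d_{0,0}=0$, and $\alpha_0=\alpha_1$ inside the last branch) are absorbed into the stated seven cases rather than producing spurious extra ones, and verifying in every branch that the one relation among (\ref{eqn:relation-1})--(\ref{eqn:relation-4}) not used to solve for a coefficient is genuinely implied by the remaining data, so that each listed case really does occur.
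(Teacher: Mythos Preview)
Your proposal is correct and is exactly the computation the paper has in mind: the paper's own proof is simply ``It can be easily checked,'' and the only way to check it is to substitute $a_{0,0}=0$ into the four constant-term relations (\ref{eqn:relation-1})--(\ref{eqn:relation-4}) and run through the case tree you describe. Your use of the affine relation to rewrite $1-2\alpha_2-2\alpha_3-2\alpha_4$ and $1-2\alpha_4$, and your branching on $c_{0,0}=0$ versus $c_{0,0}d_{0,0}=-\alpha_3$ and on $\alpha_0+\alpha_1=0$ versus $\alpha_0+\alpha_1\neq0$, reproduce the seven stated cases precisely.
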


\begin{proof}
It can be easily checked.

\end{proof}

\subsubsection{The case where $a_{0,0}=\alpha_0-\alpha_1\neq0$}

Let us treat the case in which $a_{0,0}=\alpha_0-\alpha_1\neq0.$ 
We first obtain the following proposition:

\begin{proposition}
\label{prop:t=0holo-a_{0,0}nonzero-determine}
{\it
Suppose that 
for $B_4^{(1)}(\alpha_j)_{0\leq j \leq 4},$ 
there exists a solution 
such that 
all of $(x,y,z,w)$ are holomorphic at $t=0.$ 
Moreover, assume that $a_{0,0}=\alpha_0-\alpha_1\neq 0.$ 
Then, 
$b_{0,0}=1/2, \,\,-\alpha_1/(\alpha_0-\alpha_1).$ 
}
\end{proposition}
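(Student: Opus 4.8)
The plan is to extract, from the four defining equations of $B_4^{(1)}(\alpha_j)_{0\leq j \leq 4}$ evaluated at the constant-term level, the algebraic relations that the quadruple $(a_{0,0},b_{0,0},c_{0,0},d_{0,0})$ must satisfy, and then specialize to the hypothesis $a_{0,0}=\alpha_0-\alpha_1\neq 0$. I would start from exactly the same four equations \eqref{eqn:relation-1}--\eqref{eqn:relation-4} that were derived in the proof of Proposition \ref{prop:a0-determine}, since those are precisely the constant-term comparisons and nothing new is needed there. In particular I would reuse the consequence \eqref{eqn:relation-2}, namely
\[
-2a_{0,0}b_{0,0}^2+2a_{0,0}b_{0,0}-(1-2\alpha_2-2\alpha_3-2\alpha_4)b_{0,0}+\alpha_1=0.
\]
This is already a quadratic in $b_{0,0}$ with coefficients depending only on $a_{0,0}$ and the parameters, so once $a_{0,0}$ is pinned down it will essentially force $b_{0,0}$.

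Next I would substitute $a_{0,0}=\alpha_0-\alpha_1$ into \eqref{eqn:relation-2}. Using the affine relation $\alpha_0+\alpha_1+2\alpha_2+2\alpha_3+2\alpha_4=1$, which gives $1-2\alpha_2-2\alpha_3-2\alpha_4=\alpha_0+\alpha_1$, the coefficient $(1-2\alpha_2-2\alpha_3-2\alpha_4)$ simplifies to $\alpha_0+\alpha_1$, so \eqref{eqn:relation-2} becomes
\[
-2(\alpha_0-\alpha_1)b_{0,0}^2+2(\alpha_0-\alpha_1)b_{0,0}-(\alpha_0+\alpha_1)b_{0,0}+\alpha_1=0,
\]
i.e.
\[
2(\alpha_0-\alpha_1)b_{0,0}^2-\bigl(2(\alpha_0-\alpha_1)-(\alpha_0+\alpha_1)\bigr)b_{0,0}-\alpha_1=0,
\]
which is $2(\alpha_0-\alpha_1)b_{0,0}^2-(\alpha_0-3\alpha_1)b_{0,0}-\alpha_1=0$. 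Since $\alpha_0-\alpha_1\neq 0$ this is a genuine quadratic, and I expect it to factor as $\bigl(b_{0,0}-\tfrac12\bigr)\bigl(2(\alpha_0-\alpha_1)b_{0,0}+2\alpha_1\bigr)=0$ (one checks the constant term $-\tfrac12\cdot 2\alpha_1=-\alpha_1$ and the linear term match up), giving the two roots $b_{0,0}=1/2$ and $b_{0,0}=-\alpha_1/(\alpha_0-\alpha_1)$. That is exactly the claimed dichotomy, so the bulk of the argument is just verifying this factorization.

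The only subtlety—and the step I expect to cost the most care—is making sure that the value $a_{0,0}=\alpha_0-\alpha_1$ is actually consistent with the remaining relations \eqref{eqn:relation-1}, \eqref{eqn:relation-3}, \eqref{eqn:relation-4} (equivalently \eqref{eqn:relation-5}--\eqref{eqn:relation-7}), so that we are not quoting a spurious branch; but this is already guaranteed by the derivation of \eqref{eqn:relation-7} in Proposition \ref{prop:a0-determine}, where $a_{0,0}\{a_{0,0}-(\alpha_0-\alpha_1)\}=0$ was obtained precisely by combining all four equations. Hence no new compatibility check is required, and the proposition follows by the direct computation sketched above. I would therefore write: substitute $a_{0,0}=\alpha_0-\alpha_1$ and $1-2\alpha_2-2\alpha_3-2\alpha_4=\alpha_0+\alpha_1$ into \eqref{eqn:relation-2}, factor the resulting quadratic in $b_{0,0}$, and read off $b_{0,0}=1/2$ or $b_{0,0}=-\alpha_1/(\alpha_0-\alpha_1)$.
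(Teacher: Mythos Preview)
Your proposal is correct and follows exactly the paper's own approach: the paper's proof simply says to substitute $a_{0,0}=\alpha_0-\alpha_1$ into \eqref{eqn:relation-2} and read off $b_{0,0}=1/2$ or $b_{0,0}=-\alpha_1/(\alpha_0-\alpha_1)$. Your use of the affine relation $1-2\alpha_2-2\alpha_3-2\alpha_4=\alpha_0+\alpha_1$ and the explicit factorization $(b_{0,0}-\tfrac12)\bigl(2(\alpha_0-\alpha_1)b_{0,0}+2\alpha_1\bigr)=0$ just make that one-line proof fully explicit.
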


\begin{proof}
Substituting $a_{0,0}=\alpha_0-\alpha_1$ in (\ref{eqn:relation-2}), 
we can obtain $b_{0,0}=1/2, \,\,-\alpha_1/(\alpha_0-\alpha_1).$

\end{proof}

Let us deal with the case where $a_{0,0}=\alpha_0-\alpha_1\neq 0, \,b_{0,0}=1/2.$ 

\begin{proposition}
\label{prop:t=0holo-a_{0,0}nonzero-half}
{\it
Suppose that 
for $B_4^{(1)}(\alpha_j)_{0\leq j \leq 4},$ 
there exists a solution 
such that 
all of $(x,y,z,w)$ are holomorphic at $t=0.$ 
Moreover, assume that 
$a_{0,0}=\alpha_0-\alpha_1\neq 0$ and $b_{0,0}=1/2.$ 
One of the following then occurs:
\newline
{\rm (1)}\quad 
$\alpha_0+\alpha_1=0, \,\,\alpha_4=1/2$ and 
$a_{0,0}=\alpha_0-\alpha_1\neq 0, \,\,b_{0,0}=1/2,\,\,c_{0,0}=0,$
\newline
{\rm (2)}\quad 
$\alpha_0+\alpha_1=0$ and 
$a_{0,0}=\alpha_0-\alpha_1\neq 0, \,\,b_{0,0}=1/2, \,\,c_{0,0}=d_{0,0}=0,$
\newline
{\rm (3)}\quad 
$\alpha_0+\alpha_1=\alpha_2=0, \,\,\alpha_3+\alpha_4=1/2$ and 
$$
a_{0,0}=\alpha_0-\alpha_1\neq 0, \,\,b_{0,0}=1/2, \,\,
c_{0,0}\neq0, \,\,d_{0,0}=(2\alpha_4-1)/\{2c_{0,0}\},
$$
{\rm (4)}\quad 
$\alpha_3+\alpha_4\neq 1/2$ and 
$$
a_{0,0}=\alpha_0-\alpha_1, \,\,b_{0,0}=\frac12, \,\,
c_{0,0}=\frac{(\alpha_0+\alpha_1)(\alpha_0-\alpha_1)}{1-2\alpha_3-2\alpha_4}\neq 0, \,\,
d_{0,0}=-\frac{(1-2\alpha_4)(1-2\alpha_3-2\alpha_4)}{2(\alpha_0+\alpha_1)(\alpha_0-\alpha_1)}.
$$
}
\end{proposition}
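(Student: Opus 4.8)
The strategy is exactly the one used in the preceding two propositions: substitute the holomorphic Taylor expansions of $(x,y,z,w)$ at $t=0$ into the four differential equations of $B_4^{(1)}(\alpha_j)_{0\leq j \leq 4}$, and read off the constant-term relations \eqref{eqn:relation-1}--\eqref{eqn:relation-4}. By Proposition \ref{prop:a0-determine} we already know $a_{0,0}=0$ or $a_{0,0}=\alpha_0-\alpha_1$; here we are in the branch $a_{0,0}=\alpha_0-\alpha_1\neq 0$, and by Proposition \ref{prop:t=0holo-a_{0,0}nonzero-determine} we further restrict to $b_{0,0}=1/2$. So the only unknowns remaining among the constant terms are $c_{0,0}$ and $d_{0,0}$, and the task is to solve the remaining polynomial system for $(c_{0,0},d_{0,0})$ and see which parameter constraints each solution branch forces.

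First I would plug $a_{0,0}=\alpha_0-\alpha_1$ and $b_{0,0}=1/2$ into \eqref{eqn:relation-1}. Using $\alpha_0+\alpha_1+2\alpha_2+2\alpha_3+2\alpha_4=1$, the combination $2a_{0,0}^2 b_{0,0}-a_{0,0}^2+(1-2\alpha_2-2\alpha_3-2\alpha_4)a_{0,0}$ collapses to $(\alpha_0+\alpha_1)(\alpha_0-\alpha_1)$, so \eqref{eqn:relation-1} becomes
\begin{equation*}
(\alpha_0+\alpha_1)(\alpha_0-\alpha_1)+2\alpha_3 c_{0,0}+2c_{0,0}^2 d_{0,0}=0.
\end{equation*}
Next, \eqref{eqn:relation-3} with $b_{0,0}=1/2$ gives $2c_{0,0}^2 d_{0,0}+(1-2\alpha_4)c_{0,0}=0$, i.e. $c_{0,0}\bigl(2c_{0,0}d_{0,0}+(1-2\alpha_4)\bigr)=0$. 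This immediately splits the analysis into the case $c_{0,0}=0$ and the case $c_{0,0}\neq 0$ with $d_{0,0}=-(1-2\alpha_4)/(2c_{0,0})$. In the first case the displayed equation above forces $(\alpha_0+\alpha_1)(\alpha_0-\alpha_1)=0$, hence $\alpha_0+\alpha_1=0$ (since $\alpha_0-\alpha_1\neq 0$); then \eqref{eqn:relation-4} with $b_{0,0}=1/2$, $c_{0,0}=0$ reads $-(1-2\alpha_4)d_{0,0}=0$, giving the two sub-branches $\alpha_4=1/2$ (any $d_{0,0}$, but then one must check \eqref{eqn:relation-4} is consistent — it is, giving branch (1) after also noting $c_{0,0}=0$ suffices) versus $d_{0,0}=0$ (branch (2)). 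In the second case, substituting $d_{0,0}=-(1-2\alpha_4)/(2c_{0,0})$ back into the displayed equation gives a linear equation in $c_{0,0}$: $(\alpha_0+\alpha_1)(\alpha_0-\alpha_1)+2\alpha_3 c_{0,0}-(1-2\alpha_4)c_{0,0}=0$, i.e. $c_{0,0}(2\alpha_3-1+2\alpha_4)=-(\alpha_0+\alpha_1)(\alpha_0-\alpha_1)$. If $\alpha_3+\alpha_4\neq 1/2$ this determines $c_{0,0}=(\alpha_0+\alpha_1)(\alpha_0-\alpha_1)/(1-2\alpha_3-2\alpha_4)$ and then $d_{0,0}$, yielding branch (4); I would then verify \eqref{eqn:relation-4} holds identically with these values (it should, being a consequence of the relations already used). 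If $\alpha_3+\alpha_4=1/2$, the linear equation forces $(\alpha_0+\alpha_1)(\alpha_0-\alpha_1)=0$, hence $\alpha_0+\alpha_1=0$, and then $\alpha_2=1-\alpha_0-\alpha_1-2\alpha_3-2\alpha_4=0$; here $c_{0,0}$ is a free nonzero parameter and $d_{0,0}=-(1-2\alpha_4)/(2c_{0,0})=(2\alpha_4-1)/(2c_{0,0})$, which is branch (3). Finally I would check \eqref{eqn:relation-2} is automatically satisfied throughout: with $a_{0,0}=\alpha_0-\alpha_1$, $b_{0,0}=1/2$ it reads $-\tfrac12(\alpha_0-\alpha_1)-(1-2\alpha_2-2\alpha_3-2\alpha_4)\cdot\tfrac12+\alpha_1=0$, which is precisely $\alpha_1-\tfrac12(\alpha_0+\alpha_1)=\tfrac12(\alpha_1-\alpha_0)$... wait — this should match the constancy already built into Proposition \ref{prop:t=0holo-a_{0,0}nonzero-determine}, so \eqref{eqn:relation-2} is the very relation that gave $b_{0,0}=1/2$ and needs no rechecking.

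The only real bookkeeping obstacle is organizing the case split cleanly and, in each branch, confirming that the as-yet-unused equation \eqref{eqn:relation-4} (or \eqref{eqn:relation-2}) imposes no further constraint beyond what is listed — this is the kind of thing that is "easily checked" but must be done carefully so that no branch is missed and none is spurious. Since all four constant-term relations are at most cubic and, after imposing $a_{0,0}=\alpha_0-\alpha_1$ and $b_{0,0}=1/2$, effectively reduce to the factorization $c_{0,0}\cdot(\text{linear in }c_{0,0}d_{0,0})=\cdots$ displayed above together with one more linear relation, the enumeration is finite and each branch is forced; hence the proposition follows by direct calculation, which is why the paper records the proof simply as "It can be easily checked."
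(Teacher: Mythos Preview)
Your proposal is correct and is precisely the direct calculation the paper alludes to with ``It can be easily checked.'' You substitute $a_{0,0}=\alpha_0-\alpha_1$ and $b_{0,0}=1/2$ into the constant-term relations \eqref{eqn:relation-1}--\eqref{eqn:relation-4}, factor \eqref{eqn:relation-3} as $c_{0,0}\bigl(2c_{0,0}d_{0,0}+(1-2\alpha_4)\bigr)=0$, and chase the resulting case split; your observation that \eqref{eqn:relation-4} with $b_{0,0}=1/2$ reduces to $-d_{0,0}\bigl(2c_{0,0}d_{0,0}+(1-2\alpha_4)\bigr)=0$, hence is automatic in the $c_{0,0}\neq 0$ branch, is exactly the redundancy one needs to close the argument.
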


\begin{proof}
It can be easily checked.

\end{proof}

Let us treat the case where $a_{0,0}=\alpha_0-\alpha_1\neq 0, \,b_{0,0}\neq1/2.$

\begin{proposition}
\label{prop:t=0holo-a_{0,0}nonzero-nonhalf}
{\it
Suppose that 
for $B_4^{(1)}(\alpha_j)_{0\leq j \leq 4},$ 
there exists a solution 
such that 
all of $(x,y,z,w)$ are holomorphic at $t=0.$ 
Moreover, assume that 
$a_{0,0}=\alpha_0-\alpha_1\neq 0$ and $b_{0,0}=-\alpha_1/(\alpha_0-\alpha_1)\neq 1/2,$ 
which implies that $\alpha_0+\alpha_1\neq 0.$  
One of the following then occurs:
\newline
{\rm (1)}\quad 
$\alpha_3=0, \,\,\alpha_4=1/2$ and 
$a_{0,0}=\alpha_0-\alpha_1\neq 0, \,\,b_{0,0}=-\alpha_1/(\alpha_0-\alpha_1), \,\,c_{0,0}=0,$
\newline
{\rm (2)}\quad 
$\alpha_4\neq 1/2$ and 
$$
a_{0,0}=\alpha_0-\alpha_1, \,\,b_{0,0}=-\alpha_1/(\alpha_0-\alpha_1), \,\,c_{0,0}=0, \,\,d_{0,0}=\alpha_3(\alpha_0+\alpha_1)/\{(1-2\alpha_4)(\alpha_0-\alpha_1)\}, 
$$
{\rm (3)}\quad 
$\alpha_3=0$ and 
$a_{0,0}=\alpha_0-\alpha_1, \,\,b_{0,0}=-\alpha_1/(\alpha_0-\alpha_1), \,\,c_{0,0}\neq 0, \,\,d_{0,0}=0,$
\newline
{\rm (4)}\quad 
$\alpha_3\neq0$ and 
$$
a_{0,0}=\alpha_0-\alpha_1, \,\,
b_{0,0}=\frac{-\alpha_1}{\alpha_0-\alpha_1}, \,\,
c_{0,0}=\frac{(\alpha_0-\alpha_1)(1-2\alpha_3-2\alpha_4)}{\alpha_0+\alpha_1}\neq 0, \,\,
d_{0,0}=
\frac{-\alpha_3(\alpha_0+\alpha_1)}{(\alpha_0-\alpha_1)(1-2\alpha_3-2\alpha_4)}.
$$
}
\end{proposition}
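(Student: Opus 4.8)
The plan is to substitute into the four constant-term relations (\ref{eqn:relation-1})--(\ref{eqn:relation-4}) the values $a_{0,0}=\alpha_0-\alpha_1$ (the standing hypothesis) and $b_{0,0}=-\alpha_1/(\alpha_0-\alpha_1)$ (obtained in Proposition \ref{prop:t=0holo-a_{0,0}nonzero-determine}), and then to solve the resulting equations for the two remaining leading coefficients $c_{0,0}$ and $d_{0,0}$. The hypothesis $b_{0,0}\neq1/2$ is equivalent to $\alpha_0+\alpha_1\neq0$, and throughout one has $1-2b_{0,0}=(\alpha_0+\alpha_1)/(\alpha_0-\alpha_1)\neq0$; it is this nonvanishing that keeps the case analysis finite. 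Since Proposition \ref{prop:t=0holo-a_{0,0}nonzero-determine} already extracted $b_{0,0}$ from (\ref{eqn:relation-2}), that relation is now automatically satisfied, so only (\ref{eqn:relation-1}), (\ref{eqn:relation-3}), (\ref{eqn:relation-4}), together with the derived identity (\ref{eqn:relation-6}), carry new information.

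The first step is to simplify (\ref{eqn:relation-1}). Using the parameter relation $\alpha_0+\alpha_1+2\alpha_2+2\alpha_3+2\alpha_4=1$ in the form $1-2\alpha_2-2\alpha_3-2\alpha_4=\alpha_0+\alpha_1$, one checks that the first three terms of (\ref{eqn:relation-1}) cancel, so (\ref{eqn:relation-1}) collapses to $2c_{0,0}(\alpha_3+c_{0,0}d_{0,0})=0$; the same statement follows from (\ref{eqn:relation-6}) after discarding the nonzero factor $1-2b_{0,0}$. Hence one is forced into one of two branches: $c_{0,0}=0$, or $c_{0,0}\neq0$ with $c_{0,0}d_{0,0}=-\alpha_3$. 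The remaining work is to pin down $d_{0,0}$ (and $c_{0,0}$ in the second branch) and to read off the parameter constraints.

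In the branch $c_{0,0}=0$, relation (\ref{eqn:relation-3}) is vacuous and (\ref{eqn:relation-4}) reduces to $(1-2\alpha_4)d_{0,0}=\alpha_3(1-2b_{0,0})$. If $\alpha_4=1/2$ this forces $\alpha_3(1-2b_{0,0})=0$, hence $\alpha_3=0$, and $d_{0,0}$ is unconstrained: this is case (1). If $\alpha_4\neq1/2$ one solves for $d_{0,0}$ and, inserting the value of $1-2b_{0,0}$, obtains the expression in case (2). In the branch $c_{0,0}\neq0$ one substitutes $c_{0,0}d_{0,0}=-\alpha_3$ into (\ref{eqn:relation-3}) to get $c_{0,0}(2b_{0,0}-1)=-(1-2\alpha_3-2\alpha_4)$, whence $c_{0,0}=(\alpha_0-\alpha_1)(1-2\alpha_3-2\alpha_4)/(\alpha_0+\alpha_1)$, so in particular $1-2\alpha_3-2\alpha_4\neq0$; then $d_{0,0}=-\alpha_3/c_{0,0}$ is $0$ if $\alpha_3=0$ (case (3)) and the displayed value if $\alpha_3\neq0$ (case (4)). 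Finally one verifies that (\ref{eqn:relation-4}) holds identically for these values: substituting $c_{0,0}d_{0,0}=-\alpha_3$ turns (\ref{eqn:relation-4}) into $(2\alpha_3+2\alpha_4-1)d_{0,0}+\alpha_3(2b_{0,0}-1)=0$, which is exactly the equation just solved.

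The only real difficulty is bookkeeping, not analysis: one must track which of the four original relations is being consumed to fix a coefficient and which is then to be checked as an identity, and one must route the degenerate sub-cases ($\alpha_3=0$, $\alpha_4=1/2$, or $1-2\alpha_3-2\alpha_4=0$) into the correct case of the statement instead of losing or double-counting them. Everything is linear or quadratic in $c_{0,0},d_{0,0}$ once $a_{0,0}$ and $b_{0,0}$ are fixed, so there is no genuine obstruction --- which is why the author can simply write ``it can be easily checked.''
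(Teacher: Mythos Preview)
Your proof is correct and follows the same approach the paper intends: a direct case analysis on the constant-term relations (\ref{eqn:relation-1})--(\ref{eqn:relation-4}) after substituting the known values of $a_{0,0}$ and $b_{0,0}$. The paper's own proof is simply ``It can be easily checked,'' and what you have written is precisely that check carried out in full.
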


\begin{proof}
It can be easily checked.

\end{proof}

\subsection{The case where $z$ has a pole at $t=0$}

\begin{proposition}
\label{prop:t=0-z}
{\it
Suppose that 
for $B_4^{(1)}(\alpha_j)_{0\leq j \leq 4},$ 
there exists a solution 
such that 
$z$ has a pole of order $n \,\,(n\geq 1)$ at $t=0$ 
and 
$x,y,w$ are all holomorphic at $t=0.$ 
Then, 
\begin{equation*}
\begin{cases}
\displaystyle x=(\alpha_0-\alpha_1)-\frac{\{(n+1)-2\alpha_3-2\alpha_4\}\{(n+1)-2\alpha_2-2\alpha_3-2\alpha_4\}}{c_{0,-n}} t^n+\cdots, \\
\displaystyle y=\frac12-\frac{(n+1)-2\alpha_3-2\alpha_4}{2c_{0,-n}} t^n +\cdots, \\
\displaystyle z=c_{0,-n}t^{-n}+c_{0,-(n-1)}t^{-(n-1)}+\cdots, \\
\displaystyle w=-\frac{\alpha_3}{c_{0,-n}} t^n+\cdots.
\end{cases}
\end{equation*}
}
\end{proposition}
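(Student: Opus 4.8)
The plan is to mimic the proof of Proposition~\ref{prop:z-inf}: substitute the Laurent ansatz into the four equations of $B_4^{(1)}(\alpha_j)_{0\le j\le 4}$ and match coefficients, only now expanding in powers of $t$ rather than of $t^{-1}$. Write $z=c_{0,-n}t^{-n}+c_{0,-(n-1)}t^{-(n-1)}+\cdots$ with $c_{0,-n}\neq 0$, and $x=a_{0,0}+a_{0,1}t+\cdots$, $y=b_{0,0}+b_{0,1}t+\cdots$, $w=d_{0,0}+d_{0,1}t+\cdots$. Throughout I will use the normalization $1-2\alpha_2-2\alpha_3-2\alpha_4=\alpha_0+\alpha_1$.

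The first step is to pin down $b_{0,0}$ and $d_{0,0}$. In $tz'=2z^2w-z^2+(1-2\alpha_4)z+2yz^2+t$ the left-hand side has order at worst $-n$, while on the right $-z^2+2yz^2=z^2(2y-1)$ has order $-2n$ and $2z^2w$ has order $-2n+\mathrm{ord}_0 w$; matching the coefficient of $t^{-2n}$ gives $2b_{0,0}-1+2d_{0,0}=0$ (when $d_{0,0}\neq 0$) or $b_{0,0}=1/2$ (when $d_{0,0}=0$). In the $w$-equation, if $d_{0,0}\neq 0$ the coefficient of $t^{-n}$ gives $-2d_{0,0}+2-4b_{0,0}=0$; combining these two relations forces $b_{0,0}=1/2$ and $d_{0,0}=0$, contradicting $d_{0,0}\neq 0$, so in fact $d_{0,0}=0$ and hence $b_{0,0}=1/2$.

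Next I locate the leading terms of $w$, $y$ and $x$. Since every term of the $x$-equation except $2\alpha_3 z$ and $2z^2w$ is holomorphic, holomorphy of $x$ forces the poles of $2\alpha_3 z+2z^2 w$ to cancel; this forces $\mathrm{ord}_0 w\ge n$ with $d_{0,n}=-\alpha_3/c_{0,-n}$, i.e. $w=-(\alpha_3/c_{0,-n})t^n+\cdots$. Feeding this back, $2z^2w=-2\alpha_3c_{0,-n}t^{-n}+\cdots$, so the coefficient of $t^{-n}$ in the $z$-equation reads $-nc_{0,-n}=-2\alpha_3c_{0,-n}+(1-2\alpha_4)c_{0,-n}+2b_{0,n}c_{0,-n}^2$, giving $b_{0,n}=-\{(n+1)-2\alpha_3-2\alpha_4\}/(2c_{0,-n})$. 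For $x$: the constant term of the $y$-equation collapses, using $b_{0,0}=1/2$, to $a_{0,0}/2-(\alpha_0+\alpha_1)/2+\alpha_1=0$, so $a_{0,0}=\alpha_0-\alpha_1$; and because $y(1-y)=1/4-(y-1/2)^2$ is constant up to order $t^{2n}$, the coefficient of $t^j$ in the $y$-equation is just $a_{0,j}/2-(\alpha_0+\alpha_1)b_{0,j}=0$ for $1\le j\le n-1$, so $a_{0,j}=0$, while at order $t^n$ one gets $a_{0,n}=2(n+\alpha_0+\alpha_1)b_{0,n}$. Rewriting $n+\alpha_0+\alpha_1=(n+1)-2\alpha_2-2\alpha_3-2\alpha_4$ and substituting the value of $b_{0,n}$ yields the claimed value of $a_{0,n}$. (The same recursion determines all further coefficients, but only the leading ones are asserted here.)

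The main obstacle I anticipate is the bookkeeping: in each of the four equations one must carefully identify which monomial is strictly the most singular, so that the vanishing of a single coefficient is genuinely forced, and one must repeatedly invoke the relation $1-2\alpha_2-2\alpha_3-2\alpha_4=\alpha_0+\alpha_1$ to see that the a priori unwieldy combinations collapse to the clean products $\{(n+1)-2\alpha_3-2\alpha_4\}\{(n+1)-2\alpha_2-2\alpha_3-2\alpha_4\}$ and $(n+1)-2\alpha_3-2\alpha_4$ appearing in the statement.
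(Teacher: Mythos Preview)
Your approach is exactly the intended one (the paper's own proof is simply ``It can be easily checked''), and your computations of $d_{0,n}$, $b_{0,n}$, $a_{0,0}$ and $a_{0,n}$ are correct.

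There is, however, one step you skip that you actually rely on twice. When you read off the coefficient of $t^{-n}$ in the $z$-equation you tacitly use that $2yz^2-z^2=z^2(2y-1)$ contributes only $2b_{0,n}c_{0,-n}^2$ at that order, and when you invoke ``$y(1-y)=1/4-(y-1/2)^2$ is constant up to order $t^{2n}$'' you again use $b_{0,1}=\cdots=b_{0,n-1}=0$. But you never prove this; your $y$-equation argument only yields $a_{0,j}=2(j+\alpha_0+\alpha_1)b_{0,j}$, which does not by itself force $b_{0,j}=0$. The missing step is short: once you know $\mathrm{ord}_0 w\ge n$, the $z$-equation at order $t^{-2n+j}$ for $1\le j\le n-1$ reduces (inductively) to $2c_{0,-n}^2 b_{0,j}=0$, since $tz'$, $2z^2w$, $(1-2\alpha_4)z$ and $t$ all have order $\ge -n$. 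Insert this one line and your argument is complete.
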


\begin{proof}
It can be easily checked.

\end{proof}

Let us study the relationship between the solution in Proposition \ref{prop:t=0-z} and the B\"acklund transformation, $s_3.$

\begin{corollary}
\label{coro:t=0-z(n)-s_3}
{\it
Suppose that 
for $B_4^{(1)}(\alpha_j)_{0\leq j \leq 4},$ 
there exists a solution 
such that 
$z$ has a pole of order $n \,\,(n\geq 1)$ at $t=0$ 
and 
$x,y,w$ are all holomorphic at $t=0.$ 
Moreover, assume that $\alpha_3\neq 0.$ 
$s_3(x,y,z,w)$ is then a solution of $B_4^{(1)}(\alpha_0,\alpha_1,\alpha_2+\alpha_3,-\alpha_3,\alpha_4+\alpha_3)$ 
such that all of $s_3(x,y,z,w)$ are holomorphic at $t=0.$ 
}
\end{corollary}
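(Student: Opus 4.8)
The plan is to show that the Bäcklund transformation $s_3$ sends the solution described in Proposition \ref{prop:t=0-z}, which has a pole of order $n\geq 1$ at $t=0$ in $z$, to a solution of the transformed parameters $B_4^{(1)}(\alpha_0,\alpha_1,\alpha_2+\alpha_3,-\alpha_3,\alpha_4+\alpha_3)$ in which \emph{all four} coordinates become holomorphic at $t=0$. First I would recall from Section 5 (the definition of $s_3$) the explicit birational formulas for $s_3(x),s_3(y),s_3(z),s_3(w)$; the essential feature, already exploited in Corollary \ref{coro:z(n)-s_3}, is that $s_3$ acts by something like $z\mapsto z$, $w\mapsto w + \alpha_3/z$ (up to the precise normalization in the paper), together with a shift in the $\alpha$'s. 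Using the leading behavior in Proposition \ref{prop:t=0-z}, namely $z = c_{0,-n}t^{-n}+\cdots$ and $w = -\alpha_3/c_{0,-n}\,t^{n}+\cdots$, one checks that the pole of $z$ is exactly cancelled against the $\alpha_3$-multiple of $1/z$ appearing in $s_3(w)$ — that is the whole point of the coefficient $-\alpha_3/c_{0,-n}$ in the expansion of $w$.

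The key steps, in order, are: (i) write down $s_3$ explicitly and identify which coordinates it modifies; (ii) substitute the Laurent expansions of Proposition \ref{prop:t=0-z} and verify that $s_3(z)$ and $s_3(w)$, a priori possibly meromorphic, are in fact holomorphic at $t=0$ — the pole-order-$n$ term in $z$ is removed because the transformation rule for $z$ under $s_3$ divides out by (a power related to) $w$ or adds a compensating term, and symmetrically the apparent pole introduced into $w$ is killed by the vanishing order-$n$ zero of $z^{-1}$-type terms; (iii) note $s_3(x)$ and $s_3(y)$ are holomorphic because $x,y$ were already holomorphic and $s_3$ modifies them only by rational functions of $z,w$ whose poles again cancel; (iv) invoke Proposition \ref{prop:t=0-z} in the contrapositive direction (or a direct inspection) to conclude that the transformed solution, being holomorphic in all of $x,y,z,w$ at $t=0$, falls under case (1) of Proposition \ref{prop:t=0behavior}. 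One should also check $\alpha_3\neq 0$ is used precisely to guarantee the cancellation is genuine (if $\alpha_3=0$ the statement is vacuous or $s_3$ is trivial there), which matches the hypothesis.

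The main obstacle I anticipate is purely bookkeeping: getting the exact form of $s_3$ from Section 5 right (the paper's normalization of the Bäcklund transformations for the Sasano $B_4^{(1)}$ system is not the most symmetric one), and then tracking enough terms of the Laurent series — not just the leading term but potentially the next one or two — to be certain no residual pole survives after the cancellation of the leading singular term. In other words, the risk is that cancelling the $t^{-n}$ term in $s_3(z)$ leaves a $t^{-(n-1)}$ term; one must verify, using the coefficient relations that Proposition \ref{prop:t=0-z} encodes (these come from comparing further coefficients in the four defining ODEs), that all negative-power terms vanish simultaneously. I expect this to go through cleanly because $s_3$ is an involution intertwining the two parameter sets and the analogous computation was already done at $t=\infty$ in Corollary \ref{coro:z(n)-s_3}; indeed the cleanest write-up is probably: apply the $t=\infty$ argument verbatim with $t$ replaced by $1/t$, or simply state ``by direct calculation, using Proposition \ref{prop:t=0-z} and the definition of $s_3$'' as the surrounding propositions do. Given the style of the paper, the proof will be short — essentially a one-line reduction to direct computation — but the honest verification is the coefficient-cancellation check described above.
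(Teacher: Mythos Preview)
Your instinct at the end --- to mimic the proof of Corollary~\ref{coro:z(n)-s_3} --- is correct and is exactly what the paper does. However, the bulk of your write-up has two issues worth flagging.

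First, your description of $s_3$ is backwards: from Section~5, $s_3$ acts as $x\mapsto x$, $y\mapsto y$, $z\mapsto z+\alpha_3/w$, $w\mapsto w$. So $s_3(x),s_3(y),s_3(w)$ are trivially holomorphic (they equal $x,y,w$), and the only thing to check is that $s_3(z)=z+\alpha_3/w$ is holomorphic at $t=0$. Your step~(iii), worrying about rational modifications to $x,y$, is unnecessary.

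Second, and more substantively: your proposed method --- track enough subleading coefficients of $z$ and $w$ from the ODEs to verify directly that every negative-power term of $z+\alpha_3/w$ cancels --- is not what the paper does, and would be painful for general $n$ since Proposition~\ref{prop:t=0-z} only records leading terms. The paper's argument (identical in structure to Corollary~\ref{coro:z(n)-s_3}) is a clean contradiction: the leading-term cancellation shows $s_3(z)$ has a pole of order at most $n-1$; if it had a pole of some order $m$ with $1\le m\le n-1$, then applying Proposition~\ref{prop:t=0-z} to the \emph{transformed} solution (with $\alpha_3$ replaced by $-\alpha_3$) forces $s_3(w)=-(-\alpha_3)/c'_{0,-m}\,t^m+\cdots$, i.e.\ $s_3(w)$ vanishes to order exactly $m$. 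But $s_3(w)=w=-\alpha_3/c_{0,-n}\,t^n+\cdots$ vanishes to order exactly $n$ (here $\alpha_3\neq 0$ is used), a contradiction. This sidesteps all subleading-coefficient bookkeeping.
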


\begin{proof}
By direct calculation, 
we see that 
$s_3(x,y,z,w)$ is a solution of $B_4^{(1)}(\alpha_0,\alpha_1,\alpha_2+\alpha_3,-\alpha_3,\alpha_4+\alpha3)$ 
such that $s_3(z)$ has a pole of order $m \,\,(0\leq m \leq n-1)$ at $t=0$ and all of $s_3(x,y,w)$ are holomorphic at $t=0.$ 
\par
We assume that $s_3(z)$ has a pole of order $m \,\,(1\leq m \leq n-1)$ at $t=0,$ and show contradiction. 
From Proposition \ref{prop:t=0-z}, it follows that 
$$
s_3(z)=c_{0,-m}^{\prime}t^{-m}+\cdots, \,\,   s_3(w)=-(-\alpha_3)/c^{\prime}_{0,-m}t^m+\cdots. 
$$ 
On the other hand, 
by the definition of $s_3,$ we find that $s_3(w)=-\alpha_3/c_{0,-n}t^n+\cdots,$ 
which is a contradiction.

\end{proof}

\subsection{The case where $y,w$ have a pole at $t=0$}

Let us prove the following four lemmas:

\begin{lemma}
\label{lem:t=0-(y,w)-1}
{\it
Suppose that 
for $B_4^{(1)}(\alpha_j)_{0\leq j \leq 4},$ 
there exists a solution 
such that 
$y$ has a pole of order $n \,\,(n\geq 1)$ at $t=0$ 
and $x$ is holomorphic at $t=0.$ 
Then, 
\begin{equation*}
a_{0,0}=a_{0,1}=\cdots=a_{0,(n-1)}=0, \,\,
a_{0,n}=\frac{(n-1)+2\alpha_2+2\alpha_3+2\alpha_4}{2b_{0,-n}},
\end{equation*}
which implies that 
\begin{equation*}
2\alpha_3z+2z^2w+t
=
t x^{\prime}-
\left[
2x^2y-x^2+(1-2\alpha_2-2\alpha_3-2\alpha_4)x
\right]
=O(t^2).
\end{equation*}
}
\end{lemma}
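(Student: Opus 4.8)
The plan is to imitate the proof of Lemma~\ref{leminf(y,w)-1}: substitute the Laurent expansions of $x$ and $y$ at $t=0$ into the second equation
\[
ty^{\prime}=-2xy^2+2xy-(1-2\alpha_2-2\alpha_3-2\alpha_4)y+\alpha_1
\]
and compare coefficients order by order, starting from the lowest. Write $y=b_{0,-n}t^{-n}+b_{0,-(n-1)}t^{-(n-1)}+\cdots$ with $b_{0,-n}\neq 0$, and $x=a_{0,0}+a_{0,1}t+\cdots$ since $x$ is holomorphic at $t=0$. On the left-hand side $ty^{\prime}$ has no term of order lower than $t^{-n}$, while on the right-hand side the only term that can be of order below $t^{-n}$ is the product $-2xy^2$; this mismatch is what forces the vanishing of the low-order coefficients of $x$.

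Concretely, I would run an induction on $j$: assuming $a_{0,0}=\cdots=a_{0,j-1}=0$ for some $j$ with $0\leq j\leq n-1$ (the case $j=0$ being vacuous), so that $x=a_{0,j}t^{j}+\cdots$, the product $-2xy^2$ contributes $-2a_{0,j}b_{0,-n}^2\,t^{\,j-2n}$, and since $j\leq n-1$ gives $j-2n\leq -n-1<-n$, every other term on the right ($2xy$ starts at $t^{\,j-n}$, $-(1-2\alpha_2-2\alpha_3-2\alpha_4)y$ at $t^{-n}$, $\alpha_1$ at $t^{0}$) and the left-hand side vanish at order $t^{\,j-2n}$; hence $a_{0,j}=0$. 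This yields $a_{0,0}=\cdots=a_{0,n-1}=0$. Then with $x=a_{0,n}t^{n}+\cdots$ the order $t^{-n}$ finally receives contributions from both sides: $-nb_{0,-n}$ from $ty^{\prime}$, $-2a_{0,n}b_{0,-n}^2$ from $-2xy^2$, and $-(1-2\alpha_2-2\alpha_3-2\alpha_4)b_{0,-n}$ from $-(1-2\alpha_2-2\alpha_3-2\alpha_4)y$, while $2xy$ and $\alpha_1$ are $O(1)$; dividing by $b_{0,-n}\neq 0$ and solving the resulting linear relation gives $a_{0,n}=\{(n-1)+2\alpha_2+2\alpha_3+2\alpha_4\}/(2b_{0,-n})$.

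For the displayed consequence I would rearrange the first equation as $2\alpha_3z+2z^2w+t=tx^{\prime}-\{2x^2y-x^2+(1-2\alpha_2-2\alpha_3-2\alpha_4)x\}$. Since $x=O(t^{n})$ with $n\geq 1$ and $y=O(t^{-n})$, each term on the right is $O(t^{n})$, so the claim is immediate for $n\geq 2$. The one place that needs an actual computation rather than mere bookkeeping — and hence the main (admittedly minor) obstacle — is the case $n=1$, where an order count only gives $O(t)$: one must substitute $a_{0,1}=(\alpha_2+\alpha_3+\alpha_4)/b_{0,-1}$ into the coefficient of $t$ in $tx^{\prime}-2x^2y+x^2-(1-2\alpha_2-2\alpha_3-2\alpha_4)x$ and verify that it vanishes, which reduces to the identity $1-2A-(1-2A)=0$ with $A=\alpha_2+\alpha_3+\alpha_4$. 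Apart from that, the argument is routine coefficient comparison.
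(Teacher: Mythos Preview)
Your proposal is correct and follows exactly the approach indicated in the paper, namely substituting the Laurent series of $x,y$ at $t=0$ into the second equation and comparing coefficients; you have simply supplied the inductive details and the $n=1$ check for the consequence that the paper leaves to the reader. There is nothing to add or correct.
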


\begin{proof}
Substituting the Laurent series of $x,y$ at $t=0$ in 
\begin{equation*}
t y^{\prime}=
-2xy^2+2xy-(1-2\alpha_2-2\alpha_3-2\alpha_4)y+\alpha_1, 
\end{equation*}
we can prove the proposition.
\end{proof}

\begin{lemma}
\label{lem:t=0-(y,w)-2}
{\it
Suppose that 
for $B_4^{(1)}(\alpha_j)_{0\leq j \leq 4},$ 
there exists a solution 
such that 
$y,w$ both have a pole of order $n_1, n_3 \,\,(n_1, n_3\geq 1)$ at $t=0,$ 
and $x, z$ are both holomorphic at $t=0.$ 
$n_3$ is then an odd number.
}
\end{lemma}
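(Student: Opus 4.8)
The plan is to extract the leading‐order behavior of $y$ and $w$ at $t=0$ from the two equations in which they appear quadratically, namely the equations for $tz'$ and $tw'$, and to show that the only way to balance the most singular terms forces $n_3$ to be odd. First I would invoke Lemma \ref{lem:t=0-(y,w)-1}, which already tells us that $2\alpha_3 z + 2z^2w + t = O(t^2)$ when $x$ is holomorphic and $y$ has a pole of order $n_1$; since $z$ is holomorphic at $t=0$, this says $2z^2w = -2\alpha_3 z - t + O(t^2)$, so $z^2w$ is holomorphic at $t=0$. If $c_{0,0}=z(0)\neq 0$ this already forces $w$ holomorphic, contradicting that $w$ has a pole; hence $c_{0,0}=0$, i.e. $z$ itself vanishes at $t=0$ to some order $k\geq 1$, and then the relation $2z^2w=-2\alpha_3z-t+O(t^2)$ pins down $k$ and the leading coefficient of $z$ in terms of whether $\alpha_3=0$ or not (the case $\alpha_3\neq0$ giving $k=1$, and $\alpha_3=0$ forcing $2z^2w=-t+O(t^2)$, hence $z$ of order $1/2$ in a formal sense — which must instead be read off as $w$ having a pole whose order is tied to that of the zero of $z$).

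Next I would substitute $z = c_{0,1}t + \cdots$ (or the appropriate leading power) together with $w = d_{0,-n_3}t^{-n_3} + \cdots$ into
\begin{equation*}
t z^{\prime}=2z^2w-z^2+(1-2\alpha_4)z+2yz^2+t
\end{equation*}
and track the orders: the product $z^2 w$ then has order $2k - n_3$, the term $t$ has order $1$, and $2yz^2$ has order $2k - n_1$. Matching the lowest order among these against the left side $tz'$ (order $k$) yields one constraint relating $k$, $n_1$, $n_3$. Then I would do the same with
\begin{equation*}
t w^{\prime}=-2zw^2+2zw-(1-2\alpha_4)w-2\alpha_3y-4yzw+\alpha_3,
\end{equation*}
where the left side has order $-n_3$, the term $-2zw^2$ has order $k - 2n_3$, the term $-4yzw$ has order $k - n_1 - n_3$, and $-2\alpha_3 y$ has order $-n_1$. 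Comparing the two most singular terms ($-2zw^2$ of order $k-2n_3$ against $tw'$ of order $-n_3$) gives $k = n_3$, i.e. $z$ vanishes to exactly the order that $w$ blows up.

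With $k=n_3$ in hand, I would feed this back into the $tz'$ equation: the term $2z^2w$ then has order $2n_3 - n_3 = n_3$, matching the order of $tz'$, while the standalone $t$ has order $1$; for the leading balance to close we need the $t$ term not to dominate, which combined with Lemma \ref{lem:t=0-(y,w)-1}'s conclusion $2z^2w + t = O(t^2)$ forces $n_3 \geq 2$ (indeed $n_3\geq 3$, as in Lemma \ref{leminf(y,w)-2}) and, crucially, forces the leading coefficient identity $2c_{0,n_3}^2 d_{0,-n_3} + 1 = 0$ only when $n_3 = 1$ would have matched $t$ — so the actual parity constraint comes from the next recursion. The parity itself I expect to emerge from comparing, in the $tw'$ equation, the subleading terms: the recursion for the coefficients $d_{0,-n_3+j}$ involves the square $w^2$, and the requirement that $y$ (whose order $n_1$ is linked to $n_3$ through the $2yz^2$ balance, giving something like $n_1 = n_3 - 1$ or $n_1 = (n_3-1)/2$) be consistent forces $n_3 - 1$ to be even. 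The main obstacle will be bookkeeping the interlocking recursions cleanly enough to isolate the single congruence $n_3 \equiv 1 \pmod 2$ without getting lost in the coefficient algebra; the conceptual content is just the three order‑matching comparisons above, and once $k=n_3$ and the link between $n_1$ and $n_3$ are established, the parity of $n_3$ is forced by integrality of $n_1$.
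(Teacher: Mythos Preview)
Your opening is right: Lemma~\ref{lem:t=0-(y,w)-1} gives $2\alpha_3 z + 2z^2w + t = O(t^2)$, and since $w$ has a genuine pole this forces $c_{0,0}=0$, so $z$ vanishes to some order $k\ge 1$ at $t=0$. But after that the argument goes astray.

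The claim that the $tw'$ equation yields $k=n_3$ is incorrect. In
\[
tw' = -2zw^2 + 2zw - (1-2\alpha_4)w - 2\alpha_3 y - 4yzw + \alpha_3,
\]
the term $-2zw^2$ (order $k-2n_3$) need not balance against the left side; it can balance against $-4yzw$ (order $k-n_1-n_3$) when $n_1=n_3$, via $d_{0,-n_3}+2b_{0,-n_1}=0$. In fact the correct relation, obtained below, is $k=(n_3+1)/2$, which equals $n_3$ only when $n_3=1$. With the wrong value of $k$ the rest of the order-matching collapses, and the final appeal to ``integrality of $n_1$'' never crystallizes into an argument.

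The point you are missing is that parity falls out \emph{immediately} from the relation you already have, without ever touching the $tz'$ or $tw'$ equations. Write $z=c_{0,k}t^k+\cdots$ with $c_{0,k}\neq 0$ and $w=d_{0,-n_3}t^{-n_3}+\cdots$. Then $2z^2w$ has leading order $2k-n_3$, while $2\alpha_3 z$ has order $\ge k\ge 1$ and $t$ has order $1$. If $2k-n_3\le 0$, the coefficient of $t^{2k-n_3}$ in $2\alpha_3 z+2z^2w+t$ is $2c_{0,k}^2d_{0,-n_3}\neq 0$, contradicting $O(t^2)$. If $2k-n_3\ge 2$, then at order $t^1$ the relation reads $2\alpha_3 c_{0,1}+1=0$, impossible when $k\ge 2$ (then $c_{0,1}=0$) and forcing $n_3\le 0$ when $k=1$. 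Hence $2k-n_3=1$, i.e.\ $n_3=2k-1$ is odd.

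This is exactly what the paper does, phrased for $n_3=2$: from the $tx'$ equation (equivalently from $2\alpha_3 z+2z^2w+t=O(t^2)$) one reads off $c_{0,0}=c_{0,1}=0$ at orders $t^{-2}$ and $t^0$, and then the $t^1$ coefficient gives $0=1$. The general even case is identical: for $n_3=2m$ one successively obtains $c_{0,0}=\cdots=c_{0,m}=0$, after which $2\alpha_3 z+2z^2w=O(t^2)$ and the surviving $t$ term yields the contradiction. Keep the analysis in the first equation; the detour through $tz'$ and $tw'$ is where the proposal breaks.
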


\begin{proof}
We suppose that $n_3=2$ and show a contradiction. 
If $n_3=4, 6, \ldots,$ 
we can prove the contradiction in the same way. 
\par
Comparing the coefficients of the terms $t^{-2}, t^{0}$ in 
\begin{equation*}
t x^{\prime}=
2x^2y-x^2+(1-2\alpha_2-2\alpha_3-2\alpha_4)x+2\alpha_3z+2z^2w+t, 
\end{equation*}
we find that $c_{0,0}=c_{0,1}=0.$ 
Furthermore, 
by comparing the coefficients of the term $t$ in 
\begin{equation*}
t x^{\prime}=
2x^2y-x^2+(1-2\alpha_2-2\alpha_3-2\alpha_4)x+2\alpha_3z+2z^2w+t, 
\end{equation*}
we see that $0=1,$ which is impossible. 
\end{proof}

\begin{lemma}
\label{lem:t=0-(y,w)-3}
{\it
Suppose that 
for $B_4^{(1)}(\alpha_j)_{0\leq j \leq 4},$ 
there exists a solution 
such that 
$y,w$ both have a pole of order $n_1, n_3 \,\,(n_1, n_3\geq 1)$ at $t=0,$ 
and $x, z$ are both holomorphic at $t=0.$ 
Moreover, 
assume that $n_3=3,5,7,\ldots.$ 
Then, 
$n_1 \leq n_3.$
}
\end{lemma}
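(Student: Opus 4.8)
The plan is to argue by contradiction: assume $n_1 > n_3$, i.e. $n_1 \geq n_3 + 1$, and force an inconsistency out of the fourth equation of the system, the one for $tw^{\prime}$.

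First I would extract the input I need from Lemma~\ref{lem:t=0-(y,w)-1}. Since $y$ has a pole of order $n_1$ at $t = 0$ and $x$ is holomorphic there, that lemma gives $x = O(t^{n_1})$ together with the identity $2\alpha_3 z + 2z^2 w + t = O(t^2)$. The crucial preliminary step is then to read off from this identity the exact order to which $z$ vanishes at $t=0$. Write $z = c_{0,k_0}t^{k_0} + \cdots$ with $c_{0,k_0}\neq 0$ and $k_0 \geq 0$ (one first notes $z\not\equiv 0$, as otherwise the left side would equal $t + O(t^2)$). The term $2z^2w$ has a pole of order $n_3 - 2k_0$ unless $2k_0 \geq n_3$, and since the other terms $2\alpha_3 z$ and $t$ are holomorphic, $2k_0 \geq n_3$ must hold; because $n_3$ is odd, $2k_0 - n_3$ is then a positive odd integer, so $k_0 \geq (n_3+1)/2 \geq 2$. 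Consequently $2\alpha_3 z$ contributes nothing to the coefficient of $t^1$, and comparing that coefficient rules out $2k_0 - n_3 \geq 3$ (otherwise the coefficient of $t^1$ would be the nonzero contribution $1$ of the term $t$), leaving $2k_0 - n_3 = 1$. Thus $z$ vanishes at $t=0$ to order exactly $(n_3+1)/2$; set $m := (n_3-1)/2 \geq 1$, so $n_3 = 2m+1$ and $z = c_{0,m+1}t^{m+1}+\cdots$ with $c_{0,m+1}\neq 0$.

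With this in hand, I would substitute into
$$
tw^{\prime} = -2zw^2 + 2zw - (1-2\alpha_4)w - 2\alpha_3 y - 4yzw + \alpha_3
$$
and count pole orders at $t=0$ term by term. The left side $tw^{\prime}$ has pole order $n_3 = 2m+1$; on the right, $-2zw^2$ has order $3m+1$, $2zw$ has order $m$, $-(1-2\alpha_4)w$ has order $2m+1$, $-2\alpha_3 y$ has order at most $n_1$, $-4yzw$ has order $n_1 + m$, and $\alpha_3$ has order $0$. Under $n_1 \geq 2m+2$ the single term $-4yzw$ strictly dominates all other right-hand terms and also exceeds the pole order of $tw^{\prime}$. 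Hence the coefficient of $t^{-(n_1+m)}$ on the right must vanish; but it equals $-4\,b_{0,-n_1}c_{0,m+1}d_{0,-n_3}$, a product of three nonzero numbers, a contradiction. This proves $n_1 \leq n_3$.

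I expect the only genuinely delicate step to be the determination that $z$ vanishes to order exactly $(n_3+1)/2$: one must juggle the ``no pole'' constraint and the vanishing of the $t^0$ and $t^1$ coefficients of $2\alpha_3 z + 2z^2 w + t$, and it is precisely the parity of $n_3$ that pins the vanishing order down to this one value. Everything after that is routine leading-order bookkeeping, and the dominance of $-4yzw$ is immediate from $n_1 > n_3 = 2m+1$ and $m \geq 1$. (The same tally in fact also excludes $n_1 < n_3$, since then $-2zw^2$ would dominate and its leading coefficient $-2c_{0,m+1}d_{0,-n_3}^2$ would have to vanish; so the argument really gives $n_1 = n_3$, though only the stated inequality is needed here.)
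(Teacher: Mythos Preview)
Your proof is correct. The determination of the vanishing order of $z$ from Lemma~\ref{lem:t=0-(y,w)-1} is exactly right, and the dominance argument in the $w$-equation is clean: under $n_1\geq 2m+2$ the term $-4yzw$, of pole order $n_1+m\geq 3m+2$, strictly beats every other term (the next largest being $-2zw^2$ at $3m+1$), so its nonzero leading coefficient gives the contradiction.

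The paper's proof shares your first step (pinning $z=c_{0,m+1}t^{m+1}+\cdots$ via the identity $2\alpha_3 z+2z^2w+t=O(t^2)$), but then uses the \emph{third} equation $tz'=2z^2w-z^2+(1-2\alpha_4)z+2yz^2+t$ rather than the fourth. There the only term that can have nonpositive order is $2yz^2$, of order $2m+2-n_1$; all other terms are $O(t)$, so $n_1>2m+1$ makes this the unique lowest term with nonzero coefficient $2b_{0,-n_1}c_{0,m+1}^2$, a contradiction. This route is a touch shorter because only one term needs to be isolated. On the other hand, your $w$-equation argument yields more: your closing parenthetical, that $-2zw^2$ would dominate if $n_1<n_3$, is correct and in effect gives $n_1=n_3$ directly, which is the content of the next lemma in the paper (Lemma~\ref{lem:t=0-(y,w)-4}). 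So the two approaches trade brevity for extra information.
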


\begin{proof}
We treat the case where $n_3=3.$ The other cases can be proved in the same way. 
\par
Comparing the coefficients of the terms $t^{-3}, t^{-1}$ and $ t$ in 
\begin{equation*}
t x^{\prime}=
2x^2y-x^2+(1-2\alpha_2-2\alpha_3-2\alpha_4)x+2\alpha_3z+2z^2w+t, 
\end{equation*}
we observe that $c_{0,0}=c_{0,1}=0$ and $2c_{0,2}^2d_{0,-3}+1=0,$ 
which implies that $c_{0,2}\neq 0.$ 
\par
Comparing the lowest terms in
\begin{equation*}
t z^{\prime}=
2z^2w-z^2+(1-2\alpha_4)z+2yz^2+t, 
\end{equation*}
we find that $n_1=1,2,3.$ 
\end{proof}

\begin{lemma}
\label{lem:t=0-(y,w)-4}
{\it
Suppose that 
for $B_4^{(1)}(\alpha_j)_{0\leq j \leq 4},$ 
there exists a solution 
such that 
$y,w$ both have a pole at $t=0$ 
and 
$x,z$ are both holomorphic at $t=0.$ 
$y,w$ then both have a pole of order one at $t=0.$
}
\end{lemma}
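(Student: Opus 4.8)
The plan is to show that the pole orders $n_1$ of $y$ and $n_3$ of $w$ at $t=0$ must both equal $1$, by a direct comparison of the lowest-order Laurent coefficients in the $z$- and $w$-equations, using the structural facts already established. By Lemma \ref{lem:t=0-(y,w)-2}, $n_3$ is odd, so it suffices to (a) rule out $n_3\geq 3$ and (b) show that, when $n_3=1$, one also has $n_1=1$.

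For step (a) I would argue as in the proof of Lemma \ref{lem:t=0-(y,w)-3}, treating $n_3=3$ as the representative case (the odd $n_3\geq 5$ being handled identically). That computation already supplies $c_{0,0}=c_{0,1}=0$, the relation $2c_{0,2}^2 d_{0,-3}+1=0$ (hence $c_{0,2}\neq 0$), and $n_1\leq 3$. The crucial step is to compare the coefficients of $t^{-4}$ in
\[
t w^{\prime} = -2zw^2 + 2zw - (1-2\alpha_4)w - 2\alpha_3 y - 4yzw + \alpha_3 .
\]
Since $tw^{\prime}$ is $O(t^{-3})$, its $t^{-4}$ coefficient vanishes; on the right side the only contributions at order $t^{-4}$ come from $-2zw^2$ (coefficient $-2c_{0,2}d_{0,-3}^2$) and, precisely when $n_1=3$, from $-4yzw$ (coefficient $-4b_{0,-3}c_{0,2}d_{0,-3}$), because $2zw$, $-(1-2\alpha_4)w$, $-2\alpha_3 y$ and $\alpha_3$ are all $O(t^{-3})$. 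If $n_1\leq 2$ this forces $c_{0,2}d_{0,-3}^2=0$, which is impossible. If $n_1=3$ it forces $d_{0,-3}=-2b_{0,-3}$; I would then compare the coefficients of $t$ in
\[
t z^{\prime} = 2z^2 w - z^2 + (1-2\alpha_4)z + 2yz^2 + t ,
\]
where the relation $2c_{0,2}^2 d_{0,-3}+1=0$ makes $2z^2 w + t = O(t^2)$, where $tz^{\prime}$, $-z^2$ and $(1-2\alpha_4)z$ have no $t$-term, and where $2yz^2$ contributes $2b_{0,-3}c_{0,2}^2$; this gives $b_{0,-3}c_{0,2}^2=0$, again impossible. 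Hence $n_3=1$.

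For step (b), suppose $n_3=1$. By Lemma \ref{lem:t=0-(y,w)-1}, $2\alpha_3 z + 2z^2 w + t = O(t^2)$; since $w$ has a simple pole, the $t^{-1}$ coefficient forces $c_{0,0}=0$, and then the $t$-coefficient forces $z = c_{0,1}t+\cdots$ with $c_{0,1}\neq 0$. Next I would compare the lowest-order coefficients — those of $t^{2-n_1}$ — in the same $z$-equation: for $n_1\geq 2$, only $2yz^2$ reaches order $t^{2-n_1}$ (the other terms being $O(t)$ or higher), so its coefficient $2b_{0,-n_1}c_{0,1}^2$ must vanish, which is impossible. Therefore $n_1=1$, and together with $n_3=1$ this proves the lemma.

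The main obstacle I expect is the bookkeeping of orders: several right-hand-side terms have Laurent orders that coincide or swap their relative position as $n_1$ and $n_3$ vary, so one must carefully pin down which single term controls the relevant lowest-order coefficient in each subcase. Once that term is isolated, each contradiction is just the nonvanishing of a product of leading Laurent coefficients, all of which are nonzero by the definition of the pole orders.
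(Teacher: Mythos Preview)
Your proof is correct and follows essentially the same approach as the paper, namely comparing low-order Laurent coefficients in the $z$- and $w$-equations to force contradictions in each subcase. Your step (b) showing $n_1=1$ when $n_3=1$ is in fact more explicit than the paper's own proof, which only rules out $n_3\geq 3$ and leaves the bound on $n_1$ implicit; the intermediate relation $d_{0,-3}=-2b_{0,-3}$ you derive in step (a) for $n_1=3$ is correct but unnecessary, since the $z$-equation at order $t^1$ already yields $b_{0,-3}c_{0,2}^2=0$ directly.
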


\begin{proof}
We assume that $n_3=3$ and show a contradiction. 
If $n_3=5,7, \ldots,$ 
we can prove the contradiction in the same way. 
\par
By Lemma \ref{lem:t=0-(y,w)-3} and its proof, 
we see that $c_{0,0}=c_{0,1}=0, \,\,2c_{0,2}^2d_{0,-3}+1=0$ and $n_1=1,2,3.$ 
We note that $c_{0,2}\neq 0.$
\par
Let us first suppose that $n_1=1$ and $n_3=3.$ 
By comparing the coefficients of the terms $t$ in 
\begin{equation*}
t z^{\prime}=
2z^2w-z^2+(1-2\alpha_4)z+2yz^2+t, 
\end{equation*}
we then find that $2c_{0,2}^2d_{0,-3}+2b_{0,-3}c_{0,2}^2+1=0,$ 
which implies that $2b_{0,-3}c_{0,2}^2=0.$ 
This is impossible. 
\par
Let us suppose that $n_1=1,2$ and $n_3.$ 
By comparing the coefficients of the terms $t^{-4}$ in 
\begin{equation*}
t w^{\prime}=
-2zw^2+2zw-(1-2\alpha_4)w-2\alpha_3y-4yzw+\alpha_3, 
\end{equation*}
we then observe that $-2c_{0,2}d_{0,-3}^2=0,$ 
which is impossible.
\end{proof}

Let us then prove the following proposition:

\begin{proposition}
\label{prop:t=0-(y,w)}
{\it
Suppose that 
for $B_4^{(1)}(\alpha_j)_{0\leq j \leq 4},$ 
there exists a solution 
such that 
$y,w$ both have a pole at $t=0$ 
and 
$x,z$ are both holomorphic at $t=0.$ 
Then, $\alpha_4(\alpha_3+\alpha_4)\neq 0$ and 
\begin{equation*}
\begin{cases}
\displaystyle x=\frac{\alpha_2+\alpha_3+\alpha_4}{2\alpha_4(\alpha_3+\alpha_4)} t+\cdots, \\
\displaystyle y=2\alpha_4(\alpha_3+\alpha_4) t^{-1}+\cdots, \\
\displaystyle z=\frac{1}{2\alpha_4} t+\cdots, \\
\displaystyle w=-2\alpha_4(\alpha_3+\alpha_4)t^{-1}+\cdots.
\end{cases}
\end{equation*}

}
\end{proposition}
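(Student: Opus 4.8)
The plan is to determine the leading coefficients of the Laurent expansions of $x,y,z,w$ at $t=0$ by substituting the ansatz
\begin{equation*}
y=b_{0,-n_1}t^{-n_1}+\cdots,\quad w=d_{0,-n_3}t^{-n_3}+\cdots,\quad x=a_{0,n}t^{n}+\cdots,\quad z=c_{0,m}t^{m}+\cdots
\end{equation*}
into the four equations of $B_4^{(1)}(\alpha_j)_{0\leq j\leq 4}$ and comparing the lowest-order terms. By Lemma \ref{lem:t=0-(y,w)-4} we already know $n_1=n_3=1$, so the expansions of $y$ and $w$ start with $b_{0,-1}t^{-1}$ and $d_{0,-1}t^{-1}$; and by Lemma \ref{lem:t=0-(y,w)-2} (with $n_3=1$, which is odd) and Lemma \ref{lem:t=0-(y,w)-1} specialized to $n=1$ we have $a_{0,0}=0$, so $x$ vanishes at $t=0$, together with the key identity $2\alpha_3 z+2z^2w+t=O(t^2)$ coming from the $x$-equation.

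First I would use Lemma \ref{lem:t=0-(y,w)-1} with $n=1$, which gives $a_{0,1}=(2\alpha_2+2\alpha_3+2\alpha_4)/(2b_{0,-1})$, so in particular $x$ has a simple zero at $t=0$ unless $\alpha_2+\alpha_3+\alpha_4=0$; this will produce the stated leading coefficient of $x$ once $b_{0,-1}$ is known. Next I would extract $b_{0,-1}$ and $d_{0,-1}$: comparing the coefficients of $t^{-2}$ in the $w$-equation $tw'=-2zw^2+2zw-(1-2\alpha_4)w-2\alpha_3 y-4yzw+\alpha_3$ gives a relation among $c_{0,1}$ (the leading coefficient of $z$, which is holomorphic and, by the identity $2z^2w+t=O(t^2)$, actually starts at order $t^1$ with $2c_{0,1}^2 d_{0,-1}+1=0$), $d_{0,-1}$ and $b_{0,-1}$; comparing the coefficient of $t^{-1}$ in the $z$-equation $tz'=2z^2w-z^2+(1-2\alpha_4)z+2yz^2+t$ gives $-d_{0,-1}=2c_{0,1}^2 d_{0,-1}\cdot 0+\cdots$, i.e. a relation tying $b_{0,-1}+d_{0,-1}$ to known quantities, from which I expect $b_{0,-1}+d_{0,-1}=0$. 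Combined with $2c_{0,1}^2 d_{0,-1}=-1$ and the $w$-equation relation, this should pin down $c_{0,1}=1/(2\alpha_4)$ and $b_{0,-1}=-d_{0,-1}=2\alpha_4(\alpha_3+\alpha_4)$, which in turn forces $\alpha_4\neq 0$ and $\alpha_3+\alpha_4\neq 0$ (else $b_{0,-1}=0$, contradicting that $y$ has a genuine pole), and then $x=\{(\alpha_2+\alpha_3+\alpha_4)/(2\alpha_4(\alpha_3+\alpha_4))\}t+\cdots$ as claimed.

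The main obstacle is bookkeeping: because the $x$-equation forces $2\alpha_3 z+2z^2w+t=O(t^2)$ rather than the naive $O(t)$, the leading behavior of $z$ is shifted (its constant term vanishes), so one must be careful about which power of $t$ carries the "leading" information in each of the four equations — the relevant comparisons are at $t^{-2}$ and $t^{-1}$ for the $w$- and $z$-equations but at $t^1$ for the $x$-equation and at $t^{-2}$ for the $y$-equation (already used in Lemma \ref{lem:t=0-(y,w)-1}). Once the correct orders are identified, each comparison is a short algebraic identity. After assembling these, the statement $\alpha_4(\alpha_3+\alpha_4)\neq 0$ follows from nonvanishing of $b_{0,-1}$ and $c_{0,1}$, and the displayed expansions follow by back-substitution; the higher-order coefficients are not needed for the proposition, so I would stop there. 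Since the paper's proof of the parallel statement at $t=\infty$ (Proposition \ref{prop:t=0-(y,w)}'s analogue) is merely "it can be easily checked," I expect the author to dispatch this the same way, but the computation above is the content behind that phrase.
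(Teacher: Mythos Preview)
Your overall strategy matches the paper's: use Lemma~\ref{lem:t=0-(y,w)-4} to reduce to simple poles, Lemma~\ref{lem:t=0-(y,w)-1} to get $a_{0,0}=c_{0,0}=0$ and $a_{0,1}=(\alpha_2+\alpha_3+\alpha_4)/b_{0,-1}$, then compare low-order coefficients in the remaining equations. However, your bookkeeping---which you rightly flag as the main obstacle---is off in two places and would stall the argument as written.

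First, the identity from Lemma~\ref{lem:t=0-(y,w)-1} is $2\alpha_3 z+2z^2w+t=O(t^2)$, not $2z^2w+t=O(t^2)$; at order $t^1$ this gives $2\alpha_3 c_{0,1}+2c_{0,1}^2d_{0,-1}+1=0$, not your $2c_{0,1}^2d_{0,-1}+1=0$. Second, with $c_{0,0}=0$ neither the $t^{-2}$ coefficient of the $w$-equation nor the $t^{-1}$ coefficient of the $z$-equation carries any information (all terms are $O(t^{-1})$ and $O(t^1)$ respectively). The paper instead compares at order $t^1$ in the $x$- and $z$-equations and order $t^{-1}$ in the $w$-equation, yielding the three relations
\[
2\alpha_3 c_{0,1}+2c_{0,1}^2d_{0,-1}+1=0,\quad
2c_{0,1}^2d_{0,-1}-2\alpha_4 c_{0,1}+2b_{0,-1}c_{0,1}^2+1=0,\quad
-2c_{0,1}d_{0,-1}^2+2\alpha_4 d_{0,-1}-2\alpha_3 b_{0,-1}-4b_{0,-1}c_{0,1}d_{0,-1}=0.
\]
Subtracting the first two gives $c_{0,1}=(\alpha_3+\alpha_4)/b_{0,-1}$ (hence $\alpha_3+\alpha_4\neq 0$), the first then expresses $d_{0,-1}$ in terms of $b_{0,-1}$, and substituting into the third forces $b_{0,-1}=2\alpha_4(\alpha_3+\alpha_4)$; the relation $b_{0,-1}+d_{0,-1}=0$ you expect is a consequence of this system, not an input. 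With these corrections your outline becomes exactly the paper's proof.
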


\begin{proof}
Let us first note that $y,w$ have a pole of order one at $t=0.$ 
From Lemma \ref{lem:t=0-(y,w)-1}, 
it then follows that $c_{0,0}=0.$ 
\par
Comparing the terms $t, t, t^{-1}$ in 
\begin{equation*}
\begin{cases}
\displaystyle 
t x^{\prime}=
2x^2y-x^2+(1-2\alpha_2-2\alpha_3-2\alpha_4)x+2\alpha_3z+2z^2w+t, \\
\displaystyle 
t z^{\prime}=
2z^2w-z^2+(1-2\alpha_4)z+2yz^2+t, \\
\displaystyle 
t w^{\prime}=
-2zw^2+2zw-(1-2\alpha_4)w-2\alpha_3y-4yzw+\alpha_3, \\
\end{cases}
\end{equation*}
we find that 
\begin{equation}
\label{eqn:relation -(y,w)}
\begin{cases}
2\alpha_3c_{0,1}+2c_{0,1}^2d_{0,-1}+1=0, \\
2c_{0,1}^2-2\alpha_4c_{0,1}+2b_{0,-1}c_{0,1}^2+1=0, \\
-2c_{0,1}d_{0,-1}^2+2\alpha_4d_{0,-1}-2\alpha_3b_{0,-1}-4b_{0,-1}c_{0,1}d_{0,-1}=0,
\end{cases}
\end{equation}
respectively. 
Based on the first equation of (\ref{eqn:relation -(y,w)}), 
we find that $c_{0,1}\neq 0.$ 
The first and second equations of (\ref{eqn:relation -(y,w)}) shows that 
$2(\alpha_3+\alpha_4)c_{0,1}-2b_{0,-1}c_{0,1}^2=0,$ 
which implies that 
$$
c_{0,1}=(\alpha_3+\alpha_4)/b_{0,-1}\neq 0.
$$  
Thus, from the first equation of (\ref{eqn:relation -(y,w)}), 
we find that 
$$
d_{0,-1}=-b_{0,-1}^2/\{2(\alpha_3+\alpha_4)^2\}-\alpha_3b_{0,-1}/(\alpha_3+\alpha_4).
$$
From the third equation of (\ref{eqn:relation -(y,w)}), 
we then see that 
$b_{0,-1}=2\alpha_4(\alpha_3+\alpha_4)\neq 0.$ 
\par
By Lemma \ref{lem:t=0-(y,w)-1}, 
we observe that
\begin{equation*} 
\begin{cases}
a_{0,0}=0, \,\,a_{0,1}=(\alpha_2+\alpha_3+\alpha_4)/\{2\alpha_4(\alpha_3+\alpha_4)\}, \\
b_{0,-1}=2\alpha_4(\alpha_3+\alpha_4), \\
c_{0,0}=0, \,\,c_{0,1}=1/2 \cdot \alpha_4, \\
d_{0,-1}=-2\alpha_4(\alpha_3+\alpha_4).
\end{cases}
\end{equation*}

\end{proof}

\subsection{Summary}

Let us summarize the results in this section.

\begin{proposition}
{\it
Suppose that 
for $B_4^{(1)}(\alpha_j)_{0\leq j \leq 4},$ 
there exists a meromorphic solution at $t=0.$ 
Then, 
$a_{0,0}=0, \,\,\alpha_0-\alpha_1$ 
and 
$y+w$ is holomorphic at $t=0.$

}
\end{proposition}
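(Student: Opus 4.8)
The statement is a direct summary of the three cases analyzed in Propositions~\ref{prop:t=0behavior}, \ref{prop:a0-determine}, \ref{prop:t=0-z} and \ref{prop:t=0-(y,w)}, so the plan is simply to assemble those results. First I would invoke Proposition~\ref{prop:t=0behavior}, which guarantees that a meromorphic solution at $t=0$ falls into exactly one of three cases: (1) all of $x,y,z,w$ holomorphic at $t=0$; (2) $z$ has a pole of order $n\geq 1$ at $t=0$ with $x,y,w$ holomorphic; or (3) $y,w$ both have a pole at $t=0$ with $x,z$ holomorphic. I will treat each case and verify that in all three the two asserted conclusions---namely $a_{0,0}\in\{0,\alpha_0-\alpha_1\}$ and that $y+w$ is holomorphic at $t=0$---hold.

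In case~(1), Proposition~\ref{prop:a0-determine} gives $a_{0,0}=0$ or $a_{0,0}=\alpha_0-\alpha_1$ directly, and since $y$ and $w$ are each holomorphic at $t=0$, so is $y+w$; no further work is needed. In case~(2), Proposition~\ref{prop:t=0-z} shows that $x$ is holomorphic with $x=(\alpha_0-\alpha_1)+O(t^n)$, so again $a_{0,0}=\alpha_0-\alpha_1$ lies in the required set; moreover in that expansion $y$ is holomorphic (equal to $1/2+O(t^n)$) and $w=-\alpha_3/c_{0,-n}\,t^n+\cdots$ is holomorphic, hence $y+w$ is holomorphic at $t=0$. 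In case~(3), Lemma~\ref{lem:t=0-(y,w)-1} (applied with $n=1$, which is forced by Lemma~\ref{lem:t=0-(y,w)-4}) yields $a_{0,0}=0$, so the first conclusion holds; the second conclusion is the crucial point of this case, and I would read it off from the leading Laurent coefficients computed in the proof of Proposition~\ref{prop:t=0-(y,w)}: there one finds $b_{0,-1}=2\alpha_4(\alpha_3+\alpha_4)$ and $d_{0,-1}=-2\alpha_4(\alpha_3+\alpha_4)$, so the order-one poles of $y$ and $w$ cancel exactly in $y+w$. To make this rigorous one should check the cancellation persists, i.e. that $y+w$ has no pole at all; this follows because $z$ is holomorphic at $t=0$ and one can use the equation $t(z'/z) = 2zw - z + (1-2\alpha_4) + 2yz + t/z$ together with $c_{0,0}=0$, or more simply combine the second and third ODEs to express $y+w$ in terms of $z$, $z'$ and holomorphic data near $t=0$.

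The one step requiring genuine care is the holomorphy of $y+w$ in case~(3): the individual functions $y$ and $w$ are genuinely singular, and the claim is that the singularities are of the same order (namely one, by Lemma~\ref{lem:t=0-(y,w)-4}) with opposite leading coefficients. I expect this to be the main obstacle, but it is already essentially done inside the proof of Proposition~\ref{prop:t=0-(y,w)}, where the relations in~(\ref{eqn:relation -(y,w)}) pin down $b_{0,-1}=-d_{0,-1}=2\alpha_4(\alpha_3+\alpha_4)$. Combining Lemma~\ref{lem:t=0-(y,w)-1} (which forces $2\alpha_3 z + 2z^2 w + t = O(t^2)$, controlling the interaction between the pole of $w$ and the zero of $z$) with the explicit leading terms, one confirms that $y+w=O(1)$ near $t=0$. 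Thus in every case both assertions hold, which completes the proof.
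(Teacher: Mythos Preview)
Your proposal is correct and follows essentially the same approach as the paper: this proposition is stated as a summary of Section~2 with no separate proof, and your case-by-case assembly of Propositions~\ref{prop:t=0behavior}, \ref{prop:a0-determine}, \ref{prop:t=0-z}, and \ref{prop:t=0-(y,w)} is exactly what the paper intends. One minor remark: in case~(3) your extra caution about whether ``the cancellation persists'' is unnecessary---Lemma~\ref{lem:t=0-(y,w)-4} forces both $y$ and $w$ to have poles of order exactly one, so once Proposition~\ref{prop:t=0-(y,w)} gives $b_{0,-1}+d_{0,-1}=0$, holomorphy of $y+w$ is immediate and no further ODE manipulation is needed.
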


\section{Meromorphic solution at $t=c\in\C^{*}$}

In this section, 
we treat meromorphic solutions near $t=c\in\mathbb{C}^{*}.$ 
We then obtain the following proposition 
in the same way as Proposition \ref{prop:inf-summary}. 

\begin{proposition}
{\it
Suppose that 
for $B_4^{(1)}(\alpha_j)_{0\leq j \leq 4},$ 
there exists a meromorphic solution such that 
some of $(x,y,z,w)$ have a pole at $t=c\in\mathbb{C}^{*}.$ 
One of the following then occurs:
\newline
{\rm (1)}\quad $x$ has a pole of order one at $t=c$ and $y,z,w$ are all holomorphic at $t=c,$
\newline
{\rm (2)}\quad $y$ has a pole of order two at $t=c$ and $x,z,w$ are all holomorphic at $t=c,$
\newline
{\rm (3)}\quad $z$ has a pole of order $n \,(n\geq 1)$ at $t=c$ and $x,y,w$ are all holomorphic at $t=c,$
\newline
{\rm (4)}\quad $w$ has a pole of order two at $t=c$ and $y,z,w$ are all holomorphic at $t=c,$
\newline
{\rm (5)}\quad $x,z$ both have a pole of order one at $t=c$ and $y,w$ are both holomorphic at $t=c,$
\newline
{\rm (6)}\quad $x$ has a pole of order one at $t=c$ and $w$ has a pole of order two at $t=c$ and 
$y,z$ 
\newline
\hspace{4mm}
both 
are holomorphic at $t=c,$
\newline
{\rm (7)}\quad $y,w$ both have a pole at $t=c$ and $x,z$ are both holomorphic at $t=c.$
}
\end{proposition}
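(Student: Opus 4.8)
The plan is to run the same exhaustive case analysis that was used for $t=\infty$ in Section~1 (ending with Proposition~\ref{prop:inf-summary}): go through each of the $2^4-1=15$ nonempty subsets of $\{x,y,z,w\}$ that could carry a pole at $t=c$, substitute Laurent expansions $x=\sum_k a_{c,k}(t-c)^k$, $y=\sum_k b_{c,k}(t-c)^k$, etc.\ into the four equations of $B_4^{(1)}(\alpha_j)_{0\le j\le4}$, and compare orders and leading coefficients. The essential simplification at a point $t=c\in\C^{*}$ is that the factor $t$ on the left-hand sides and the inhomogeneous term $+t$ on the right-hand sides of the $x$- and $z$-equations are both $O(1)$ near $t=c$, so they can never balance a pole; hence the leading-order balances are governed purely by the quadratic and cubic nonlinearities, exactly as at $t=\infty$ but without the polynomial-degree bookkeeping forced by the $+t$ terms.

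First I would eliminate the forbidden combinations by pure order counting. If $x$ and $y$ both have poles, then $-2xy^2$ is the strict leading term on the right of the $y$-equation, and the balance with the left-hand side forces the pole order of $y$ to be $\le0$, a contradiction; applying the same reasoning to the $z$-equation (where $2yz^2$, resp.\ $2z^2w$, dominates, even after combining $2yz^2+2z^2w=2z^2(y+w)$) rules out the pairs $\{y,z\}$ and $\{z,w\}$. Since every triple and the quadruple contains one of $\{x,y\}$, $\{y,z\}$, $\{z,w\}$, all of those are excluded as well. This leaves exactly the singletons $\{x\},\{y\},\{z\},\{w\}$ and the pairs $\{x,z\},\{x,w\},\{y,w\}$, which are cases (1)--(7).

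For the surviving cases I would pin down the pole orders and the compatibility conditions as in Section~1. When only $x$ has a pole, the $y$-equation forces $y(c)=0$ and then the $x$-equation forces $\mathrm{ord}\,x=1$ (case (1)); when only $y$ has a pole, the $y$-equation forces $x$ to vanish to first order at $t=c$, whence $\mathrm{ord}\,y=2$ (case (2)); symmetrically, the $w$- and $z$-equations give case (4). The pairs $\{x,z\}$ and $\{x,w\}$ are handled the same way through the $y$- and $w$-equations, giving in case (6) that $\mathrm{ord}\,x=1$ and $\mathrm{ord}\,w=2$ after forcing a simple zero of $z$ at $t=c$. The case $\{z\}$ with $z$ of arbitrary pole order $n\ge1$ is the one requiring a recursive argument rather than a single leading-order balance: as in Propositions~\ref{prop:z-inf} and~\ref{prop:t=0-z}, the requirement that $2\alpha_3 z+2z^2w$ stay bounded forces $w$ to vanish to order $n$ at $t=c$ with prescribed leading coefficient, after which the remaining Laurent coefficients are determined step by step. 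Finally, $\{y,w\}$ survives here precisely because the leading poles of $y$ and $w$ cancel in $y+w$, so that $2yz^2+2z^2w=2z^2(y+w)$ is bounded in the $z$-equation, and $x,z$ are forced to vanish at $t=c$; its detailed normalization parallels Proposition~\ref{prop:t=0-(y,w)}.

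The main obstacle is the combination of the $\{y,w\}$ case and the arbitrary-order $\{z\}$ case: both require more than a one-line leading-order balance — the former because one must track the cancellation in $y+w$ through several orders to confirm consistency (and eventually force the pole order), the latter because the determination of the Laurent coefficients is genuinely recursive in the index. Everything else reduces to the short leading-coefficient computations that the propositions of Section~1 dispatch with ``it can be easily checked.''
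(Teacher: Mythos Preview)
Your approach is exactly the paper's: it states only that the proposition is obtained ``in the same way as Proposition~\ref{prop:inf-summary},'' i.e.\ by the exhaustive subset-by-subset Laurent analysis you describe, and your identification of the seven surviving cases is correct.

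One genuine (though easily repaired) gap: the containment argument ``every triple contains one of $\{x,y\},\{y,z\},\{z,w\}$, hence is excluded'' is valid for $\{x,y\}$ because the $y$-equation involves only $x$ and $y$, and it is valid for $\{x,z,w\}$ because the $z$-equation does not involve $x$; but it does \emph{not} directly cover $\{y,z,w\}$. For that triple both $2yz^2$ and $2z^2w$ appear in the $z$-equation, and their leading poles can cancel in $2z^2(y+w)$ when $n_1=n_3$ and $b_{c,-n_1}+d_{c,-n_3}=0$. To close this case you must also look at the $w$-equation, where the dominant combination is $-2zw(w+2y)$: cancellation there would require $d_{c,-n_3}+2b_{c,-n_1}=0$, and the two cancellation conditions together force $b_{c,-n_1}=d_{c,-n_3}=0$, a contradiction. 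Without cancellation, either equation gives $n_2+\max(n_1,n_3)=1$, which is impossible.

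Two minor slips that do not affect the strategy: in case~(1) the $y$-equation forces $b_{c,0}\in\{0,1\}$, not just $b_{c,0}=0$ (cf.\ Proposition~\ref{prop:t=c-x}); and in case~(2) it is the $x$-equation (not the $y$-equation alone) that forces $x(c)=0$, after which the $y$-equation gives $\mathrm{ord}\,y=2$.
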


\subsection{The case where $x$ has a pole at $t=c\in\C^{*}$}

\begin{proposition}
\label{prop:t=c-x}
{\it
Suppose that 
for $B_4^{(1)}(\alpha_j)_{0\leq j \leq 4},$ 
there exists a solution such that 
$x$ has a pole of order one at $t=c\in\C^{*}$ 
and $y,z,w$ are all holomorphic at $t=c.$ 
Then, 
$b_{c,0}=0,1.$ 
\newline
{\rm (1)}\quad If $b_{c,0}=0,$ 
\begin{equation*}
\begin{cases}
x=c(t-c)^{-1}+\cdots, \\
y=\displaystyle -\frac{\alpha_1}{c}(t-c)+\cdots.
\end{cases}
\end{equation*}
{\rm (2)}\quad If $b_{c,0}=1,$ 
\begin{equation*}
\begin{cases}
x=-c(t-c)^{-1}+\cdots,   \\
y=\displaystyle 1+\frac{\alpha_0}{c}(t-c)+\cdots.
\end{cases}
\end{equation*}
}
\end{proposition}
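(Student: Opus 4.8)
The plan is to extract the local data at $t = c$ by plugging the Laurent expansions into the four ODEs and matching the lowest‑order terms. Since the hypothesis is that $x$ has a simple pole and $y,z,w$ are holomorphic at $t=c$, write
\begin{equation*}
x = a_{c,-1}(t-c)^{-1} + a_{c,0} + \cdots,\quad
y = b_{c,0} + b_{c,1}(t-c) + \cdots,
\end{equation*}
with $a_{c,-1}\neq 0$, and similarly $z = c_{c,0}+\cdots$, $w = d_{c,0}+\cdots$. The first step is to substitute into
\begin{equation*}
t x^{\prime} = 2x^2y - x^2 + (1-2\alpha_2-2\alpha_3-2\alpha_4)x + 2\alpha_3 z + 2z^2w + t
\end{equation*}
and compare the coefficients of $(t-c)^{-2}$: the left side contributes $-c\,a_{c,-1}(t-c)^{-2}$, while the right side contributes $(2b_{c,0}-1)a_{c,-1}^2(t-c)^{-2}$. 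Hence $-c\,a_{c,-1} = (2b_{c,0}-1)a_{c,-1}^2$; since $a_{c,-1}\neq 0$ this gives $a_{c,-1} = -c/(2b_{c,0}-1)$, provided $b_{c,0}\neq 1/2$ (the case $b_{c,0}=1/2$ must be excluded, which the next step does).

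Next I would feed these into the $y$‑equation
\begin{equation*}
t y^{\prime} = -2xy^2 + 2xy - (1-2\alpha_2-2\alpha_3-2\alpha_4)y + \alpha_1.
\end{equation*}
Here $y'$ is holomorphic at $t=c$, so the left side is holomorphic, while the right side has a potential simple pole from $-2xy^2+2xy = -2x y(y-1)$; its residue is $-2a_{c,-1}b_{c,0}(b_{c,0}-1)$. Holomorphy forces $b_{c,0}(b_{c,0}-1)=0$, i.e. $b_{c,0}=0$ or $b_{c,0}=1$; in particular $b_{c,0}\neq 1/2$, which retroactively validates the formula for $a_{c,-1}$. Substituting $b_{c,0}=0$ gives $a_{c,-1}=-c/(0-1)=c$, and substituting $b_{c,0}=1$ gives $a_{c,-1}=-c/(2-1)=-c$, yielding the leading terms of $x$ in the two cases. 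To get the linear coefficient of $y$, I would compare the constant ($(t-c)^0$) terms in the $y$‑equation: the left side gives $c\,b_{c,1}$ (from $t y'$ evaluated at leading order), while on the right the only surviving contribution at order $(t-c)^0$ after the pole cancels is $-2a_{c,-1}b_{c,1}(2b_{c,0}-1) + \alpha_1$ together with the terms linear in $b_{c,0}$; for $b_{c,0}=0$ this collapses to $c\,b_{c,1} = 2a_{c,-1}b_{c,1}+\alpha_1 = 2c\,b_{c,1}+\alpha_1$, hence $b_{c,1} = -\alpha_1/c$, and for $b_{c,0}=1$ the analogous bookkeeping (now the coefficient of $b_{c,1}$ flips sign and the $\alpha_1$ is replaced, via $\alpha_0+\alpha_1+2\alpha_2+2\alpha_3+2\alpha_4=1$, by $\alpha_0$) gives $b_{c,1}=\alpha_0/c$.

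The computations are all elementary coefficient comparisons, so I do not expect a genuine obstacle; the one place demanding care is the order‑$(t-c)^0$ matching in the $y$‑equation, where several terms of $x$, $y$, and the constant $\alpha_1$ all contribute and one must use the parameter relation $\alpha_0+\alpha_1+2\alpha_2+2\alpha_3+2\alpha_4=1$ to rewrite the $b_{c,0}=1$ answer cleanly in terms of $\alpha_0$. One should also check in passing that $z,w$ being holomorphic is consistent — i.e. that the $z$‑ and $w$‑equations do not force a pole — but since $x^2$ enters the $z$‑equation only through $2z^2w$ and friends, and $z,w$ carry no pole, the $(t-c)^{-2}$ and $(t-c)^{-1}$ coefficients there vanish automatically, so no contradiction arises. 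This is exactly the pattern already used in Section 1, so I would simply say "it can be checked by direct calculation" after indicating the two decisive comparisons above.
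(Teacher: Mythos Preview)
Your proposal is correct and follows exactly the approach the paper intends (the paper's own proof is simply ``It can be easily checked''): compare the $(t-c)^{-2}$ coefficient in the $x$-equation to get $a_{c,-1}(2b_{c,0}-1)=-c$, then the $(t-c)^{-1}$ and $(t-c)^{0}$ coefficients in the $y$-equation to force $b_{c,0}\in\{0,1\}$ and determine $b_{c,1}$. One small slip: in your final consistency remark, $x$ does not appear in the $z$- or $w$-equations at all, so the check that $z,w$ stay holomorphic is trivially satisfied rather than for the reason you give.
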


\begin{proof}
It can be easily checked.

\end{proof}

\subsection{The case where $y$ has a pole at $t=c\in\C^{*}$}

\begin{proposition}
\label{prop:t=c-y}
{\it
Suppose that 
for $B_4^{(1)}(\alpha_j)_{0\leq j \leq 4},$ 
there exists a solution such that 
$y$ has a pole of order two at $t=c\in\C^{*}$ 
and $x,z,w$ are all holomorphic at $t=c.$ 
Then, $\alpha_3=0$ and one of the following occurs:
\begin{equation*}
\mathrm{(1)}
\begin{cases}
x=\displaystyle -(t-c)+\frac{(\alpha_0+\alpha_1)-2}{2c}(t-c)^2+\cdots, \\
y=-c(t-c)^{-2}+b_{c,0}+\cdots, \\
z=\displaystyle \frac12(t-c)+\cdots, \\
w=d_{c,2}(t-c)^2+\cdots,
\end{cases}
\end{equation*}
\begin{equation*}
\mathrm{(2)}
\begin{cases}
x=\displaystyle -(t-c)+\frac{(\alpha_0+\alpha_1)-2}{2c}(t-c)^2\cdots, \\
y=-c(t-c)^{-2}+b_{c,0}+\cdots, \\
z=-(t-c)+\cdots, \\
w=d_{c,2}(t-c)^2+\cdots.
\end{cases}
\end{equation*}
}
\end{proposition}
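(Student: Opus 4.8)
The plan is to substitute Laurent expansions at $t=c$ into the four equations of $B_4^{(1)}(\alpha_j)_{0\le j\le 4}$ and read off constraints order by order. Writing $s=t-c$, expand $x=\sum_{k\ge 0}a_{c,k}s^{k}$, $y=\sum_{k\ge -2}b_{c,k}s^{k}$, $z=\sum_{k\ge 0}c_{c,k}s^{k}$, $w=\sum_{k\ge 0}d_{c,k}s^{k}$ with $b_{c,-2}\ne 0$, and note that near $t=c$ one has $tx'=(c+s)x'$ and likewise for $y,z,w$, so that $tx',tz',tw'$ are holomorphic at $t=c$ while $ty'$ is of order $s^{-3}$.

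First I would fix the zero/pole pattern. In the $z$-equation the only possibly singular term on the right is $2yz^2$, whose $s^{-2}$-coefficient is $2b_{c,-2}c_{c,0}^2$, so $c_{c,0}=0$. In the $y$-equation the $s^{-4}$-coefficient, which comes only from $-2xy^2$, is $-2a_{c,0}b_{c,-2}^2$, so $a_{c,0}=0$. Now in the $w$-equation, using $c_{c,0}=0$, the only $s^{-2}$-contribution is $-2\alpha_3 b_{c,-2}$, which forces $\alpha_3=0$. Matching the constant term of the $z$-equation then gives $cc_{c,1}=2b_{c,-2}c_{c,1}^2+c$; since $c\ne 0$ this rules out $c_{c,1}=0$, and then the $s^{-1}$-coefficient of the $w$-equation, $-4b_{c,-2}c_{c,1}d_{c,0}=0$, gives $d_{c,0}=0$.

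Next I would determine the leading coefficients. The $s^{-3}$-coefficient of the $y$-equation gives $a_{c,1}b_{c,-2}=c$, and feeding this into the constant term of the $x$-equation, $ca_{c,1}=2a_{c,1}^2b_{c,-2}+c$, forces $a_{c,1}=-1$ and hence $b_{c,-2}=-c$; thus $x=-(t-c)+\cdots$ and $y=-c(t-c)^{-2}+\cdots$. The constant term of the $z$-equation now reads $2c_{c,1}^2+c_{c,1}-1=(2c_{c,1}-1)(c_{c,1}+1)=0$, so $c_{c,1}=1/2$ or $c_{c,1}=-1$, that is $z=c_{c,1}(t-c)+\cdots$; these are exactly cases (1) and (2) of the statement. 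In both, the $s^0$-coefficient of the $w$-equation, $cd_{c,1}=-4b_{c,-2}c_{c,1}d_{c,1}$, gives $d_{c,1}=0$ because $1-4c_{c,1}\ne 0$ in each branch, so $w=d_{c,2}(t-c)^2+\cdots$. Finally, matching the $s^{-2}$-coefficient of the $y$-equation with the $s^{1}$-coefficient of the $x$-equation yields $b_{c,-1}=0$ and $a_{c,2}=-(2\alpha_2+2\alpha_4+1)/(2c)$; since $\alpha_3=0$, the relation $\alpha_0+\alpha_1+2\alpha_2+2\alpha_3+2\alpha_4=1$ turns this into $a_{c,2}=((\alpha_0+\alpha_1)-2)/(2c)$, which is the displayed coefficient. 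The higher coefficients $b_{c,0},d_{c,2},\dots$ are then determined recursively.

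The whole argument is elementary, so there is no conceptual obstacle; the only real work is the bookkeeping of which of the four equations supplies the decisive relation at each order. The points that most need care are: deriving $c_{c,0}=0$ from the $z$-equation before the $w$-equation can be used to force $\alpha_3=0$; establishing that $x,z$ vanish to order one and $w$ to order two at $t=c$ before the finer coefficients can be read off cleanly; and observing that the split into the two displayed cases is governed entirely by the quadratic satisfied by $c_{c,1}$.
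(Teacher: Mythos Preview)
Your argument is correct and is precisely the direct Laurent-series computation that the paper intends; the paper's own proof of this proposition is simply ``It can be easily checked.'' Your bookkeeping is accurate at every step, including the derivation of $c_{c,0}=0$ before using the $w$-equation to force $\alpha_3=0$, the quadratic $(2c_{c,1}-1)(c_{c,1}+1)=0$ that splits the two cases, and the elimination between the $s^{-2}$-coefficient of the $y$-equation and the $s^{1}$-coefficient of the $x$-equation yielding $b_{c,-1}=0$ and $a_{c,2}=((\alpha_0+\alpha_1)-2)/(2c)$.
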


\begin{proof}
It can be easily checked.

\end{proof}

\subsection{The case where $z$ has a pole at $t=c\in\C^{*}$}

\begin{proposition}
\label{prop:t=c-z}
{\it
Suppose that 
for $B_4^{(1)}(\alpha_j)_{0\leq j \leq 4},$ 
there exists a solution such that 
$z$ has a pole of order $n \,\,(n\geq 1)$ at $t=c\in\C^{*}$ 
and $x,y,w$ are all holomorphic at $t=c.$ 
\newline
{\rm (1)}\quad If $n\geq 2,$
\begin{equation*}
\begin{cases}
\displaystyle y=\frac12+O((t-c)^{n-1})\cdots, \\
\displaystyle z=c_{0,-n}(t-c)^{-n}+\cdots, \\
\displaystyle w=-\frac{\alpha_3}{c_{0,-n}}(t-c)^n+\cdots.
\end{cases}
\end{equation*}
{\rm (2)}\quad If $n=1,$ then
\begin{equation*}
\begin{cases}
\displaystyle y=\left(\frac12-\frac{c}{2c_{c,-1}}\right)+\cdots, \\
\displaystyle z=c_{c,-1}(t-c)^{-1}+\cdots, \\
\displaystyle w=-\frac{\alpha_3}{c_{c,-1}}(t-c)+\cdots.
\end{cases}
\end{equation*}
}
\end{proposition}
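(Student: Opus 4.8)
The plan is to substitute the Laurent expansions of $(x,y,z,w)$ at $t=c$ into the four equations of $B_4^{(1)}(\alpha_j)_{0\le j\le 4}$ and to compare the coefficients of the most singular powers of $(t-c)$, using that $t=c+(t-c)$ is holomorphic and nonvanishing near $t=c$, so that dividing through by $t$ causes no trouble. I would write
\begin{equation*}
z=c_{c,-n}(t-c)^{-n}+c_{c,-n+1}(t-c)^{-n+1}+\cdots,\qquad c_{c,-n}\ne 0,
\end{equation*}
together with $x=a_{c,0}+\cdots$, $y=b_{c,0}+\cdots$, $w=d_{c,m}(t-c)^{m}+\cdots$, where $m\ge 0$ is the order of vanishing of $w$ at $t=c$ (the trivial sub-case $w\equiv 0$ near $t=c$ forces $\alpha_3=0$ through the $x$-equation and is then covered by the stated formulas).

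First I would look at the $x$-equation
\begin{equation*}
tx'=2x^2y-x^2+(1-2\alpha_2-2\alpha_3-2\alpha_4)x+2\alpha_3z+2z^2w+t.
\end{equation*}
Its left-hand side, and every term on the right except $2\alpha_3z+2z^2w$, is holomorphic at $t=c$. Since $2\alpha_3z$ has order $-n$ and $2z^2w$ has order $m-2n$, holomorphy forces first $m\ge n$ (otherwise the coefficient of $(t-c)^{m-2n}$ would give $d_{c,m}=0$), and then, equating the coefficients of $(t-c)^{-n}$, $\alpha_3 c_{c,-n}+c_{c,-n}^{2}d_{c,n}=0$, i.e. $d_{c,n}=-\alpha_3/c_{c,-n}$. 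This yields the stated expansion $w=-\dfrac{\alpha_3}{c_{c,-n}}(t-c)^{n}+\cdots$.

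Next I would turn to the $z$-equation
\begin{equation*}
tz'=2z^2w-z^2+(1-2\alpha_4)z+2yz^2+t.
\end{equation*}
Its left-hand side has order $-n-1$, whereas $2z^2w$ has order $m-2n\ge -n$, the term $(1-2\alpha_4)z$ has order $-n$, $t$ is holomorphic, and $-z^2+2yz^2=z^2(2y-1)$ has order $-2n$ with leading coefficient $c_{c,-n}^{2}(2b_{c,0}-1)$. For $n\ge 2$ one has $-2n<-n-1$, so this leading coefficient must vanish, giving $b_{c,0}=1/2$; comparing the coefficients of $(t-c)^{-2n+k}$ for $k=1,\dots,n-2$ inductively (each still lying strictly below order $-n-1$, with the only contribution coming from $z^2(2y-1)$) then forces $b_{c,1}=\cdots=b_{c,n-2}=0$, i.e. $y=\tfrac12+O((t-c)^{n-1})$. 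For $n=1$ the orders $-2n$ and $-n-1$ coincide, and equating the coefficients of $(t-c)^{-2}$ gives $-c\,c_{c,-1}=c_{c,-1}^{2}(2b_{c,0}-1)$, hence $b_{c,0}=\dfrac12-\dfrac{c}{2c_{c,-1}}$. Writing out $z$ in the chosen normalization then reproduces the two displayed expansions; the remaining coefficients are determined recursively from the four equations but are irrelevant to the statement.

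All the computations are of the same elementary ``compare-leading-coefficients'' type, so the only thing that needs care is the bookkeeping of which monomial dominates in each equation. The one genuinely delicate point is the borderline case $n=1$, where the pole order of $z^{2}$ coincides with that of $tz'$; this coincidence is exactly what produces the extra term $-c/(2c_{c,-1})$ in $b_{c,0}$. The degenerate sub-case $\alpha_3=0$ (where $w$ may vanish to higher order at $t=c$) must be noted separately but presents no real obstacle, since the conclusions for $y$ are insensitive to it and the stated coefficient $-\alpha_3/c_{c,-n}$ of $w$ simply vanishes.
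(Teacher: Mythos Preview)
Your proposal is correct and is precisely the direct Laurent-series substitution the paper has in mind; the paper's own proof is just ``It can be easily checked,'' and you have supplied the bookkeeping it omits. Your identification of the two key points---that the $x$-equation forces $w$ to vanish to order at least $n$ with $d_{c,n}=-\alpha_3/c_{c,-n}$, and that in the $z$-equation the term $z^2(2y-1)$ governs the orders below $-n-1$ (with the $n=1$ coincidence producing the extra $-c/(2c_{c,-1})$)---is exactly right.
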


\begin{proof}
It can be easily checked.

\end{proof}

\subsection{The case where $w$ has a pole at $t=c\in\C^{*}$}

\begin{proposition}
\label{prop:t=c-w}
{\it
Suppose that 
for $B_4^{(1)}(\alpha_j)_{0\leq j \leq 4},$ 
there exists a solution such that 
$w$ has a pole of order two at $t=c\in\C^{*}$ 
and $x,y,z$ are all holomorphic at $t=c.$ 
Then, 
\begin{equation*}
\begin{cases}
y=b_{c,0}+\cdots, \\
z=\displaystyle -(t-c)-\frac{2\alpha_4+1}{2c}(t-c)^2+c_{c,3}(t-c)^3+\cdots, \\
w=-c(t-c)^{-2}+d_{c,0}+\cdots,
\end{cases}
\end{equation*}
where 
$b_{c,0}+d_{c,0}=(1-c_{c,3}c)/2.$
}
\end{proposition}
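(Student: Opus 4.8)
The plan is to substitute Laurent expansions of $x,y,z,w$ centered at $t=c\in\C^{*}$ into the four equations of the Sasano system $B_4^{(1)}(\alpha_j)_{0\leq j \leq 4}$ and extract the claimed leading coefficients by comparing powers of $(t-c)$, working outward from the most singular terms. Since $w$ has a pole of order two at $t=c$ and $x,y,z$ are holomorphic there, I would write $w=d_{c,-2}(t-c)^{-2}+d_{c,-1}(t-c)^{-1}+d_{c,0}+\cdots$, $z=c_{c,0}+c_{c,1}(t-c)+\cdots$, and similarly for $x,y$. The leading balance must come from the equation for $w$, namely $t w^{\prime}=-2zw^2+2zw-(1-2\alpha_4)w-2\alpha_3y-4yzw+\alpha_3$: the most singular term on the left is $c\cdot(-2)d_{c,-2}(t-c)^{-3}$ (from $t w'$, using $t=c+(t-c)$), and on the right the most singular term is $-2c_{c,0}d_{c,-2}^2(t-c)^{-4}$. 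For these to balance one needs $c_{c,0}=0$, i.e. $z$ vanishes at $t=c$, so write $z=c_{c,1}(t-c)+c_{c,2}(t-c)^2+\cdots$ with $c_{c,1}$ to be pinned down.

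The key steps, in order: (i) from the $(t-c)^{-3}$ coefficient of the $w$-equation (now the leading power once $c_{c,0}=0$) deduce a relation forcing $c_{c,1}=-1$, so $z=-(t-c)+\cdots$; (ii) feed this into the $z$-equation $t z^{\prime}=2z^2w-z^2+(1-2\alpha_4)z+2yz^2+t$ and compare the $(t-c)^0$ and $(t-c)^1$ coefficients: the constant term gives the relation between $c$ and the leading data, and the linear term determines $c_{c,2}=-(2\alpha_4+1)/(2c)$, matching the stated expansion $z=-(t-c)-\frac{2\alpha_4+1}{2c}(t-c)^2+c_{c,3}(t-c)^3+\cdots$; (iii) return to the $w$-equation at the next orders to get $d_{c,-2}=-c$ (from matching $-2c_{c,1}^2 d_{c,-2}$-type terms against the $t w'$ contribution $-2c\,d_{c,-2}$, using $c_{c,1}^2=1$) and to see that $d_{c,-1}=0$, so $w=-c(t-c)^{-2}+d_{c,0}+\cdots$; (iv) finally, compare the $(t-c)^0$ coefficients in the $z$-equation once more — with $z^2w=(t-c)^2\cdot(-c)(t-c)^{-2}+\cdots=-c+\cdots$ contributing, together with the $2yz^2$ and $+t$ terms — to extract the single remaining constraint $b_{c,0}+d_{c,0}=(1-c_{c,3}c)/2$, where $c_{c,3}$ enters through the order-$(t-c)^2$ part of $z$ interacting with the pole of $w$. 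The $x$-equation and $y$-equation should be checked to impose no further restriction on the leading terms (they determine higher coefficients of $x$ and $y$ but leave $b_{c,0}$, $c_{c,3}$ free), which is why the proposition only records the one relation.

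The main obstacle I anticipate is bookkeeping in step (iv): the relation $b_{c,0}+d_{c,0}=(1-c_{c,3}c)/2$ mixes a free higher-order coefficient $c_{c,3}$ of $z$ with the constant terms $b_{c,0},d_{c,0}$, so I must track the product $z^2 w$ to sufficiently high order — the $(t-c)^3$ term of $z$ multiplied into the $(t-c)^{-2}$ pole of $w$ produces an $O(t-c)$ contribution, and one has to be careful that all of $2z^2w$, $2yz^2$, $-z^2$, $(1-2\alpha_4)z$ and $+t$ are expanded consistently through order $(t-c)^1$ while also using $c_{c,1}=-1$ and $c_{c,2}=-(2\alpha_4+1)/(2c)$ already determined. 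The other potential subtlety is confirming that the compatibility between the $z$-equation and the $w$-equation does not over-determine the system (forcing, say, $\alpha_3=0$ as happened in Proposition \ref{prop:t=c-y}); I expect that here the $-2\alpha_3 y+\alpha_3$ terms in the $w$-equation are subleading enough that no such constraint appears, but this should be verified by checking one order beyond the leading balances. Everything else is the kind of routine coefficient comparison already invoked with "it can be easily checked" in the neighboring propositions.
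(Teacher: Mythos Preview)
Your approach is correct and is exactly what the paper has in mind: its proof reads only ``It can be easily checked,'' i.e.\ the same direct Laurent-series substitution and coefficient comparison you outline. One small correction to your bookkeeping: the $(t-c)^{-3}$ balance in the $w$-equation by itself only gives $c_{c,1}d_{c,-2}=c$ (the term is $-2c_{c,1}d_{c,-2}^2$, not $-2c_{c,1}^2 d_{c,-2}$); you need to pair it with the $(t-c)^0$ balance in the $z$-equation to pin down $c_{c,1}=-1$ and hence $d_{c,-2}=-c$, after which the $(t-c)^1$ and $(t-c)^{-2}$ balances yield $c_{c,2}$ and $d_{c,-1}=0$, and the $(t-c)^2$ balance in the $z$-equation gives the stated relation $b_{c,0}+d_{c,0}=(1-c_{c,3}c)/2$.
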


\begin{proof}
It can be easily checked.

\end{proof}

\subsection{The case where $x,z$ have a pole at $t=c\in\C^{*}$}

\begin{proposition}
\label{prop:t=c-(x,z)}
{\it
Suppose that 
for $B_4^{(1)}(\alpha_j)_{0\leq j \leq 4},$ 
there exists a solution such that 
$x,z$ have a pole of order one at $t=c\in\C^{*}$ 
and $y,w$ are all holomorphic at $t=c.$ 
Then, 
$(b_{c,0},d_{c,0})=(0,0), (0,1), (1,0), (1,-1).$ 
\newline
{\rm (1)}\quad 
If $(b_{c,0},d_{c,0})=(0,0),$ 
\begin{equation*}
\begin{cases}
x=c(t-c)^{-1}+\cdots, \\
y=\displaystyle -\frac{\alpha_1}{c}(t-c)+\cdots, \\
z=c(t-c)^{-1}+\cdots, \\
w=\displaystyle -\frac{\alpha_3}{c}(t-c)+\cdots. 
\end{cases}
\end{equation*}
{\rm (2)}\quad 
Assume that $(b_{c,0},d_{c,0})=(0,1).$ 
Then, $(a_{c,-1}, c_{c,-1})=(-c,-c), (2c,-c).$ 
\par
{\rm (i)}\quad If $(a_{c,-1}, c_{c,-1})=(-c,-c),$ 
\begin{equation*}
\begin{cases}
x=-c(t-c)^{-1}+\cdots, \\
y=\displaystyle \frac{\alpha_1}{3c}(t-c)+\cdots, \\
z=-c(t-c)^{-1}+\cdots, \\
w=\displaystyle 1+\left(1-\frac{4\alpha_1}{3}-\alpha_3-2\alpha_4 \right)\frac{1}{c} (t-c)+\cdots. 
\end{cases}
\end{equation*}
\par
{\rm (ii)}\quad If $(a_{c,-1}, c_{c,-1})=(2c,-c),$ 
\begin{equation*}
\begin{cases}
x=2c(t-c)^{-1}+\cdots, \\
y=\displaystyle -\frac{\alpha_1}{3c}(t-c)+\cdots, \\
z=-c(t-c)^{-1}+\cdots, \\
w=\displaystyle 1+\left(1+\frac{4\alpha_1}{3}-\alpha_3-2\alpha_4 \right)\frac{1}{c} (t-c)+\cdots. 
\end{cases}
\end{equation*}
{\rm (3)}\quad 
If $(b_{c,0},d_{c,0})=(1,0),$ 
\begin{equation*}
\begin{cases}
x=-c(t-c)^{-1}+\cdots, \\
y=\displaystyle 1+\frac{\alpha_0}{c}(t-c)+\cdots, \\
z=-c(t-c)^{-1}+\cdots, \\
w=\displaystyle \frac{\alpha_3}{c}(t-c)+\cdots. 
\end{cases}
\end{equation*}
{\rm (4)}\quad 
Assume that $(b_{c,0},d_{c,0})=(1,-1).$ 
Then, $(a_{c,-1}, c_{c,-1})=(c,c), (-2c,c).$ 
\par
(i)\quad If $(a_{c,-1}, c_{c,-1})=(c,c),$ 
\begin{equation*}
\begin{cases}
x=c(t-c)^{-1}+\cdots, \\
y=\displaystyle 1-\frac{\alpha_0}{3c}(t-c)+\cdots, \\
z=c(t-c)^{-1}+\cdots, \\
w=\displaystyle -1+\left(-1+\frac{4\alpha_0}{3}+\alpha_3+2\alpha_4 \right)\frac{1}{c} (t-c)+\cdots. 
\end{cases}
\end{equation*}
\par
(ii)\quad If $(a_{c,-1}, c_{c,-1})=(-2c,c),$ 
\begin{equation*}
\begin{cases}
x=-2c(t-c)^{-1}+\cdots, \\
y=\displaystyle 1+\frac{\alpha_0}{3c}(t-c)+\cdots, \\
z=c(t-c)^{-1}+\cdots, \\
w=\displaystyle -1+\left(-1-\frac{4\alpha_0}{3}+\alpha_3+2\alpha_4 \right)\frac{1}{c} (t-c)+\cdots. 
\end{cases}
\end{equation*}

}
\end{proposition}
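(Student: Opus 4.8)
The statement to be proved is Proposition \ref{prop:t=c-(x,z)}, the classification of meromorphic solutions near a point $t=c\in\C^{*}$ in the case where $x$ and $z$ each have a pole of order one and $y,w$ are holomorphic there. The plan is a direct local (power-series) analysis of the Hamiltonian system, exactly in the spirit of the proofs of Propositions \ref{prop:t=c-x}--\ref{prop:t=c-w}: substitute Laurent expansions $x=\sum a_{c,k}(t-c)^k$, $z=\sum c_{c,k}(t-c)^k$ with $a_{c,-1},c_{c,-1}\neq 0$, and $y=\sum_{k\geq0}b_{c,k}(t-c)^k$, $w=\sum_{k\geq0}d_{c,k}(t-c)^k$, into the four equations and compare coefficients order by order, starting from the most singular terms.

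First I would extract the leading balances. In $t x'=2x^2y-x^2+(1-2\alpha_2-2\alpha_3-2\alpha_4)x+2\alpha_3 z+2z^2w+t$ the left side has a simple pole (order $1$), so the order-$2$ pole on the right must cancel: $2a_{c,-1}^2 b_{c,0}-a_{c,-1}^2=0$, giving $b_{c,0}\in\{0,\tfrac12\}$; combined with the order-$2$ balance of $t z'=2z^2w-z^2+(1-2\alpha_4)z+2yz^2+t$, namely $2c_{c,-1}^2 d_{c,0}-c_{c,-1}^2+2b_{c,0}c_{c,-1}^2=0$, i.e. $d_{c,0}=\tfrac12-b_{c,0}$, and then the order-$1$ pole balance of $t w'=-2zw^2+2zw-(1-2\alpha_4)w-2\alpha_3 y-4yzw+\alpha_3$, namely $-2c_{c,-1}d_{c,0}^2+2c_{c,-1}d_{c,0}-4b_{c,0}c_{c,-1}d_{c,0}=0$, i.e. $d_{c,0}(1-d_{c,0}-2b_{c,0})=0$. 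Feeding $d_{c,0}=\tfrac12-b_{c,0}$ into this last relation forces $b_{c,0}\in\{0,1\}$ together with $b_{c,0}=\tfrac12$ being excluded, hence $(b_{c,0},d_{c,0})\in\{(0,0),(0,1),(1,0),(1,-1)\}$ — wait; here one must also redo the $x$-equation leading balance more carefully, since in case $b_{c,0}=\tfrac12$ is killed one gets $b_{c,0}\in\{0,1\}$ from a refined argument using the $y$-equation $t y'=-2xy^2+2xy-(1-2\alpha_2-2\alpha_3-2\alpha_4)y+\alpha_1$ whose order-$1$ pole balance reads $-2a_{c,-1}b_{c,0}^2+2a_{c,-1}b_{c,0}=0$, i.e. $b_{c,0}\in\{0,1\}$. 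The four sign-compatible pairs then survive.

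Next, for each of the four cases I would push the expansion one step further. In cases $(0,0)$ and $(1,0)$ the residues $a_{c,-1},c_{c,-1}$ are pinned directly: the order-$1$ pole of the $x$-equation (now the genuine pole, since the order-$2$ piece vanished) gives $a_{c,-1}=\pm c$ with the sign matched by the order-$0$ term of $t x'$; similarly $c_{c,-1}=\pm c$ from the $z$-equation; and then the order-$(t-c)$ terms of the $y$- and $w$-equations produce the stated linear coefficients $-\alpha_1/c$, $\alpha_3/c$ (resp. $\alpha_0/c$, $\alpha_3/c$). In the mixed cases $(0,1)$ and $(1,-1)$ the residues are not immediately determined; there one compares the next-to-leading ($(t-c)^0$) coefficients of the $x$- and $z$-equations, obtaining a coupled pair of quadratic relations among $a_{c,-1}$ and $c_{c,-1}$ whose solutions are exactly the two listed options (e.g. $(-c,-c)$ and $(2c,-c)$ for $(0,1)$); the appearance of the factor $\tfrac13$ and the coefficients $1-\tfrac{4\alpha_1}{3}-\alpha_3-2\alpha_4$ etc. then comes from solving the linear order-$(t-c)$ recursion for $b_{c,1}$ and $d_{c,1}$, which is nondegenerate because the relevant ``indicial'' factor does not vanish.

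The main obstacle is purely bookkeeping: in the mixed cases the first two orders of all four equations are genuinely coupled (the pole of $z^2w$ feeds the $x$-equation and the pole of $yz^2$ feeds the $z$-equation), so one cannot decouple the system and must solve a small nonlinear system by hand to get the branching into sub-cases (i) and (ii); one also has to verify that in each branch the remaining coefficients $a_{c,-k}$, $c_{c,-k}$ for $k\geq2$ are forced to vanish and that $b_{c,k},d_{c,k}$ for $k\geq2$ are uniquely solvable, i.e. that no further resonance obstructs the formal expansion. Once the leading three orders are settled, the tail is routine, so in the write-up it suffices, as the paper does elsewhere, to record ``It can be easily checked.'' after indicating these leading balances.
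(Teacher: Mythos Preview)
Your overall approach---substitute Laurent series and compare coefficients order by order---is exactly what the paper does (its proof is just ``It can be easily checked''), and your self-corrected route via the $y$- and $w$-equations is the right one. However, several of the intermediate balances you wrote down are incorrect, and if left uncorrected they would not close.

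First, at order $(t-c)^{-2}$ in the $x$-equation you wrote $2a_{c,-1}^2b_{c,0}-a_{c,-1}^2=0$. Two terms are missing: since $c\neq0$, the left side $tx'$ contributes $-c\,a_{c,-1}$ at this order, and the term $2z^2w$ on the right contributes $2c_{c,-1}^2d_{c,0}$. The correct balance is
\[
-c\,a_{c,-1}=2a_{c,-1}^2b_{c,0}-a_{c,-1}^2+2c_{c,-1}^2d_{c,0}.
\]
Likewise, in the $z$-equation the left side $tz'$ contributes $-c\,c_{c,-1}$ at order $(t-c)^{-2}$, so the balance is
\[
-c\,c_{c,-1}=c_{c,-1}^2\bigl(2d_{c,0}-1+2b_{c,0}\bigr),
\]
which does \emph{not} give $d_{c,0}=\tfrac12-b_{c,0}$ as you claimed; it determines $c_{c,-1}$ once $(b_{c,0},d_{c,0})$ is known.

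The clean way to get the four pairs is exactly your ``wait'' observation: the order-$(t-c)^{-1}$ balance of the $y$-equation gives $b_{c,0}(1-b_{c,0})=0$, and the order-$(t-c)^{-1}$ balance of the $w$-equation gives $d_{c,0}(1-d_{c,0}-2b_{c,0})=0$; together these yield $(b_{c,0},d_{c,0})\in\{(0,0),(0,1),(1,0),(1,-1)\}$. Only then do the corrected order-$(t-c)^{-2}$ balances of the $x$- and $z$-equations determine $a_{c,-1}$ and $c_{c,-1}$ (and in cases (2) and (4) the $x$-balance is genuinely coupled to $c_{c,-1}$, producing the quadratic $(a_{c,-1}-2c)(a_{c,-1}+c)=0$, etc.). With these fixes the rest of your outline goes through.
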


\begin{proof}
It can be easily checked.

\end{proof}

\subsection{The case where $x,w$ have a pole at $t=c\in\C^{*}$}

\begin{proposition}
\label{prop:t=c-(x,w)}
{\it
Suppose that 
for $B_4^{(1)}(\alpha_j)_{0\leq j \leq 4},$ 
there exists a solution such that 
$x,w$ have a pole at $t=c\in\C^{*}$ 
and $y,z$ are all holomorphic at $t=c.$ 
Then, $b_{c,0}=0,1.$ 
\newline
{\rm (1)}\quad If $b_{c,0}=0,$ 
\begin{equation*}
\begin{cases}
x=\displaystyle c(t-c)^{-1}+\frac{1+\alpha_0-\alpha_1}{2}+\cdots, \\
y=\displaystyle -\frac{\alpha_1}{c}(t-c)+\cdots, \\
z=\displaystyle -(t-c)-\frac{2\alpha_4+1}{2c}(t-c)^2+c_{c,3}(t-c)^3+\cdots, \\
w=\displaystyle -c(t-c)^{-2} \,\,+\frac{1-c_{c,3} c}{2}+\cdots.
\end{cases}
\end{equation*}
{\rm (2)}\quad If $b_{c,0}=1,$ 
\begin{equation*}
\begin{cases}
x=\displaystyle -c(t-c)^{-1}+\frac{-1+\alpha_0-\alpha_1}{2}+\cdots, \\
y=\displaystyle 1+\frac{\alpha_0}{c}(t-c)+\cdots, \\
z=\displaystyle -(t-c)-\frac{2\alpha_4+1}{2c}(t-c)^2+c_{c,3}(t-c)^3+\cdots, \\
w=\displaystyle -c(t-c)^{-2}+\frac{-1-c_{c,3} c}{2}+\cdots. 
\end{cases}
\end{equation*}
}
\end{proposition}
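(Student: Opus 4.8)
The plan is a direct Laurent-series computation at the ordinary point $t=c\in\C^{*}$, carried out exactly in the spirit of the proofs of Propositions \ref{prop:t=c-x}--\ref{prop:t=c-(x,z)}. By the classification proposition at the head of this section (its case (6)), we may assume $x$ has a pole of order exactly one and $w$ a pole of order exactly two at $t=c$, while $y,z$ are holomorphic there; so write
\begin{equation*}
x=\sum_{k\geq -1}a_{c,k}(t-c)^{k},\quad y=\sum_{k\geq 0}b_{c,k}(t-c)^{k},\quad z=\sum_{k\geq 0}c_{c,k}(t-c)^{k},\quad w=\sum_{k\geq -2}d_{c,k}(t-c)^{k},
\end{equation*}
with $a_{c,-1}\neq 0$ and $d_{c,-2}\neq 0$. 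Substituting into the four equations of $B_4^{(1)}(\alpha_j)_{0\leq j\leq 4}$, using $t=c+(t-c)$, and comparing coefficients of successive powers of $t-c$ determines everything.

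First I would extract $b_{c,0}$. Since the left-hand side of $ty'=-2xy^2+2xy-(1-2\alpha_2-2\alpha_3-2\alpha_4)y+\alpha_1$ is holomorphic at $t=c$, the coefficient of $(t-c)^{-1}$ on the right, namely $-2a_{c,-1}b_{c,0}^{2}+2a_{c,-1}b_{c,0}=2a_{c,-1}b_{c,0}(1-b_{c,0})$, must vanish; as $a_{c,-1}\neq 0$ this forces $b_{c,0}=0$ or $b_{c,0}=1$. Next, in $tz'=2z^2w-z^2+(1-2\alpha_4)z+2yz^2+t$ the left side is holomorphic while $w$ has a double pole, so $z^2w$ must be holomorphic, forcing $c_{c,0}=0$, i.e. $z=c_{c,1}(t-c)+\cdots$. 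The coefficient of $(t-c)^{-3}$ in the $w$-equation then gives $c\,d_{c,-2}=c_{c,1}d_{c,-2}^{2}$, so $d_{c,-2}=c/c_{c,1}$, and the constant term of the $z$-equation gives $c\,c_{c,1}=2c_{c,1}^{2}d_{c,-2}+c=2c\,c_{c,1}+c$, whence $c_{c,1}=-1$ and $d_{c,-2}=-c$. Finally the coefficient of $(t-c)^{-2}$ in the $x$-equation (where $2z^2w$ contributes nothing, since $c_{c,0}=0$) gives $-c\,a_{c,-1}=a_{c,-1}^{2}(2b_{c,0}-1)$, i.e. $a_{c,-1}=c$ when $b_{c,0}=0$ and $a_{c,-1}=-c$ when $b_{c,0}=1$, which separates the two stated cases.

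The remaining coefficients are then forced one at a time. The constant term of the $y$-equation yields $b_{c,1}=-\alpha_1/c$ when $b_{c,0}=0$, and $b_{c,1}=\alpha_0/c$ when $b_{c,0}=1$ (using $1-2\alpha_2-2\alpha_3-2\alpha_4=\alpha_0+\alpha_1$); the coefficient of $(t-c)^{-1}$ in the $x$-equation gives $a_{c,0}=(1+\alpha_0-\alpha_1)/2$, resp. $(-1+\alpha_0-\alpha_1)/2$, again after invoking the parameter relation $\alpha_0+\alpha_1+2\alpha_2+2\alpha_3+2\alpha_4=1$; the coefficient of $(t-c)^{-1}$ in the $w$-equation gives $d_{c,-1}=0$; the coefficient of $(t-c)$ in the $z$-equation gives $c_{c,2}=-(2\alpha_4+1)/(2c)$; and the constant terms of the $z$- and $w$-equations express $d_{c,0}$ through the single coefficient $c_{c,3}$ left free, giving $d_{c,0}=(1-c_{c,3}c)/2$, resp. $(-1-c_{c,3}c)/2$. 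All higher coefficients are then determined by the recursion, and since the system has the Painlev\'e property the resulting formal series converge, so the two families displayed are exactly the meromorphic solutions of the asserted type.

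The only genuine work here is the bookkeeping: tracking which Laurent coefficients of $x,y,z,w$ enter each of the four equations in each degree, checking that the recursion never produces a contradiction of the form $0=1$ (in contrast with several other cases treated in this section), and confirming that precisely one coefficient, $c_{c,3}$, remains undetermined. Once the leading relations above are in place this is routine.
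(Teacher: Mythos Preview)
Your approach is correct and is exactly the direct Laurent-series computation that the paper has in mind (the paper's own proof reads simply ``It can be easily checked''). One small bookkeeping point: the relation $d_{c,-1}=0$ does not drop out of the $(t-c)^{-1}$ coefficient of the $w$-equation alone---that coefficient already involves $d_{c,0}$ and $c_{c,3}$---but rather from combining the $(t-c)^{-2}$ coefficient of the $w$-equation with the $(t-c)^{1}$ coefficient of the $z$-equation, after which $c_{c,2}=-(2\alpha_4+1)/(2c)$ and then $b_{c,0}+d_{c,0}=(1-c_{c,3}c)/2$ follow as you say.
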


\begin{proof}
It can be easily checked.

\end{proof}

\subsection{The case where $y,w$ have a pole at $t=c\in\C^{*}$}







In the same way as Proposition \ref{prop:t=0-(y,w)}, 
we can prove the following proposition:

\begin{proposition}
\label{prop:t=c-(y,w)(2,1)}
{\it
Suppose that 
for $B_4(\alpha_j)_{0\leq j \leq 4},$ 
there exists a solution such that 
$y$ has a pole of order two at $t=c\in\C^{*}$ 
and 
$w$ has a pole of order one at $t=c$ 
and 
$x,z$ are holomorphic at $t=c.$ 
Then, $c_{c,0}=0$ and $c_{c,1}=1/2, -1.$
\newline
{\rm (1)}\quad If $c_{c,1}=1/2,$
\begin{equation*}
\begin{cases}
x=\displaystyle -(t-c)+\frac{\alpha_0+\alpha_1-2\alpha_3-2}{2c}(t-c)^2+\cdots, \\
y=\displaystyle -c(t-c)^{-2}+\frac{2\alpha_3}{3}(t-c)^{-1}+\cdots, \\
z=\displaystyle \frac12(t-c)+\frac{-\alpha_4+1}{4c}(t-c)^2+\cdots, \\
w=\displaystyle -\frac{2\alpha_3}{3}(t-c)^{-1}+\frac{\alpha_3(4\alpha_3+3\alpha_4+3)}{9c}+\cdots.
\end{cases}
\end{equation*}
{\rm (2)}\quad If $c_{c,1}=-1,$
\begin{equation*}
\begin{cases}
x=\displaystyle -(t-c)+\frac{\alpha_0+\alpha_1+2\alpha_3-2}{2c}(t-c)^2+\cdots, \\
y=\displaystyle -c(t-c)^{-2}-\frac{2\alpha_3}{3}(t-c)^{-1}+\cdots, \\
z=\displaystyle -(t-c)+\frac{-2\alpha_4-1}{2c}(t-c)^2+\cdots, \\
w=\displaystyle \frac{2\alpha_3}{3}(t-c)^{-1}+\frac{\alpha_3(\alpha_3-3\alpha_4-3)}{9c}+\cdots.
\end{cases}
\end{equation*}

}
\end{proposition}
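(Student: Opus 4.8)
The plan is to argue exactly as in the proof of Proposition~\ref{prop:t=0-(y,w)}, only re‑centred at $t=c$. Writing $t=c+(t-c)$ (so that the inhomogeneous term $t$ contributes the constant $c$ plus a term $(t-c)$), I would substitute the Laurent expansions $x=\sum_{k\ge0}a_{c,k}(t-c)^{k}$, $y=\sum_{k\ge-2}b_{c,k}(t-c)^{k}$, $z=\sum_{k\ge0}c_{c,k}(t-c)^{k}$, $w=\sum_{k\ge-1}d_{c,k}(t-c)^{k}$ (with $b_{c,-2}\neq0$ and $d_{c,-1}\neq0$ by hypothesis) into the four defining equations of $B_4^{(1)}(\alpha_j)_{0\le j\le4}$ and compare coefficients order by order.

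First I would apply the local analogue of Lemma~\ref{lem:t=0-(y,w)-1} to the second equation $ty'=-2xy^{2}+2xy-(1-2\alpha_{2}-2\alpha_{3}-2\alpha_{4})y+\alpha_{1}$: since $y$ has a double pole and $x$ is holomorphic, matching the most singular term (order $(t-c)^{-3}$) forces $a_{c,0}=0$ and $a_{c,1}b_{c,-2}=c$. Turning to the first equation $tx'=2x^{2}y-x^{2}+(1-2\alpha_{2}-2\alpha_{3}-2\alpha_{4})x+2\alpha_{3}z+2z^{2}w+t$, the left‑hand side and every term on the right except $2z^{2}w$ are now holomorphic at $t=c$, hence so is $2z^{2}w$; since its singular part is $2c_{c,0}^{2}d_{c,-1}(t-c)^{-1}$, we get $c_{c,0}=0$. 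Comparing constant terms in this equation, together with $a_{c,1}b_{c,-2}=c$, then gives $ca_{c,1}=2a_{c,1}^{2}b_{c,-2}+c=2a_{c,1}c+c$, so $a_{c,1}=-1$ and $b_{c,-2}=-c$; in particular $x=-(t-c)+\cdots$ and $y=-c(t-c)^{-2}+\cdots$.

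Next I would use the third and fourth equations. Writing $z=c_{c,1}(t-c)+\cdots$, the terms $2z^{2}w$, $-z^{2}$, $(1-2\alpha_{4})z$ in $tz'=2z^{2}w-z^{2}+(1-2\alpha_{4})z+2yz^{2}+t$ vanish at $t=c$, while $2yz^{2}=-2c\,c_{c,1}^{2}+\cdots$ and $t=c+\cdots$; comparing constant terms gives $c\,c_{c,1}=-2c\,c_{c,1}^{2}+c$, that is $(2c_{c,1}-1)(c_{c,1}+1)=0$, so $c_{c,1}=1/2$ or $c_{c,1}=-1$. Similarly, the most singular terms of $tw'=-2zw^{2}+2zw-(1-2\alpha_{4})w-2\alpha_{3}y-4yzw+\alpha_{3}$ live in order $(t-c)^{-2}$ and yield $-c\,d_{c,-1}=2\alpha_{3}c+4c\,c_{c,1}d_{c,-1}$, i.e. $d_{c,-1}=-2\alpha_{3}/(1+4c_{c,1})$, which equals $-\tfrac{2\alpha_{3}}{3}$ when $c_{c,1}=\tfrac12$ and $\tfrac{2\alpha_{3}}{3}$ when $c_{c,1}=-1$; in particular the hypothesis $d_{c,-1}\neq0$ forces $\alpha_{3}\neq0$.

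It then remains to continue matching coefficients in the two branches $c_{c,1}\in\{1/2,-1\}$. For example, the $(t-c)^{-2}$‑coefficient of the second equation relates $b_{c,-1}$ and $a_{c,2}$, the $(t-c)$‑coefficient of the first equation expresses $a_{c,2}$ in terms of $b_{c,-1}$, $c_{c,1}$, $d_{c,-1}$, and eliminating $a_{c,2}$ while invoking the affine relation $\alpha_{0}+\alpha_{1}+2\alpha_{2}+2\alpha_{3}+2\alpha_{4}=1$ identifies $b_{c,-1}=2\alpha_{3}c_{c,1}+2c_{c,1}^{2}d_{c,-1}=\pm\tfrac{2\alpha_{3}}{3}$; the coefficients $a_{c,2}$, $c_{c,2}$, $d_{c,0}$ displayed in the statement come out of the next orders of the first, third and fourth equations in the same way, and inductively every further coefficient is uniquely determined. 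Since the existence of the solution is assumed, there is no conceptual obstacle; the only points demanding care are the quadratic branching for $c_{c,1}$, which must be carried through separately thereafter, and the invocation of the parameter relation at the right place so that the coefficients collapse to the closed forms asserted in the proposition.
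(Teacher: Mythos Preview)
Your proposal is correct and follows exactly the approach the paper indicates: the paper itself states only that this proposition is proved ``in the same way as Proposition~\ref{prop:t=0-(y,w)}'', and you have faithfully (and accurately) carried out the re-centred version of that computation, correctly deriving $a_{c,0}=c_{c,0}=0$, $a_{c,1}=-1$, $b_{c,-2}=-c$, the quadratic $(2c_{c,1}-1)(c_{c,1}+1)=0$, and $d_{c,-1}=-2\alpha_3/(1+4c_{c,1})$, with the remaining coefficients obtained by continuing the expansion.
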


\begin{proposition}
\label{prop:t=c-(y,w)(2,2)}
{\it
Suppose that 
for $B_4(\alpha_j)_{0\leq j \leq 4},$ 
there exists a solution such that 
$y,w$ both have a pole of order two at $t=c\in\C^{*}$ 
and 
$x,z$ are holomorphic at $t=c.$ 
Then, 
$c_{c,0}=0$ and 
$c_{c,1}=-1/2,1.$ 
\newline
{\rm (1)}\quad If $c_{c,1}=-1/2,$
\begin{equation*}
\begin{cases}
x=\displaystyle (t-c)+\frac{1+2\alpha_2+4\alpha_3+6\alpha_4}{2c}(t-c)^2+\cdots, \\
y=\displaystyle c(t-c)^{-2}-\frac{2\alpha_3+4\alpha_4}{3}(t-c)^{-1}+\cdots, \\
z=\displaystyle -\frac12(t-c)-\frac{\alpha_4+1}{4c}(t-c)^2+\cdots, \\
w=\displaystyle -4c(t-c)^{-2}+\frac{2\alpha_3+4\alpha_4}{3}(t-c)^{-1}+\cdots.
\end{cases}
\end{equation*}
{\rm (2)}\quad If $c_{c,1}=1,$
\begin{equation*}
\begin{cases}
x=\displaystyle (t-c)+\frac{1+2\alpha_2-2\alpha_4}{2c}(t-c)^2+\cdots, \\
y=\displaystyle c(t-c)^{-2}+\frac{2\alpha_3+4\alpha_4}{3}(t-c)^{-1}+\cdots, \\
z=\displaystyle (t-c)+\frac{1-2\alpha_4}{2c}(t-c)^2+\cdots, \\
w=\displaystyle -c(t-c)^{-2}-\frac{2\alpha_3+4\alpha_4}{3}(t-c)^{-1}+\cdots.
\end{cases}
\end{equation*}

}
\end{proposition}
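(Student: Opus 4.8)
The plan is to substitute the Laurent expansions of $x,y,z,w$ about $t=c$ into the four equations of $B_4^{(1)}(\alpha_j)_{0\leq j\leq 4}$ and to determine the coefficients recursively, starting from the most singular terms; throughout one uses $t\,d/dt=(c+s)\,d/ds$ with $s=t-c$. Write $x=a_{c,0}+a_{c,1}s+\cdots$, $z=c_{c,0}+c_{c,1}s+\cdots$, $y=b_{c,-2}s^{-2}+b_{c,-1}s^{-1}+b_{c,0}+\cdots$ and $w=d_{c,-2}s^{-2}+d_{c,-1}s^{-1}+d_{c,0}+\cdots$, where $b_{c,-2}d_{c,-2}\neq 0$ by hypothesis.

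First I would pin down $c_{c,0}$. Since $x$ and $z$ are holomorphic at $t=c$, the left-hand sides of the $z$- and $w$-equations have only finitely many negative powers of $s$, so the singular parts on the right must cancel. The coefficient of $s^{-2}$ in the $z$-equation gives $c_{c,0}^{2}(b_{c,-2}+d_{c,-2})=0$, and the coefficient of $s^{-4}$ in the $w$-equation gives $c_{c,0}\,d_{c,-2}(d_{c,-2}+2b_{c,-2})=0$; since $b_{c,-2},d_{c,-2}\neq 0$, the assumption $c_{c,0}\neq 0$ would force $d_{c,-2}=-b_{c,-2}$ and $d_{c,-2}=-2b_{c,-2}$ simultaneously, hence $b_{c,-2}=0$, a contradiction, so $c_{c,0}=0$. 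With $c_{c,0}=0$ the term $2z^{2}w$ in the $x$-equation becomes holomorphic, so its coefficient of $s^{-2}$ reads $a_{c,0}^{2}b_{c,-2}=0$, whence $a_{c,0}=0$; and the coefficient of $s^{-3}$ in the $y$-equation, namely $-2cb_{c,-2}=-2a_{c,1}b_{c,-2}^{2}$, cannot hold unless $a_{c,1}\neq 0$ (otherwise the two sides have different pole orders), in which case $b_{c,-2}=c/a_{c,1}$.

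Next I would compare the constant terms of the $x$- and $z$-equations together with the coefficient of $s^{-3}$ in the $w$-equation; after substituting $b_{c,-2}=c/a_{c,1}$ these become three relations among $a_{c,1}$, $c_{c,1}$, $d_{c,-2}$. Eliminating $d_{c,-2}$ yields both the quadratic $2c_{c,1}^{2}-a_{c,1}c_{c,1}-a_{c,1}^{2}=0$ (so in particular $c_{c,1}\neq 0$, as $a_{c,1}\neq 0$) and the relation $a_{c,1}(a_{c,1}-1)=0$. Hence $a_{c,1}=1$, so $b_{c,-2}=c$, the quadratic reduces to $2c_{c,1}^{2}-c_{c,1}-1=0$, and therefore $c_{c,1}=1$ or $c_{c,1}=-\tfrac12$; correspondingly $d_{c,-2}=-c/c_{c,1}^{2}$ equals $-c$ or $-4c$. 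This already produces the two cases of the statement together with their leading coefficients.

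Finally I would push the expansion further by matching the coefficients of $s^{-2},s^{-1},s^{0},s^{1},\dots$ in the four equations. At each step the next unknown among $a_{c,k},c_{c,k},b_{c,k-2},d_{c,k-2}$ appears linearly, and one checks that its coefficient is nonzero throughout the range of orders occurring in the statement, so these coefficients are determined one after another; reading off the first few of them in each of the two branches gives exactly the displayed expansions. The main obstacle is purely organizational: one has to keep careful track of which equation at which power of $s$ determines which coefficient, carry the two branches $c_{c,1}=1$ and $c_{c,1}=-\tfrac12$ in parallel, and confirm that the free parameter (resonance) shows up only beyond the orders written down, so that all the listed coefficients are genuinely forced. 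This is the same scheme as for Proposition~\ref{prop:t=0-(y,w)} and Proposition~\ref{prop:t=c-(y,w)(2,1)}.
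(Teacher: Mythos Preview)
Your proposal is correct and follows exactly the approach the paper indicates (the paper gives no explicit proof, merely referring to the method of Proposition~\ref{prop:t=0-(y,w)}, i.e.\ substituting Laurent series and matching coefficients). One harmless slip: the quadratic obtained from the constant term of the $z$-equation after eliminating $d_{c,-2}$ is $2c_{c,1}^{2}-a_{c,1}c_{c,1}-a_{c,1}=0$, not $-a_{c,1}^{2}$; since you already have $a_{c,1}=1$ this makes no difference to the conclusion.
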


\begin{proposition}
\label{prop:t=c-(y,w)(1,1)}
{\it
Suppose that 
for $B_4(\alpha_j)_{0\leq j \leq 4},$ 
there exists a solution such that 
$y,w$ both have a pole of order one at $t=c\in\C^{*}$ 
and 
$x,z$ are holomorphic at $t=c\in\C^{*}.$ 
Then, 
$c_{c,0}\neq0$ and 
\begin{equation*}
\begin{cases}
x=\displaystyle \frac{c}{2b_{c,-1}}+a_{c,1}(t-c)+\cdots, \\
y=\displaystyle b_{c,-1}(t-c)^{-1}+\cdots, \\
z=\displaystyle \frac{c}{2b_{c,-1}}+c_{c,1}(t-c)+\cdots, \\
w=\displaystyle d_{c,-1}(t-c)^{-1}+\cdots, \\
\end{cases}
\end{equation*}
where $b_{c,-1}+d_{c,-1}=0$ and $a_{c,1}-c_{c,1}=2\alpha_2c_{c,0}.$

}
\end{proposition}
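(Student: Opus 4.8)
The plan is to imitate the proof of Proposition~\ref{prop:t=0-(y,w)}: substitute the Laurent expansions of $x,y,z,w$ at $t=c$ into the four defining equations of $B_4^{(1)}(\alpha_j)_{0\leq j\leq 4}$ and balance the most singular coefficients. Put $s=t-c$, so that $t\,d/dt=(c+s)\,d/ds$, and write
\begin{equation*}
x=a_{c,0}+a_{c,1}s+\cdots,\quad y=b_{c,-1}s^{-1}+b_{c,0}+\cdots,\quad z=c_{c,0}+c_{c,1}s+\cdots,\quad w=d_{c,-1}s^{-1}+d_{c,0}+\cdots,
\end{equation*}
with $b_{c,-1}d_{c,-1}\neq 0$ by hypothesis.

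First I would extract the leading singular balances. In the equation $t y^{\prime}=-2xy^2+2xy-(1-2\alpha_2-2\alpha_3-2\alpha_4)y+\alpha_1$, the coefficient of $s^{-2}$ gives $-cb_{c,-1}=-2a_{c,0}b_{c,-1}^{2}$, hence $a_{c,0}=c/(2b_{c,-1})$. In the equation $t z^{\prime}=2z^2w-z^2+(1-2\alpha_4)z+2yz^2+t$, the coefficient of $s^{-1}$ gives $c_{c,0}^{2}(b_{c,-1}+d_{c,-1})=0$. In the equation $t w^{\prime}=-2zw^2+2zw-(1-2\alpha_4)w-2\alpha_3y-4yzw+\alpha_3$, the coefficient of $s^{-2}$ gives $-cd_{c,-1}=-2c_{c,0}d_{c,-1}(d_{c,-1}+2b_{c,-1})$, i.e. $c=2c_{c,0}(d_{c,-1}+2b_{c,-1})$.

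Next I would combine these. If $c_{c,0}=0$, the $w$-balance forces $c=0$, contradicting $c\in\C^{*}$; hence $c_{c,0}\neq 0$, which is the first assertion. Then the $z$-balance gives $d_{c,-1}=-b_{c,-1}$, whence the $w$-balance reduces to $c=2c_{c,0}b_{c,-1}$, so $c_{c,0}=c/(2b_{c,-1})=a_{c,0}$. (At this stage the coefficient of $s^{-1}$ in the equation for $t x^{\prime}$ reads $a_{c,0}^{2}b_{c,-1}+c_{c,0}^{2}d_{c,-1}=0$, which is then automatic and only a consistency check.) This yields the stated forms of $x,y,z,w$.

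For the remaining relation I would subtract the $z$-equation from the $x$-equation. Cancelling the common term $2z^{2}w+t$ and factoring, the difference collapses to
\begin{equation*}
t(x-z)^{\prime}=(2y-1)(x-z)(x+z)+(1-2\alpha_3-2\alpha_4)(x-z)-2\alpha_2 x.
\end{equation*}
Since $a_{c,0}=c_{c,0}$ we have $x-z=(a_{c,1}-c_{c,1})s+\cdots$; comparing the constant terms on both sides, using $x+z=c/b_{c,-1}+\cdots$ and $2y-1=2b_{c,-1}s^{-1}+\cdots$, yields the asserted relation among $a_{c,1}$, $c_{c,1}$ and $c_{c,0}$. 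The only step that requires a moment's thought is the exclusion of $c_{c,0}=0$ via the interplay of the $z$- and $w$-equations, which is exactly where the hypothesis $c\neq 0$ enters; everything else is the routine bookkeeping of Proposition~\ref{prop:t=0-(y,w)}.
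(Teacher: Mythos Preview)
Your approach is exactly the paper's: the paper simply refers back to Proposition~\ref{prop:t=0-(y,w)} and lets the reader redo the Laurent-series balancing at $t=c$, which is what you do. Your derivations of $a_{c,0}=c/(2b_{c,-1})$, of $c_{c,0}\neq 0$ from the $w$-equation combined with $c\neq 0$, of $b_{c,-1}+d_{c,-1}=0$, and of $c_{c,0}=a_{c,0}$ are all correct.

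One arithmetic point at the very end: carrying out the constant-term comparison in your subtracted identity
\[
t(x-z)'=(2y-1)(x-z)(x+z)+(1-2\alpha_3-2\alpha_4)(x-z)-2\alpha_2 x
\]
actually gives
\[
c(a_{c,1}-c_{c,1})=2c(a_{c,1}-c_{c,1})-2\alpha_2 c_{c,0},
\]
hence $c(a_{c,1}-c_{c,1})=2\alpha_2 c_{c,0}$, i.e.\ $a_{c,1}-c_{c,1}=\alpha_2/b_{c,-1}$, which differs from the formula printed in the statement by a factor of $c$. This is most likely a typographical slip in the paper (the relation is not used downstream; only $b_{c,-1}+d_{c,-1}=0$ and $c_{c,0}\neq 0$ feed into Propositions~\ref{rational-necessary-1} and~\ref{rational-necessary-2}), but you should record the discrepancy rather than asserting that the computation reproduces the printed relation verbatim.
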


\subsection{Summary}

\begin{proposition}
\label{rational-necessary-1}
{\it
{\rm (1)}\quad 
Suppose that 
for $B_4^{(1)}(\alpha_j)_{0\leq j \leq 4},$ 
there exists a meromorphic solution at $t=c\in\C^{*}.$ 
{\rm (i)} and {\rm (ii)} then hold:
\newline
{\rm (i)}\quad $x$ 
has a pole of order at most one at $t=c$
and $\Res_{t=c} x=nc, n\in\Z,$
\newline
{\rm (ii)}\quad $y+w$ 
has a pole of order at most two at $t=c$
and $\Res_{t=c} (y+w)=0.$
\newline
{\rm (2)}\quad 
Suppose that 
for $B_4^{(1)}(\alpha_j)_{0\leq j \leq 4},$ 
there exists a rational solution. 
Then, 
\begin{equation*}
a_{\infty,0}-a_{0,0}\in\Z \,\,{\it and} \,\,
b_{\infty,-1}+d_{\infty,-1}=0,
\end{equation*}
because $b_{0,-1}+d_{0,-1}=b_{c,-1}+d_{c,-1}=0.$

}
\end{proposition}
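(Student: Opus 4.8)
The plan is to read everything off the local classification of meromorphic solutions at $t=c\in\C^{*}$ obtained in Propositions \ref{prop:t=c-x}--\ref{prop:t=c-(y,w)(1,1)}, and then, for the rational case, to feed this into the residue theorem on $\mathbb{P}^1$ together with the summaries of Sections 1 and 2.

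For part (1) I would go through the seven possible local behaviors listed in the classification proposition at the start of Section 3. For (i): if $x$ is holomorphic at $t=c$ the claim is void; the remaining cases are exactly those of Propositions \ref{prop:t=c-x}, \ref{prop:t=c-(x,z)} and \ref{prop:t=c-(x,w)}, and in each of them the pole of $x$ is simple with leading coefficient $a_{c,-1}\in\{\pm c,\pm 2c\}$, so $\Res_{t=c}x=a_{c,-1}=nc$ with $n\in\Z$. For (ii): when $y$ and $w$ are both holomorphic at $t=c$ there is nothing to prove; when exactly one of them has a pole (Propositions \ref{prop:t=c-y}, \ref{prop:t=c-w} and \ref{prop:t=c-(x,w)}) the displayed expansion of that variable has no $(t-c)^{-1}$ term, so $y+w$ has a pole of order at most two and $\Res_{t=c}(y+w)=0$; and when both $y$ and $w$ have poles (Propositions \ref{prop:t=c-(y,w)(2,1)}, \ref{prop:t=c-(y,w)(2,2)} and \ref{prop:t=c-(y,w)(1,1)}) the $(t-c)^{-1}$-coefficients of $y$ and of $w$ in the explicit expansions are negatives of each other, so again $\Res_{t=c}(y+w)=0$.

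For part (2), recall from Proposition \ref{prop:inf-summary} that a rational solution has $x$ holomorphic at $t=\infty$ with $x(\infty)=a_{\infty,0}$, and from Proposition \ref{prop:t=0behavior} and the summary of Section 2 that $x$ is holomorphic at $t=0$ with $x(0)=a_{0,0}$ and that $y+w$ is holomorphic at $t=0$. By part (1)(i) every finite pole of the rational function $x$ lies in $\C^{*}$, is simple, and has residue $n_c c$ with $n_c\in\Z$; hence $x(t)=a_{\infty,0}+\sum_c n_c c/(t-c)$, and evaluating at $t=0$ gives $a_{0,0}=a_{\infty,0}-\sum_c n_c$, so $a_{\infty,0}-a_{0,0}\in\Z$. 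For the remaining assertion, apply the residue theorem to the rational function $y+w$: its residue is $0$ at every $c\in\C^{*}$ by part (1)(ii) and at $t=0$ by the summary of Section 2, so the vanishing of the total residue forces $\Res_{t=\infty}(y+w)=0$, i.e. $b_{\infty,-1}+d_{\infty,-1}=0$; here $b_{0,-1}+d_{0,-1}=b_{c,-1}+d_{c,-1}=0$ is precisely the statement that the finite residues vanish.

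I do not anticipate a genuine difficulty: the work is the routine bookkeeping of checking, case by case, that the leading pole coefficients in the local normal forms have the claimed shape --- $nc$ with $n\in\Z$ for $x$, and equal-and-opposite residues for $y$ and $w$. The only points deserving slight care are confirming that the enumeration of local behaviors at $t=c\in\C^{*}$ is exhaustive (this is the content of the opening proposition of Section 3) and, in the double-pole cases for $y+w$, verifying that the simple-pole parts of $y$ and $w$ truly cancel rather than merely being absent from truncated expansions; the latter can be confirmed directly from the differential equations for $y$ and $w$.
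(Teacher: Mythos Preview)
Your proposal is correct and follows essentially the same approach as the paper: part (1) is read off case by case from the local classifications of Section~3, and part (2) is the residue theorem. The only cosmetic difference is that the paper phrases the argument for $a_{\infty,0}-a_{0,0}\in\Z$ as ``apply the residue theorem to $t^{-1}x$'' rather than writing out the partial-fraction decomposition of $x$ and evaluating at $t=0$; these two formulations are equivalent, since $\Res_{t=c}(t^{-1}x)=c^{-1}\Res_{t=c}x=n_c\in\Z$, $\Res_{t=0}(t^{-1}x)=a_{0,0}$, and $\Res_{t=\infty}(t^{-1}x)=-a_{\infty,0}$.
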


\begin{proof}
Case (1) is obvious. 
Case (2) can be proved by applying the residue theorem to $t^{-1}x.$

\end{proof}

\section{Hamiltonian and its properties}

\subsection{The Laurent series of $H_{B_4^{(1)}}$ at $t=\infty$}

By Proposition \ref{prop:inf-summary}, 
we can calculate the constant term, $h_{\infty,0},$ of the Laurent series of $H_{B_4^{(1)}}$ at $t=\infty.$ 

\begin{proposition}
\label{prop:hinf}
{\it
Suppose that 
for $B_4^{(1)}(\alpha_j)_{0\leq j \leq 4},$ 
there exists a solution such that 
$z$ has a pole order $n \,\,(n\geq 1)$ at $t=\infty$ and 
$x,y,w$ are all holomorphic at $t=\infty.$ 
\newline
{\rm (1)}\quad 
If $n\geq 2,$ then 
$$
h_{\infty,0}=\frac14(\alpha_0-\alpha_1)^2+\alpha_3(\alpha_3+2\alpha_4-1).
$$
{\rm (2)}\quad 
If $n=1,$ then
$$
h_{\infty,0}=\frac14(\alpha_0-\alpha_1)^2+(\alpha_3+\alpha_4)^2-(\alpha_3+\alpha_4).
$$

}
\end{proposition}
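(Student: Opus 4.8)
The plan is to substitute the asymptotic expansions of $(x,y,z,w)$ from Proposition~\ref{prop:inf-summary} directly into the Hamiltonian
\[
H_{B_4^{(1)}}=x^2y(y-1)+x\{(1-2\alpha_2-2\alpha_3-2\alpha_4)y-\alpha_1\}+ty+z^2w(w-1)+z\{(1-2\alpha_4)w-\alpha_3\}+tw+2yz(zw+\alpha_3),
\]
and read off the constant term of its Laurent series at $t=\infty$ in each of the two cases $n\geq 2$ and $n=1$. Since Proposition~\ref{prop:inf-summary} tells us that $x,y,w$ are holomorphic at $t=\infty$ and $z$ has a pole of order $n$, the only terms in $H_{B_4^{(1)}}$ that can contribute to $t^{0}$ are those that either (a) involve no factor of $z$ and are evaluated at their constant terms, or (b) involve $z$ or $z^2$ multiplied by coefficients of $w$ of sufficiently negative order to cancel the growth. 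So the bookkeeping amounts to tracking, in $z^2w(w-1)$, $z(1-2\alpha_4)w$, $-\alpha_3 z$, $2yz^2w$ and $2\alpha_3 yz$, exactly which products land in degree zero.

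Concretely, first I would handle case (1), $n\geq 2$. Here $x=(\alpha_0-\alpha_1)+O(t^{-n})$, $y=\tfrac12+O(t^{-n})$, $w=-\alpha_3/c_{\infty,n}\,t^{-n}+\cdots$, and $z=c_{\infty,n}t^{n}+\cdots$. The purely holomorphic part $x^2y(y-1)+x\{(1-2\alpha_2-2\alpha_3-2\alpha_4)y-\alpha_1\}$ contributes its value at the constant terms, namely $(\alpha_0-\alpha_1)^2\cdot\tfrac12\cdot(-\tfrac12)+(\alpha_0-\alpha_1)\{\tfrac12(1-2\alpha_2-2\alpha_3-2\alpha_4)-\alpha_1\}$; using $\alpha_0+\alpha_1+2\alpha_2+2\alpha_3+2\alpha_4=1$ this collapses to a multiple of $(\alpha_0-\alpha_1)^2$. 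The term $ty$ contributes $t\cdot(\text{coefficient of }t^{-1}\text{ in }y)$, which for $n\geq 2$ vanishes, so only $-\tfrac14(\alpha_0-\alpha_1)^2$ survives from $ty$... more carefully, $ty=\tfrac12 t+\cdots$ has no constant term when $n\geq2$; the $\tfrac14(\alpha_0-\alpha_1)^2$ must therefore emerge from combining the $x^2y(y-1)$ piece with the linear-in-$x$ piece via the parameter relation. For the $z$-dependent block, $z^2w(w-1)=-c_{\infty,n}\,\alpha_3\,t^{n}\cdot(\cdots)$ — one must check the leading contribution to $t^0$ comes from $z^2\cdot(-w)=c_{\infty,n}^2 t^{2n}\cdot(\alpha_3/c_{\infty,n})t^{-n}=\alpha_3 c_{\infty,n}t^{n}$, which is $O(t^{n})$ not $O(1)$ unless cancelled; the cancellation happens against $2yz^2w=2\cdot\tfrac12\cdot(-\alpha_3 c_{\infty,n}t^{n})$ plus $tw=O(t^{1-n})$, and against $z(1-2\alpha_4)w+2\alpha_3 yz-\alpha_3 z$. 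Collecting all the $t^0$ contributions from this block should yield exactly $\alpha_3(\alpha_3+2\alpha_4-1)$; this is the step requiring genuine care, since several pieces of size $t^{n}$ must cancel and the residual $O(1)$ terms involve $c_{\infty,n-1},\,d_{\infty,-n},\,b_{\infty,-n}$ whose values are pinned down by Proposition~\ref{prop:inf-summary} and the recursion in its proof.

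For case (2), $n=1$, the situation is more delicate because $ty$ now \emph{does} contribute: $y=\tfrac12+2\alpha_4(\alpha_3+\alpha_4)t^{-1}+\cdots$ gives $ty\ni 2\alpha_4(\alpha_3+\alpha_4)$ in degree zero, and likewise $tw\ni -2\alpha_4(\alpha_3+\alpha_4)$, so these two cancel. Meanwhile $z=\tfrac{1}{2\alpha_4}t+\cdots$ means $z^2=\tfrac{1}{4\alpha_4^2}t^2+\cdots$, and the quadratic-in-$z$ terms now produce $t^2$ and $t^1$ pieces that must all cancel, forcing one to use the next-order coefficients $c_{\infty,0},z$-data established in Proposition~\ref{prop:uniqueness} (where $c_{\infty,0}=(\alpha_0+\alpha_1)(\alpha_0-\alpha_1)(\alpha_0+\alpha_1+2\alpha_2)/4\alpha_4^2$). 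After all cancellations the constant term should reorganize, via the affine relation, into $\tfrac14(\alpha_0-\alpha_1)^2+(\alpha_3+\alpha_4)^2-(\alpha_3+\alpha_4)$. The main obstacle throughout is purely the algebraic bookkeeping: ensuring every term of positive degree in $t$ cancels (a good consistency check is that the Hamiltonian system forces $tH'=(\text{something explicit})$, so the Laurent series of $H$ is constrained), and then simplifying the surviving constants using $\alpha_0+\alpha_1+2\alpha_2+2\alpha_3+2\alpha_4=1$. I would organize the computation by first writing $H_{B_4^{(1)}}=H_x+H_z+2yz(zw+\alpha_3)+ty+tw$ with $H_x,H_z$ the two one-variable Hamiltonian blocks, compute each block's $t^0$-coefficient separately, and only combine at the end.
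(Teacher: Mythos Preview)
Your approach is exactly the paper's: substitute the expansions from Proposition~\ref{prop:inf-summary} into $H_{B_4^{(1)}}$ and extract the $t^0$ coefficient. One simplification you are missing: because Proposition~\ref{prop:inf-summary} gives $y-\tfrac12=O(t^{-n})$ and $w=O(t^{-n})$ (all intermediate coefficients vanish), every constant-term contribution reduces to a product of \emph{leading} coefficients only, so you never need $c_{\infty,0}$ or anything from Proposition~\ref{prop:uniqueness}; in particular, grouping the $z$-dependent terms as $(zw)^2+(1-2\alpha_4)zw+(z^2w+\alpha_3 z)(2y-1)+tw$ and using $zw=-\alpha_3+O(t^{-1})$ for $n\ge 2$ (respectively $zw=-(\alpha_3+\alpha_4)+O(t^{-1})$ for $n=1$) makes both cases a two-line computation.
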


\subsection{The Laurent series of $H_{B_4^{(1)}}$ at $t=0$}

In this subsection, 
we compute the constant term, $h_{0,0},$ of the Laurent series of $H_{B_4^{(1)}}$ at $t=0.$

\subsubsection{The case where all of $(x,y,z,w)$ are holomorphic at $t=0$}

By Propositions \ref{prop:t=0holo-a_{0,0}=0}, \ref{prop:t=0holo-a_{0,0}nonzero-half} 
and \ref{prop:t=0holo-a_{0,0}nonzero-nonhalf}, 
we obtain the following proposition:

\begin{proposition}
\label{prop:hzero-1}
{\it
Suppose that 
for $B_4^{(1)}(\alpha_j)_{0\leq j \leq 4},$ 
there exists a solution such that 
all of $(x,y,z,w)$ are holomorphic at $t=0.$ 
Then, $a_{0,0}=0, \alpha_0-\alpha_1.$ 
\newline
{\rm (1)-(i)}\quad 
If $a_{0,0}=0$ and $c_{0,0}=0,$ 
\begin{equation*}
h_{0,0}=0.
\end{equation*}
{\rm (1)-(ii)}\quad 
If $a_{0,0}=0$ and $c_{0,0}\neq0,$
\begin{equation*}
h_{0,0}=\alpha_3(\alpha_3+2\alpha_4-1).
\end{equation*}
{\rm (2)}\quad Moreover, assume that $a_{0,0}=\alpha_0-\alpha_1\neq 0.$ 
Then, $b_{0,0}=1/2, -\alpha_1/(\alpha_0-\alpha_1).$ 
\newline
{\rm (2)-(i)}\quad 
If $b_{0,0}=1/2$ and $c_{0,0}=0,$ 
\begin{equation*}
h_{0,0}=\frac14(\alpha_0-\alpha_1)^2.
\end{equation*}
{\rm (2)-(ii)}\quad 
If $b_{0,0}=1/2$ and $c_{0,0}\neq0,$ 
\begin{equation*}
h_{0,0}=\frac14(\alpha_0-\alpha_1)^2-\frac14(2\alpha_4-1)^2.
\end{equation*}
{\rm (2)-(iii)}\quad 
If $b_{0,0}=-\alpha_1/(\alpha_0-\alpha_1)\neq 1/2$ and $c_{0,0}=0,$ 
\begin{equation*}
h_{0,0}=-\alpha_0\alpha_1.
\end{equation*}
{\rm (2)-(iv)}\quad 
If $b_{0,0}=-\alpha_1/(\alpha_0-\alpha_1)\neq 1/2$ and $c_{0,0}\neq0,$ 
\begin{equation*}
h_{0,0}=-\alpha_0\alpha_1+\alpha_3(\alpha_3+2\alpha_4-1).
\end{equation*}
}
\end{proposition}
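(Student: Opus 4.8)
The plan is to exploit that, since $x,y,z,w$ are all holomorphic at $t=0$, the Hamiltonian $H_{B_4^{(1)}}$ is holomorphic there as well, so $h_{0,0}$ is nothing but the value of $H_{B_4^{(1)}}$ on the leading coefficients. The monomials $ty$ and $tw$ drop out at $t=0$, and using the normalization $1-2\alpha_2-2\alpha_3-2\alpha_4=\alpha_0+\alpha_1$ one gets
\begin{align*}
h_{0,0}
={}&a_{0,0}^2b_{0,0}(b_{0,0}-1)+a_{0,0}\{(\alpha_0+\alpha_1)b_{0,0}-\alpha_1\}\\
&+c_{0,0}^2d_{0,0}(d_{0,0}-1)+c_{0,0}\{(1-2\alpha_4)d_{0,0}-\alpha_3\}+2b_{0,0}c_{0,0}(c_{0,0}d_{0,0}+\alpha_3).
\end{align*}

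Next I would feed in the algebraic constraints on the leading coefficients obtained in the proof of Proposition \ref{prop:a0-determine}. Collecting the terms of $h_{0,0}$ that carry a factor $b_{0,0}$, the bracket that appears equals the left-hand side of (\ref{eqn:relation-1}) minus $a_{0,0}^2b_{0,0}$, hence equals $-a_{0,0}^2b_{0,0}$; this collapses $h_{0,0}$ to
\begin{equation*}
h_{0,0}=-a_{0,0}^2b_{0,0}^2-\alpha_1a_{0,0}+c_{0,0}^2d_{0,0}^2-c_{0,0}^2d_{0,0}+(1-2\alpha_4)c_{0,0}d_{0,0}-\alpha_3c_{0,0}.
\end{equation*}
Now comes the case analysis. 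By Proposition \ref{prop:a0-determine}, $a_{0,0}=0$ or $a_{0,0}=\alpha_0-\alpha_1$; by Proposition \ref{prop:t=0holo-a_{0,0}nonzero-determine}, in the latter case $b_{0,0}=1/2$ or $b_{0,0}=-\alpha_1/(\alpha_0-\alpha_1)$. In each of these three situations I split further on whether $c_{0,0}=0$. If $c_{0,0}=0$ the whole $z$-contribution vanishes and $-a_{0,0}^2b_{0,0}^2-\alpha_1a_{0,0}$ evaluates to $0$, to $\tfrac14(\alpha_0-\alpha_1)^2$, or to $-\alpha_0\alpha_1$ respectively (for the $b_{0,0}=1/2$ branch one uses the constraint $\alpha_0+\alpha_1=0$ recorded in Proposition \ref{prop:t=0holo-a_{0,0}nonzero-half} to bring $-\tfrac14(\alpha_0-\alpha_1)^2-\alpha_1(\alpha_0-\alpha_1)$ into the stated form). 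If $c_{0,0}\neq0$, I use (\ref{eqn:relation-7}), which gives $c_{0,0}d_{0,0}=-\alpha_3$ whenever $a_{0,0}=0$ or $b_{0,0}=-\alpha_1/(\alpha_0-\alpha_1)$, and (\ref{eqn:relation-3}), which after dividing by $c_{0,0}$ gives $2c_{0,0}d_{0,0}=2\alpha_4-1$ when $b_{0,0}=1/2$; substituting either identity into $c_{0,0}^2d_{0,0}^2-c_{0,0}^2d_{0,0}+(1-2\alpha_4)c_{0,0}d_{0,0}-\alpha_3c_{0,0}$ makes the $c_{0,0}$-linear terms cancel and leaves $\alpha_3(\alpha_3+2\alpha_4-1)$ in the first case and $-\tfrac14(2\alpha_4-1)^2$ in the second, which then combine with the already-computed $x$-part to produce exactly the formulas of (2)-(ii) and (2)-(iv). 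The explicit coefficient values recorded in Propositions \ref{prop:t=0holo-a_{0,0}=0}, \ref{prop:t=0holo-a_{0,0}nonzero-half} and \ref{prop:t=0holo-a_{0,0}nonzero-nonhalf} are needed only to check that no further sub-case escapes this dichotomy.

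The argument is thus essentially mechanical once the two-line reduction is in hand; the only real nuisance is the bookkeeping, since Propositions \ref{prop:t=0holo-a_{0,0}=0}, \ref{prop:t=0holo-a_{0,0}nonzero-half} and \ref{prop:t=0holo-a_{0,0}nonzero-nonhalf} together list more than a dozen sub-cases, and several of them (those with $\alpha_0+\alpha_1=0$, $\alpha_3=0$, or $\alpha_3+\alpha_4=1/2$) must be recognized as degenerate instances of the four formulas in the statement rather than as separate outputs. I anticipate no conceptual obstacle; I would write it up as the reduction above followed by a short table of substitutions.
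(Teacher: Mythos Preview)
Your reduction to
\[
h_{0,0}=-a_{0,0}^2b_{0,0}^2-\alpha_1a_{0,0}+c_{0,0}^2d_{0,0}^2-c_{0,0}^2d_{0,0}+(1-2\alpha_4)c_{0,0}d_{0,0}-\alpha_3c_{0,0}
\]
is correct and is a tidy way to organise the computation; the paper simply says ``by Propositions \ref{prop:t=0holo-a_{0,0}=0}, \ref{prop:t=0holo-a_{0,0}nonzero-half}, \ref{prop:t=0holo-a_{0,0}nonzero-nonhalf}'' and plugs in the explicit coefficient values, so your argument is in the same spirit but more streamlined.

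There is, however, one genuine slip in case (2)-(ii). With $b_{0,0}=1/2$ and $c_{0,0}\neq0$, relation (\ref{eqn:relation-3}) indeed gives $c_{0,0}d_{0,0}=(2\alpha_4-1)/2$, but substituting this alone into the $z$-block leaves
\[
-\tfrac14(2\alpha_4-1)^2-c_{0,0}\bigl(\alpha_3+\alpha_4-\tfrac12\bigr),
\]
so the $c_{0,0}$-linear term does \emph{not} cancel in general. Likewise, the ``already-computed $x$-part'' $\tfrac14(\alpha_0-\alpha_1)^2$ was obtained under $\alpha_0+\alpha_1=0$, which is not assumed here (Proposition \ref{prop:t=0holo-a_{0,0}nonzero-half}, case (4)). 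The fix is easy: in this branch also use (\ref{eqn:relation-7}), which for $a_{0,0}=\alpha_0-\alpha_1$, $b_{0,0}=1/2$ reads
\[
c_{0,0}^2d_{0,0}+\alpha_3c_{0,0}=-\tfrac12(\alpha_0^2-\alpha_1^2).
\]
Feeding this into the $z$-block and combining with $-a_{0,0}^2b_{0,0}^2-\alpha_1a_{0,0}=-\tfrac14(\alpha_0-\alpha_1)^2-\alpha_1(\alpha_0-\alpha_1)$ gives the stated $\tfrac14(\alpha_0-\alpha_1)^2-\tfrac14(2\alpha_4-1)^2$ without any extra parameter constraint. So relation (\ref{eqn:relation-7}) is needed in \emph{all} the $c_{0,0}\neq0$ branches, not just the two you singled out; once you make that adjustment the proof goes through exactly as you outline.
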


\subsubsection{The case where $z$ has a pole at $t=0$}

By Proposition \ref{prop:t=0-z}, 
we obtain the following proposition:

\begin{proposition}
\label{prop:hzero-2}
{\it
Suppose that 
for $B_4^{(1)}(\alpha_j)_{0\leq j \leq 4},$ 
there exists a solution such that 
$z$ has a pole of order $n \,\,(n\geq 1)$ at $t=0$ 
and 
$x,y,w$ are all holomorphic at $t=0.$ 
Then, 
\begin{equation*}
h_{0,0}=\frac14(\alpha_0-\alpha_1)^2+\alpha_3(\alpha_3+2\alpha_4-1).
\end{equation*}
}
\end{proposition}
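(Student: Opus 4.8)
The plan is to proceed exactly as in the computation of $h_{\infty,0}$ in Proposition \ref{prop:hinf}, but now at $t=0$ and using the expansions supplied by Proposition \ref{prop:t=0-z}. First I would recall that Proposition \ref{prop:t=0-z} gives, for a solution with $z$ having a pole of order $n$ at $t=0$ and $x,y,w$ all holomorphic there,
\begin{equation*}
x=(\alpha_0-\alpha_1)+O(t^n),\quad y=\tfrac12+O(t^n),\quad z=c_{0,-n}t^{-n}+\cdots,\quad w=-\tfrac{\alpha_3}{c_{0,-n}}t^n+\cdots .
\end{equation*}
Substituting these into
\begin{equation*}
H_{B_4^{(1)}}=x^2y(y-1)+x\{(1-2\alpha_2-2\alpha_3-2\alpha_4)y-\alpha_1\}+ty+z^2w(w-1)+z\{(1-2\alpha_4)w-\alpha_3\}+tw+2yz(zw+\alpha_3),
\end{equation*}
I would track which pieces contribute to the constant term $h_{0,0}$, i.e. to the coefficient of $t^0$ in the Laurent expansion at $t=0$.

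The key observation is that the potentially divergent pieces must collapse. The terms $z^2w(w-1)$, $z(1-2\alpha_4)w$, and $2yz(zw+\alpha_3)$ all involve $z$, whose leading behaviour is $t^{-n}$; but $w$ and hence $zw$ are controlled — indeed $zw\to -\alpha_3$ to leading order — so after cancellation these contribute only finitely, and one must extract their $t^0$-coefficient carefully. The term $-z\alpha_3 = -\alpha_3 c_{0,-n}t^{-n}+\cdots$ is the main source of a pole, but it is cancelled against $2yz\alpha_3 = \alpha_3 c_{0,-n}t^{-n}+\cdots$ (using $y\to\tfrac12$), which is precisely the mechanism forcing holomorphy of $H$ near $t=0$; the subleading terms of this same cancellation, together with the $z^2w(w-1)+z^2w\cdot 2y$ combination, produce the $\alpha_3(\alpha_3+2\alpha_4-1)$ contribution. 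Meanwhile the $x$-block contributes $x^2y(y-1)+x\{(1-2\alpha_2-2\alpha_3-2\alpha_4)y-\alpha_1\}$ evaluated at $x=\alpha_0-\alpha_1$, $y=\tfrac12$, namely $-\tfrac14(\alpha_0-\alpha_1)^2+(\alpha_0-\alpha_1)\cdot\tfrac12(1-2\alpha_2-2\alpha_3-2\alpha_4)-\alpha_1(\alpha_0-\alpha_1)$, and one checks that the constraint $\alpha_0+\alpha_1+2\alpha_2+2\alpha_3+2\alpha_4=1$ simplifies this to $\tfrac14(\alpha_0-\alpha_1)^2$. The terms $ty$ and $tw$ are $O(t)$ and contribute nothing to $h_{0,0}$.

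The main obstacle is purely bookkeeping: several terms ($z^2w^2$, $z^2w$, $zw$, $yz^2w$, $yz\alpha_3$, $z\alpha_3$) individually produce contributions of order up to $t^{-n}$ or $t^{-n+1}$, and the constant term $h_{0,0}$ emerges only after combining their Laurent coefficients through order $t^0$; this requires knowing the expansions of $z$ and $w$ to enough subleading order. However, since Proposition \ref{prop:t=0-z} (and the uniqueness-type arguments behind it) pins down these expansions, and since the structure is identical to the $t=\infty$ case already handled in Proposition \ref{prop:hinf} (case $n\ge 2$), the computation goes through verbatim with $t\mapsto t^{-1}$ and the sign of $n$ reversed, yielding
\begin{equation*}
h_{0,0}=\frac14(\alpha_0-\alpha_1)^2+\alpha_3(\alpha_3+2\alpha_4-1),
\end{equation*}
independently of $n\ge 1$. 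The one point to double-check is the $n=1$ case, where $y$ acquires a nonzero $t^1$-correction $-\tfrac{(n+1)-2\alpha_3-2\alpha_4}{2c_{0,-n}}t$; but this correction enters $H$ only multiplied by quantities that are $O(t)$ or higher near $t=0$, so it does not affect $h_{0,0}$, and the formula is uniform in $n$.
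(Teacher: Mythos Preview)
Your proposal is correct and follows essentially the same approach as the paper: substitute the local expansions from Proposition~\ref{prop:t=0-z} into $H_{B_4^{(1)}}$ and extract the constant term. The paper's own proof is just the one-line remark that the result follows from Proposition~\ref{prop:t=0-z}; your write-up supplies the bookkeeping that the paper omits, including the key regrouping (equivalent to writing the $z,w$-block as $(zw)^2+(1-2\alpha_4)zw+(2y-1)z(zw+\alpha_3)$, where the last summand is $O(t)$ since $2y-1=O(t^n)$, $z=O(t^{-n})$, and $zw+\alpha_3=O(t)$), which makes the cancellation of the apparently divergent terms transparent and shows the answer is uniform in $n\ge 1$.
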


\subsubsection{The case where $y,w$ have a pole at $t=0$}

By Proposition \ref{prop:t=0-(y,w)}, 
we have the following proposition:

\begin{proposition}
\label{prop:hzero-3}
{\it
Suppose that 
for $B_4^{(1)}(\alpha_j)_{0\leq j \leq 4},$ 
there exists a solution such that 
$y,w$ have a pole at $t=0$ 
and 
$x,z$ are holomorphic at $t=0.$ 
Then, 
\begin{equation*}
h_{0,0}=\alpha_2(\alpha_0+\alpha_1+\alpha_2).
\end{equation*}
}
\end{proposition}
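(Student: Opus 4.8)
The plan is to read off the Laurent expansion of $H_{B_4^{(1)}}$ at $t=0$ directly from the expansion of $(x,y,z,w)$ supplied by Proposition \ref{prop:t=0-(y,w)} and collect the constant term. Recall that the Hamiltonian is
\begin{equation*}
H_{B_4^{(1)}}=x^2y(y-1)+x\{(1-2\alpha_2-2\alpha_3-2\alpha_4)y-\alpha_1\}+ty+z^2w(w-1)+z\{(1-2\alpha_4)w-\alpha_3\}+tw+2yz(zw+\alpha_3).
\end{equation*}
From Proposition \ref{prop:t=0-(y,w)} we have, near $t=0$,
\begin{equation*}
x=\frac{\alpha_2+\alpha_3+\alpha_4}{2\alpha_4(\alpha_3+\alpha_4)}t+\cdots,\quad
y=2\alpha_4(\alpha_3+\alpha_4)t^{-1}+\cdots,\quad
z=\frac{1}{2\alpha_4}t+\cdots,\quad
w=-2\alpha_4(\alpha_3+\alpha_4)t^{-1}+\cdots,
\end{equation*}
so $y$ and $w$ have simple poles while $x$ and $z$ vanish to first order; note in particular $y+w=O(1)$ and $zw=O(1)$, $yz=O(1)$. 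The first step is to track the order of vanishing/pole of each of the eight monomials in $H$ and discard everything that is $O(t)$, keeping only terms that can contribute a constant.

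The second step is the bookkeeping. The term $x^2y(y-1)$ is $O(t)$ (since $x^2=O(t^2)$ and $y^2=O(t^{-2})$, giving $O(t^0)$ from $x^2y^2$, but actually $x^2 y^2 = O(1)$ — so I must be careful here and retain it); similarly $z^2w(w-1)$ has $z^2w^2=O(1)$. The genuinely delicate monomials are $x^2y^2$, $z^2w^2$, the cross term $2yz\cdot zw=2yz^2w$ (which is $O(1)$ since $y=O(t^{-1})$, $z^2=O(t^2)$, $w=O(t^{-1})$), and $2\alpha_3 yz$ (which is $O(1)$). The linear-in-$x$, linear-in-$z$, and $ty$, $tw$ pieces all need checking: $ty=2\alpha_4(\alpha_3+\alpha_4)+\cdots$ and $tw=-2\alpha_4(\alpha_3+\alpha_4)+\cdots$ cancel at leading order, but subleading corrections of $y$ and $w$ at order $t^{-1}\cdot t$... no, $ty=O(1)$ with constant term $2\alpha_4(\alpha_3+\alpha_4)$ from the leading coefficient of $y$, and likewise for $tw$; these two constants cancel. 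The terms $x\cdot(\cdots)y$ contribute since $x=O(t)$ and $y=O(t^{-1})$ give $O(1)$; similarly $z(1-2\alpha_4)w=O(1)$. So the constant term of $H$ is a finite sum of products of the explicitly known leading coefficients, plus a few products involving the (so far unnamed) next coefficients $a_{0,2},b_{0,-2},c_{0,2},d_{0,-2}$ where a leading $O(t^{-1})$ multiplies a subleading $O(t^2)$ — I expect those to either cancel in pairs or to have been pinned down inside the proof of Proposition \ref{prop:t=0-(y,w)}.

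The main obstacle is precisely that last point: several monomials ($x^2y^2$, $z^2w^2$, $2yz^2w$, the cross terms) are exactly order zero, so their constant contributions require the product of a leading coefficient with a \emph{next-order} coefficient, not just the leading data displayed in Proposition \ref{prop:t=0-(y,w)}. The fix is to go one further order in the recursion used to prove that proposition — substitute the Laurent series one degree deeper into each of the four ODEs in $B_4^{(1)}(\alpha_j)$ and solve for $a_{0,2},b_{0,-2},c_{0,2},d_{0,-2}$ in terms of the $\alpha_j$ — and then observe that the dangerous contributions assemble, after using the relation $\alpha_0+\alpha_1+2\alpha_2+2\alpha_3+2\alpha_4=1$, into the clean expression $\alpha_2(\alpha_0+\alpha_1+\alpha_2)$. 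I would organize the computation so that the many $\alpha_3$- and $\alpha_4$-dependent pieces visibly cancel (they must, since the answer depends only on $\alpha_0,\alpha_1,\alpha_2$), which serves as a strong internal consistency check; indeed writing $\alpha_0+\alpha_1+\alpha_2=1-\alpha_2-2\alpha_3-2\alpha_4$ shows $h_{0,0}=\alpha_2(1-\alpha_2-2\alpha_3-2\alpha_4)$, and matching this against the raw monomial sum is the crux of the verification. Since the excerpt labels analogous computations "It can be easily checked," I would present this as a direct substitution, displaying only the surviving monomials and their constant contributions in a single aligned computation, and concluding with the stated identity.
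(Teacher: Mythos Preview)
Your approach is the same as the paper's: direct substitution of the Laurent data from Proposition~\ref{prop:t=0-(y,w)} into $H_{B_4^{(1)}}$ and extraction of the constant term. However, the ``main obstacle'' you flag is not real. Every monomial in $H$ that is not $O(t)$ has order \emph{exactly} zero at $t=0$: since $x,z$ begin at $t^{1}$ and $y,w$ at $t^{-1}$, each of $x^2y^2$, $z^2w^2$, $yz^2w$, $xy$, $zw$, $yz$, $ty$, $tw$ has leading term at $t^0$, so its constant contribution is a product of the \emph{leading} coefficients only. No subleading coefficients $a_{0,2},b_{0,0},c_{0,2},d_{0,0}$ enter the computation; the data already displayed in Proposition~\ref{prop:t=0-(y,w)} suffices.

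Concretely, writing $a_1b_{-1}=\alpha_2+\alpha_3+\alpha_4=:A$, $c_1d_{-1}=-(\alpha_3+\alpha_4)=:-B$, $b_{-1}c_1=B$, and $b_{-1}+d_{-1}=0$, one finds
\[
h_{0,0}=A^2+(1-2A)A+B^2-(1-2\alpha_4)B-2B^2+2\alpha_3B=A(1-A)-B(1-B)=(A-B)(1-A-B)=\alpha_2(\alpha_0+\alpha_1+\alpha_2),
\]
using $\alpha_0+\alpha_1+2\alpha_2+2\alpha_3+2\alpha_4=1$ at the last step. So your plan works, just without the extra recursion you anticipated.
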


\subsection{The Laurent series of $H_{B_4^{(1)}}$ at $t=c\in\C^{*}$}

In this subsection, 
we calculate the residue, $h_{c,-1},$ of $H_{B_4^{(1)}}$ at $t=c\in\mathbb{C}^{*}.$

\subsubsection{The case where $x$ has a pole at $t=c\in \C^{*}$}

By Proposition \ref{prop:t=c-x}, 
we can show the following proposition:

\begin{proposition}
{\it
Suppose that 
for $B_4^{(1)}(\alpha_j)_{0\leq j \leq 4},$ 
there exists a solution such that 
$x$ has a pole at $t=c\in\C^{*}$ 
and 
$y,z,w$ are all holomorphic at $t=c.$ 
$H_{B_4^{(1)}}$ is then holomorphic at $t=c.$
}
\end{proposition}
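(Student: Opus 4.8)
The plan is to read off the behaviour of $H_{B_4^{(1)}}$ at $t=c$ directly from the explicit Laurent expansions provided by Proposition \ref{prop:t=c-x}. Since $x$ has only a simple pole there while $y,z,w$ are all holomorphic at $t=c$, every summand of the Hamiltonian except $x^2y(y-1)$ and $x\{(1-2\alpha_2-2\alpha_3-2\alpha_4)y-\alpha_1\}$ is a polynomial in $t,y,z,w$ and is therefore automatically holomorphic at $t=c$; only those two terms can contribute a pole, the first a priori of order two and the second a priori of order one. So the whole task reduces to checking that the coefficients of $(t-c)^{-2}$ and $(t-c)^{-1}$ in
\begin{equation*}
x^2y(y-1)+x\{(1-2\alpha_2-2\alpha_3-2\alpha_4)y-\alpha_1\}
\end{equation*}
vanish.

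First I would dispose of the double pole. Writing $x=a_{c,-1}(t-c)^{-1}+a_{c,0}+\cdots$ and $y=b_{c,0}+b_{c,1}(t-c)+\cdots$, the coefficient of $(t-c)^{-2}$ in $x^2y(y-1)$ is $a_{c,-1}^2\,b_{c,0}(b_{c,0}-1)$, which vanishes because Proposition \ref{prop:t=c-x} forces $b_{c,0}\in\{0,1\}$. Next I would compute the coefficient of $(t-c)^{-1}$; using $b_{c,0}(b_{c,0}-1)=0$ once more, the residue of the two relevant terms collapses to
\begin{equation*}
a_{c,-1}^2\,b_{c,1}(2b_{c,0}-1)+a_{c,-1}\{(1-2\alpha_2-2\alpha_3-2\alpha_4)b_{c,0}-\alpha_1\}.
\end{equation*}

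Finally I would substitute the two cases of Proposition \ref{prop:t=c-x}. In the case $b_{c,0}=0$ one has $a_{c,-1}=c$ and $b_{c,1}=-\alpha_1/c$, so the displayed quantity equals $c\alpha_1-c\alpha_1=0$. In the case $b_{c,0}=1$ one has $a_{c,-1}=-c$ and $b_{c,1}=\alpha_0/c$, and invoking the parameter relation $1-2\alpha_2-2\alpha_3-2\alpha_4=\alpha_0+\alpha_1$ it becomes $c\alpha_0-c\{(\alpha_0+\alpha_1)-\alpha_1\}=0$. Hence the entire polar part of $H_{B_4^{(1)}}$ at $t=c$ vanishes, i.e.\ $H_{B_4^{(1)}}$ is holomorphic at $t=c$. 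There is essentially no serious obstacle: the shape of the Hamiltonian makes all but two summands manifestly regular, and the cancellation of the remaining polar part is forced by the relations already recorded in Proposition \ref{prop:t=c-x} together with the constraint $\alpha_0+\alpha_1+2\alpha_2+2\alpha_3+2\alpha_4=1$. The only point requiring any care is the correct bookkeeping of the $(t-c)^{-1}$ coefficient and remembering to use that parameter relation in the second case.
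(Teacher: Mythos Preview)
Your proof is correct and follows exactly the approach the paper intends: the paper's own proof consists solely of the remark that the result follows from Proposition~\ref{prop:t=c-x}, and you have carried out precisely that verification in detail. The bookkeeping of the $(t-c)^{-2}$ and $(t-c)^{-1}$ coefficients is accurate, and your use of the relation $\alpha_0+\alpha_1+2\alpha_2+2\alpha_3+2\alpha_4=1$ in the case $b_{c,0}=1$ is the right way to close the argument.
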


\subsubsection{The case where $y$ has a pole at $t=c\in \C^{*}$}

By Proposition \ref{prop:t=c-y}, 
we can prove the following proposition:

\begin{proposition}
{\it
Suppose that 
for $B_4^{(1)}(\alpha_j)_{0\leq j \leq 4},$ 
there exists a solution such that 
$y$ has a pole of order two at $t=c\in\C^{*}$ 
and $x,z,w$ are all holomorphic at $t=c.$ 
Then, $\alpha_3=0$ 
and $H_{B_4^{(1)}}$ has a pole of order one at $t=c$ 
and $h_{c,-1}=\mathrm{Res}_{t=c} H_{B_4^{(1)}} =c.$
}
\end{proposition}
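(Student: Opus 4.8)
The assertion that $\alpha_3=0$, together with the precise Laurent behaviour of $x,y,z,w$ near $t=c$, is exactly the content of Proposition \ref{prop:t=c-y}, so the only new point is the value of $h_{c,-1}$. Rather than substituting those (somewhat bulky) expansions into $H_{B_4^{(1)}}$, the plan is to use the identity
$$
\frac{d}{dt}H_{B_4^{(1)}}=y+w,
$$
which holds along any solution of the Hamiltonian system and reduces the computation of $\Res_{t=c}H_{B_4^{(1)}}$ to the already-known expansion of $y+w$.

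To obtain this identity, differentiate $H_{B_4^{(1)}}$ along the flow. Using $t\,dx/dt=\partial H/\partial y$, $t\,dy/dt=-\partial H/\partial x$, $t\,dz/dt=\partial H/\partial w$, $t\,dw/dt=-\partial H/\partial z$, the chain rule gives
$$
\frac{d}{dt}H_{B_4^{(1)}}=\frac{\partial H_{B_4^{(1)}}}{\partial t}+\frac1t\Bigl(\frac{\partial H}{\partial x}\frac{\partial H}{\partial y}-\frac{\partial H}{\partial y}\frac{\partial H}{\partial x}+\frac{\partial H}{\partial z}\frac{\partial H}{\partial w}-\frac{\partial H}{\partial w}\frac{\partial H}{\partial z}\Bigr)=\frac{\partial H_{B_4^{(1)}}}{\partial t}.
$$
Since the only explicit $t$-dependence of $H_{B_4^{(1)}}$ is through the two terms $ty$ and $tw$, one has $\partial H_{B_4^{(1)}}/\partial t=y+w$; and since $t=c\neq 0$, the division by $t$ is harmless on a punctured neighbourhood of $c$, so the identity is valid there.

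Now I read off the right-hand side from Proposition \ref{prop:t=c-y}: in both cases $y=-c(t-c)^{-2}+b_{c,0}+\cdots$ has no $(t-c)^{-1}$-term and $w$ is holomorphic at $t=c$, hence $y+w=-c(t-c)^{-2}+O(1)$, a pole of order exactly two with vanishing residue. Integrating the identity $\frac{d}{dt}H_{B_4^{(1)}}=y+w$ once, the primitive of $-c(t-c)^{-2}$ is $c(t-c)^{-1}$, and the absence of a $(t-c)^{-1}$-term on the right guarantees that no logarithm appears, consistently with $H_{B_4^{(1)}}$ being meromorphic near $c$. Therefore $H_{B_4^{(1)}}=c(t-c)^{-1}+(\text{holomorphic at }t=c)$, which shows that $H_{B_4^{(1)}}$ has a pole of order exactly one at $t=c$ and that $h_{c,-1}=\Res_{t=c}H_{B_4^{(1)}}=c$.

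There is no real obstacle here: the one step needing care is the derivation of $dH_{B_4^{(1)}}/dt=y+w$, i.e.\ the pairwise cancellation of the Poisson-bracket terms, which is the standard computation for any system of the form $t\,dq_i/dt=\partial H/\partial p_i$, $t\,dp_i/dt=-\partial H/\partial q_i$. Alternatively — and this appears to be the route taken for the neighbouring propositions — one may insert the two expansions of Proposition \ref{prop:t=c-y} directly into $H_{B_4^{(1)}}$ and verify that the $(t-c)^{-2}$-coefficient vanishes while the $(t-c)^{-1}$-coefficient equals $c$; the identity-based argument is preferable since it makes the cancellation of the double pole transparent and does not require the auxiliary coefficients $b_{c,0}$ and $d_{c,2}$.
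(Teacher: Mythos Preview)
Your argument is correct. The identity $dH_{B_4^{(1)}}/dt=\partial H_{B_4^{(1)}}/\partial t=y+w$ is valid exactly as you derive it, and Proposition~\ref{prop:t=c-y} does record that $y=-c(t-c)^{-2}+b_{c,0}+\cdots$ with no $(t-c)^{-1}$ term while $w$ vanishes to order two, so integrating gives $H_{B_4^{(1)}}=c(t-c)^{-1}+O(1)$ as claimed.

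The paper, by contrast, proceeds by the direct route you anticipate at the end: it simply substitutes the expansions of Proposition~\ref{prop:t=c-y} into the Hamiltonian and reads off the coefficients. Your approach is genuinely cleaner. It bypasses the term-by-term bookkeeping in $H_{B_4^{(1)}}$, it explains \emph{why} the $(t-c)^{-2}$ term of $H$ cancels (there is nothing to produce it on the right of $dH/dt=y+w$), and it in fact unifies the entire batch of residue computations in Section~4.3: each of those propositions reduces, via the same identity, to the statement $\Res_{t=c}(y+w)=0$ together with the coefficient of $(t-c)^{-2}$ in $y+w$, which is precisely the content already collected in Proposition~\ref{rational-necessary-1}. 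The paper's direct substitution has the minor advantage of being self-contained at each proposition, but your method makes the structure transparent.
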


\subsubsection{The case where $z$ has a pole at $t=c\in \C^{*}$}

By Proposition \ref{prop:t=c-z}, 
we can show the following proposition:

\begin{proposition}
{\it
Suppose that for $B_4^{(1)}(\alpha_j)_{0\leq j \leq 5},$ 
there exists a solution such that 
$z$ has a pole of order $n \,\,(n\geq 1)$ at $t=c\in\C^{*}$ 
and $x,y,w$ are all holomorphic at $t=c.$ 
$H_{B_4^{(1)}}$ is then holomorphic at $t=c.$
}
\end{proposition}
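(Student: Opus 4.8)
The plan is to read the singular part of $H_{B_4^{(1)}}$ off the equations of motion rather than from the local expansions directly. First I would record the one structural fact that is needed: since $x$ and $y$ are holomorphic at $t=c$ and the coefficient $t$ is holomorphic and nonzero near $t=c$, the left‑hand side $tx^{\prime}$ and the terms $2x^2y-x^2+(1-2\alpha_2-2\alpha_3-2\alpha_4)x+t$ of the first equation of $B_4^{(1)}(\alpha_j)_{0\leq j\leq 4}$ are holomorphic at $t=c$; hence
\[
2\alpha_3 z+2z^2w=2\,(z^2w+\alpha_3 z)
\]
is holomorphic at $t=c$, i.e. $z(zw+\alpha_3)$ is holomorphic there. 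Because $z$ has a genuine pole of order $n\geq 1$ at $t=c$, comparing Laurent orders forces $zw+\alpha_3$ --- and therefore $zw$ --- to be holomorphic at $t=c$ (in fact $zw+\alpha_3$ must vanish to order $\geq n$), and consequently $z^2w^2=(zw)^2$ is holomorphic as well.

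Next I would expand the given formula for $H_{B_4^{(1)}}$ and regroup it so that the $x$‑monomials are separated and the remaining terms are organized around the combination $z^2w+\alpha_3 z$:
\[
H_{B_4^{(1)}}
=\bigl[x^2y(y-1)+x\{(1-2\alpha_2-2\alpha_3-2\alpha_4)y-\alpha_1\}+ty\bigr]
+\bigl[z^2w^2+(1-2\alpha_4)zw+tw\bigr]
+(2y-1)\,(z^2w+\alpha_3 z).
\]
This is a purely algebraic rearrangement: $z^2w(w-1)=z^2w^2-z^2w$, $z\{(1-2\alpha_4)w-\alpha_3\}=(1-2\alpha_4)zw-\alpha_3 z$, $2yz(zw+\alpha_3)=2yz^2w+2\alpha_3 yz$, and then $-z^2w+2yz^2w-\alpha_3 z+2\alpha_3 yz=(2y-1)(z^2w+\alpha_3 z)$. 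Now each bracket is manifestly holomorphic at $t=c$: the first is a polynomial in the functions $x,y$ and $t$, all holomorphic there; in the second, $z^2w^2$, $zw$ and $tw$ were just shown to be holomorphic; and the last term is the product of the holomorphic function $2y-1$ with the holomorphic function $z^2w+\alpha_3 z$. Therefore $H_{B_4^{(1)}}$ is holomorphic at $t=c$.

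The only delicate point is the division step in the first paragraph --- that $z(zw+\alpha_3)$ holomorphic together with $z$ having a pole at $t=c$ forces $zw+\alpha_3$ holomorphic. This is immediate from comparing orders of vanishing, and it is precisely where the hypothesis that $z$ actually has a pole at $t=c$ enters (if $z$ were holomorphic there, the statement is trivial). Alternatively one can avoid the equations of motion entirely and read the orders of vanishing of $w$ and of $zw+\alpha_3$ off Proposition \ref{prop:t=c-z}, but then one has to split into the cases $n=1$, $n\geq 2$, and $\alpha_3=0$; routing through the first equation sidesteps all of that.
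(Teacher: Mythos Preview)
Your proof is correct and genuinely different from the paper's. The paper simply invokes Proposition~\ref{prop:t=c-z}, i.e., plugs the explicit local expansions of $y,z,w$ at $t=c$ (splitting into the cases $n=1$ and $n\geq 2$) into $H_{B_4^{(1)}}$ and checks by direct computation that the principal part vanishes. Your route is structurally cleaner: you use the first equation of motion to force $z(zw+\alpha_3)$ holomorphic, deduce from the pole of $z$ that $zw+\alpha_3$ (hence $zw$ and $z^2w^2$) is holomorphic, and then regroup $H_{B_4^{(1)}}$ algebraically into three manifestly holomorphic blocks. The advantage of your approach is that it avoids any case splitting and any explicit coefficient-chasing; the paper's approach, while more pedestrian, has the advantage of reusing expansions already tabulated for other purposes. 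Both are short once you know what to look for.
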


\subsubsection{The case where $w$ has a pole at $t=c\in \C^{*}$}

By Proposition \ref{prop:t=c-w}, 
we can prove the following proposition:

\begin{proposition}
{\it
Suppose that for $B_4^{(1)}(\alpha_j)_{0\leq j \leq 5},$ 
there exists a solution such that 
$w$ has a pole of order two at $t=c\in\C^{*}$ 
and $x,y,z$ are all holomorphic at $t=c.$ 
$H_{B_4^{(1)}}$ then has a pole of order one at $t=c$ and 
$h_{c,-1}=\mathrm{Res}_{t=c} H_{B_4^{(1)}} =c.$
}
\end{proposition}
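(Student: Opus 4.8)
The plan is to substitute the local expansions of Proposition~\ref{prop:t=c-w} into $H_{B_4^{(1)}}$ and keep track only of the principal part at $t=c$. Since $x$ and $y$ are holomorphic at $t=c$, the terms $x^2y(y-1)$, $x\{(1-2\alpha_2-2\alpha_3-2\alpha_4)y-\alpha_1\}$ and $ty$ are holomorphic there, so the only contributions to the principal part come from the $z,w$-part of $H_{B_4^{(1)}}$, namely $z^2w(w-1)$, $z\{(1-2\alpha_4)w-\alpha_3\}$, $tw$ and $2yz(zw+\alpha_3)$.

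First I would compute the Laurent expansion of $zw$ at $t=c$. Using $z=-(t-c)-\frac{2\alpha_4+1}{2c}(t-c)^2+\cdots$ and $w=-c(t-c)^{-2}+d_{c,0}+\cdots$ from Proposition~\ref{prop:t=c-w}, direct multiplication gives
\begin{equation*}
zw=c(t-c)^{-1}+\frac{2\alpha_4+1}{2}+O(t-c);
\end{equation*}
the decisive point is that the $(t-c)^2$-coefficient of $z$ times the $(t-c)^{-2}$-coefficient of $w$ produces the constant term $\tfrac{2\alpha_4+1}{2}$, and that $w$ carries no $(t-c)^{-1}$ term. Consequently $z^2w=z\cdot(zw)$ is holomorphic at $t=c$ (its lowest-order term is $(t-c)^0$), so $2yz(zw+\alpha_3)=2y\,z^2w+2\alpha_3 yz$ is holomorphic, while $z^2w(w-1)=(zw)^2-z^2w$ has the same principal part as $(zw)^2$.

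Then I would read off the residues term by term: from $(zw)^2$ the coefficient of $(t-c)^{-1}$ is $2c\cdot\tfrac{2\alpha_4+1}{2}=c(2\alpha_4+1)$; from $(1-2\alpha_4)zw$ it is $(1-2\alpha_4)\,\Res_{t=c}(zw)=(1-2\alpha_4)c$; the remaining pieces $-\alpha_3z$, $2\alpha_3 yz$ and $2y\,z^2w$ are holomorphic; and substituting $t=c+(t-c)$ in $tw$ gives residue $-c$. Adding up,
\begin{equation*}
h_{c,-1}=c(2\alpha_4+1)+(1-2\alpha_4)c-c=c.
\end{equation*}
Finally, to confirm the pole is of order exactly one I would note that the $(t-c)^{-2}$-coefficients of $(zw)^2$ and of $tw$ are $c^2$ and $-c^2$ and cancel, and no other term contributes at order $(t-c)^{-2}$; since $h_{c,-1}=c\neq0$, the pole is of order exactly one. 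Everything here is routine residue calculus; the only delicate points—retaining the constant term of $zw$ and verifying the cancellation of the double poles of $(zw)^2$ and $tw$—are settled by that one explicit multiplication.
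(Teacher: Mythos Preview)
Your proof is correct and follows exactly the approach the paper intends: direct substitution of the local expansions from Proposition~\ref{prop:t=c-w} into $H_{B_4^{(1)}}$ and bookkeeping of the principal part. The paper itself gives no details beyond citing Proposition~\ref{prop:t=c-w}, so your explicit computation of $zw$, the identity $z^2w(w-1)=(zw)^2-z^2w$, and the cancellation of the $(t-c)^{-2}$ terms between $(zw)^2$ and $tw$ simply fills in what the paper leaves as routine.
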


\subsubsection{The case where $x,z$ have a pole at $t=c\in \C^{*}$}

By Proposition \ref{prop:t=c-(x,z)}, 
we have the following proposition:

\begin{proposition}
{\it
Suppose that for $B_4^{(1)}(\alpha_j)_{0\leq j \leq 5},$ 
there exists a solution such that 
 $x,z$ have a pole of order one at $t=c\in \C^{*}$ 
and 
$y,w$ are holomorphic at $t=c.$ 
Then, 
$(b_{c,0},d_{c,0})=(0,0), (0,1), (1,0), (1,-1).$ 
\newline
{\rm (1)}\quad 
If $(b_{c,0},d_{c,0})=(0,0), $ 
$H_{B_4^{(1)}}$ is holomorphic at $t=c.$
\newline
{\rm (2)}\quad If $(b_{c,0},d_{c,0})=(0,1),$ 
$H_{B_4^{(1)}}$ is holomorphic at $t=c.$
\newline
{\rm (3)}\quad If $(b_{c,0},d_{c,0})=(1,0),$ 
$H_{B_4^{(1)}}$ is holomorphic at $t=c.$
\newline
{\rm (4)}\quad If $(b_{c,0},d_{c,0})=(1,-1),$ 
$H_{B_4^{(1)}}$ is holomorphic at $t=c.$
}
\end{proposition}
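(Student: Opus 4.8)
The cleanest route is to sidestep substituting the Laurent expansions of Proposition~\ref{prop:t=c-(x,z)} into the explicit formula for $H_{B_4^{(1)}}$, and instead exploit the first‑order relation
\[
\frac{dH_{B_4^{(1)}}}{dt}=y+w
\]
valid along every solution. First I would establish this identity: writing $H=H_{B_4^{(1)}}(x,y,z,w,t)$ and differentiating along a solution, one has $t\,dH/dt=H_x(t\,dx/dt)+H_y(t\,dy/dt)+H_z(t\,dz/dt)+H_w(t\,dw/dt)+t\,H_t$, and the Hamiltonian form $t\,dx/dt=H_y$, $t\,dy/dt=-H_x$, $t\,dz/dt=H_w$, $t\,dw/dt=-H_z$ makes the first four terms cancel in pairs, leaving $t\,dH/dt=t\,H_t$. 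Since $H_{B_4^{(1)}}$ contains $t$ only through the summands $ty$ and $tw$, one has $H_t=y+w$, which gives the stated relation. (This is also the mechanism behind the earlier computations in this section: when $y$ or $w$ has a pole of order two with zero residue at $c$, $y+w=-c(t-c)^{-2}+\cdots$ forces $h_{c,-1}=c$.)

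Next I would apply the hypothesis. In each of the four cases $(b_{c,0},d_{c,0})=(0,0),(0,1),(1,0),(1,-1)$ of Proposition~\ref{prop:t=c-(x,z)}, the components $y$ and $w$ are holomorphic at $t=c$, so $y+w$, and therefore $dH_{B_4^{(1)}}/dt$, is holomorphic at $t=c$. Since the solution is meromorphic near $t=c$, so is $H_{B_4^{(1)}}$ (it is a polynomial in the components and $t$); and a single‑valued meromorphic function whose derivative is holomorphic at $c$ must itself be holomorphic at $c$, because a pole of order $m\ge1$ of $H_{B_4^{(1)}}$ at $c$ would produce a pole of order $m+1$ of its derivative there. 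Hence $H_{B_4^{(1)}}$ is holomorphic at $t=c$ in all four subcases simultaneously.

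I do not expect a serious obstacle here; the only points requiring attention are the (routine) bookkeeping in deriving $dH_{B_4^{(1)}}/dt=y+w$ and the trivial observation that Proposition~\ref{prop:t=c-(x,z)} does list $y$ and $w$ as holomorphic at $t=c$ in every case. If one instead wanted a direct verification, the alternative is to insert the leading terms from Proposition~\ref{prop:t=c-(x,z)} into $H_{B_4^{(1)}}$ and check that the a priori singular contributions — those from $x^2y(y-1)$, $z^2w(w-1)$, the terms linear in $x$ and in $z$, and $2yz^2w+2\alpha_3 yz$ — have cancelling polar parts in each (sub)case; this is longer and entirely mechanical, the only mild subtlety being that when $b_{c,0}=1$ (resp.\ $d_{c,0}=1$) one uses the vanishing of $y-1$ (resp.\ $w-1$) at $t=c$ rather than that of $y$ (resp.\ $w$).
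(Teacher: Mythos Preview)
Your argument is correct and is genuinely different from the paper's approach. The paper simply substitutes the explicit Laurent data of Proposition~\ref{prop:t=c-(x,z)} into the polynomial expression for $H_{B_4^{(1)}}$ and checks case by case that the polar parts cancel; this is what you sketch as the ``alternative'' at the end. Your main argument instead extracts the structural identity $dH_{B_4^{(1)}}/dt = y+w$ from the Hamiltonian form and the fact that $t$ enters $H_{B_4^{(1)}}$ only through $ty+tw$, and then observes that holomorphy of $y+w$ at $c$ forces holomorphy of the meromorphic function $H_{B_4^{(1)}}$ there. This is shorter, handles all four subcases uniformly, and also explains at a glance why the other propositions in this section give $h_{c,-1}=c$ or $3c$ exactly when $y+w$ has a double pole with leading coefficient $-c$ or $-3c$. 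The trade-off is that the direct substitution, while longer, requires no auxiliary identity and makes the cancellations visible term by term; your route trades that transparency for a one-line conceptual reason.
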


\subsubsection{The case where $x,w$ have a pole at $t=c\in \C^{*}$}

By Proposition \ref{prop:t=c-(x,w)}, 
we have the following proposition:

\begin{proposition}
{\it
Suppose that for $B_4^{(1)}(\alpha_j)_{0\leq j \leq 5},$ 
there exists a solution such that 
 $x,w$ have a pole at $t=c\in \C^{*}$ 
and 
$y,z$ are holomorphic at $t=c.$ 
$H_{B_4^{(1)}}$ then has a pole of order one at $t=c$ 
and 
$h_{c,-1}=\mathrm{Res}_{t=c} H_{B_4^{(1)}} =c.$
}
\end{proposition}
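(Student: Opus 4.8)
The plan is to sidestep a direct substitution of the two Laurent expansions of Proposition~\ref{prop:t=c-(x,w)} into $H_{B_4^{(1)}}$ and instead exploit the Hamiltonian structure, which makes the computation of $h_{c,-1}$ essentially automatic. First I would record the general identity
$$\frac{d}{dt}H_{B_4^{(1)}}=\frac{\partial H_{B_4^{(1)}}}{\partial t}.$$
Indeed, differentiating $H_{B_4^{(1)}}$ along a solution and inserting $t\,dx/dt=\partial H/\partial y$, $t\,dy/dt=-\partial H/\partial x$, $t\,dz/dt=\partial H/\partial w$, $t\,dw/dt=-\partial H/\partial z$, the four ``internal'' contributions cancel in pairs and only the explicit $t$-derivative survives. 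Since the only monomials of $H_{B_4^{(1)}}$ depending explicitly on $t$ are $ty$ and $tw$, this yields $dH_{B_4^{(1)}}/dt=y+w$.

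Next I would use the local data near $t=c$. By Proposition~\ref{prop:t=c-(x,w)}, in both cases $b_{c,0}=0$ and $b_{c,0}=1$, the function $y$ is holomorphic at $t=c$ and $w=-c(t-c)^{-2}+O(1)$, so $y+w=-c(t-c)^{-2}+O((t-c)^{-1})$, with no terms of order $\le-3$. Writing $H_{B_4^{(1)}}=\sum_k h_{c,k}(t-c)^k$ — legitimate because $H_{B_4^{(1)}}$ is a polynomial in the (convergent Laurent) functions $x,y,z,w$ with $t$-holomorphic coefficients, hence meromorphic at $t=c$ — and matching Laurent coefficients in $\sum_k k\,h_{c,k}(t-c)^{k-1}=y+w$, the absence of terms of order $\le-3$ on the right forces $h_{c,k}=0$ for all $k\le-2$, so $H_{B_4^{(1)}}$ has at most a simple pole at $t=c$. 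Comparing the $(t-c)^{-2}$ coefficients gives $-h_{c,-1}=-c$, i.e. $h_{c,-1}=\Res_{t=c}H_{B_4^{(1)}}=c$; since $c\in\C^{*}$ this is nonzero, so the pole is of order exactly one.

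I do not expect a genuine obstacle here: the cancellation in the first step is a routine check, and the only point needing a word of justification is that $H_{B_4^{(1)}}$ is meromorphic (not, say, essentially singular) at $t=c$, which is immediate from Proposition~\ref{prop:t=c-(x,w)}. As a consistency check, the identity $dH_{B_4^{(1)}}/dt=y+w$ forces $\Res_{t=c}(y+w)=0$, in agreement with Proposition~\ref{rational-necessary-1}(1)(ii). One could alternatively prove the statement by brute-force substitution of the two expansions of Proposition~\ref{prop:t=c-(x,w)} into $H_{B_4^{(1)}}$ and tracking the $(t-c)^{-2}$ and $(t-c)^{-1}$ coefficients, but the structural argument above is shorter and applies uniformly to the analogous cases treated in the preceding subsections.
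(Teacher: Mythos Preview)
Your argument is correct and takes a genuinely different route from the paper. The paper simply invokes Proposition~\ref{prop:t=c-(x,w)} and carries out the direct substitution of the two Laurent expansions into $H_{B_4^{(1)}}$, tracking the $(t-c)^{-2}$ and $(t-c)^{-1}$ coefficients case by case. You instead exploit the Hamiltonian identity $dH_{B_4^{(1)}}/dt=\partial H_{B_4^{(1)}}/\partial t=y+w$ and read off $h_{c,-1}$ from the single coefficient of $(t-c)^{-2}$ in $y+w$, which is $-c$ in both cases of Proposition~\ref{prop:t=c-(x,w)}. Your approach is cleaner, handles both cases $b_{c,0}=0,1$ uniformly without any case distinction, and, as you note, applies verbatim to the other subsections of this section (the $y$-pole, $w$-pole, $(y,w)$-pole cases) where the paper repeats the brute-force computation each time. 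The paper's direct method has the minor advantage of not requiring one to record the differential identity, but your structural argument also delivers the side result $\Res_{t=c}(y+w)=0$ for free, which the paper obtains separately in Proposition~\ref{rational-necessary-1}.
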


\subsubsection{The case where $y,w$ have a pole at $t=c\in \C^{*}$}

By Proposition \ref{prop:t=c-(y,w)(2,1)}, 
we obtain the following proposition:

\begin{proposition}
{\it
Suppose that for $B_4^{(1)}(\alpha_j)_{0\leq j \leq 5},$ 
there exists a solution such that 
 $y$ has a pole order two at $t=c\in \C^{*}$ 
and 
$w$ has a pole of order one at $t=c$ 
and 
$x,z$ are holomorphic at $t=c.$ 
$H_{B_4^{(1)}}$ then has a pole of order one at $t=c$ 
and 
$h_{c,-1}=\mathrm{Res}_{t=c} H_{B_4^{(1)}} =c.$
}
\end{proposition}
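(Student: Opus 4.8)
The plan is to exploit the fact that, restricted to a solution, the Hamiltonian obeys a first-order ODE in $t$ alone. Viewing $H_{B_4^{(1)}}$ along a solution and applying the chain rule together with the Hamiltonian system $t\,dx/dt=\partial H/\partial y,$ $t\,dy/dt=-\partial H/\partial x,$ $t\,dz/dt=\partial H/\partial w,$ $t\,dw/dt=-\partial H/\partial z,$ the contributions of the two canonical pairs cancel, leaving $dH_{B_4^{(1)}}/dt=\partial H_{B_4^{(1)}}/\partial t.$ Since $t$ enters $H_{B_4^{(1)}}$ only through the summands $ty$ and $tw,$ this collapses to the clean identity $dH_{B_4^{(1)}}/dt=y+w.$ This is the tool I would use for the propositions of this type throughout the section.

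Next I would read off from Proposition \ref{prop:t=c-(y,w)(2,1)} the principal part of $y+w$ at $t=c.$ In case (1) one has $y=-c(t-c)^{-2}+\tfrac{2\alpha_3}{3}(t-c)^{-1}+\cdots$ and $w=-\tfrac{2\alpha_3}{3}(t-c)^{-1}+\cdots,$ and in case (2) the signs of the simple-pole coefficients are reversed; in both cases the residues of $y$ and $w$ at $t=c$ cancel (as also forced by Proposition \ref{rational-necessary-1}), so near $t=c$ one has $y+w=-c(t-c)^{-2}+O(1),$ with no term of order lower than $(t-c)^{-2}$ because $y$ has a pole of order two and $w$ a pole of order one. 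Since $H_{B_4^{(1)}}$ is a polynomial in $x,y,z,w,t$ and $x,y,z,w$ are meromorphic at $t=c,$ $H_{B_4^{(1)}}$ is single-valued there; matching Laurent coefficients in $dH_{B_4^{(1)}}/dt=y+w$ then gives that $H_{B_4^{(1)}}$ has a pole of order at most one and that its residue $h_{c,-1}$ equals minus the $(t-c)^{-2}$-coefficient of $y+w,$ that is $h_{c,-1}=c.$ As $c\neq 0,$ the pole is of order exactly one.

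An equally viable but less efficient route would be the brute-force one: substitute the full Laurent expansions of $x,y,z,w$ from Proposition \ref{prop:t=c-(y,w)(2,1)} into $H_{B_4^{(1)}},$ expand the monomials $x^2y(y-1),$ $xy,$ $ty,$ $z^2w(w-1),$ $zw,$ $tw,$ $yz^2w,$ $yz$ to the needed order, and collect the coefficient of $(t-c)^{-1};$ one finds it equals $c\bigl(2-\alpha_0-\alpha_1-2\alpha_2-2\alpha_3-2\alpha_4\bigr),$ which is $c$ by the parameter relation $\alpha_0+\alpha_1+2\alpha_2+2\alpha_3+2\alpha_4=1.$ There is no serious obstacle in either approach; the one point that genuinely deserves attention is verifying that the $(t-c)^{-1}$-coefficient of $y+w$ really does cancel in each of the two sub-cases, since this is exactly what guarantees that $H_{B_4^{(1)}}$ acquires an honest simple pole rather than a logarithmic branch point at $t=c.$
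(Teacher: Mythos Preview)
Your proposal is correct. The identity $dH_{B_4^{(1)}}/dt=y+w$ along solutions is valid exactly as you derive it, and once you read off from Proposition~\ref{prop:t=c-(y,w)(2,1)} that the $(t-c)^{-1}$ coefficients of $y$ and $w$ cancel in both sub-cases, the argument that $H_{B_4^{(1)}}=c(t-c)^{-1}+O(1)$ follows immediately by termwise integration, with the meromorphy of $H_{B_4^{(1)}}$ ruling out any logarithmic term.

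This is a genuinely different and cleaner route than the paper's. The paper simply invokes Proposition~\ref{prop:t=c-(y,w)(2,1)} and leaves the reader to substitute the Laurent expansions of $x,y,z,w$ directly into the Hamiltonian and collect the $(t-c)^{-1}$ coefficient---precisely what you call the brute-force route. Your approach replaces that computation by the single structural identity $dH_{B_4^{(1)}}/dt=\partial H_{B_4^{(1)}}/\partial t=y+w$, which reduces every residue calculation in this section (and the analogous constant-term calculations at $t=0,\infty$) to reading off the principal part of $y+w$. This buys uniformity and transparency: one sees immediately why the residue is always an integer multiple of $c$, and why the vanishing of $\Res_{t=c}(y+w)$ (Proposition~\ref{rational-necessary-1}) is exactly the compatibility condition for $H_{B_4^{(1)}}$ to be single-valued. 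The paper's direct substitution, by contrast, verifies the claim case by case but obscures this mechanism.
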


By Proposition \ref{prop:t=c-(y,w)(2,2)}, 
we have the following proposition:

\begin{proposition}
{\it
Suppose that for $B_4^{(1)}(\alpha_j)_{0\leq j \leq 5},$ 
there exists a solution such that 
$y,w$ have a pole order two at $t=c\in \C^{*}$ 
and 
$x,z$ are holomorphic at $t=c.$ 
Then, 
$c_{c,1}=-1/2, 1.$ 
\newline
{\rm (1)}\quad If $c_{c,1}=-1/2, $ $H_{B_4^{(1)}}$ has a pole of order one at $t=c$ and $h_{c,-1}=\mathrm{Res}_{t=c} H_{B_4^{(1)}} =3c.$
\newline
{\rm (2)}\quad If $c_{c,1}=1, $ 
$H_{B_4^{(1)}}$ is holomorphic at $t=c.$
}
\end{proposition}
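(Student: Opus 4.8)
The plan is to avoid substituting the Laurent expansions directly into the cubic Hamiltonian and instead use the scalar identity governing $H_{B_4^{(1)}}$ along solutions. First I would record that, differentiating $H_{B_4^{(1)}}=H_{B_4^{(1)}}(x,y,z,w,t)$ along any solution of the Hamiltonian system and using $t\,dx/dt=\partial H/\partial y$, $t\,dy/dt=-\partial H/\partial x$, $t\,dz/dt=\partial H/\partial w$, $t\,dw/dt=-\partial H/\partial z$, the Poisson-bracket terms cancel in pairs, so
\[
\frac{dH_{B_4^{(1)}}}{dt}
=\frac{\partial H_{B_4^{(1)}}}{\partial t}
+\frac1t\left(\frac{\partial H}{\partial x}\frac{\partial H}{\partial y}-\frac{\partial H}{\partial y}\frac{\partial H}{\partial x}+\frac{\partial H}{\partial z}\frac{\partial H}{\partial w}-\frac{\partial H}{\partial w}\frac{\partial H}{\partial z}\right)
=\frac{\partial H_{B_4^{(1)}}}{\partial t}=y+w,
\]
the last equality because the only explicit $t$-dependence of $H_{B_4^{(1)}}$ is through the summands $ty$ and $tw$.

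Next I would invoke Proposition~\ref{prop:t=c-(y,w)(2,2)}, which already gives $c_{c,0}=0$, $c_{c,1}\in\{-1/2,1\}$ and the principal parts of $y$ and $w$ at $t=c$ in each case; in particular $y$ and $w$ each have a pole of order exactly two there, so $y+w$ has a pole of order at most two, with leading coefficient $b_{c,-2}+d_{c,-2}$. Since $x,y,z,w$ are meromorphic near $t=c$, so is $H_{B_4^{(1)}}$, hence $dH_{B_4^{(1)}}/dt$ has zero residue at $t=c$; combined with $dH_{B_4^{(1)}}/dt=y+w$ this shows on the one hand that $\mathrm{Res}_{t=c}(y+w)=0$ (as in Proposition~\ref{rational-necessary-1}), and on the other hand, after integrating $y+w=(b_{c,-2}+d_{c,-2})(t-c)^{-2}+O(1)$ (no logarithm occurs, precisely because the $(t-c)^{-1}$ coefficient of $y+w$ vanishes), that $H_{B_4^{(1)}}$ has a pole of order at most one at $t=c$ with
\[
h_{c,-1}=\mathrm{Res}_{t=c}H_{B_4^{(1)}}=-\big(b_{c,-2}+d_{c,-2}\big).
\]

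It then remains only to read off $b_{c,-2}$ and $d_{c,-2}$ from Proposition~\ref{prop:t=c-(y,w)(2,2)}. If $c_{c,1}=-1/2$ then $b_{c,-2}=c$ and $d_{c,-2}=-4c$, so $h_{c,-1}=3c$; since $c\in\C^{*}$ this is nonzero, so $H_{B_4^{(1)}}$ has a pole of order exactly one. If $c_{c,1}=1$ then $b_{c,-2}=c$ and $d_{c,-2}=-c$, so $h_{c,-1}=0$; moreover in this case $b_{c,-1}$ and $d_{c,-1}$ equal $\pm(2\alpha_3+4\alpha_4)/3$ and cancel, so $y+w$ is in fact holomorphic at $t=c$, hence so is $dH_{B_4^{(1)}}/dt$, and therefore $H_{B_4^{(1)}}$ is holomorphic at $t=c$.

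The only genuine work is the verification of $dH_{B_4^{(1)}}/dt=y+w$, which is the routine Hamiltonian computation displayed above; after that the statement follows immediately from the principal parts supplied by Proposition~\ref{prop:t=c-(y,w)(2,2)}, so I do not expect any real obstacle. If one preferred to avoid the identity, the alternative is to substitute the two expansions of Proposition~\ref{prop:t=c-(y,w)(2,2)} straight into $H_{B_4^{(1)}}=x^2y(y-1)+x\{(1-2\alpha_2-2\alpha_3-2\alpha_4)y-\alpha_1\}+ty+z^2w(w-1)+z\{(1-2\alpha_4)w-\alpha_3\}+tw+2yz(zw+\alpha_3)$ and check by hand that the $(t-c)^{-2}$ contributions of $x^2y^2$, $z^2w^2$, $2yz^2w$, $ty$ and $tw$ cancel while their $(t-c)^{-1}$ contributions sum to $3c$ (respectively $0$); this bookkeeping is more laborious and I would keep it only as a cross-check.
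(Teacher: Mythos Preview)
Your argument is correct and takes a genuinely different route from the paper. The paper simply substitutes the Laurent expansions of Proposition~\ref{prop:t=c-(y,w)(2,2)} directly into the cubic expression for $H_{B_4^{(1)}}$ and reads off the principal part, which is straightforward but involves tracking several competing terms of the form $x^2y^2$, $z^2w^2$, $2yz^2w$, $ty$, $tw$ through two orders.

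Your method exploits the scalar identity $dH_{B_4^{(1)}}/dt=\partial H_{B_4^{(1)}}/\partial t=y+w$, valid along any solution because the Poisson-bracket contributions cancel. This reduces the residue computation to the single observation that for a meromorphic $H$ one has $h_{c,-1}=-(b_{c,-2}+d_{c,-2})$, and the two leading coefficients of $y$ and $w$ are already tabulated in Proposition~\ref{prop:t=c-(y,w)(2,2)}. In case $c_{c,1}=1$ you correctly note that both the $(t-c)^{-2}$ and $(t-c)^{-1}$ coefficients of $y+w$ vanish, so $dH_{B_4^{(1)}}/dt$ is holomorphic, and since $H_{B_4^{(1)}}$ is a priori meromorphic it must itself be holomorphic (any pole in $H$ would force a higher-order pole in its derivative). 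The advantage of your approach is that it is uniform and essentially computation-free once the identity is in hand; it also immediately yields the vanishing of $\mathrm{Res}_{t=c}(y+w)$ used elsewhere (Proposition~\ref{rational-necessary-1}). The paper's direct substitution, by contrast, is self-contained and does not rely on the Hamiltonian structure, but is more laborious.
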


By Proposition \ref{prop:t=c-(y,w)(1,1)}, 
we have the following proposition:

\begin{proposition}
{\it
Suppose that for $B_4^{(1)}(\alpha_j)_{0\leq j \leq 5},$ 
there exists a solution such that 
$y,w$ both have a pole order one at $t=c\in \C^{*}$ 
and 
$x,z$ are holomorphic at $t=c.$ 
$H_{B_4^{(1)}}$ is then holomorphic at $t=c.$
}
\end{proposition}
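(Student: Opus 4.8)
The plan is to substitute the Laurent expansions of $x,y,z,w$ at $t=c$ supplied by Proposition~\ref{prop:t=c-(y,w)(1,1)} directly into
\[
H_{B_4^{(1)}}=x^2y(y-1)+x\{(1-2\alpha_2-2\alpha_3-2\alpha_4)y-\alpha_1\}+ty+z^2w(w-1)+z\{(1-2\alpha_4)w-\alpha_3\}+tw+2yz(zw+\alpha_3),
\]
and to check that the principal part of the resulting Laurent series at $t=c$ vanishes. Put $\tau=t-c$; by Proposition~\ref{prop:t=c-(y,w)(1,1)} one has $x=c_{c,0}+a_{c,1}\tau+\cdots$, $z=c_{c,0}+c_{c,1}\tau+\cdots$ (so $a_{c,0}=c_{c,0}$), $y=b_{c,-1}\tau^{-1}+b_{c,0}+\cdots$, $w=d_{c,-1}\tau^{-1}+d_{c,0}+\cdots$, with $c_{c,0}\neq0$ and $b_{c,-1}+d_{c,-1}=0$. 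It suffices to compute the coefficients of $\tau^{-2}$ and $\tau^{-1}$ of $H_{B_4^{(1)}}$: the monomials $x^2y^2$, $z^2w^2$, $yz^2w$ feed the $\tau^{-2}$ coefficient, and these together with $x^2y$, $xy$, $ty$, $z^2w$, $zw$, $tw$, $yz$ feed the $\tau^{-1}$ coefficient.

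For the $\tau^{-2}$ coefficient, using $a_{c,0}=c_{c,0}$, the contributions of $x^2y^2$, $z^2w^2$, $2yz^2w$ add to
\[
c_{c,0}^2b_{c,-1}^2+c_{c,0}^2d_{c,-1}^2+2c_{c,0}^2b_{c,-1}d_{c,-1}=c_{c,0}^2(b_{c,-1}+d_{c,-1})^2=0,
\]
so $H_{B_4^{(1)}}$ has at most a simple pole at $t=c$. For the $\tau^{-1}$ coefficient I would collect the contributions of the eight singular monomials, substitute $d_{c,-1}=-b_{c,-1}$ and $a_{c,0}=c_{c,0}$, and simplify. The terms carrying $b_{c,0}$ or $d_{c,0}$ cancel in pairs (those from $x^2y^2$ and $z^2w^2$ against those from $2yz^2w$); the contributions $c\,b_{c,-1}$ from $ty$ and $c\,d_{c,-1}$ from $tw$ cancel; and the purely numerical multiples of $c_{c,0}^2b_{c,-1}$ cancel. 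What survives is $2c_{c,0}b_{c,-1}\{b_{c,-1}(a_{c,1}-c_{c,1})-\alpha_2\}$, the lone $\alpha_2$ arising from the combination $(1-2\alpha_2-2\alpha_3-2\alpha_4)-(1-2\alpha_4)+2\alpha_3$ of the parameter-coefficients of the $xy$-, $zw$- and $yz$-terms. It then remains to invoke the identity $b_{c,-1}(a_{c,1}-c_{c,1})=\alpha_2$: this is part of the content of Proposition~\ref{prop:t=c-(y,w)(1,1)}, and can in any case be recovered by adding the coefficients of $\tau^{-1}$ in the equations $ty'=-2xy^2+2xy-(1-2\alpha_2-2\alpha_3-2\alpha_4)y+\alpha_1$ and $tw'=-2zw^2+2zw-(1-2\alpha_4)w-2\alpha_3y-4yzw+\alpha_3$. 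Since $c_{c,0}b_{c,-1}\neq0$, this forces $h_{c,-1}=0$, and hence $H_{B_4^{(1)}}$ is holomorphic at $t=c$.

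The Laurent expansion itself is routine; the only delicate point is the $\tau^{-1}$ bookkeeping, where one must match correctly the $b_{c,0}$ and $d_{c,0}$ contributions of $x^2y^2$ and $z^2w^2$ with those of $2yz^2w$, and verify that the leftover collapses precisely to $2c_{c,0}b_{c,-1}\{b_{c,-1}(a_{c,1}-c_{c,1})-\alpha_2\}$ rather than to a genuinely nonzero multiple of $c_{c,0}b_{c,-1}$. That is the step that rests on the relation $b_{c,-1}(a_{c,1}-c_{c,1})=\alpha_2$; so this relation — equivalently, the $\tau^{-1}$-order information in the $y$- and $w$-equations — is really what drives the argument.
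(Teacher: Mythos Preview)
Your argument is correct and follows the same route the paper intends (direct substitution of the Laurent data from Proposition~\ref{prop:t=c-(y,w)(1,1)} into $H_{B_4^{(1)}}$ and cancellation of the principal part). One remark: the identity you actually need and correctly rederive, $b_{c,-1}(a_{c,1}-c_{c,1})=\alpha_2$, differs by a factor of $c$ from the formula $a_{c,1}-c_{c,1}=2\alpha_2 c_{c,0}$ printed in Proposition~\ref{prop:t=c-(y,w)(1,1)} (since $c_{c,0}=c/(2b_{c,-1})$ the printed relation gives $b_{c,-1}(a_{c,1}-c_{c,1})=\alpha_2 c$); your version is the one consistent with the $\tau^{-1}$-coefficients of the $y$- and $w$-equations and is what makes $h_{c,-1}=0$, so your independent derivation from the ODEs is exactly the right safeguard.
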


\subsection{Summary}

\begin{proposition}
\label{rational-necessary-2}
{\it
{\rm (1)}\quad 
Suppose that for $B_4^{(1)}(\alpha_j)_{0\leq j \leq 5},$ 
there exists a meromorphic solution at $t=c\in\C^{*}.$ 
$H_{B_4^{(1)}}$ then has a pole of order at most one at $t=c$ and 
$h_{c,-1}=\mathrm{Res}_{t=c} H_{B_4^{(1)}} =nc \,\,(n=0,1,3).$
\newline
{\rm (2)}\quad 
Suppose that for $B_4^{(1)}(\alpha_j)_{0\leq j \leq 5},$ 
there exists a rational solution. 
Then, 
$h_{\infty,0}-h_{0,0}\in\Z.$
}
\end{proposition}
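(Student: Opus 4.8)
The plan is to prove the two statements of Proposition \ref{rational-necessary-2} separately, with part (1) being an essentially finite bookkeeping task and part (2) following from the residue theorem applied to $t^{-1}H_{B_4^{(1)}}$.

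For part (1), I would argue that by the classification of meromorphic solutions at $t=c\in\C^{*}$ already established (the summary proposition listing seven possible local behaviors), any meromorphic solution near $t=c$ must fall into one of those cases. For each of the seven cases, the preceding subsection has computed whether $H_{B_4^{(1)}}$ is holomorphic at $t=c$ or has a pole of order one there, and in the latter situation has identified $h_{c,-1}=\Res_{t=c}H_{B_4^{(1)}}$ explicitly. Collecting these results: $H_{B_4^{(1)}}$ is holomorphic (so $h_{c,-1}=0$) when $x$ alone has a pole, when $z$ alone has a pole, in all four subcases of $x,z$ both having a pole, and when $y,w$ both have simple poles; $h_{c,-1}=c$ when $y$ alone has a pole of order two, when $w$ alone has a pole of order two, when $x,w$ both have poles, and when $y$ has a pole of order two while $w$ has a simple pole; and $h_{c,-1}=3c$ in the one subcase of Proposition \ref{prop:t=c-(y,w)(2,2)} with $c_{c,1}=-1/2$ (with $h_{c,-1}=0$ in the other subcase). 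Hence in every case $h_{c,-1}=nc$ with $n\in\{0,1,3\}$, and in particular $H_{B_4^{(1)}}$ has a pole of order at most one at $t=c$. This is purely a matter of quoting the seven propositions just proved.

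For part (2), suppose $B_4^{(1)}(\alpha_j)_{0\leq j\leq 4}$ has a rational solution $(x,y,z,w)$. Then $H_{B_4^{(1)}}$ is a rational function of $t$, and consequently so is $t^{-1}H_{B_4^{(1)}}$; the sum of all its residues over $\bbP^1$ vanishes. At a point $c\in\C^{*}$ the residue of $t^{-1}H_{B_4^{(1)}}$ equals $c^{-1}h_{c,-1}$, which by part (1) is an integer in $\{0,1,3\}$. At $t=0$ the residue of $t^{-1}H_{B_4^{(1)}}$ is the constant term $h_{0,0}$ of the Laurent expansion of $H_{B_4^{(1)}}$ at $0$, and by Propositions \ref{prop:hzero-1}, \ref{prop:hzero-2}, \ref{prop:hzero-3} this is a definite expression in the parameters. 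At $t=\infty$ the residue of $t^{-1}H_{B_4^{(1)}}$ is $-h_{\infty,0}$ (the coefficient of $t^{0}$ in the expansion at infinity, up to sign from the change of variable $t\mapsto 1/t$), with $h_{\infty,0}$ given by Proposition \ref{prop:hinf}. Summing, $-h_{\infty,0}+h_{0,0}+\sum_{c}c^{-1}h_{c,-1}=0$, and since each $c^{-1}h_{c,-1}\in\Z$ this yields $h_{\infty,0}-h_{0,0}\in\Z$, as claimed.

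The main obstacle is bookkeeping rather than any deep argument: one must be careful that the list of seven local cases is genuinely exhaustive (this is the content of the summary proposition opening Section 3, already granted), and that the sign and normalization conventions in passing from ``residue at $\infty$'' to ``coefficient $h_{\infty,0}$'' are handled correctly so that the telescoping sum of residues comes out with the stated integrality. One should also note that only finitely many $c\in\C^{*}$ can be poles of a rational $H_{B_4^{(1)}}$, so the sum $\sum_c c^{-1}h_{c,-1}$ is finite and each term is an integer; thus the conclusion $h_{\infty,0}-h_{0,0}\in\Z$ is immediate once the residue theorem is invoked.
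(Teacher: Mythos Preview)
Your proposal is correct and takes essentially the same approach as the paper: part (1) is just a summary of the case-by-case residue computations already carried out, and part (2) follows by applying the residue theorem to $t^{-1}H_{B_4^{(1)}}$, exactly as the paper does. Your write-up fills in the bookkeeping details (the sign at $t=\infty$, the identification $\Res_{t=c}(t^{-1}H)=c^{-1}h_{c,-1}$) that the paper leaves implicit, but the argument is the same.
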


\begin{proof}
Case (1) is obvious. 
Case (2) can be proved by applying the residue theorem to $t^{-1}H_{B_4^{(1)}}$

\end{proof}

\section{B\"acklund transformations and their properties}
In this section, following Sasano \cite{Sasano-1}, 
we introduce the B\"acklund transformations $s_0, s_1, s_2, s_3, s_4, \pi_1, $ and $\pi_2$ 
and 
investigate their properties.

\subsection{Definition of the B\"acklund transformations}
$B_4^{(1)}(\alpha_j)_{0\leq j \leq 4}$ 
has 
B\"acklund transformations, 
$s_0, s_1, s_2, s_3, s_4, \pi_1, $ and $\pi_2,$ 
which are defined by 
\begin{align*}
&s_0: \,\,
(x,y,z,w,t;\alpha_0,\alpha_1,\alpha_2,\alpha_3,\alpha_4)
\rightarrow
\left(
x+\frac{\alpha_0}{y-1},y,z,w,t;
-\alpha_0,\alpha_1,\alpha_2+\alpha_0,\alpha_3,\alpha_4
\right),              \\
&s_1: \,\,
(x,y,z,w,t;\alpha_0,\alpha_1,\alpha_2,\alpha_3,\alpha_4)
\rightarrow
\left(
x+\frac{\alpha_1}{y},y,z,w,t; 
\alpha_0,-\alpha_1,\alpha_2+\alpha_1,\alpha_3,\alpha_4
\right),                \\
&s_2: \,\,
(x,y,z,w,t;\alpha_0,\alpha_1,\alpha_2,\alpha_3,\alpha_4)
\rightarrow          \\
&\hspace{50mm}
\left(
x,y-\frac{\alpha_2}{x-z},z,w+\frac{\alpha_2}{x-z},t;
\alpha_0+\alpha_2,\alpha_1+\alpha_2,-\alpha_2,\alpha_3+\alpha_2,\alpha_4
\right),   \\
&s_3: \,\,
(x,y,z,w,t;\alpha_0,\alpha_1,\alpha_2,\alpha_3,\alpha_4)
\rightarrow
\left(
x,y,z+\frac{\alpha_3}{w},w,t;\alpha_0,\alpha_1,\alpha_2+\alpha_3,-\alpha_3,\alpha_4+\alpha_3
\right),  \\
&s_4: \,\,
(x,y,z,w,t;\alpha_0,\alpha_1,\alpha_2,\alpha_3,\alpha_4)
\rightarrow
\left(
x,y,z,w-\frac{2\alpha_4}{z}+\frac{t}{z^2},-t;
\alpha_0,\alpha_1,\alpha_2,\alpha_3+2\alpha_4,-\alpha_4
\right),        \\
&\pi_1: \,\,
(x,y,z,w,t;\alpha_0,\alpha_1,\alpha_2,\alpha_3,\alpha_4)
\rightarrow
\left(
-x,1-y,-z,-w,-t;
\alpha_1,\alpha_0,\alpha_2,\alpha_3,\alpha_4
\right),  \\
&\pi_2 : \,\,
(x,y,z,w,t;\alpha_0,\alpha_1,\alpha_2,\alpha_3,\alpha_4)
\rightarrow \\
&\hspace{50mm}
\left(
\frac{t}{z}, 
-\frac{z}{t}(zw+\alpha_3), 
\frac{t}{x}, 
-\frac{x}{t}(xy+\alpha_1), 
t; 
2\alpha_4+\alpha_3, \alpha_3,\alpha_2,  \alpha_1, \frac{\alpha_0-\alpha_1}{2}
\right).
\end{align*}
Let us note that the B\"acklund transformations, except for $\pi_1,$ are not polynomial in $x,y,z,w$ and $t$ but rational in $x,y,z,w$ and $t.$
\newline
{\bf Remark}
At first, 
Sasano \cite{Sasano-1} 
defined 
$$
\pi_2: (\alpha_0,\alpha_1,\alpha_2,\alpha_3,\alpha_4)
\longrightarrow
(2\alpha_4+\alpha_3, \alpha_3,\alpha_2, (\alpha_0-\alpha_1)/2,\alpha_1).
$$
But, 
following Sasano \cite{Sasano-7}, 
we corrected and redefined it  
$$
\pi_2: (\alpha_0,\alpha_1,\alpha_2,\alpha_3,\alpha_4)
\longrightarrow
(2\alpha_4+\alpha_3, \alpha_3,\alpha_2, \alpha_1,(\alpha_0-\alpha_1)/2).
$$

\subsection{The properties of the B\"acklund transformations}

Considering $s_0, s_1, s_2, s_3,$ 
we can prove the following proposition:

\begin{proposition}
\label{prop:backlund}
{\it
{\rm (0)}\quad 
If $y\equiv 1$ for $B_4^{(1)}(\alpha_j)_{0\leq j \leq 4},$ 
then $\alpha_0=0.$
\newline
{\rm (1)}\quad 
If $y\equiv 0$ for $B_4^{(1)}(\alpha_j)_{0\leq j \leq 4},$ then $\alpha_1=0.$
\newline
{\rm (2)}\quad 
If $x\equiv z$ for $B_4^{(1)}(\alpha_j)_{0\leq j \leq 4},$ then $\alpha_2=0.$
\newline
{\rm (3)}\quad 
If $w\equiv 0$ for $B_4^{(1)}(\alpha_j)_{0\leq j \leq 4},$ then $\alpha_3=0$ or $y\equiv 1/2.$ 
\newline
{\rm (4)}\quad 
For $B_4^{(1)}(\alpha_j)_{0\leq j \leq 4},$ 
there exists no solution such that $z\equiv 0.$
}
\end{proposition}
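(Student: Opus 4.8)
The plan is to treat all five parts by direct substitution of the degeneracy hypothesis into the four equations of $B_4^{(1)}(\alpha_j)_{0\leq j\leq 4}$; no input beyond the system itself and the normalization $\alpha_0+\alpha_1+2\alpha_2+2\alpha_3+2\alpha_4=1$ is needed. I will use repeatedly that this normalization rewrites the recurring coefficient as $1-2\alpha_2-2\alpha_3-2\alpha_4=\alpha_0+\alpha_1$. It is convenient to establish part (4) first, since part (2) will invoke it.

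For (4), if $z\equiv 0$ then $z'\equiv 0$, and the third equation $tz'=2z^2w-z^2+(1-2\alpha_4)z+2yz^2+t$ reduces to $0=t$, which is impossible; hence no solution with $z\equiv 0$ exists. For (0) and (1), suppose $y$ equals a constant $c$, so $y'\equiv 0$; the second equation becomes $0=2cx(1-c)-(\alpha_0+\alpha_1)c+\alpha_1$. Taking $c=1$ kills the $x$-dependent term and leaves $0=-\alpha_0$, giving (0); taking $c=0$ likewise leaves $0=\alpha_1$, giving (1). For (3), if $w\equiv 0$ then $w'\equiv 0$, and the fourth equation $tw'=-2zw^2+2zw-(1-2\alpha_4)w-2\alpha_3y-4yzw+\alpha_3$ collapses to $0=-2\alpha_3y+\alpha_3=\alpha_3(1-2y)$; since this is an identity in $t$, either $\alpha_3=0$ or $y\equiv 1/2$.

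For (2), assume $x\equiv z$, so $x'\equiv z'$, and subtract the third equation from the first. With $x=z$ the nonlinear terms $2x^2y$, $-x^2$ and $2x^2w$ in the first equation cancel against $2yz^2$, $-z^2$ and $2z^2w$ in the third, and the $+t$ terms cancel, leaving only $0=\bigl[(1-2\alpha_2-2\alpha_3-2\alpha_4)+2\alpha_3-(1-2\alpha_4)\bigr]x=-2\alpha_2 x$. Thus $\alpha_2 x\equiv 0$, so either $\alpha_2=0$ or $x\equiv 0$; but $x\equiv 0$ together with $x\equiv z$ forces $z\equiv 0$, contradicting part (4). Hence $\alpha_2=0$. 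The only point requiring a little care is this last bookkeeping in (2) — recognizing that the hypothesis $x\equiv z$ makes every quadratic term in the two equations cancel pairwise, so that the surviving linear term yields the conclusion once the degenerate case $x\equiv 0$ is excluded via (4); everything else is an immediate one-line computation.
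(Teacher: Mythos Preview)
Your proof is correct and follows essentially the same approach as the paper: direct substitution of each degeneracy into the system, with case~(2) handled by subtracting the first and third equations and ruling out $x\equiv z\equiv 0$ via the contradiction $0=t$. The paper only spells out case~(2) and leaves the rest as ``in the same way,'' whereas you carry out all five computations explicitly, but the method is identical.
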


\begin{proof}
We treat case (2). The other cases can be proved in the same way. 
By considering 
\begin{equation*}
\begin{cases}
\displaystyle 
t x^{\prime}=
2x^2y-x^2+(1-2\alpha_2-2\alpha_3-2\alpha_4)x+2\alpha_3z+2z^2w+t, \\
\displaystyle 
t z^{\prime}=
2z^2w-z^2+(1-2\alpha_4)z+2yz^2+t, 
\end{cases}
\end{equation*}
we find that $\alpha_2=0$ or $x\equiv z \equiv 0.$ 
If $x\equiv z \equiv 0,$ 
it follows that $0=t,$ because 
\begin{equation*}
t x^{\prime}=
2x^2y-x^2+(1-2\alpha_2-2\alpha_3-2\alpha_4)x+2\alpha_3z+2z^2w+t,
\end{equation*}
but this is impossible.

\end{proof}
\par
By Proposition \ref{prop:backlund}, 
we consider $s_0,$ $s_1,$ or $s_2$ as the identical transformation, 
if $y\equiv 1,$ $y\equiv 0$ or $x\equiv z$ for $B_4^{(1)}(\alpha_j)_{0\leq j \leq 4}.$
\par
Considering $s_3$ more in detail, 
we can easily check the following proposition:

\begin{proposition}
\label{prop:exam1}
{\it
Suppose that for $B_4^{(1)}(\alpha_j)_{0\leq j \leq 4},$ 
$w\equiv 0$ and $\alpha_3\neq 0.$ 
Either of the following then occurs:
\newline
{\rm (1)}\quad 
$\alpha_3=-1/2, \,\,\alpha_4=1/2$ 
and 
\begin{equation*}
x\equiv \alpha_0-\alpha_1, \,\,
y\equiv  \frac12, \,\,
z=t+(\alpha_0+\alpha_1)(\alpha_0-\alpha_1), \,\,
w\equiv 0,
\end{equation*}
{\rm (2)}\quad 
$\alpha_3+\alpha_4=0, \,\,(\alpha_0+\alpha_1)(\alpha_0-\alpha_1)=0$ 
and 
\begin{equation*}
x\equiv \alpha_0-\alpha_1, \,\,
y\equiv \frac12, \,\,
z=-\frac{1}{2\alpha_3}t, \,\,
w\equiv 0.
\end{equation*}
}
\end{proposition}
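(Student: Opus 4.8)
The hypothesis is that $w\equiv 0$ and $\alpha_3\neq 0$ for $B_4^{(1)}(\alpha_j)_{0\leq j\leq 4}$. The plan is to extract constraints on the other unknowns and on the parameters directly from the four ODEs after setting $w\equiv 0$, and then to recognize the surviving possibilities as cases (1) and (2). First I would substitute $w\equiv 0$ (hence $w'\equiv 0$) into the fourth equation,
\begin{equation*}
t w^{\prime}=-2zw^2+2zw-(1-2\alpha_4)w-2\alpha_3y-4yzw+\alpha_3,
\end{equation*}
which collapses to $0=-2\alpha_3 y+\alpha_3$, so since $\alpha_3\neq 0$ we get $y\equiv 1/2$ (this is exactly the dichotomy of Proposition \ref{prop:backlund}(3), now forced into its second branch). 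With $y\equiv 1/2$ and $w\equiv 0$, the second equation $ty'=-2xy^2+2xy-(1-2\alpha_2-2\alpha_3-2\alpha_4)y+\alpha_1$ becomes $0=-\tfrac12 x+\tfrac12 x-\tfrac12(1-2\alpha_2-2\alpha_3-2\alpha_4)+\alpha_1$, giving the linear relation $\alpha_1=\tfrac12(1-2\alpha_2-2\alpha_3-2\alpha_4)$; combined with the affine relation $\alpha_0+\alpha_1+2\alpha_2+2\alpha_3+2\alpha_4=1$ this yields $\alpha_0=\alpha_1$ at this intermediate stage — wait, more carefully it yields $\alpha_0-\alpha_1 = 1 - 2\alpha_2-2\alpha_3-2\alpha_4 - 2\alpha_1 = \ldots$; I would simply carry both relations forward rather than prematurely simplifying.

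Next I would turn to the first and third equations. With $w\equiv 0$, $y\equiv 1/2$, the third equation $tz'=2z^2w-z^2+(1-2\alpha_4)z+2yz^2+t$ becomes $tz'=-z^2+(1-2\alpha_4)z+z^2+t=(1-2\alpha_4)z+t$, a first-order linear ODE for $z$ whose rational solutions are either $z=\tfrac{1}{2\alpha_4}t$ (when $2\alpha_4\ne 2$, i.e. generically; note $z$ being a genuine rational solution of $t z' = (1-2\alpha_4)z + t$ forces $z$ to be a polynomial, and matching degrees pins it down) or, in the resonant case $2\alpha_4=1$... I should instead solve it exactly: $tz' -(1-2\alpha_4)z=t$ has the particular polynomial solution $z=\tfrac{t}{2\alpha_4}$ when $\alpha_4\neq 1/2$, while the homogeneous solutions $t^{1-2\alpha_4}$ are non-rational unless $1-2\alpha_4\in\Z_{\geq 0}$; so generically $z=t/(2\alpha_4)+\text{(lower order polynomial terms that must vanish)}$. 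Meanwhile the first equation with $w\equiv 0$, $y\equiv 1/2$ reads $tx'=x^2-x^2+(1-2\alpha_2-2\alpha_3-2\alpha_4)x+2\alpha_3 z+t=(1-2\alpha_2-2\alpha_3-2\alpha_4)x+2\alpha_3 z+t$; substituting $z$ and using Proposition \ref{prop:backlund} together with the parameter relation already derived, $x$ is forced to be a constant, and matching constant/linear terms in $t$ produces $x\equiv\alpha_0-\alpha_1$ together with a further algebraic constraint on the parameters. This is where the two branches split: either $\alpha_3+\alpha_4=0$ (forcing, via the constant-term balance in the first equation, $(\alpha_0+\alpha_1)(\alpha_0-\alpha_1)=0$ and $z=-t/(2\alpha_3)$, which is case (2)), or one lands in the resonant subcase which, after carefully solving the linear ODE for $z$ with an added lower-degree term, yields $\alpha_3=-1/2$, $\alpha_4=1/2$ and $z=t+(\alpha_0+\alpha_1)(\alpha_0-\alpha_1)$, which is case (1).

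The main obstacle I anticipate is the bookkeeping in the resonant case $\alpha_4=1/2$: there the linear ODE $tz'=(1-2\alpha_4)z+t=t$ integrates to $z=t\log t + \text{const}\cdot t$... no — $tz'=t$ gives $z'=1$, so $z=t+\text{const}$, which is rational, and then the first equation's constant term forces that constant to be $(\alpha_0+\alpha_1)(\alpha_0-\alpha_1)$ and simultaneously forces $\alpha_3+\alpha_4$ to a specific value; tracking exactly which linear combinations of $\alpha_0,\dots,\alpha_4$ survive, and checking consistency with the affine relation to land precisely on $\alpha_3=-1/2,\alpha_4=1/2$, is the delicate part. I would handle it by writing $z=c_1 t+c_0$ with undetermined $c_1,c_0$, substituting into the first and third equations, and reading off a small linear-and-quadratic system in $c_1,c_0$ and the $\alpha_j$; solving it (using $\alpha_3\ne 0$ throughout) should produce exactly the two listed cases and nothing else. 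The verification that the displayed $(x,y,z,w)$ genuinely solve all four equations in each case is then a routine substitution, which the paper signals with "we can easily check."
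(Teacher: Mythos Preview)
Your overall plan---substitute $w\equiv 0$ into the four equations and read off the constraints---is exactly the direct calculation the paper has in mind. But there is one concrete arithmetic slip that derails your execution and forces an unnecessary (and actually invalid) detour.

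In the second equation, at $y=1/2$ you compute $-2xy^2+2xy=-\tfrac12 x+\tfrac12 x=0$. In fact $2xy=x$, not $\tfrac12 x$, so the sum is $\tfrac12 x$, and the second equation becomes
\[
0=\tfrac12 x-\tfrac12(1-2\alpha_2-2\alpha_3-2\alpha_4)+\alpha_1,
\]
which, together with the affine relation $1-2\alpha_2-2\alpha_3-2\alpha_4=\alpha_0+\alpha_1$, gives $x\equiv\alpha_0-\alpha_1$ immediately. This is not a parameter relation; it determines $x$ outright. Once $x$ is a known constant, the first equation (with $2x^2y-x^2=0$) reads $0=(\alpha_0+\alpha_1)(\alpha_0-\alpha_1)+2\alpha_3 z+t$, so $z=-\bigl(t+(\alpha_0+\alpha_1)(\alpha_0-\alpha_1)\bigr)/(2\alpha_3)$ is pinned down with no free constant. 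Substituting this into the third equation $tz'=(1-2\alpha_4)z+t$ and matching the $t$-coefficient gives $\alpha_3+\alpha_4=0$, while the constant term gives $(1+2\alpha_3)(\alpha_0+\alpha_1)(\alpha_0-\alpha_1)=0$, which is precisely the split into cases (1) and (2).

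Because your slip made you miss that $x$ is determined by the second equation, you instead solved the third equation first and then argued that the homogeneous part $t^{1-2\alpha_4}$ must be discarded on rationality grounds. That step is a genuine gap: the proposition does not assume the solution is rational, so you may not throw away non-rational homogeneous solutions. The system is overdetermined (four equations for three remaining unknowns), and it is the \emph{second} equation---not rationality---that kills the freedom. With the arithmetic corrected, no ``resonant case'' analysis is needed at all; the two cases fall out of a single linear consistency check.
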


Proposition \ref{prop:exam1} shows that  
if $w\equiv 0$ and $\alpha_3\neq 0$ for $B_4^{(1)}(\alpha_j)_{0\leq j \leq 4},$
$$
s_3(x,y,z,w)=(\alpha_0-\alpha_1, 1/2, \infty, 0).
$$
Therefore, 
we have to consider the infinite solution such that $z\equiv \infty,$ 
which is treated in the next section. 
If $w\equiv 0$ and $\alpha_3=0$ for $B_4^{(1)}(\alpha_j)_{0\leq j \leq 4},$ 
we consider $s_3$ as the identical transformation.

In order to study $\pi_2,$
we assume that $x\equiv 0$ and obtain examples of the rational solutions.

\begin{proposition}
\label{prop:exam2}
{\it
Suppose that for $B_4^{(1)}(\alpha_j)_{0\leq j \leq 4},$ 
$x\equiv 0.$ 
Then, 
$\alpha_4\neq 0$ and 
either of the following occurs:
\newline
{\rm (1)}\quad 
$-\alpha_0+\alpha_1=0,\,\, \alpha_3+\alpha_4=0$ 
and 
\begin{equation*}
x\equiv 0, \,\,
y\equiv \frac12, \,\,
z=\frac{1}{2\alpha_4}t, \,\,
w\equiv 0, 
\end{equation*}
{\rm (2)}\quad $\alpha_0=\alpha_1=1/2$ and 
\begin{equation*}
x\equiv 0, \,\,
y=\frac12+\frac{2\alpha_4(\alpha_3+\alpha_4)}{t}, \,\,
z=\frac{1}{2\alpha_4}t, \,\,
w=-\frac{2\alpha_4(\alpha_3+\alpha_4)}{t}.
\end{equation*}
}
\end{proposition}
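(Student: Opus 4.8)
The plan is to substitute $x\equiv 0$ into $B_4^{(1)}(\alpha_j)_{0\le j\le 4}$ and read off the reduced system, then pin down the parameters using the local analyses of Sections~1--3. Since $x'\equiv 0$, the first equation collapses to the algebraic relation $2\alpha_3 z+2z^2w+t=0$; by Proposition~\ref{prop:backlund}(4) we have $z\not\equiv 0$, so $w=-(2\alpha_3 z+t)/(2z^2)$ is rational in $z,t$. The second equation becomes the scalar linear ODE $ty'=-(\alpha_0+\alpha_1)y+\alpha_1$ (recall $\alpha_0+\alpha_1=1-2\alpha_2-2\alpha_3-2\alpha_4$), and substituting the expression for $w$ into the third equation turns it into the Riccati equation $tz'=(2y-1)z^2+(1-2\alpha_3-2\alpha_4)z$. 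The fourth equation is the remaining constraint that makes $x\equiv 0$ an honest rational solution; I would keep it in reserve. First, Proposition~\ref{prop:inf-summary} (valid since a rational solution is meromorphic at $t=\infty$) forces $z$ to have a pole of some order $n\ge 1$ at $\infty$ with $x,y,w$ holomorphic there; from $a_{\infty,0}=\alpha_0-\alpha_1$ and $x\equiv 0$ we get $\alpha_0=\alpha_1$, and also $y(\infty)=1/2$.

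Next I would analyse $y$. A pole of $y$ at a point $c\in\C^{*}$ would force $x$ to be non-constant near $c$ (Propositions~\ref{prop:t=c-y}, \ref{prop:t=c-(y,w)(2,1)}, \ref{prop:t=c-(y,w)(2,2)}, \ref{prop:t=c-(y,w)(1,1)}), so $y$ is holomorphic on $\C^{*}$; together with holomorphy at $\infty$ and rationality this means $y$ is either the constant $1/2$ or has a pole only at $t=0$, which by Proposition~\ref{prop:t=0-(y,w)} is simple with $\alpha_4(\alpha_3+\alpha_4)\ne 0$. In the case $y\equiv 1/2$ the Riccati equation degenerates to $tz'=(1-2\alpha_3-2\alpha_4)z$, so $z=c_0 t^{\,1-2\alpha_3-2\alpha_4}$ with $c_0\ne 0$, and rationality plus the pole at $\infty$ force $n:=1-2\alpha_3-2\alpha_4\in\Z_{\ge 1}$. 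Feeding this $z$ and $w=-(2\alpha_3 z+t)/(2z^2)$ into the fourth equation, a short computation shows it is equivalent to $-\dfrac{\alpha_4}{c_0^{2}}\,t^{\,1-2n}+\dfrac{1}{2c_0^{3}}\,t^{\,2-3n}=0$. For $n\ge 2$ the two exponents are distinct and neither coefficient can vanish, a contradiction; for $n=1$ it yields $c_0=1/(2\alpha_4)$ (so $\alpha_4\ne 0$), and $n=1$ forces $\alpha_3+\alpha_4=0$, hence $z=t/(2\alpha_4)$ and $w\equiv 0$. Combined with $\alpha_0=\alpha_1$, this is case~(1) (equivalently, invoke Corollary~\ref{coro:solution-1}).

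In the remaining case, $y$ has a simple pole at $t=0$. Comparing the $t^{-1}$-coefficients in $ty'=-(\alpha_0+\alpha_1)y+\alpha_1$ forces $\alpha_0+\alpha_1=1$, so with $\alpha_0=\alpha_1$ we obtain $\alpha_0=\alpha_1=1/2$ and $y=\tfrac12+C/t$, where Proposition~\ref{prop:t=0-(y,w)} identifies $C=2\alpha_4(\alpha_3+\alpha_4)$. Since $y-\tfrac12$ is of order exactly $t^{-1}$ at $\infty$, Proposition~\ref{prop:inf-summary}(1) excludes $n\ge 2$, so $n=1$ and $\alpha_4\ne 0$ by Proposition~\ref{prop:inf-summary}(2). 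Having exhibited the candidate $x\equiv 0$, $y=\tfrac12+2\alpha_4(\alpha_3+\alpha_4)/t$, $z=t/(2\alpha_4)$, $w=-2\alpha_4(\alpha_3+\alpha_4)/t$ and checked that it solves the system, the uniqueness in Proposition~\ref{prop:uniqueness} shows it is the only possibility; this is case~(2). In both cases $\alpha_4\ne 0$, which completes the argument.

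The main obstacle is the last step of the case $y\equiv 1/2$: the three surviving equations (linear for $y$, Riccati for $z$, algebraic for $w$) are consistent with a whole family of non-rational functions, so nothing forces $n=1$ until the fourth equation is brought in, and it is precisely the bookkeeping of the powers $t^{\,1-2n}$ and $t^{\,2-3n}$, together with the automatic cancellation at order $t^{-n}$, that does the work. A secondary delicate point, in the case of a pole of $y$ at the origin, is ruling out a higher-order pole of $z$ at infinity; I would dispatch this by matching the $t^{-1}$ behaviour of $y$ against the expansion in Proposition~\ref{prop:inf-summary}(1) rather than by a direct recursion.
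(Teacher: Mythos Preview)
Your argument is internally consistent, but it proves a weaker statement than the one claimed: the hypothesis is only $x\equiv 0$, with no rationality assumption, yet you repeatedly invoke the local classifications of Sections~1--3 (Propositions~\ref{prop:inf-summary}, \ref{prop:t=0behavior}, \ref{prop:t=0-(y,w)}, \ref{prop:t=c-y}--\ref{prop:t=c-(y,w)(1,1)}, \ref{prop:uniqueness}), all of which presuppose that the solution is meromorphic at the relevant point. For a general local solution of the ODE nothing guarantees this, so your dichotomy ``$y\equiv\tfrac12$ or $y$ has a simple pole at $0$'' is not justified a priori; indeed the linear equation $ty'=-(\alpha_0+\alpha_1)y+\alpha_1$ admits the non-rational solutions $y=\alpha_1/(\alpha_0+\alpha_1)+Ct^{-(\alpha_0+\alpha_1)}$, and it is only the interaction with the remaining equations that rules these out.

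The paper's intended argument is a direct elimination, valid for arbitrary solutions and much shorter. With $x\equiv0$ the first equation becomes the algebraic constraint $2\alpha_3 z+2z^{2}w+t=0$. Differentiating it, multiplying by $t$, and eliminating $tz'$ and $tw'$ via the third and fourth equations (after using the constraint itself to simplify $2z^{2}w+t$ in the third equation to obtain $tz'=(2y-1)z^{2}+(1-2\alpha_3-2\alpha_4)z$), every term cancels except $(2\alpha_4-1)t-t^{2}/z=-t$, i.e.\ $2\alpha_4 t=t^{2}/z$. Hence $\alpha_4\neq0$ and $z=t/(2\alpha_4)$ identically. The constraint then gives $w=-2\alpha_4(\alpha_3+\alpha_4)/t$, the reduced third equation gives $y=\tfrac12+2\alpha_4(\alpha_3+\alpha_4)/t$, and finally the second equation $ty'=-(\alpha_0+\alpha_1)y+\alpha_1$ forces $\alpha_0=\alpha_1$ (constant term) and either $\alpha_3+\alpha_4=0$ or $\alpha_0+\alpha_1=1$ (coefficient of $t^{-1}$), yielding precisely cases~(1) and~(2). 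No local expansions, case splits, or uniqueness results are needed.
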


Proposition \ref{prop:exam2} implies that 
if $x\equiv 0$ for $B_4^{(1)}(\alpha_j)_{0\leq j \leq 4},$ 
$$
\pi_2(x,y,z,w)=(2\alpha_4, 1/2, \infty, 0).
$$
Therefore, 
we have to consider the infinite solution such that $z\equiv \infty,$ 
which is treated in the next section.

\section{Infinite solutions}

In this section, 
we consider an ``infinite solution," that is, a solution such that some of $(x,y,z,w)$ are identically equal to $\infty.$ 
Especially, we treat the infinite solution such that $z\equiv \infty.$ 
For this purpose, 
following Sasano \cite{Sasano-2}, 
we introduce the coordinate transformation 
which is given by 
\begin{equation*}
m_3: x_3=x, \, y_3=y, \, z_3=1/z_, \, w_3=-(wz+\alpha_3)z, 
\end{equation*}

By $m_3,$ we have the following proposition.

\begin{proposition}
\label{prop:zinf}
{\it
For $B_4^{(1)}(\alpha_j)_{0\leq j \leq 4},$ 
there exists a solution such that 
$z\equiv \infty.$ 
Either of the following then occurs:
\newline
{\rm (1)}\quad $(\alpha_0+\alpha_1)(\alpha_0-\alpha_1)=0, \,\,\alpha_4=0,$ and 
$$
x_3=\alpha_0-\alpha_1, \,y_3=\frac12, \,z_3=0, \,w_3=\frac{t}{2},
$$
that is, $x=\alpha_0-\alpha_1, \,y=1/2, \,z=\infty, \,w=0,$
\newline
{\rm (2)}\quad $\alpha_3=1/2, \,\,\alpha_4=0. $ 
$$
x_3=\alpha_0-\alpha_1, \,y_3=\frac12, \,z_3=0, \,w_3=\frac{t}{2}+\frac{(\alpha_0+\alpha_1)(\alpha_0-\alpha_1)}{2}, 
$$
that is, $x=\alpha_0-\alpha_1, \,y=1/2, \,z=\infty, \,w=0.$ 
}
\end{proposition}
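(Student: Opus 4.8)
The plan is to use the coordinate transformation $m_3$ to convert the assumed infinite solution $z \equiv \infty$ of $B_4^{(1)}(\alpha_j)$ into a genuine (finite, holomorphic) solution of the transformed system in the variables $(x_3, y_3, z_3, w_3)$, and then to extract constraints by imposing $z_3 \equiv 0$. First I would substitute $x = x_3$, $y = y_3$, $z = 1/z_3$, $w = -(w_3/z_3 + \alpha_3)/z_3$ — i.e. invert $m_3$ — into the four differential equations of $B_4^{(1)}(\alpha_j)$ and clear denominators, producing the polynomial system satisfied by $(x_3, y_3, z_3, w_3, t)$. This is a routine (if tedious) birational change of variables of the sort used by Sasano \cite{Sasano-2}; the resulting system is regular along $z_3 = 0$, which is exactly what makes $z \equiv \infty$ a bona fide solution in the new chart.

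Next I would impose $z_3 \equiv 0$ (so $z_3' \equiv 0$) throughout the transformed system. The equation governing $z_3$ should then force a relation on $y_3$, and comparing the resulting terms one expects $y_3 \equiv 1/2$; feeding $y_3 \equiv 1/2$ and $z_3 \equiv 0$ back into the transformed $x_3$- and $w_3$-equations should leave a linear ODE for $w_3$ of the form $t\,w_3' = $ (linear in $w_3$, with an inhomogeneous term linear in $t$ and a constant coming from the parameters), whose polynomial solution is $w_3 = t/2 + (\text{const})$. The transformed $x_3$-equation, now with $z_3 \equiv 0$, should reduce to an algebraic constraint determining $x_3$ as a constant, and matching the constant term should give $x_3 = \alpha_0 - \alpha_1$ together with a compatibility condition on the parameters. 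The bookkeeping here parallels the constant-term comparisons already carried out in Sections 1–2 (e.g. the reasoning around Propositions \ref{prop:z-inf} and \ref{prop:inf-summary}), so I would lean on those techniques rather than redo them from scratch.

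The main obstacle, and the step I would be most careful about, is the case division: after $y_3 \equiv 1/2$, $z_3 \equiv 0$ are fixed, the inhomogeneous term in the $w_3$-equation and the algebraic constraint from the $x_3$-equation will split according to whether the additive constant in $w_3$ vanishes or not. Case (1), with constant $0$, should emerge under $\alpha_4 = 0$ together with $(\alpha_0+\alpha_1)(\alpha_0-\alpha_1) = 0$ (this last factor arising exactly as the ``$a_{\infty,0}\{a_{\infty,0}-(\alpha_0-\alpha_1)\}$''-type identities do elsewhere, now evaluated at the boundary), while case (2), with constant $(\alpha_0+\alpha_1)(\alpha_0-\alpha_1)/2 \neq 0$ in general, should force $\alpha_3 = 1/2$, $\alpha_4 = 0$. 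One must check that these are the only two possibilities — in particular that $\alpha_4 = 0$ is unavoidable (heuristically because a nonzero $\alpha_4$ would reintroduce a pole in $w$ that is incompatible with $z \equiv \infty$, $w \equiv 0$), and that no further branch with, say, $y_3 \not\equiv 1/2$ survives. Translating back via $z = 1/z_3$, $w = -(w_3 z_3 + \alpha_3)/(\text{appropriate power})$ — more precisely reading off $w$ from $w_3 = -(wz+\alpha_3)z$ with $z \to \infty$ — gives $w \equiv 0$ and the stated values of $(x,y,z,w)$, completing the proof.
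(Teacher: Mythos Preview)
Your approach is essentially the paper's own: transform via $m_3$, set $z_3\equiv 0$, and read off the remaining quantities and parameter constraints from the transformed system. One point of imprecision worth fixing: you have the roles of the four transformed equations slightly shuffled. In the paper's argument, setting $z_3\equiv 0$ in the $z_3$-equation gives $y_3\equiv 1/2$ (as you say); then the $y_3$-equation (not the $x_3$-equation) becomes the algebraic constraint yielding $x_3=\alpha_0-\alpha_1$; next the $x_3$-equation determines $w_3$ \emph{algebraically} (not via a linear ODE) as $w_3=t/2+(\alpha_0+\alpha_1)(\alpha_0-\alpha_1)/2$; finally substituting everything into the $w_3$-equation produces an identity in $t$ whose $t$-coefficient forces $\alpha_4=0$ and whose constant term forces $(1-2\alpha_3)(\alpha_0+\alpha_1)(\alpha_0-\alpha_1)=0$, giving exactly the dichotomy (1)/(2). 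Once you write the transformed system out explicitly this ordering is forced on you, so the slip is harmless, but your narrative would be cleaner with the correct assignment of equations to unknowns.
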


\begin{proof}
For $B_4^{(1)}(\alpha_j)_{0\leq j \leq 4},$ 
$m_3$ transforms the system of $(x,y,z,w)$ into the system of $(x_3, y_3, z_3, w_3),$ 
which is given by 
\begin{equation*}
\begin{cases}
tx_3^{\prime}=2x_3^2y_3-x_3^2+(1-2\alpha_2-2\alpha_3-2\alpha_4)x_3-2w_3+t,  \\
ty_3^{\prime}=-2x_3y_3^2+2x_3y_3-(1-2\alpha_2-2\alpha_3-2\alpha_4)y_3+\alpha_1, \\
tz_3^{\prime}=2z_3^2w_3+1-(1-2\alpha_3-2\alpha_4)z_3-2y_3-tz_3^2, \\
tw_3^{\prime}=-2z_3w_3^2+2tz_3w_3+\alpha_3t+(1-2\alpha_3-2\alpha_4)w_3. 
\end{cases}
\end{equation*}
Substituting $z_3\equiv 0$ in 
$$
tz_3^{\prime}=2z_3^2w_3+1-(1-2\alpha_3-2\alpha_4)z_3-2y_3-tz_3^2,
$$
we have $y_3=1/2,$ which implies that $x_3=\alpha_0-\alpha_1,$ 
because 
$$
ty_3^{\prime}=-2x_3y_3^2+2x_3y_3-(1-2\alpha_2-2\alpha_3-2\alpha_4)y_3+\alpha_1.
$$ 
Substituting $x_3=\alpha_0-\alpha_1, y_3=1/2, z_3=0$ in 
$$
tx_3^{\prime}=2x_3^2y_3-x_3^2+(1-2\alpha_2-2\alpha_3-2\alpha_4)x_3-2w_3+t, 
$$
we obtain 
\begin{equation}
\label{eqn:D4-inf.sol}
w_3=\frac{t}{2}+\frac{(\alpha_0+\alpha_1)(\alpha_0-\alpha_1)}{2}.
\end{equation}
Substituting $z_3\equiv 0$ and (\ref{eqn:D4-inf.sol}) in 
$$
tw_3^{\prime}=-2z_3w_3^2+2tz_3w_3+\alpha_3t+(1-2\alpha_3-2\alpha_4)w_3,
$$
we have 
$$
0=-\alpha_4t+(1-2\alpha_3-2\alpha_4)\frac{(\alpha_0-\alpha_1)(\alpha_0+\alpha_1)}{2},
$$
which proves the proposition. 
\end{proof}

{\bf Remark}
\quad
In both cases of Proposition \ref{prop:zinf}, 
we can express the infinite solution by 
$$
x_3=\alpha_0-\alpha_1, \,y_3=\frac12, \,z_3=0, w_3=\frac{t}{2}+\frac{(\alpha_0+\alpha_1)(\alpha_0-\alpha_1)}{2}. 
$$

\subsection{B\"acklund transformations and infinite solutions}

In this subsection, 
we treat the relationship between the B\"acklund transformations and 
a solution such that $z\equiv \infty$ and $x,y,w \not\equiv \infty.$
By $m_3$ and Proposition \ref{prop:zinf}, 
we can prove the following proposition.
\begin{proposition}
{\it
Suppose that 
for $B_4^{(1)}(\alpha_j)_{0\leq j \leq 4},$ 
there exists a solution such that 
$z\equiv \infty$ and $x,y,w\not\equiv \infty.$ 
The actions of the B\"acklund transformations to the inifinite solutions are then expressed by
\begin{align*}
s_0 &: (x,y,z,w) \longrightarrow (-\alpha_0-\alpha_1, \,\,1/2, \,\,\infty, \,\,0),  \\
s_1 &: (x,y,z,w) \longrightarrow (\alpha_0+\alpha_1, \,\,1/2, \,\,\infty, \,\,0),  \\
s_2 &: (x,y,z,w) \longrightarrow (\alpha_0-\alpha_1, \,\,1/2, \,\,\infty, \,\,0), \\
s_3 &: (x,y,z,w) \longrightarrow (\alpha_0-\alpha_1, \,\,1/2, \,\,t/\{2\alpha_3\}+(\alpha_0+\alpha_1)(\alpha_0-\alpha_1)/\{2\alpha_3\}, 0 ), \\
s_4 &: (x,y,z,w) \longrightarrow (\alpha_0-\alpha_1, \,\,1/2, \,\,\infty, \,\,0), \\
\pi_1 &: (x,y,z,w) \longrightarrow (\alpha_1-\alpha_0, \,\,1/2, \,\,\infty, \,\, 0), \\
\pi_2 &: (x,y,z,w) \longrightarrow (0, \,\,  1/2+(\alpha_0+\alpha_1)(\alpha_0-\alpha_1)/\{2t\},  \,\,t/(\alpha_0-\alpha_1), \,\, -(\alpha_0+\alpha_1)(\alpha_0-\alpha_1)/\{2t\} ).
\end{align*}
Furthermore, if $\alpha_0-\alpha_1=0,$ $\pi_2$ is given by 
$
 (x,y,z,w) \longrightarrow  (0, 1/2, \,\,\infty, \,\,0).
$
}
\end{proposition}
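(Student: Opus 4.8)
The plan is to reduce the statement to a finite substitution by invoking Proposition \ref{prop:zinf}. By that proposition and the Remark following it, every solution of $B_4^{(1)}(\alpha_j)_{0\le j\le 4}$ with $z\equiv\infty$ and $x,y,w\not\equiv\infty$ is, in the $m_3$-coordinates, given by
$$
x_3=\alpha_0-\alpha_1,\qquad y_3=\frac12,\qquad z_3=0,\qquad w_3=\frac{t}{2}+\frac{(\alpha_0+\alpha_1)(\alpha_0-\alpha_1)}{2},
$$
that is, $x=\alpha_0-\alpha_1$, $y=1/2$, $z=\infty$, $w=0$, with the indeterminate product $z\cdot w$ regularized by the relations $z=1/z_3$ and $zw+\alpha_3=-w_3z_3$. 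First I would record these two relations; they allow one to evaluate every rational expression occurring in the Bäcklund transformations as a limit $z_3\to0$.

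For $s_0,s_1,s_2,\pi_1$ the computation is immediate, since each affects $(x,y,z,w)$ only through terms that are either regular at $z=\infty$ or vanish there: with $y=1/2$ one gets $x+\alpha_0/(y-1)=-\alpha_0-\alpha_1$, $x+\alpha_1/y=\alpha_0+\alpha_1$, $\alpha_2/(x-z)\to0$, and $-x=\alpha_1-\alpha_0$, $1-y=1/2$, respectively, while $z$ remains $\infty$ and $w$ remains $0$ in each case. For $s_4$ the new fourth coordinate is $w-2\alpha_4/z+t/z^2\to0$ and $t\mapsto-t$, so again the image is $(\alpha_0-\alpha_1,1/2,\infty,0)$.

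The two cases needing care are $s_3$ and $\pi_2$, where one must pass through $m_3$. For $s_3$, writing the new $z$-coordinate as $z'=(zw+\alpha_3)/w$ and using $w=-z_3(w_3z_3+\alpha_3)$ together with $zw+\alpha_3=-w_3z_3$, one finds $1/z'=(w_3z_3+\alpha_3)/w_3$, which at $z_3=0$ equals $\alpha_3/w_3$; substituting the value of $w_3$ gives the claimed $z'=t/\{2\alpha_3\}+(\alpha_0+\alpha_1)(\alpha_0-\alpha_1)/\{2\alpha_3\}$, while $x,y$ are unchanged and the new $w$ is again $0$. For $\pi_2$ one has $t/z\to0$ and $t/x=t/(\alpha_0-\alpha_1)$, while the two indeterminate entries are resolved by $-(z/t)(zw+\alpha_3)=w_3/t=1/2+(\alpha_0+\alpha_1)(\alpha_0-\alpha_1)/\{2t\}$ and, using $xy+\alpha_1=(\alpha_0+\alpha_1)/2$, by $-(x/t)(xy+\alpha_1)=-(\alpha_0+\alpha_1)(\alpha_0-\alpha_1)/\{2t\}$; when $\alpha_0-\alpha_1=0$ the third coordinate $t/(\alpha_0-\alpha_1)$ becomes $\infty$ and the others collapse to $(0,1/2,\infty,0)$, which gives the last assertion.

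The only real subtlety --- the part requiring attention rather than mere substitution --- is that $m_3$ depends on the parameter $\alpha_3$, and $s_3$, $s_4$ shift $\alpha_3$; hence the coordinate system in which the image is a ``finite'' solution is not the same as the one for the source, and one must take the limit $z_3\to0$ with respect to the correct $m_3$ (with the shifted parameter) before reading off $(x,y,z,w)$. Once this bookkeeping is in place, the proposition follows by the direct substitutions above.
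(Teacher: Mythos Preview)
Your proof is correct and follows essentially the same route as the paper: invoke Proposition \ref{prop:zinf} (and the Remark after it) to identify the infinite solution in the $m_3$-coordinates, then substitute directly into each B\"acklund transformation, using the relations $z=1/z_3$ and $zw+\alpha_3=-w_3z_3$ to resolve the indeterminate expressions. The paper carries out only the $\pi_2$ case in detail (including the $\alpha_0-\alpha_1=0$ subcase via $m_3\circ\pi_2\circ m_3^{-1}$) and declares the others analogous, whereas you spell out each case; your added remark about the parameter-dependence of $m_3$ under $s_3,s_4$ is a sound piece of bookkeeping that the paper leaves implicit.
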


\begin{proof}
We treat $\pi_2.$ The other cases can be proved in the same way. 
From the definitions of $\pi_2$ and $m_3,$ 
it follows that 
\begin{align*}
&\pi_2(x)=\frac{t}{z}=tz_3=0, \\
&\pi_2(y)=-\frac{z}{t}(zw+\alpha_3)
=\frac{w_3}{t}=\frac12+\frac{(\alpha_0+\alpha_1)(\alpha_0-\alpha_1)}{2t}, \\
&\pi_2(z)=\frac{t}{x}=\frac{t}{x_3}=\frac{t}{\alpha_0-\alpha_1},   \\
&\pi_2(w)=-\frac{x}{t}(xy+\alpha_1)=-\frac{x_3}{t}(x_3y_3+\alpha_1)=-\frac{(\alpha_0-\alpha_1)(\alpha_0+\alpha_1)}{2t}. 
\end{align*}
If $\alpha_0-\alpha_1=0,$ 
by $m_3\circ \pi_2\circ m_3^{-1},$ 
we obtain 
\begin{align*}
&m_3(\pi_2(m_3^{-1}(x_3)))=tz_3=0, \\
&m_3(\pi_2(m_3^{-1}(y_3)))=\frac12+\frac{(\alpha_0+\alpha_1)(\alpha_0-\alpha_1)}{2t}=\frac12, \\
&m_3(\pi_2(m_3^{-1}(z_3)))=\frac{\alpha_0-\alpha_1}{t}=0, \\
&m_3(\pi_2(m_3^{-1}(w_3)))=-(\pi_2(w)\pi_2(z)+\pi_2(\alpha_3))\pi_2(z)=ty_3=\frac{t}{2},
\end{align*}
which means that 
$$
\pi_2(x,y,z,w)=(0, 1/2, \,\,\infty, \,\,0).
$$

\end{proof}

\section{Necessary conditions$\cdots$the case where $z$ has a pole of order one at $t=\infty$}

In this section, 
we assume that 
$z$ has a pole of order one at $t=\infty$ 
and 
obtain 
the necessary conditions 
for $B_4^{(1)}(\alpha_j)_{0\leq j \leq 4}$ 
to have such a rational solution.

\subsection{The case where $x,y,z,w$ are all holomorphic at $t=0$ }

In this subsection, 
we assume that 
$z$ has a pole of order one at $t=\infty$ 
and 
all of $(x,y,z,w)$ are holomorphic at $t=0.$

\subsubsection{The case where $a_{0,0}=0$}

\begin{proposition}
\label{prop:nec-order1-holozero-zero-nonzero}
{\it
Suppose that for $B_4^{(1)}(\alpha_j)_{0\leq j \leq 4},$  
there exists a rational solution 
such that 
$z$ has a pole of order one at $t=\infty$ 
and 
$x,y,w$ are all holomorphic at $t=\infty.$ 
Moreover, 
assume that 
$x,y,z,w$ are all holomorphic at $t=0$ 
and $a_{0,0}=0, \,\,c_{0,0}\neq0.$ 
Then, 
$\alpha_0-\alpha_1\in\Z, \,\,2\alpha_4\in\Z.$
}
\end{proposition}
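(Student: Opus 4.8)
The plan is to establish the two assertions separately: $\alpha_0-\alpha_1\in\Z$ directly, and $2\alpha_4\in\Z$ after one B\"acklund transformation.

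For $\alpha_0-\alpha_1\in\Z$: since $z$ has a pole of order one at $t=\infty$ and $x,y,w$ are holomorphic there, Proposition \ref{prop:inf-summary}(2) gives $a_{\infty,0}=\alpha_0-\alpha_1$ (and $\alpha_4\neq 0$). By hypothesis $a_{0,0}=0$, and by Proposition \ref{rational-necessary-1}(2) a rational solution satisfies $a_{\infty,0}-a_{0,0}\in\Z$; hence $\alpha_0-\alpha_1\in\Z$.

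For $2\alpha_4\in\Z$: the idea is that $\pi_2$ converts the order-one pole of $z$ at $t=\infty$ into the value at $t=\infty$ of the first coordinate of a transformed solution, so that the preceding paragraph applies again. First, $x\not\equiv 0$: otherwise Proposition \ref{prop:exam2} would force $z=t/(2\alpha_4)$, giving $c_{0,0}=0$, contrary to the hypothesis. Since also $z\not\equiv 0$, the transformation $\pi_2$ produces a genuine rational solution $\pi_2(x,y,z,w)$ of $B_4^{(1)}(2\alpha_4+\alpha_3,\alpha_3,\alpha_2,\alpha_1,(\alpha_0-\alpha_1)/2)$ (none of its four components is identically $\infty$), whose first coordinate is $\pi_2(x)=t/z$. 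Near $t=\infty$ one has $z=\frac{1}{2\alpha_4}t+c_{\infty,0}+\cdots$, so $\pi_2(x)=t/z=2\alpha_4+O(t^{-1})$ is holomorphic at $t=\infty$ with constant term $2\alpha_4$; near $t=0$, $z$ is holomorphic with $z(0)=c_{0,0}\neq 0$, so $\pi_2(x)=t/z$ is holomorphic at $t=0$ with constant term $0$. Applying Proposition \ref{rational-necessary-1}(2) to the rational solution $\pi_2(x,y,z,w)$ then yields $2\alpha_4-0\in\Z$. (Equivalently, apply the residue theorem on $\bbP^1$ to $1/z=t^{-1}\pi_2(x)$: its residue at $t=\infty$ is $-2\alpha_4$, its residue at $t=0$ is $0$, and at each $t=c\in\C^*$ the residue equals $\frac1c\Res_{t=c}\pi_2(x)\in\Z$ by Proposition \ref{rational-necessary-1}(1)(i) applied to the transformed system.)

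The step requiring care is the verification that $\pi_2$ carries our solution to an honest rational solution of $B_4^{(1)}$ with the stated parameters, rather than to an infinite solution; this is precisely where $c_{0,0}\neq 0$ (through $x\not\equiv 0$) and the genuine pole of $z$ at $t=\infty$ (so $z\not\equiv 0$) enter, together with the fact that Propositions \ref{rational-necessary-1} and \ref{rational-necessary-2} hold for every admissible parameter vector and may therefore be invoked for the $\pi_2$-image. As a side remark, combining Propositions \ref{prop:hinf}(2) and \ref{prop:hzero-1}(1)-(ii) gives $h_{\infty,0}-h_{0,0}=\frac14(\alpha_0-\alpha_1)^2+\alpha_4^2-\alpha_4$, so once $\alpha_0-\alpha_1\in\Z$ and $2\alpha_4\in\Z$ are known, the requirement $h_{\infty,0}-h_{0,0}\in\Z$ from Proposition \ref{rational-necessary-2}(2) further forces $\alpha_0-\alpha_1\equiv 2\alpha_4\pmod 2$; this sharpening is not needed for the present proposition.
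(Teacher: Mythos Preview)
Your proof is correct but takes a genuinely different route from the paper's. Both arguments obtain $\alpha_0-\alpha_1\in\Z$ in the same way from Proposition \ref{rational-necessary-1}(2). For $2\alpha_4\in\Z$, however, the paper applies the B\"acklund transformation $s_4$ (which fixes $x,y,z$ and sends $\alpha_4\mapsto -\alpha_4$) and uses the Hamiltonian integrality of Proposition \ref{rational-necessary-2}: for the original solution $h_{\infty,0}-h_{0,0}=\tfrac14(\alpha_0-\alpha_1)^2+\alpha_4^2-\alpha_4\in\Z$, while for $s_4(x,y,z,w)$ one gets $\tfrac14(\alpha_0-\alpha_1)^2+\alpha_4^2+\alpha_4\in\Z$; subtracting yields $2\alpha_4\in\Z$. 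You instead apply $\pi_2$, which turns the order-one pole of $z$ at $t=\infty$ into the constant term $2\alpha_4$ of the new first coordinate $t/z$, and then invoke Proposition \ref{rational-necessary-1}(2) for the transformed system. Your approach is more elementary in that it avoids the Hamiltonian altogether; the price is the extra check that $x\not\equiv 0$ (via Proposition \ref{prop:exam2}) so that $\pi_2$ does not produce an infinite solution. The paper's approach requires checking that $s_4$ preserves all the hypotheses at $t=0$ and $t=\infty$, but since $s_4$ leaves $x,y,z$ unchanged this is immediate. Your closing remark that the Hamiltonian condition then forces $\alpha_0-\alpha_1\equiv 2\alpha_4\pmod 2$ is correct and anticipates the parity condition in case (2) of Theorem \ref{them:B4}.
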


\begin{proof}
From Proposition \ref{rational-necessary-1}, 
it follows that $\alpha_0-\alpha_1\in\Z.$ 
Furthermore, 
from Proposition \ref{rational-necessary-2}, 
it follows that 
\begin{equation}
\label{eqn:h-relation-1}
h_{\infty,0}-h_{0,0}=1/4\cdot(\alpha_0-\alpha_1)^2+\alpha_4^2-\alpha_4\in\Z.
\end{equation}
\par
$s_4(x,y,z,w)$ is a solution of $B_4^{(1)}(\alpha_0,\alpha_1,\alpha_2,\alpha_3+2\alpha_4,-\alpha_4)$ 
such that $z$ has a pole of order one at $t=\infty$ 
and 
$x,y,w$ are all holomorphic at $t=\infty.$ 
Moreover, 
for $s_4(x,y,z,w)$ 
$x,y,z,w$ are all holomorphic at $t=0$ 
and $a_{0,0}=0, \,\,c_{0,0}\neq0.$ 
From Proposition \ref{rational-necessary-2}, 
it then follows that 
\begin{equation}
\label{eqn:h-relation-2}
h_{\infty,0}-h_{0,0}=1/4\cdot(\alpha_0-\alpha_1)^2+\alpha_4^2+\alpha_4\in\Z.
\end{equation}
From (\ref{eqn:h-relation-1}) and (\ref{eqn:h-relation-2}), 
it follows that $2\alpha_4\in\Z.$

\end{proof}

In order to treat the case where $a_{0,0}=c_{0,0}=0,$ 
let us prove the following two lemmas:

\begin{lemma}
\label{lem:holo-1}
{\it
Suppose that for $B_4^{(1)}(\alpha_j)_{0\leq j \leq 4},$  
there exists a solution 
such that $x,y,z,w$ are all holomorphic at $t=0$ 
and $c_{0,0}=0.$ 
Then, $\alpha_4\neq 0$ and 
$c_{0,1}=1/\{2\alpha_4\}.$
}
\end{lemma}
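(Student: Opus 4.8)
The plan is to substitute the Taylor expansions of $x,y,z,w$ at $t=0$ into the third equation of $B_4^{(1)}(\alpha_j)_{0\leq j \leq 4}$ and read off the coefficient of $t$. Since $x,y,z,w$ are all holomorphic at $t=0$, write
\[
x=a_{0,0}+a_{0,1}t+\cdots,\quad y=b_{0,0}+b_{0,1}t+\cdots,\quad z=c_{0,0}+c_{0,1}t+\cdots,\quad w=d_{0,0}+d_{0,1}t+\cdots .
\]
The hypothesis $c_{0,0}=0$ gives $z=c_{0,1}t+c_{0,2}t^{2}+\cdots$, hence $z^{2}=c_{0,1}^{2}t^{2}+\cdots$, so that each of the terms $2z^{2}w$, $-z^{2}$ and $2yz^{2}$ is $O(t^{2})$.

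Next I would compare the coefficients of $t$ in
\[
tz^{\prime}=2z^{2}w-z^{2}+(1-2\alpha_4)z+2yz^{2}+t .
\]
The left-hand side equals $c_{0,1}t+O(t^{2})$, while on the right-hand side only the terms $(1-2\alpha_4)z$ and $t$ contribute at order $t$, giving $(1-2\alpha_4)c_{0,1}+1$. Therefore $c_{0,1}=(1-2\alpha_4)c_{0,1}+1$, that is, $2\alpha_4 c_{0,1}=1$. If $\alpha_4=0$ this reads $0=1$, which is impossible; hence $\alpha_4\neq 0$ and $c_{0,1}=1/\{2\alpha_4\}$, as claimed.

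There is essentially no obstacle here: the only point that needs care is the bookkeeping observation that all the terms quadratic in $z$ drop out at order $t$ precisely because $c_{0,0}=0$, after which the conclusion is immediate. This is of the same routine nature as the earlier ``it can be easily checked'' computations, and the lemma is exactly the input needed to handle the subcase $a_{0,0}=c_{0,0}=0$ in the subsequent argument.
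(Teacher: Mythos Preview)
Your argument is correct. Comparing the coefficient of $t$ in the third equation, with $c_{0,0}=0$ forcing all the $z^{2}$-terms to be $O(t^{2})$, gives $c_{0,1}=(1-2\alpha_4)c_{0,1}+1$, hence $2\alpha_4 c_{0,1}=1$, and the conclusion follows.

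The paper takes a different, less direct route. It applies the B\"acklund transformation $s_4$, which sends $w$ to $w-2\alpha_4/z+t/z^{2}$ while fixing $x,y,z$. Since $c_{0,0}=0$ (and $z\not\equiv 0$), the term $t/z^{2}$ has a pole at $t=0$; if $\alpha_4=0$ this pole cannot be cancelled, so $s_4(w)$ alone is singular at $t=0$, contradicting the classification of meromorphic solutions at $t=0$ (Proposition~\ref{prop:t=0behavior}). For $\alpha_4\neq 0$ the same classification forces $s_4(w)$ to be holomorphic, and cancelling the $t^{-1}$-term in $-2\alpha_4/z+t/z^{2}$ yields $c_{0,1}=1/(2\alpha_4)$. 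Your direct coefficient comparison avoids both the B\"acklund transformation and the appeal to the $t=0$ classification, and is the cleaner proof here; the paper's approach is consistent with its general strategy of moving between cases via $s_j$, but for this particular lemma that machinery is not needed.
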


\begin{proof}
From Proposition \ref{prop:backlund}, 
let us first note that $z\not\equiv 0.$ 
Suppose that $\alpha_4=0.$ 
$s_4(x,y,z,w)$ is then a solution of $B_4^{(1)}(\alpha_0,\alpha_1,\alpha_2,\alpha_3,0)$ 
such that only $s_4(w)$ has a pole at $t=0,$ 
which is impossible from Proposition \ref{prop:t=0behavior}. 
\par
Thus, it follows that $\alpha_4\neq 0$ 
and 
$s_4(x,y,z,w)$ is a solution of $B_4^{(1)}(\alpha_0,\alpha_1,\alpha_2,\alpha_3+2\alpha_4,-\alpha_4)$ 
such that all of $s_4(x,y,z,w)$ are holomorphic at $t=0,$ which implies that 
$c_{0,1}=1/\{2\alpha_4\}.$ 

\end{proof}

\begin{lemma}
\label{lem:holo-2}
{\it
Suppose that for $B_4^{(1)}(\alpha_j)_{0\leq j \leq 4},$  
there exists a solution 
such that $x,y,z,w$ are all holomorphic at $t=0$ 
and $a_{0,0}=c_{0,0}=0.$ 
Moreover, assume that $\alpha_2\neq 0.$ 
Then, $\alpha_4(\alpha_2+\alpha_3+\alpha_4)\neq 0$ 
and 
$a_{0,1}=(\alpha_3+\alpha_4)/\{2\alpha_4(\alpha_2+\alpha_3+\alpha_4)\}.$
}
\end{lemma}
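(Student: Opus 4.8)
The plan is to reduce the statement to Proposition \ref{prop:t=0-(y,w)} by applying the B\"acklund transformation $s_2$, which is available precisely because $\alpha_2\neq 0$. First I would record two structural facts about the given solution. By Proposition \ref{prop:backlund}, $z\not\equiv 0$; and since $\alpha_2\neq 0$, the same proposition forces $x\not\equiv z$, so $x-z$ is a nonzero rational function. Because $a_{0,0}=c_{0,0}=0$, the function $x-z$ vanishes at $t=0$ to some order $k\geq 1$, hence $\alpha_2/(x-z)$ has a pole of order $k$ at $t=0$. Since $y$ and $w$ are both holomorphic at $t=0$ by hypothesis, no cancellation can occur, so $s_2(y)=y-\alpha_2/(x-z)$ and $s_2(w)=w+\alpha_2/(x-z)$ both have a pole at $t=0$.

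Next I would identify the type of the transformed solution. The quadruple $s_2(x,y,z,w)$ is a solution of $B_4^{(1)}(\alpha_0+\alpha_2,\alpha_1+\alpha_2,-\alpha_2,\alpha_3+\alpha_2,\alpha_4)$ for which $s_2(x)=x$ and $s_2(z)=z$ are holomorphic at $t=0$, while $s_2(y)$ and $s_2(w)$ have poles there. Among the three possibilities of Proposition \ref{prop:t=0behavior}, cases (1) and (2) are excluded ($y$ has a pole but $z$ does not), so we must be in case (3): $s_2(y),s_2(w)$ both have a pole at $t=0$ and $s_2(x),s_2(z)$ are both holomorphic at $t=0$. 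Therefore Proposition \ref{prop:t=0-(y,w)} applies to $s_2(x,y,z,w)$ with parameters $\beta_2=-\alpha_2$, $\beta_3=\alpha_3+\alpha_2$, $\beta_4=\alpha_4$.

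Finally I would read off the conclusion. Proposition \ref{prop:t=0-(y,w)} gives $\beta_4(\beta_3+\beta_4)\neq 0$, that is $\alpha_4(\alpha_2+\alpha_3+\alpha_4)\neq 0$ (the factor $\alpha_4\neq 0$ was already known from Lemma \ref{lem:holo-1}, so the genuinely new assertion is $\alpha_2+\alpha_3+\alpha_4\neq 0$), together with
$$
s_2(x)=\frac{\beta_2+\beta_3+\beta_4}{2\beta_4(\beta_3+\beta_4)}\,t+\cdots
=\frac{\alpha_3+\alpha_4}{2\alpha_4(\alpha_2+\alpha_3+\alpha_4)}\,t+\cdots .
$$
Since $s_2(x)=x$ and $a_{0,0}=0$, comparing coefficients of $t$ yields $a_{0,1}=(\alpha_3+\alpha_4)/\{2\alpha_4(\alpha_2+\alpha_3+\alpha_4)\}$, as claimed. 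The only delicate point is the first step — checking that $s_2$ genuinely produces a pole at $t=0$, which is exactly where both hypotheses $\alpha_2\neq 0$ and $a_{0,0}=c_{0,0}=0$ are used; once the transformed solution is known to be of type (3) at $t=0$, the rest is a direct substitution into Proposition \ref{prop:t=0-(y,w)}.
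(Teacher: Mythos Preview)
Your proof is correct and follows essentially the same approach as the paper: apply $s_2$ (using $\alpha_2\neq 0$ and $a_{0,0}=c_{0,0}=0$ to force a pole in $s_2(y),s_2(w)$), then invoke Proposition \ref{prop:t=0-(y,w)} for the transformed solution. The only cosmetic difference is that the paper reads off the residue of $s_2(y)$ (using $c_{0,1}=1/(2\alpha_4)$ from Lemma \ref{lem:holo-1}) and solves for $a_{0,1}$, whereas you read off the coefficient of $t$ in $s_2(x)=x$ directly; your version is arguably cleaner.
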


\begin{proof}
From Proposition \ref{prop:backlund}, 
let us first note that $x\not\equiv z.$ 
$s_2(x,y,z,w)$ is then a solution of $B_4^{(1)}(\alpha_0+\alpha_2,\alpha_1+\alpha_2,-\alpha_2,\alpha_3+\alpha_2,\alpha_4)$ 
such that both of  $s_2(y,w)$ have a pole at $t=0$ and 
both of $s_2(x,z)$ are holomorphic at $t=0.$ 
Thus, it follows from Proposition \ref{prop:t=0-(y,w)} that 
$\alpha_2+\alpha_3+\alpha_4\neq 0$ and 
$$
-\frac{\alpha_2}{a_{0,1}-1/\{2\alpha_4\}}=\mathrm{Res}_{t=0} s_2(y)=2\alpha_4(\alpha_2+\alpha_3+\alpha_4),
$$
which implies that $a_{0,1}=(\alpha_3+\alpha_4)/2\alpha_4(\alpha_2+\alpha_3+\alpha_4).$

\end{proof}

By Lemmas \ref{lem:holo-1} and \ref{lem:holo-2}, 
we can prove the following proposition:

\begin{proposition}
\label{prop:nec-order1-holozero-zero-zero}
{\it
Suppose that for $B_4^{(1)}(\alpha_j)_{0\leq j \leq 4},$  
there exists a rational solution 
such that 
$z$ has a pole of order one at $t=\infty$ 
and 
$x,y,w$ are all holomorphic at $t=\infty.$ 
Moreover, 
assume that 
$x,y,z,w$ are all holomorphic at $t=0$ 
and $a_{0,0}=0, c_{0,0}=0.$ 
One of the following then occurs: 
{\rm (1)}\quad $\alpha_0=\alpha_1=0,$
\,\,
{\rm (2)}\quad $\alpha_0-\alpha_1\in\Z, \,\,2\alpha_4=1,$
\,\,
{\rm (3)}\quad $\alpha_0-\alpha_1\in\Z, \,\,2\alpha_3+2\alpha_4\in\Z.$
}
\end{proposition}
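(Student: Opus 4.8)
The plan is to combine the two global arithmetic identities for rational solutions — $a_{\infty,0}-a_{0,0}\in\Z$ (Proposition \ref{rational-necessary-1}) and $h_{\infty,0}-h_{0,0}\in\Z$ (Proposition \ref{rational-necessary-2}) — with the B\"acklund transformations, reducing everything to the case analysis of Proposition \ref{prop:t=0holo-a_{0,0}=0}. Since $z$ has a pole of order one at $t=\infty$, Proposition \ref{prop:inf-summary}(2) gives $a_{\infty,0}=\alpha_0-\alpha_1$, hence $\alpha_0-\alpha_1\in\Z$ unconditionally; thus it suffices to produce one of $\alpha_0=\alpha_1=0$, $2\alpha_4=1$, or $2\alpha_3+2\alpha_4\in\Z$. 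Lemma \ref{lem:holo-1} (applicable because $c_{0,0}=0$) also gives $\alpha_4\neq 0$ and $c_{0,1}=1/(2\alpha_4)$. Now invoke Proposition \ref{prop:t=0holo-a_{0,0}=0} with $a_{0,0}=c_{0,0}=0$: the cases there forcing $\alpha_0=\alpha_1=0$ give alternative (1), the cases forcing $\alpha_4=1/2$ give alternative (2), and the only remaining possibility is case (5) of that proposition, namely $\alpha_0+\alpha_1\neq 0$ and $\alpha_4\neq 1/2$, in which I must establish alternative (3).

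So assume we are in case (5). If $\alpha_3+\alpha_4=0$, then $2\alpha_3+2\alpha_4=0\in\Z$ and we are done; so assume $\alpha_3+\alpha_4\neq 0$. Comparing the coefficient of $t$ in the $x$-component of the system near $t=0$ (using $a_{0,0}=c_{0,0}=0$ and $c_{0,1}=1/(2\alpha_4)$) yields $a_{0,1}(2\alpha_2+2\alpha_3+2\alpha_4)=(\alpha_3+\alpha_4)/\alpha_4$, which forces $\alpha_2+\alpha_3+\alpha_4\neq 0$ and $a_{0,1}=(\alpha_3+\alpha_4)/\{2\alpha_4(\alpha_2+\alpha_3+\alpha_4)\}\neq 0$ (this is Lemma \ref{lem:holo-2} when $\alpha_2\neq 0$).

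Suppose moreover $\alpha_0-\alpha_1\neq 0$, and apply $\pi_2$. Using $a_{\infty,0}=\alpha_0-\alpha_1\neq 0$, $c_{\infty,1}=c_{0,1}=1/(2\alpha_4)\neq 0$ and $a_{0,1}\neq 0$, one checks from the defining formulas of $\pi_2$ that $\pi_2(x,y,z,w)$ is a rational solution of $B_4^{(1)}(2\alpha_4+\alpha_3,\alpha_3,\alpha_2,\alpha_1,\tfrac{\alpha_0-\alpha_1}{2})$ for which $\pi_2(z)$ has a pole of order one at $t=\infty$, all of $\pi_2(x,y,z,w)$ are holomorphic at $t=0$, and at $t=0$ one has $\pi_2(x)=2\alpha_4+\cdots$ (which equals the new $\alpha_0-\alpha_1\neq 0$), $\pi_2(y)=-\tfrac{\alpha_3}{2\alpha_4}+\cdots$ (which equals $-\alpha_1^{\mathrm{new}}/(\alpha_0^{\mathrm{new}}-\alpha_1^{\mathrm{new}})$, and is $\neq\tfrac12$ since $\alpha_3+\alpha_4\neq 0$), and $\pi_2(z)=\tfrac{1}{a_{0,1}}+\cdots\neq 0$. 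Hence Propositions \ref{prop:hinf}(2) and \ref{prop:hzero-1}(2)-(iv) apply to $\pi_2(x,y,z,w)$, and the resulting instance of $h_{\infty,0}-h_{0,0}\in\Z$ simplifies — the cancellation $\tfrac14(\alpha_0+\alpha_1)^2-\alpha_0\alpha_1=\tfrac14(\alpha_0-\alpha_1)^2$ doing the work — to $(\alpha_3+\alpha_4)^2+\tfrac14(\alpha_0-\alpha_1)^2-\tfrac12(\alpha_0-\alpha_1)\in\Z$. Subtracting from this the identity $\tfrac14(\alpha_0-\alpha_1)^2+(\alpha_3+\alpha_4)^2-(\alpha_3+\alpha_4)\in\Z$ for the original solution (Propositions \ref{prop:hinf}(2), \ref{prop:hzero-1}(1)-(i)) leaves $(\alpha_3+\alpha_4)-\tfrac12(\alpha_0-\alpha_1)\in\Z$, whence $2\alpha_3+2\alpha_4\in\Z$ because $\alpha_0-\alpha_1\in\Z$; this is alternative (3).

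The remaining sub-case $\alpha_0=\alpha_1\neq 0$ of case (5) is where I expect the main difficulty: here $\pi_2(z)=t/x$ acquires a pole of order two at $t=\infty$ (as $a_{\infty,0}=\alpha_0-\alpha_1=0$), so the clean subtraction above is unavailable and one must chain B\"acklund transformations instead. In this sub-case $d_{0,0}=0$, so $w$ vanishes at $t=0$; for $\alpha_3\neq 0$ the map $s_3$ then sends the solution to one in which $z$ has a pole at $t=0$, a situation treated separately, while for $\alpha_3=0$ one first applies $s_4$ (turning $\alpha_3$ into $2\alpha_4\neq 0$) and then $s_3$ to land in the already-settled case $a_{0,0}=0$, $c_{0,0}\neq 0$ of Proposition \ref{prop:nec-order1-holozero-zero-nonzero}; equally, $s_0$ or $s_1$ moves $a_{0,0}$ off $0$ into the branch $a_{0,0}=\alpha_0-\alpha_1\neq 0$. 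Tracking the pole orders at $0$ and at $\infty$ through these compositions — and ruling out further degeneracies where the next Laurent coefficient vanishes — is the delicate point, but in each branch the conclusion is one of the three stated alternatives.
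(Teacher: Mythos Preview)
Your reduction via Proposition \ref{prop:t=0holo-a_{0,0}=0} to its case (5), and your treatment of that case when $\alpha_0-\alpha_1\neq 0$, are correct and genuinely different from the paper. The paper, when $\alpha_0-\alpha_1\neq 0$, does not use $\pi_2$ at all: it observes that then $d_{0,0}=-\alpha_3(\alpha_0-\alpha_1)/\{(2\alpha_4-1)(\alpha_0+\alpha_1)\}\neq 0$ (provided $\alpha_3\neq 0$), applies $s_3$ to obtain a solution still of order one at $t=\infty$ but now with $a_{0,0}=0$, $c_{0,0}=\alpha_3/d_{0,0}\neq 0$, and invokes Proposition \ref{prop:nec-order1-holozero-zero-nonzero} to get $2(\alpha_3+\alpha_4)\in\Z$; the sub-case $\alpha_3=0$ is first pushed to $\alpha_3\neq 0$ via $s_4$. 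Your $\pi_2$-plus-Hamiltonian-subtraction route avoids the case split on $\alpha_3$ and is arguably cleaner; the paper's route stays entirely within Section 7.

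The sub-case $\alpha_0=\alpha_1\neq 0$ is where your argument is genuinely incomplete, and your proposed workarounds are both more awkward and less effective than what the paper does. Your $s_3$ idea (exploiting $d_{0,0}=0$) lands in the situation where $s_3(z)$ has a pole at $t=0$, which is \emph{not} covered by the present proposition and whose treatment later in Section 7 in fact refers back to here; the $s_4$-then-$s_3$ suggestion requires controlling $c_{0,2}$ and does not obviously terminate. The paper instead does precisely what you dismissed: it applies $\pi_2$ and \emph{accepts} that $\pi_2(z)=t/x$ now has a pole of order $n\geq 2$ at $t=\infty$ (since $a_{\infty,0}=0$). The resulting solution of $B_4^{(1)}(2\alpha_4+\alpha_3,\alpha_3,\alpha_2,\alpha_1,0)$ has all components holomorphic at $t=0$ with $a_{0,0}=2\alpha_4\neq 0$, $b_{0,0}=-\alpha_3/(2\alpha_4)\neq 1/2$ (using $\alpha_3+\alpha_4\neq 0$), and $c_{0,0}\neq 0$ via Lemma \ref{lem:holo-2} (which forces $\alpha_2+\alpha_3+\alpha_4\neq 0$; the case $\alpha_2=0$ is handled by first applying $s_1s_0$). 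One then invokes the Section 8 result Proposition \ref{prop:nec-z(n)-t=0holo-a_{0,0}nonzero-nonhalf}---whose proof the paper explicitly notes is independent of Section 7---to conclude $(2\alpha_4+\alpha_3)+\alpha_3=2\alpha_3+2\alpha_4\in\Z$. So the missing ingredient in your sketch is exactly the forward reference to the order-$n\geq 2$ analysis that you anticipated but then tried to avoid.
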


\begin{proof}
From Proposition \ref{rational-necessary-1}, 
it follows that $\alpha_0-\alpha_1\in\Z.$ 
By Proposition \ref{prop:t=0holo-a_{0,0}=0} and \ref{prop:exam2}, 
we can assume that $x\not\equiv 0$ and 
$$
a_{0,0}=0, \,\,
b_{0,0}=\alpha_1/(\alpha_0+\alpha_1), \,\,
c_{0,0}=0, \,\,
d_{0,0}=-\alpha_3(\alpha_0-\alpha_1)/\{(2\alpha_4-1)(\alpha_0+\alpha_1)\},
$$
where $\alpha_0+\alpha_1\neq 0$ and $2\alpha_4\neq 1.$
\par
If $\alpha_0-\alpha_1\neq 0$ and $\alpha_3\neq 0,$ 
then 
$s_3(x,y,z,w)$ is a solution of $B_4^{(1)}(\alpha_0,\alpha_1,\alpha_2+\alpha_3,-\alpha_3,\alpha_4+\alpha_3)$ 
such that $s_3(z)$ has a pole of order one at $t=\infty$ 
and 
all of $s_3(x,y,w)$ are holomorphic at $t=\infty.$
Moreover, 
all of $s_3(x,y,z,w)$ are holomorphic at $t=0$ 
and for $s_3(x,y,z,w),$ $a_{0,0}=0, c_{0,0}\neq 0.$ 
Thus, by Proposition \ref{prop:nec-order1-holozero-zero-nonzero}, 
we find that $\alpha_0-\alpha_1\in\Z, \,\,2\alpha_3+2\alpha_4\in\Z.$
\par
If $\alpha_0-\alpha_1\neq 0$ and $\alpha_3=0,$ 
then 
$s_4(x,y,z,w)$ 
is a solution of $B_4^{(1)}(\alpha_0,\alpha_1,\alpha_2,2\alpha_4,-\alpha_4)$ 
such that 
$s_4(z)$ has a pole of order one at $t=\infty$ 
and 
all of $s_4(x,y,w)$ are holomorphic at $t=\infty.$ 
Moreover,  
$s_4(x,y,z,w)$ are all holomorphic at $t=0$ 
and for $s_4(x,y,z,w),$ $a_{0,0}=0, c_{0,0}=0.$ 
Thus, from the above discussion, 
we see that $\alpha_0-\alpha_1\in\Z, \,\,2\alpha_3+2\alpha_4\in\Z.$
\par
Let us suppose that $\alpha_0-\alpha_1=0$ and $\alpha_2\neq 0.$  
We can then assume that $\alpha_3+\alpha_4\neq 0.$
Thus, 
$\pi_2(x,y,z,w)$ is a solution of $B_4^{(1)}(2\alpha_4+\alpha_3,\alpha_3,\alpha_2,\alpha_1,0)$ 
such that 
$\pi_2(z)$ has a pole of order $n\,\,(n\geq 2)$ at $t=\infty$ 
and 
all of $\pi_2(x,y,w)$ are holomorphic at $t=\infty.$ 
Moreover, all of $\pi_2(x,y,z,w)$ are holomorphic at $t=0$ 
and 
for $\pi_2(x,y,z,w),$ $a_{0,0}=(2\alpha_4+\alpha_3)-\alpha_3\neq 0, \,\,b_{0,0}=-\alpha_3/\{2\alpha_4\}, \,\,
c_{0,0}=(\alpha_2+\alpha_3+\alpha_4)/(\alpha_3+\alpha_4),$ 
which implies that $\alpha_0-\alpha_1=0$ and $2\alpha_3+2\alpha_4\in\Z$ 
from Proposition \ref{prop:nec-z(n)-t=0holo-a_{0,0}nonzero-nonhalf}.
\par
Let us consider that  $\alpha_0-\alpha_1=0$ and $\alpha_2= 0.$ 
$s_1s_0(x,y,z,w)$ is then a solution of $B_4^{(1)}(-\alpha_0,-\alpha_1,2\alpha_0,\alpha_3,\alpha_4)$ 
such that 
only $s_1s_0(z)$ has a pole of order one at $t=\infty.$ 
Moreover, 
all of $s_1s_0(x,y,z,w)$ are holomorphic at $t=0$ and for  $s_1s_0(x,y,z,w),$ 
$a_{0,0}=0, b_{0,0}=1/2, \,\,c_{0,0}=0.$ 
Thus, it follows from the above discussion that $\alpha_0-\alpha_1=0$ and $2\alpha_3+2\alpha_4\in\Z.$

\end{proof}

{\bf Remark} 
Proposition \ref{prop:nec-z(n)-t=0holo-a_{0,0}nonzero-nonhalf} 
can be proved independent of the propositions in this section.
\newline
\par
Let us summarize Propositions \ref{prop:nec-order1-holozero-zero-nonzero} and \ref{prop:nec-order1-holozero-zero-zero}.

\begin{proposition}
\label{prop:nec-order1-t=0-holo-a_{0,0}zero}
{\it
Suppose that for $B_4^{(1)}(\alpha_j)_{0\leq j \leq 4},$  
there exists a rational solution 
such that 
$z$ has a pole of order one at $t=\infty$ 
and 
$x,y,w$ are all holomorphic at $t=\infty.$ 
Moreover, 
assume that 
$x,y,z,w$ are all holomorphic at $t=0$ 
and $a_{0,0}=0.$ 
One of the following then occurs:
\newline
{\rm (1)}\quad $\alpha_0=\alpha_1=0, $
\newline
{\rm (2)}\quad $\alpha_0-\alpha_1\in\Z, \,\,2\alpha_4\in\Z,$
\newline
{\rm (3)}\quad $\alpha_0-\alpha_1\in\Z, \,\,2\alpha_3+2\alpha_4\in\Z.$
}
\end{proposition}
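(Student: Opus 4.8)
This proposition is a bookkeeping consequence of the two preceding ones, so the plan is simply to case-split on whether $c_{0,0}$ vanishes and then read off the conclusions. First I would record the hypotheses common to both cases: we have a rational solution of $B_4^{(1)}(\alpha_j)_{0\leq j \leq 4}$ for which $z$ has a pole of order one at $t=\infty$ with $x,y,w$ holomorphic there, and $x,y,z,w$ are all holomorphic at $t=0$ with $a_{0,0}=0$. By Proposition \ref{rational-necessary-1} we already know $\alpha_0-\alpha_1\in\Z$ in every case, so the remaining work is to pin down the arithmetic condition on $\alpha_3,\alpha_4$.

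\textbf{Case $c_{0,0}\neq 0$.} Here I would invoke Proposition \ref{prop:nec-order1-holozero-zero-nonzero} directly: it yields $\alpha_0-\alpha_1\in\Z$ and $2\alpha_4\in\Z$, which is exactly alternative (2) of the statement. Recall this is proved by computing $h_{\infty,0}-h_{0,0}=\tfrac14(\alpha_0-\alpha_1)^2+\alpha_4^2-\alpha_4\in\Z$ from Propositions \ref{prop:hinf} and \ref{prop:hzero-1}, and then applying the same formula to $s_4(x,y,z,w)$ (a solution of $B_4^{(1)}(\alpha_0,\alpha_1,\alpha_2,\alpha_3+2\alpha_4,-\alpha_4)$, still with $c_{0,0}\neq 0$) to get $\tfrac14(\alpha_0-\alpha_1)^2+\alpha_4^2+\alpha_4\in\Z$; subtracting gives $2\alpha_4\in\Z$. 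Nothing new is needed.

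\textbf{Case $c_{0,0}= 0$.} Here I would invoke Proposition \ref{prop:nec-order1-holozero-zero-zero}, which asserts that one of the following holds: $\alpha_0=\alpha_1=0$; or $\alpha_0-\alpha_1\in\Z$ with $2\alpha_4=1$; or $\alpha_0-\alpha_1\in\Z$ with $2\alpha_3+2\alpha_4\in\Z$. The first is alternative (1), the third is alternative (3), and the second is subsumed in alternative (2) since $2\alpha_4=1$ forces $2\alpha_4\in\Z$. Hence in every subcase the conclusion of the proposition holds. Combining the two cases, one of alternatives (1)–(3) always occurs, which proves the proposition.

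\textbf{Where the real difficulty lies.} There is essentially no obstacle at this level; the substance is already carried by Propositions \ref{prop:nec-order1-holozero-zero-nonzero} and \ref{prop:nec-order1-holozero-zero-zero}. The genuinely delicate step is inside the latter: when $c_{0,0}=0$ one must use the normal forms of Propositions \ref{prop:t=0holo-a_{0,0}=0}, \ref{prop:exam2}, together with Lemmas \ref{lem:holo-1} and \ref{lem:holo-2}, and then chase the solution through $s_3$, $s_4$, $\pi_2$ and $s_1 s_0$ so as to reduce to the $c_{0,0}\neq 0$ situation (or to an order-$\geq 2$ pole at $t=\infty$, handled by Proposition \ref{prop:nec-z(n)-t=0holo-a_{0,0}nonzero-nonhalf}). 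Once those reductions are in place, the present proposition is just the disjunction of the outputs, as above.
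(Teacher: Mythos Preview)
Your proposal is correct and matches the paper's approach exactly: the paper presents this proposition simply as a summary of Propositions \ref{prop:nec-order1-holozero-zero-nonzero} and \ref{prop:nec-order1-holozero-zero-zero}, with no separate proof given. Your case split on $c_{0,0}$ and the observation that the $2\alpha_4=1$ subcase is absorbed into alternative (2) is precisely the intended reading.
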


\subsubsection{$a_{0,0}=\alpha_0-\alpha_1\neq0$}

We treat the case where $x,y,z,w$ are all holomorphic at $t=0$ and $a_{0,0}\neq0.$ 
Let us first deal with the case in which $b_{0,0}\neq 1/2.$ 

\begin{proposition}
\label{prop:z(1)-t=0-a_{0,0}nonzero-b_{0,0}-nonhalf}
{\it
Suppose that for $B_4^{(1)}(\alpha_j)_{0\leq j \leq 4},$  
there exists a rational solution 
such that 
$z$ has a pole of order one at $t=\infty$ 
and 
$x,y,w$ are all holomorphic at $t=\infty.$ 
Moreover, 
assume that 
$x,y,z,w$ are all holomorphic at $t=0$ 
and $a_{0,0}=\alpha_0-\alpha_1\neq 0, b_{0,0}=-\alpha_1/(\alpha_0-\alpha_1)\neq1/2.$ 
Either of the following then occurs:
\newline
{\rm (1)}\quad $\alpha_0+\alpha_1\in\Z, \,\,2\alpha_4\in\Z,$
\newline
{\rm (2)}\quad $\alpha_0+\alpha_1\in\Z, \,\,2\alpha_3+2\alpha_4\in\Z.$
}
\end{proposition}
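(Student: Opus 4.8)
The plan is to reduce to Proposition \ref{prop:nec-order1-t=0-holo-a_{0,0}zero} by applying a single B\"acklund transformation that kills the constant term of $x$ at $t=0,$ thereby replacing the role of $\alpha_0-\alpha_1$ by that of $\alpha_0+\alpha_1.$ Under the standing hypotheses we have $a_{0,0}=\alpha_0-\alpha_1\neq 0,$ so $(\alpha_0,\alpha_1)\neq(0,0);$ moreover, as noted in Proposition \ref{prop:t=0holo-a_{0,0}nonzero-nonhalf}, the assumption $b_{0,0}=-\alpha_1/(\alpha_0-\alpha_1)\neq 1/2$ is equivalent to $\alpha_0+\alpha_1\neq 0.$ Since $y$ is holomorphic at $t=\infty$ with $y(\infty)=1/2$ by Proposition \ref{prop:inf-summary}, $y$ is not identically $0$ or $1,$ so $s_0$ and $s_1$ act as genuine transformations.

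First I would split according to whether $\alpha_1\neq 0$ or $\alpha_1=0.$ If $\alpha_1\neq 0,$ then $b_{0,0}\neq 0,$ so $s_1(x)=x+\alpha_1/y$ remains holomorphic at $t=0$ (because $y(0)=b_{0,0}\neq0$) and at $t=\infty$ (because $y(\infty)=1/2\neq0$), while $s_1$ fixes $y,z,w;$ a one-line computation gives that the constant term of $s_1(x)$ at $t=0$ equals $a_{0,0}+\alpha_1/b_{0,0}=0.$ If instead $\alpha_1=0,$ then $\alpha_0\neq0$ and $b_{0,0}=0,$ in particular $b_{0,0}\neq1,$ so I would use $s_0(x)=x+\alpha_0/(y-1),$ which is likewise holomorphic at $t=0$ and $t=\infty,$ and whose constant term at $t=0$ equals $a_{0,0}+\alpha_0/(b_{0,0}-1)=0.$ In either case the transformed quadruple is a rational solution of $B_4^{(1)}$ with parameter list $(\alpha_0,-\alpha_1,\alpha_2+\alpha_1,\alpha_3,\alpha_4)$ or $(-\alpha_0,\alpha_1,\alpha_2+\alpha_0,\alpha_3,\alpha_4)$, for which $z$ still has a pole of order one at $t=\infty,$ the remaining coordinates are holomorphic at $t=\infty,$ all four are holomorphic at $t=0,$ and the constant term of $x$ at $t=0$ vanishes.

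Next I would apply Proposition \ref{prop:nec-order1-t=0-holo-a_{0,0}zero} to the transformed solution. Its three alternatives, expressed back in the original parameters, become: (a) the two ``first'' parameters both vanish, i.e.\ $\alpha_0=\alpha_1=0,$ which is excluded because $a_{0,0}=\alpha_0-\alpha_1\neq0;$ (b) $\alpha_0+\alpha_1\in\Z$ and $2\alpha_4\in\Z;$ (c) $\alpha_0+\alpha_1\in\Z$ and $2\alpha_3+2\alpha_4\in\Z.$ Here I use that the difference of the new ``first two'' parameters is $\pm(\alpha_0+\alpha_1)$ after the substitution $\alpha_1\mapsto-\alpha_1$ (resp.\ $\alpha_0\mapsto-\alpha_0$), and that $\alpha_3,\alpha_4$ are left unchanged by both $s_0$ and $s_1.$ Alternatives (b) and (c) are exactly conclusions (1) and (2), which completes the argument.

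The only point requiring genuine care --- and the one I would write out in full --- is the verification that the chosen transformation preserves every hypothesis of Proposition \ref{prop:nec-order1-t=0-holo-a_{0,0}zero}: that $s_1(x)$ or $s_0(x)$ develops no pole at $t=0$ and does not disturb the order-one pole of $z$ at $t=\infty,$ and that the excluded alternative $\alpha_0=\alpha_1=0$ is truly incompatible with $a_{0,0}\neq0.$ Everything beyond that is routine substitution bookkeeping.
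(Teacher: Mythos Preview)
Your proposal is correct and follows essentially the same approach as the paper: apply $s_0$ or $s_1$ to kill the constant term $a_{0,0}$ and reduce to Proposition~\ref{prop:nec-order1-t=0-holo-a_{0,0}zero}. The only cosmetic difference is that the paper splits on whether $\alpha_0\neq0$ or $\alpha_1\neq0$ (overlapping cases) while you organize it as a dichotomy on $\alpha_1$; your version is slightly more explicit in verifying that $s_0(x)$ or $s_1(x)$ stays holomorphic at $t=0$ and that the excluded alternative is incompatible with $a_{0,0}\neq0$, but the substance is identical.
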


\begin{proof}
Since $a_{0,0}=\alpha_0-\alpha_1\neq 0,$ it follows that $\alpha_0\neq 0$ or $\alpha_1\neq 0.$ 
If $\alpha_0\neq 0,$ then 
using $s_0$ we find that 
$s_0(x,y,z,w)$ is a solution of $B_4^{(1)}(-\alpha_0,\alpha_1,\alpha_2+\alpha_0,\alpha_3,\alpha_4)$ 
such that only $s_0(z)$ has a pole of order one at $t=0$ and all of $S_0(x,y,z,w)$ are 
holomorphic at $t=0.$ Moreover, for $s_0(x,y,z,w)$ $a_{0,0}=0.$ 
Thus, the proposition follows from Proposition \ref{prop:nec-order1-t=0-holo-a_{0,0}zero}.
\par
If $\alpha_1\neq 0,$ 
we use $s_1$ in the same way and can obtain the necessary conditions.

\end{proof}

Let us treat the case where $a_{0,0}\neq 0$ and $b_{0,0}=1/2.$

\begin{proposition}
\label{prop:z(1)-t=0-a_{0,0}nonzero-b_{0,0}-half}
{\it
Suppose that for $B_4^{(1)}(\alpha_j)_{0\leq j \leq 4},$  
there exists a rational solution 
such that 
$z$ has a pole of order one at $t=\infty$ 
and 
$x,y,w$ are all holomorphic at $t=\infty.$ 
Moreover, 
assume that 
$x,y,z,w$ are all holomorphic at $t=0$ 
and $a_{0,0}=\alpha_0-\alpha_1\neq 0, b_{0,0}=1/2.$ 
One of the following then occurs:
\newline
{\rm (1)}\quad $\alpha_0+\alpha_1\in\Z, \,\, 2\alpha_4\in\Z,$
\newline
{\rm (2)}\quad $\alpha_0+\alpha_1\in\Z, \,\,2\alpha_3+2\alpha_4\in\Z,$
\newline
{\rm (3)}\quad $\alpha_0+\alpha_1+2\alpha_2\in\Z, \,\,2\alpha_4\in\Z.$
}
\end{proposition}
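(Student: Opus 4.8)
The plan is to run through the four alternatives for the Laurent data of the solution at $t=0$ furnished by Proposition~\ref{prop:t=0holo-a_{0,0}nonzero-half} (which applies since $a_{0,0}=\alpha_0-\alpha_1\neq0$ and $b_{0,0}=1/2$). Alternatives (1) and (3) of that proposition give the result at once: in its case (1) one has $\alpha_0+\alpha_1=0$ and $\alpha_4=1/2$, so $\alpha_0+\alpha_1\in\Z$ and $2\alpha_4\in\Z$, which is condition~(1); in its case (3) one has $\alpha_0+\alpha_1=0$ and $\alpha_3+\alpha_4=1/2$, so $\alpha_0+\alpha_1\in\Z$ and $2\alpha_3+2\alpha_4\in\Z$, which is condition~(2).

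In alternative (2) of Proposition~\ref{prop:t=0holo-a_{0,0}nonzero-half} one has $\alpha_0+\alpha_1=0$, hence $\alpha_0=-\alpha_1\neq0$ because $\alpha_0-\alpha_1\neq0$. Since $b_{0,0}=1/2$ and $\alpha_0+\alpha_1=0$, applying $s_0$ replaces $a_{0,0}=\alpha_0-\alpha_1$ by $(\alpha_0-\alpha_1)+\alpha_0/(\tfrac12-1)=-(\alpha_0+\alpha_1)=0$ while keeping $z$ of pole order one at $t=\infty$, keeping $x,y,w$ holomorphic there, and keeping $x,y,z,w$ holomorphic at $t=0$. Thus $s_0(x,y,z,w)$ is a rational solution of $B_4^{(1)}(-\alpha_0,\alpha_1,\alpha_2+\alpha_0,\alpha_3,\alpha_4)$ to which Proposition~\ref{prop:nec-order1-t=0-holo-a_{0,0}zero} applies; its first alternative ($-\alpha_0=\alpha_1=0$) is excluded by $\alpha_1\neq0$, and the remaining two give, after translation, condition~(1) or condition~(2).

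The main case is alternative (4) of Proposition~\ref{prop:t=0holo-a_{0,0}nonzero-half}, where $\alpha_3+\alpha_4\neq1/2$, $c_{0,0}\neq0$ and $\alpha_0+\alpha_1\neq0$. First I would read off $2\alpha_4\in\Z$ from the Hamiltonian: Propositions~\ref{prop:hinf}(2), \ref{prop:hzero-1}(2)-(ii), and \ref{rational-necessary-2}(2) give
\[
h_{\infty,0}-h_{0,0}=(\alpha_3+\alpha_4)^2-(\alpha_3+\alpha_4)+\tfrac14(2\alpha_4-1)^2\in\Z ,
\]
and the same three propositions applied to the rational solution $s_4(x,y,z,w)$ of $B_4^{(1)}(\alpha_0,\alpha_1,\alpha_2,\alpha_3+2\alpha_4,-\alpha_4)$ — which again has $z$ of pole order one at $t=\infty$, is holomorphic at $t=0$ with $a_{0,0}=\alpha_0-\alpha_1\neq0$, $b_{0,0}=1/2$, $c_{0,0}\neq0$, and still satisfies $\alpha_3+\alpha_4\neq1/2$, so it lies again in alternative (4) — give $(\alpha_3+\alpha_4)^2-(\alpha_3+\alpha_4)+\tfrac14(2\alpha_4+1)^2\in\Z$; subtracting the two relations yields $2\alpha_4\in\Z$. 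Next, using $1-2\alpha_3-2\alpha_4=\alpha_0+\alpha_1+2\alpha_2$, one computes $(x-z)|_{t=0}=2\alpha_2(\alpha_0-\alpha_1)/(\alpha_0+\alpha_1+2\alpha_2)$. If $\alpha_2\neq0$ this is nonzero, so $s_2(x,y,z,w)$ is a rational solution of $B_4^{(1)}(\alpha_0+\alpha_2,\alpha_1+\alpha_2,-\alpha_2,\alpha_3+\alpha_2,\alpha_4)$, holomorphic at $t=0$, with $a_{0,0}\neq0$ and $b_{0,0}=-(\alpha_1+\alpha_2)/(\alpha_0-\alpha_1)\neq1/2$; Proposition~\ref{prop:z(1)-t=0-a_{0,0}nonzero-b_{0,0}-nonhalf} then yields $\alpha_0+\alpha_1+2\alpha_2\in\Z$, which together with $2\alpha_4\in\Z$ is condition~(3). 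If $\alpha_2=0$, then $\alpha_0+\alpha_1+2\alpha_2=\alpha_0+\alpha_1$ and one of $\alpha_0,\alpha_1$ is nonzero; applying $s_0$ or $s_1$ produces a rational solution still in alternative (4) of Proposition~\ref{prop:t=0holo-a_{0,0}nonzero-half} but now with a nonzero third parameter, so the preceding ($\alpha_2\neq0$) argument applies to it and returns $\alpha_0+\alpha_1\in\Z$, i.e.\ condition~(1).

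The step I expect to be the main obstacle is the bookkeeping inside alternative (4): one must check for each B\"acklund image ($s_4$, then $s_2$, and the auxiliary $s_0$ or $s_1$ in the $\alpha_2=0$ branch) that $z$ still has a pole of order exactly one at $t=\infty$, that all four unknowns remain holomorphic at $t=0$, and that the transformed $a_{0,0},b_{0,0},c_{0,0}$ still fit the pattern needed to keep Propositions~\ref{prop:hinf}, \ref{prop:hzero-1}, \ref{prop:nec-order1-t=0-holo-a_{0,0}zero} and \ref{prop:z(1)-t=0-a_{0,0}nonzero-b_{0,0}-nonhalf} applicable; the underlying algebra is routine, but the nested case split ($\alpha_2=0$ versus $\alpha_2\neq0$, and then $\alpha_0=0$ versus $\alpha_1=0$) has to be arranged so as not to be circular.
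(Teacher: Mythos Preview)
Your proof is correct and follows essentially the same reduction as the paper: in the generic situation use $s_2$ to make $b_{0,0}\neq 1/2$ and invoke Proposition~\ref{prop:z(1)-t=0-a_{0,0}nonzero-b_{0,0}-nonhalf}, and when $\alpha_2=0$ first apply $s_0$ or $s_1$ to create a nonzero $\alpha_2$ (or land at $a_{0,0}=0$ and use Proposition~\ref{prop:nec-order1-t=0-holo-a_{0,0}zero}). Your explicit pass through the four alternatives of Proposition~\ref{prop:t=0holo-a_{0,0}nonzero-half} is more careful bookkeeping than the paper's, which simply splits on $\alpha_2$.

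The one genuine difference is your Hamiltonian$+\,s_4$ step inside alternative~(4) to isolate $2\alpha_4\in\Z$ before applying $s_2$. This is valid but unnecessary: after $s_2$, Proposition~\ref{prop:z(1)-t=0-a_{0,0}nonzero-b_{0,0}-nonhalf} already returns either $(\alpha_0+\alpha_1+2\alpha_2\in\Z,\ 2\alpha_4\in\Z)$, which is condition~(3), or $(\alpha_0+\alpha_1+2\alpha_2\in\Z,\ 2\alpha_2+2\alpha_3+2\alpha_4\in\Z)$, and the parameter relation $\alpha_0+\alpha_1+2\alpha_2+2\alpha_3+2\alpha_4=1$ turns the latter into $(\alpha_0+\alpha_1\in\Z,\ 2\alpha_3+2\alpha_4\in\Z)$, condition~(2). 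So the paper gets by without the extra $s_4$ computation. Your route has the advantage of pinning down $2\alpha_4\in\Z$ uniformly in alternative~(4), at the cost of checking that $s_4$ preserves all the relevant local data (which you do correctly).
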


\begin{proof}
By Proposition \ref{prop:t=0holo-a_{0,0}nonzero-half}, 
we can assume that 
$a_{0,0}=\alpha_0-\alpha_1, \,\,b_{0,0}=1/2, \,\,c_{0,0}=0,$ 
or 
that 
$a_{0,0}=\alpha_0-\alpha_1, \,\,b_{0,0}=1/2, \,\,c_{0,0}=(\alpha_0+\alpha_1)(\alpha_0-\alpha_1)/(1-2\alpha_3-2\alpha_4)\neq 0.$ 
\par
Let us first suppose that $\alpha_2\neq 0.$ 
$s_2(x,y,z,w)$ is then a solution of $B_4^{(1)}(\alpha_0+\alpha_2,\alpha_1+\alpha_2,-\alpha_2,\alpha_3+\alpha_2, \alpha_4)$ 
such that 
only $s_2(z)$ has a pole of order one at $t=\infty$ and all of $s_2(x,y,z,w)$ are holomorphic at $t=0.$ 
Moreover, for $s_2(x,y,z,w),$ $a_{0,0}=\alpha_0-\alpha_1\neq 0$ and $b_{0,0}\neq 1/2.$ 
Thus, from Proposition \ref{prop:z(1)-t=0-a_{0,0}nonzero-b_{0,0}-nonhalf}, 
we can obtain the necessary conditions. 
\par
Now, let us consider the case where $\alpha_2=0.$ 
Since $a_{0,0}=\alpha_0-\alpha_1\neq 0,$ it follows that $\alpha_0\neq 0$ or $\alpha_1\neq 0.$ 
We suppose that $\alpha_0\neq 0.$ 
$s_0(x,y,z,w)$ is then a solution of $B_4^{(1)}(-\alpha_0,\alpha_1,\alpha_0,\alpha_3,\alpha_4)$ 
such that 
only $s_2(z)$ has a pole of order one at $t=\infty$ and all of $s_2(x,y,z,w)$ are holomorphic at $t=0.$ 
Moreover, for $s_0(x,y,z,w),$ $a_{0,0}=-\alpha_0-\alpha_1$ and $b_{0,0}\neq 1/2.$ 
When $\alpha_0+\alpha_1=0,$ we can obtain the necessary conditions from Proposition \ref{prop:nec-order1-t=0-holo-a_{0,0}zero}. 
When $\alpha_0+\alpha_1neq0,$ we can obtain the necessary conditions from the above discussion. 
\par
If $\alpha_2=0$ and $\alpha_1\neq 0,$ 
using $s_1$ in the same way, 
we can obtain the necessary conditions.

\end{proof}

\subsection{The case where $z$ has a pole at $t=0$ }

\begin{proposition}
{\it
Suppose that for $B_4^{(1)}(\alpha_j)_{0\leq j \leq 4},$  
there exists a rational solution 
such that 
$z$ has a pole of order one at $t=\infty$ 
and 
$x,y,w$ are all holomorphic at $t=\infty.$ 
Moreover, 
assume that 
$z$ has a pole at $t=0$ 
and 
$x,y,w$ are all holomorphic at $t=0.$ 
One of the following then occurs:
\newline
{\rm (1)}\quad $\alpha_0=\alpha_1=0, $
\newline
{\rm (2)}\quad $\alpha_0-\alpha_1\in\Z, \,\,2\alpha_3+2\alpha_4\in\Z, $
\newline
{\rm (3)}\quad $\alpha_0-\alpha_1\in\Z, \,\,2\alpha_4\in\Z, $
\newline
{\rm (4)}\quad $\alpha_0+\alpha_1\in\Z, \,\,2\alpha_3+2\alpha_4\in\Z,  $
\newline
{\rm (5)}\quad $\alpha_0+\alpha_1\in\Z, \,\,2\alpha_4\in\Z,$
\newline
{\rm (6)}\quad $2\alpha_3\in\Z, \,\,2\alpha_4\in \Z. $
}
\end{proposition}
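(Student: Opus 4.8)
The plan is to extract a single numerical relation from the Hamiltonian obstruction of Proposition \ref{rational-necessary-2}, to double it up using the B\"acklund transformation $s_4$ so as to get $2\alpha_4\in\Z$, and then to reduce the remaining conditions to the cases already settled in Section 7 by means of $s_3$.

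First I would record the data at $t=\infty$ and $t=0$. Since $z$ has a pole of order one at $t=\infty$ with $x,y,w$ holomorphic there, Propositions \ref{prop:inf-summary} and \ref{prop:hinf} give $\alpha_4\neq 0$, $a_{\infty,0}=\alpha_0-\alpha_1$, and $h_{\infty,0}=\frac14(\alpha_0-\alpha_1)^2+(\alpha_3+\alpha_4)^2-(\alpha_3+\alpha_4)$; since $z$ has a pole at $t=0$ with $x,y,w$ holomorphic there, Propositions \ref{prop:t=0-z} and \ref{prop:hzero-2} give $a_{0,0}=\alpha_0-\alpha_1$ and $h_{0,0}=\frac14(\alpha_0-\alpha_1)^2+\alpha_3(\alpha_3+2\alpha_4-1)$. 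Thus $a_{\infty,0}-a_{0,0}=0$, so Proposition \ref{rational-necessary-1} gives no information here, whereas $h_{\infty,0}-h_{0,0}=\alpha_4^2-\alpha_4\in\Z$ by Proposition \ref{rational-necessary-2}. Next I would apply $s_4$: since $s_4(z)=z$ and $s_4(w)=w-2\alpha_4/z+t/z^2$ is again holomorphic at $t=\infty$ (the extra terms vanish there) and at $t=0$ (they are holomorphic there, $z$ having a pole), $s_4(x,y,z,w)$ is once more a rational solution of the same shape, now of $B_4^{(1)}(\alpha_0,\alpha_1,\alpha_2,\alpha_3+2\alpha_4,-\alpha_4)$. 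The same computation for this solution gives $(-\alpha_4)^2-(-\alpha_4)=\alpha_4^2+\alpha_4\in\Z$, and adding the two relations yields $2\alpha_4\in\Z$.

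It then remains to produce, in addition, one of $\alpha_0-\alpha_1\in\Z$, $\alpha_0+\alpha_1\in\Z$, or $2\alpha_3\in\Z$, since together with $2\alpha_4\in\Z$ each of these places the parameters in one of {\rm (1)}--{\rm (6)} (using $\alpha_0+\alpha_1+2\alpha_2+2\alpha_3+2\alpha_4=1$ to rewrite a condition $\alpha_0+\alpha_1+2\alpha_2\in\Z$ as $2\alpha_3\in\Z$ once $2\alpha_4\in\Z$ is known). If $\alpha_3=0$ then $2\alpha_3=0\in\Z$; if $\alpha_3+\alpha_4=0$ then $2\alpha_3=-2\alpha_4\in\Z$; and if $\alpha_0-\alpha_1=0$ then trivially $\alpha_0-\alpha_1\in\Z$ --- in each of these cases we already land in {\rm (3)} or {\rm (6)}. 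So I would assume $\alpha_3\neq 0$, $\alpha_3+\alpha_4\neq 0$ and $\alpha_0-\alpha_1\neq 0$. Then Corollary \ref{coro:t=0-z(n)-s_3} applies and $s_3(x,y,z,w)$ is a rational solution of $B_4^{(1)}(\alpha_0,\alpha_1,\alpha_2+\alpha_3,-\alpha_3,\alpha_4+\alpha_3)$ with $x,y,z,w$ all holomorphic at $t=0$; moreover one checks that $s_3(z)=z+\alpha_3/w$ still has a pole of order exactly one at $t=\infty$, because there $w$ has a simple zero (its leading coefficient $-2\alpha_4(\alpha_3+\alpha_4)$ being nonzero) and the leading coefficient $1/\{2(\alpha_3+\alpha_4)\}$ of $s_3(z)$ does not vanish, while $s_3(x),s_3(y),s_3(w)$ stay holomorphic at $t=\infty$. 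For $s_3(x,y,z,w)$ one still has $a_{0,0}=\alpha_0-\alpha_1\neq 0$, so Proposition \ref{prop:t=0holo-a_{0,0}nonzero-determine} gives $b_{0,0}\in\{1/2,\,-\alpha_1/(\alpha_0-\alpha_1)\}$ (and if these coincide then $\alpha_0+\alpha_1=0$ and we are done). I would then invoke Proposition \ref{prop:z(1)-t=0-a_{0,0}nonzero-b_{0,0}-half} or Proposition \ref{prop:z(1)-t=0-a_{0,0}nonzero-b_{0,0}-nonhalf} for $s_3(x,y,z,w)$ and translate their conclusions back through $s_3$, obtaining $\alpha_0+\alpha_1\in\Z$ or $2\alpha_3+2\alpha_4\in\Z$, i.e. one of {\rm (4)},{\rm (5)},{\rm (6)}.

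The routine parts are the pole-order bookkeeping for $s_4$ and $s_3$ at $t=0$ and $t=\infty$, reading off the relevant Laurent coefficients from Proposition \ref{prop:inf-summary}, and the arithmetic of pushing the conclusions of the Section 7 propositions through $s_3$; these I would dispose of by direct calculation. The one step that needs genuine care is verifying that after $s_3$ the pole of $z$ at $t=\infty$ remains of order exactly one --- not higher (which would put the transformed solution outside the scope of the Section 7 propositions) and not cancelled away --- and this is precisely where the case distinction $\alpha_3+\alpha_4\neq 0$ is used; it is the main, if mild, obstacle.
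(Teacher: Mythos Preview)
Your proposal is correct and follows essentially the same route as the paper: reduce via $s_3$ to the case where $x,y,z,w$ are all holomorphic at $t=0$, then invoke the Section~7 propositions. The organizational difference is that you first extract $2\alpha_4\in\Z$ from the Hamiltonian obstruction (applied once directly and once after $s_4$), which lets you dispose of the degenerate cases $\alpha_3=0$ and $\alpha_3+\alpha_4=0$ immediately; the paper instead handles $\alpha_3=0$ by applying $s_4$ to make the new $\alpha_3$ nonzero, and handles $\alpha_3+\alpha_4=0$ by observing that $s_3(z)$ then acquires a pole of order $n\ge 2$ at $t=\infty$ and invoking Proposition~\ref{prop:nec-z(n)-t=0holo-a_{0,0}nonzero-half}. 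Your front-loading of $2\alpha_4\in\Z$ is a tidy shortcut.

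Two small remarks. First, after $s_3$ one has $s_3(y)=y$, and Proposition~\ref{prop:t=0-z} gives $b_{0,0}=1/2$ for the original solution; hence for $s_3(x,y,z,w)$ one automatically has $b_{0,0}=1/2$, so only Proposition~\ref{prop:z(1)-t=0-a_{0,0}nonzero-b_{0,0}-half} is needed (not Proposition~\ref{prop:z(1)-t=0-a_{0,0}nonzero-b_{0,0}-nonhalf}). Translating its three alternatives back through $s_3$ gives exactly conditions (4), (5), (6) of the statement, as you indicate. Second, when you write ``$s_4(z)=z$'' you are suppressing the change of independent variable $t\mapsto -t$ built into $s_4$; this is harmless for the argument since rationality and the pole structure at $t=0,\infty$ are preserved under $t\mapsto -t$, but it is worth saying explicitly.
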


\begin{proof}
Let us first suppose that $\alpha_3(\alpha_3+\alpha_4)\neq 0.$ 
$s_3(x,y,z,w)$ is then a solution of $B_4^{(1)}(\alpha_0,\alpha_1,\alpha_2+\alpha_3,-\alpha_3,\alpha_4+\alpha_3)$ 
such that only $s_3(z)$ has a pole of order one at $t=\infty$ 
and all of $s_3(x,y,z,w)$ are holomorphic at $t=0.$ 
Moreover, for  $s_3(x,y,z,w),$ 
$a_{0,0}=\alpha_0-\alpha_1,\,\,b_{0,0}=1/2, \,\,d_{0,0}=0.$ 
If $\alpha_0-\alpha_1=0,$ we can obtain the necessary conditions from Proposition \ref{prop:nec-order1-t=0-holo-a_{0,0}zero}. 
If $\alpha_0-\alpha_1\neq0,$ we can obtain the necessary conditions 
from Proposition \ref{prop:z(1)-t=0-a_{0,0}nonzero-b_{0,0}-nonhalf} and \ref{prop:z(1)-t=0-a_{0,0}nonzero-b_{0,0}-half}.
\par
Now, let us consider the case where $\alpha_3\neq 0$ and $\alpha_3+\alpha_4=0.$ 
It then follows from Proposition \ref{prop:exam1} that $w \not\equiv 0.$ 
Thus, 
$s_3(x,y,z,w)$ is a solution of $B_4^{(1)}(\alpha_0,\alpha_1,\alpha_2+\alpha_3,-\alpha_3,\alpha_4+\alpha_3)$ 
such that only $s_3(z)$ has a pole of order $n \,\,(n\geq 2)$ at $t=\infty$ 
and all of $s_3(x,y,z,w)$ are holomorphic at $t=0.$ 
Moreover, for  $s_3(x,y,z,w),$ 
$a_{0,0}=\alpha_0-\alpha_1,\,\,b_{0,0}=1/2, \,\,d_{0,0}=0.$ 
Therefore, 
using Proposition \ref{prop:nec-z(n)-t=0holo-a_{0,0}nonzero-half}, 
we can obtain the necessary conditions.
\par
If $\alpha_3=0,$ then $\alpha_3+\alpha_4\neq 0,$ 
because $\alpha_4\neq 0.$ 
Thus, 
$s_4(x,y,z,w)$ is a solution of $B_4^{(1)}(\alpha_0,\alpha_1,\alpha_2,2\alpha_4,-\alpha_4)$ 
such that only $s_4(z)$ has a pole of order one at $t=\infty$ 
and has a pole at $t=0.$ 
Therefore, 
we can obtain the necessary condition based on the above discussion.   
\end{proof}

\subsection{The case where $y,w$ have a pole at $t=0$ }

\begin{proposition}
{\it
Suppose that for $B_4^{(1)}(\alpha_j)_{0\leq j \leq 4},$  
there exists a rational solution 
such that 
$z$ has a pole of order one at $t=\infty$ 
and 
$x,y,w$ are all holomorphic at $t=\infty.$ 
Moreover, 
assume that 
$y,w$ both have a pole at $t=0$ 
and 
$x,z$ are both holomorphic at $t=0.$ 
One of the following then occurs:
\newline
{\rm (1)}\quad $\alpha_0+\alpha_2=\alpha_1+\alpha_2=0,$
\newline
{\rm (2)}\quad $\alpha_0-\alpha_1\in\Z, \,\,2\alpha_4\in\Z,$
\newline
{\rm (3)}\quad $\alpha_0-\alpha_1\in\Z, \,\,\alpha_0+\alpha_1\in\Z,$
\newline
{\rm (4)}\quad $\alpha_0+\alpha_1\in\Z, \,\,2\alpha_3+2\alpha_4\in\Z,$
\newline
{\rm (5)}\quad $2\alpha_3\in\Z, \,\,2\alpha_4\in\Z.$
}
\end{proposition}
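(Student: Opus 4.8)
The plan is to use the B\"acklund transformation $s_2$ to remove the poles of $y$ and $w$ at $t=0$, reducing everything to Proposition \ref{prop:nec-order1-t=0-holo-a_{0,0}zero}. First I would invoke Proposition \ref{prop:t=0-(y,w)}: since $y,w$ both have a pole at $t=0$ while $x,z$ are holomorphic there, we have $\alpha_4(\alpha_3+\alpha_4)\neq 0$ and the explicit leading behaviors it records, which give near $t=0$ that $x-z=\dfrac{\alpha_2}{2\alpha_4(\alpha_3+\alpha_4)}\,t+\cdots$ and $\mathrm{Res}_{t=0}\,y=2\alpha_4(\alpha_3+\alpha_4)=-\mathrm{Res}_{t=0}\,w$.

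Consider first the case $\alpha_2\neq 0$. Then $x\not\equiv z$ by Proposition \ref{prop:backlund}, so $s_2$ is genuinely defined, and $x-z$ has a simple zero at $t=0$, so $\alpha_2/(x-z)$ has a simple pole at $t=0$ with residue $2\alpha_4(\alpha_3+\alpha_4)$. Hence the residues of $y$ and $-\alpha_2/(x-z)$ (respectively of $w$ and $\alpha_2/(x-z)$) cancel, so $s_2(y)=y-\alpha_2/(x-z)$ and $s_2(w)=w+\alpha_2/(x-z)$ are holomorphic at $t=0$, while $s_2(x)=x$, $s_2(z)=z$ are holomorphic at $t=0$ with constant term $a_{0,0}=0$. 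Near $t=\infty$, Proposition \ref{prop:inf-summary} shows $x-z$ has a pole there, so $\alpha_2/(x-z)=O(t^{-1})$ and $s_2$ leaves the behavior at $t=\infty$ intact: $s_2(z)$ still has a pole of order one and $s_2(x),s_2(y),s_2(w)$ remain holomorphic. Thus $s_2(x,y,z,w)$ is a rational solution of $B_4^{(1)}(\alpha_0+\alpha_2,\alpha_1+\alpha_2,-\alpha_2,\alpha_3+\alpha_2,\alpha_4)$ satisfying the hypotheses of Proposition \ref{prop:nec-order1-t=0-holo-a_{0,0}zero}. Applying that proposition to these shifted parameters, and using $\alpha_0+\alpha_1+2\alpha_2+2\alpha_3+2\alpha_4=1$ to rewrite $2(\alpha_2+\alpha_3)+2\alpha_4\in\Z$ as $\alpha_0+\alpha_1\in\Z$, yields precisely one of conditions (1), (2), (3).

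Now suppose $\alpha_2=0$, so $s_2$ is the identity. If $\alpha_0=\alpha_1=0$ then $\alpha_0+\alpha_2=\alpha_1+\alpha_2=0$, which is condition (1). If $\alpha_0\neq 0$, then $y\not\equiv 1$ by Proposition \ref{prop:backlund}, so $s_0$ is defined; since $y$ has a pole at $t=0$ and $b_{\infty,0}=1/2\neq 1$, the correction term $\alpha_0/(y-1)$ is holomorphic and vanishing both at $t=0$ and at $t=\infty$, so $s_0$ preserves the entire configuration and produces a rational solution of $B_4^{(1)}(-\alpha_0,\alpha_1,\alpha_0,\alpha_3,\alpha_4)$ of the same kind whose $\alpha_2$-parameter is $\alpha_0\neq 0$. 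The previous case then applies, and translating the resulting conditions back through $s_0$ (again with the affine relation) gives one of (3), (4), (5). The subcase $\alpha_1\neq 0$ is handled identically via $s_1$. These three subcases exhaust $\alpha_2=0$, so one of (1)--(5) always holds.

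The main obstacle is the cancellation step when $\alpha_2\neq 0$: one must verify from the precise coefficients of Proposition \ref{prop:t=0-(y,w)} that the residues at $t=0$ of $y$ and $-\alpha_2/(x-z)$ (and of $w$ and $\alpha_2/(x-z)$) match exactly, so that $s_2$ genuinely lands in the ``holomorphic at $t=0$, $a_{0,0}=0$'' regime; after that, the argument is just bookkeeping of the parameter shifts under $s_0,s_1,s_2$ against $\alpha_0+\alpha_1+2\alpha_2+2\alpha_3+2\alpha_4=1$.
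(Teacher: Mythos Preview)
Your argument is correct and follows essentially the same route as the paper: reduce via $s_2$ (when $\alpha_2\neq 0$) to the holomorphic-at-$0$, $a_{0,0}=0$ case and invoke Proposition \ref{prop:nec-order1-t=0-holo-a_{0,0}zero}, then handle $\alpha_2=0$ by first applying $s_0$ or $s_1$ to make the new $\alpha_2$ nonzero (and treat $\alpha_0=\alpha_1=\alpha_2=0$ directly as condition (1)). Your explicit residue computation from Proposition \ref{prop:t=0-(y,w)} showing that $\alpha_2/(x-z)$ exactly cancels the simple poles of $y$ and $w$ at $t=0$, together with the $O(t^{-1})$ estimate at $t=\infty$, is precisely what the paper asserts without spelling out; the only cosmetic difference is that the paper quotes the slightly sharper Proposition \ref{prop:nec-order1-holozero-zero-zero} (since in fact $c_{0,0}=0$ as well), but your use of the summary Proposition \ref{prop:nec-order1-t=0-holo-a_{0,0}zero} is equally valid.
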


\begin{proof}
By Proposition \ref{prop:t=0-(y,w)}, 
let us note that $\alpha_4(\alpha_3+\alpha_4)\neq 0.$ 
We first assume that $\alpha_2\neq 0.$ 
$s_2(x,y,z,w)$ 
is then a solution of 
$B_4^{(1)}(\alpha_0+\alpha_2,\alpha_1+\alpha_2,-\alpha_2,\alpha_3+\alpha_2,\alpha_4)$ 
such that only $s_2(z)$ has a pole of order one at $t=\infty$ 
and 
all of $s_2(x,y,z,w)$ are holomorphic at $t=0.$ 
Moreover, 
for $s_2(x,y,z,w),$ $a_{0,0}=c_{0,0}=0.$ 
Thus, by Proposition \ref{prop:nec-order1-holozero-zero-zero}, 
we obtain the necessary conditions.  
\par
Now, let us suppose that $\alpha_2=0$ and $\alpha_0\neq 0.$ 
$s_0(x,y,z,w)$ is then a solution of $B_4^{(1)}(-\alpha_0,\alpha_1,\alpha_0,\alpha_3,\alpha_4)$ 
such that only $s_2(z)$ has a pole of order one at $t=\infty$ 
and 
only $s_2(y,w)$ have a pole at $t=0.$ 
Based on the above discussion, 
we then obtain the necessary condition. 
\par
If $\alpha_2=0$ and $\alpha_1\neq 0,$ 
using $s_1$ in the same way, 
we obtain the necessary conditions. 
\par
If $\alpha_0=\alpha_1=\alpha_2=0,$ 
the parameters then satisfy one of the conditions in the proposition.
\end{proof}

\section{Necessary conditions$\cdots$the case where $z$ has a pole of order $n \,\,(n\geq 2)$ at $t=\infty$ }
In this section, for $B_4^{(1)}(\alpha_j)_{0\leq j \leq 4},$
we treat a rational solution such that  $z$ has a pole of order $n \,\,(n\geq 2)$ at $t=\infty$ 
and assume that $\alpha_4=0.$ 
This section consists of three subsections. 
In the first, second and third subsections, 
we treat the case where $x,y,z,w$ are all holomorphic at $t=0,$ 
the case where $z$ has a pole at $t=0,$ 
and 
the case where $y,w$ both have a pole at $t=0,$ 
respectively.

\subsection{The case where $x,y,z,w$ are all holomorphic at $t=0$ }

\subsubsection{The case where $a_{0,0}=0 $}

\begin{proposition}
\label{prop:nec-z(n)-t=0holo-a_{0,0}=0}
{\it
Suppose that for $B_4^{(1)}(\alpha_j)_{0\leq j \leq 4},$ 
$\alpha_4=0$ and 
there exists a rational solution 
such that 
$z$ has a pole of order $(n\geq 2)$ at $t=\infty$ 
and 
$x,y,w$ are all holomorphic at $t=\infty.$ 
Moreover, 
assume that 
$x,y,z,w$ are all holomorphic at $t=0$ 
and 
$a_{0,0}=0.$ 
Then, $\alpha_4=0, \,\,\alpha_0-\alpha_1\in\Z.$
}
\end{proposition}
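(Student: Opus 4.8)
The plan is to obtain the conclusion immediately by combining the normal form of the solution at $t=\infty$ (Proposition \ref{prop:inf-summary}) with the global integrality constraint coming from the residue theorem (Proposition \ref{rational-necessary-1}). Since $\alpha_4=0$ is already part of the hypothesis, the only assertion requiring an argument is $\alpha_0-\alpha_1\in\Z$.

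First I would invoke Proposition \ref{prop:inf-summary}. Because $z$ has a pole of order $n\geq 2$ at $t=\infty$ while $x,y,w$ are all holomorphic there, case (1) of that proposition applies and gives the Laurent expansion $x=(\alpha_0-\alpha_1)-\{\cdots\}t^{-n}+\cdots$ near $t=\infty$; in particular the constant term of the Laurent series of $x$ at $t=\infty$ is $a_{\infty,0}=\alpha_0-\alpha_1$. On the other hand, the hypothesis of the proposition states that $x,y,z,w$ are all holomorphic at $t=0$ and that $a_{0,0}=0$, so the coefficient $a_{0,0}$ is well defined and equals $0$.

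Next, since the solution is rational, Proposition \ref{rational-necessary-1}(2) applies and yields $a_{\infty,0}-a_{0,0}\in\Z$. Substituting the two values just computed gives $\alpha_0-\alpha_1\in\Z$, which together with the standing hypothesis $\alpha_4=0$ is precisely the statement.

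There is essentially no obstacle here: the proof is a one-line consequence of the two cited results. The only point one needs to verify is that Proposition \ref{rational-necessary-1} is genuinely available in the present situation; its proof applies the residue theorem to $t^{-1}x$, which requires that at every finite $c\in\C^{*}$ the function $x$ has at worst a simple pole with residue in $c\Z$ and that $x$ is holomorphic at $t=0$. The first fact is supplied by the analysis of Section 3 (Proposition \ref{rational-necessary-1}(1)(i)) and the second is part of the hypothesis, so the argument goes through without difficulty.
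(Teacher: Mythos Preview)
Your proof is correct and follows exactly the paper's approach: the paper's proof of this proposition is the single line ``This proposition follows from Proposition \ref{rational-necessary-1},'' which is precisely your argument that $a_{\infty,0}=\alpha_0-\alpha_1$ (from Proposition \ref{prop:inf-summary}) and $a_{0,0}=0$ (by hypothesis) together with $a_{\infty,0}-a_{0,0}\in\Z$ yield the conclusion. Your additional remarks verifying the applicability of Proposition \ref{rational-necessary-1} are sound but not strictly needed, since that proposition is stated for arbitrary rational solutions.
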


\begin{proof}
This proposition follows from Proposition \ref{rational-necessary-1}.

\end{proof}

\subsubsection{$a_{0,0}=\alpha_0-\alpha_1\neq 0$}

Let us first treat the case where $x,y,z,w$ are all holomorphic at $t=0$ 
and $a_{0,0}\neq0, b_{0,0}\neq 1/2.$

\begin{proposition}
\label{prop:nec-z(n)-t=0holo-a_{0,0}nonzero-nonhalf}
{\it
Suppose that for $B_4^{(1)}(\alpha_j)_{0\leq j \leq 4},$ 
$\alpha_4=0$ and 
there exists a rational solution 
such that 
$z$ has a pole of order $(n\geq 2)$ at $t=\infty$ 
and 
$x,y,w$ are all holomorphic at $t=\infty.$ 
Moreover, 
assume that 
$x,y,z,w$ are all holomorphic at $t=0$ 
and 
$a_{0,0}=\alpha_0-\alpha_1\neq 0$ 
and $b_{0,0}=-\alpha_1/(\alpha_0-\alpha_1)\neq 1/2.$ 
Then, 
$\alpha_4=0, \,\,\alpha_0+\alpha_1\in\Z.$
}
\end{proposition}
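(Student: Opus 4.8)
The plan is to reduce this case to Proposition~\ref{prop:nec-z(n)-t=0holo-a_{0,0}=0} by a single B\"acklund transformation, mirroring the proof of Proposition~\ref{prop:z(1)-t=0-a_{0,0}nonzero-b_{0,0}-nonhalf}. Since $a_{0,0}=\alpha_0-\alpha_1\neq 0$, at least one of $\alpha_0,\alpha_1$ is nonzero, and I would treat the cases $\alpha_0\neq 0$ and $\alpha_1\neq 0$ separately.

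Suppose first $\alpha_0\neq 0$. From $b_{0,0}=-\alpha_1/(\alpha_0-\alpha_1)$ one gets $b_{0,0}\neq 1$ (otherwise $\alpha_0=0$), and by Proposition~\ref{prop:inf-summary} the value of $y$ at $t=\infty$ is $1/2\neq 1$; hence $y-1$ is nonvanishing at both $t=0$ and $t=\infty$, so $s_0(x)=x+\alpha_0/(y-1)$ is holomorphic there while $s_0$ leaves $y,z,w$ untouched. Therefore $s_0(x,y,z,w)$ is a rational solution of $B_4^{(1)}(-\alpha_0,\alpha_1,\alpha_2+\alpha_0,\alpha_3,0)$ for which $z$ still has a pole of order $n\geq 2$ at $t=\infty$, $x,y,w$ are holomorphic at $t=\infty$, and $x,y,z,w$ are all holomorphic at $t=0$. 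The one-line identity $b_{0,0}-1=-\alpha_0/(\alpha_0-\alpha_1)$, hence $\alpha_0/(b_{0,0}-1)=-(\alpha_0-\alpha_1)$, shows that the constant term of $s_0(x)$ at $t=0$ equals $a_{0,0}-(\alpha_0-\alpha_1)=0$. Thus Proposition~\ref{prop:nec-z(n)-t=0holo-a_{0,0}=0} applies to $s_0(x,y,z,w)$ and gives that the new value of $\alpha_0-\alpha_1$, namely $-\alpha_0-\alpha_1$, lies in $\Z$; equivalently $\alpha_0+\alpha_1\in\Z$.

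If instead $\alpha_1\neq 0$, I would run the identical argument with $s_1$. Here $b_{0,0}\neq 0$ (otherwise $\alpha_1=0$) and $y=1/2\neq 0$ at $t=\infty$, so $s_1(x)=x+\alpha_1/y$ is holomorphic at $t=0$ and $t=\infty$, and $s_1(x,y,z,w)$ solves $B_4^{(1)}(\alpha_0,-\alpha_1,\alpha_2+\alpha_1,\alpha_3,0)$ with the same pole structure at $t=\infty$ and holomorphy at $t=0$. Since $\alpha_1/b_{0,0}=-(\alpha_0-\alpha_1)$, the constant term of $s_1(x)$ at $t=0$ is again $0$, and Proposition~\ref{prop:nec-z(n)-t=0holo-a_{0,0}=0} now yields that the new value of $\alpha_0-\alpha_1$, which equals $\alpha_0+\alpha_1$, lies in $\Z$. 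As $s_0$ and $s_1$ both preserve $\alpha_4=0$, this proves the proposition in both cases.

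The main obstacle is purely bookkeeping rather than conceptual: one must verify that $y-1$ (for $s_0$) and $y$ (for $s_1$) are nonvanishing at $t=0$ and at $t=\infty$ --- which is exactly where the hypothesis $b_{0,0}=-\alpha_1/(\alpha_0-\alpha_1)$ together with the normalization $b_{\infty,0}=1/2$ of Proposition~\ref{prop:inf-summary} is used --- and confirm that after the shift the new $x$ has vanishing constant term at $t=0$, a routine algebraic identity. Once these are in hand, the conclusion is handed directly to Proposition~\ref{prop:nec-z(n)-t=0holo-a_{0,0}=0}.
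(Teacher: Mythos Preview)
Your proof is correct and follows exactly the same approach as the paper's own proof: split into the cases $\alpha_0\neq 0$ and $\alpha_1\neq 0$, apply $s_0$ or $s_1$ respectively to reduce to the case $a_{0,0}=0$, and invoke Proposition~\ref{prop:nec-z(n)-t=0holo-a_{0,0}=0}. You have in fact supplied more detail than the paper, which merely says ``by $s_0$ and Proposition~\ref{prop:nec-z(n)-t=0holo-a_{0,0}=0}, we can prove the proposition'' without verifying the nonvanishing of $y-1$ (resp.\ $y$) at $t=0,\infty$ or computing the new constant term of $x$.
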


\begin{proof}
Since $a_{0,0}=\alpha_0-\alpha_1\neq 0,$ 
it follows that $\alpha_0\neq 0$ or $\alpha_1\neq 0.$ 
If $\alpha_0\neq 0$ 
by $s_0$ and Proposition \ref{prop:nec-z(n)-t=0holo-a_{0,0}=0}, 
we can prove the proposition. 
\par
If $\alpha_1\neq 0,$ 
using $s_0$ in the same way, 
we can show the proposition. 

\end{proof}

Let us treat the case where $x,y,z,w$ are all holomorphic at $t=0$ 
and $a_{0,0}\neq0, b_{0,0}=1/2.$

\begin{proposition}
\label{prop:nec-z(n)-t=0holo-a_{0,0}nonzero-half}
{\it
Suppose that for $B_4^{(1)}(\alpha_j)_{0\leq j \leq 4},$ 
$\alpha_4=0$ and 
there exists a rational solution 
such that 
$z$ has a pole of order $(n\geq 2)$ at $t=\infty$ 
and 
$x,y,w$ are all holomorphic at $t=\infty.$ 
Moreover, 
assume that 
$x,y,z,w$ are all holomorphic at $t=0$ 
and 
$a_{0,0}=\alpha_0-\alpha_1\neq 0$ 
and $b_{0,0}=1/2.$ 
Either of the following then occurs:
\newline
{\rm (1)}\quad $\alpha_4=0, \,\,\alpha_0+\alpha_1=0,$
\newline
{\rm (2)}\quad $\alpha_4=0, \,\,\alpha_0+\alpha_1+2\alpha_2\in\Z.$
}
\end{proposition}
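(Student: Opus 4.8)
The plan is to imitate the proof of Proposition~\ref{prop:z(1)-t=0-a_{0,0}nonzero-b_{0,0}-half}: reduce, by means of the B\"acklund transformations $s_0,s_1,s_2$ (each of which fixes $\alpha_4$, so the assertion $\alpha_4=0$ in both conclusions is merely the standing hypothesis), to Propositions~\ref{prop:nec-z(n)-t=0holo-a_{0,0}=0} and~\ref{prop:nec-z(n)-t=0holo-a_{0,0}nonzero-nonhalf}. The first step is to narrow the local data at $t=0$. If $\alpha_0+\alpha_1=0$ then conclusion~(1) holds and we are done, so assume $\alpha_0+\alpha_1\neq 0$. Since $a_{0,0}=\alpha_0-\alpha_1\neq 0$, $b_{0,0}=1/2$ and $\alpha_4=0$, the cases (1)--(3) of Proposition~\ref{prop:t=0holo-a_{0,0}nonzero-half} are excluded (they force $\alpha_4=1/2$ or $\alpha_0+\alpha_1=0$), so we are in its case~(4); in particular $\alpha_3\neq 1/2$ and $c_{0,0}=(\alpha_0+\alpha_1)(\alpha_0-\alpha_1)/(1-2\alpha_3)\neq 0$.

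Suppose first $\alpha_2\neq 0$. By Proposition~\ref{prop:backlund}(2), $x\not\equiv z$, so $s_2$ is the genuine B\"acklund transformation (not the identity). It fixes $x$ and $z$ and alters $y,w$ by $\mp\alpha_2/(x-z)$, which is $O(t^{-n})$ at $t=\infty$ since $z$ has a pole of order $n$ there; hence $s_2(x,y,z,w)$ is a rational solution of $B_4^{(1)}(\alpha_0+\alpha_2,\alpha_1+\alpha_2,-\alpha_2,\alpha_3+\alpha_2,0)$ for which $z$ still has a pole of order $n\geq 2$ at $t=\infty$ and $x,y,w$ are holomorphic there. At $t=0$ the affine relation $\alpha_0+\alpha_1+2\alpha_2+2\alpha_3=1$ gives $a_{0,0}-c_{0,0}=2\alpha_2(\alpha_0-\alpha_1)/(1-2\alpha_3)\neq 0$, so $x-z$ does not vanish at $t=0$ and all of $s_2(x,y,z,w)$ remain holomorphic at $t=0$, with the new $a_{0,0}$ still $\alpha_0-\alpha_1\neq 0$ and the new $b_{0,0}$ equal to $1/2-\alpha_2/(a_{0,0}-c_{0,0})\neq 1/2$. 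Therefore Proposition~\ref{prop:nec-z(n)-t=0holo-a_{0,0}nonzero-nonhalf} applies to $s_2(x,y,z,w)$ and yields $(\alpha_0+\alpha_2)+(\alpha_1+\alpha_2)=\alpha_0+\alpha_1+2\alpha_2\in\Z$, which is conclusion~(2).

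Now suppose $\alpha_2=0$. Since $a_{0,0}=\alpha_0-\alpha_1\neq 0$, either $\alpha_0\neq 0$ or $\alpha_1\neq 0$; these are handled symmetrically, so I treat $\alpha_0\neq 0$ (with $s_1$ in place of $s_0$ in the other case). Then $y\not\equiv 1$ by Proposition~\ref{prop:backlund}(0), so $s_0$ is available; as $y\to 1/2$ at $t=\infty$ (Proposition~\ref{prop:inf-summary}) and $y-1\to b_{0,0}-1=-1/2$ at $t=0$, the function $s_0(x)=x+\alpha_0/(y-1)$ is holomorphic at both $t=0$ and $t=\infty$. Hence $s_0(x,y,z,w)$ is a rational solution of $B_4^{(1)}(-\alpha_0,\alpha_1,\alpha_0,\alpha_3,0)$ with $z$ of pole order $n\geq 2$ at $t=\infty$, with $x,y,w$ holomorphic there, all holomorphic at $t=0$, new $b_{0,0}=1/2$, and new $a_{0,0}=(\alpha_0-\alpha_1)-2\alpha_0=-(\alpha_0+\alpha_1)\neq 0$. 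The new value of $\alpha_2$ is now $\alpha_0\neq 0$, so the argument of the preceding paragraph applies to $s_0(x,y,z,w)$ and gives $(-\alpha_0)+\alpha_1+2\alpha_0=\alpha_0+\alpha_1\in\Z$, i.e.\ conclusion~(2), since here $\alpha_0+\alpha_1+2\alpha_2=\alpha_0+\alpha_1$.

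The substance of the proof is the bookkeeping, and that is where I expect the \emph{main difficulty} to lie: at each application of a B\"acklund transformation one must check that the profile ``$z$ has a pole of order $n\geq 2$ at $t=\infty$, $x,y,w$ holomorphic at $t=\infty$, everything holomorphic at $t=0$'' is preserved, and one must keep straight which integrality condition on the transformed parameters pulls back to the claimed one. The single step that is not a routine computation is excluding the possibility that $s_2$ creates a pole at $t=0$; this is settled above by combining Proposition~\ref{prop:backlund}(2) with the affine relation, which together force $a_{0,0}\neq c_{0,0}$ exactly when $\alpha_2\neq 0$.
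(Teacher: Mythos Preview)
Your proof is correct and follows essentially the same route as the paper: reduce the case $\alpha_2\neq 0$ to Proposition~\ref{prop:nec-z(n)-t=0holo-a_{0,0}nonzero-nonhalf} via $s_2$, and reduce the case $\alpha_2=0$ to the previous one via $s_0$ (or $s_1$). Your write-up is in fact more careful than the paper's: you explicitly verify that $s_2$ does not create a pole at $t=0$ by computing $a_{0,0}-c_{0,0}=2\alpha_2(\alpha_0-\alpha_1)/(1-2\alpha_3)\neq 0$ from the affine relation, and you track the behavior at $t=\infty$ under each transformation, whereas the paper leaves these checks implicit.
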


\begin{proof}
By Proposition \ref{prop:t=0holo-a_{0,0}nonzero-half}, 
we can assume that 
$$
a_{0,0}=\alpha_0-\alpha_1, \,\,b_{0,0}=\frac12, \,\,
c_{0,0}=\frac{(\alpha_0+\alpha_1)(\alpha_0-\alpha_1)}{1-2\alpha_3-2\alpha_4}\neq 0, \,\,
d_{0,0}=-\frac{(1-2\alpha_4)(1-2\alpha_3-2\alpha_4)}{2(\alpha_0+\alpha_1)(\alpha_0-\alpha_1)},
$$
where 
$\alpha_3+\alpha_4\neq 1/2.$ 
\par
Suppose that $\alpha_2\neq 0.$ 
$s_2(x,y,z,w)$ is then a rational solution of 
$B_4^{(1)}(\alpha_0+\alpha_2,\alpha_1+\alpha_2,-\alpha_2,\alpha_3+\alpha_2,\alpha_4)$ 
such that $s_2(z)$ has a pole of order $n \,(n\geq 2)$ at $t=\infty$ and 
all of $s_2(x,y,z,w)$ are holomorphic at $t=0.$ 
Moreover, assume that 
$a_{0,0}=\alpha_0-\alpha_1, \,\,b_{0,0}\neq 1/2.$ 
Thus, by Proposition \ref{prop:nec-z(n)-t=0holo-a_{0,0}nonzero-nonhalf}, 
we can obtain the necessary conditions. 
\par
Suppose that $\alpha_2=0.$ 
Since $\alpha_0-\alpha_1\neq 0,$ 
it follows that $\alpha_0\neq 0$ or $\alpha_1\neq 0.$ 
Let us first assume that $\alpha_0\neq 0.$ 
$s_0(x,y,z,w)$ 
is then a rational solution of 
$B_4^{(1)}(-\alpha_0,\alpha_1,\alpha_0,\alpha_3,\alpha_4)$ 
such that $s_0(z)$ has a pole of order $n \,(n\geq 2)$ at $t=\infty$ and 
all of $s_0(x,y,z,w)$ are holomorphic at $t=0.$ 
Moreover, assume that 
$a_{0,0}=\alpha_0-\alpha_1, \,\,b_{0,0}= 1/2.$ 
Based on the above discussion, 
we can obtain the necessary conditions. 
\par
If $\alpha_2=0$ and $\alpha_1\neq 0,$ 
we use $s_1$ in the same way and 
can obtain the necessary conditions. 
\end{proof}

\subsection{The case where $z$ has a pole of order $m \geq 1$ at $t=0$}

\begin{proposition}
{\it
Suppose that for $B_4^{(1)}(\alpha_j)_{0\leq j \leq 4},$ 
$\alpha_4=0$ and 
there exists a rational solution 
such that 
$z$ has a pole of order $(n\geq 2)$ at $t=\infty$ 
and 
$x,y,w$ are all holomorphic at $t=\infty.$ 
Moreover, 
assume that 
$z$ has a pole of order $m\geq 1$ at $t=0.$ 
One of the following then occurs:
\newline
{\rm (1)}\quad $\alpha_0-\alpha_1=0, \,\,\alpha_4=0,$
\newline
{\rm (2)}\quad $2\alpha_3\in\Z, \,\,\alpha_4=0,$
\newline
{\rm (3)}\quad $\alpha_0+\alpha_1\in\Z, \,\,\alpha_4=0,$
\newline
{\rm (4)}\quad $2\alpha_3\in\Z, \,\,\alpha_4=0.  $
}
\end{proposition}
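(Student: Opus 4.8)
The plan is to reduce, via the B\"acklund transformation $s_3$, to the case of a pole of order one at $t=\infty$ that is already handled in Section 7. Observe first that conclusions (2) and (4) coincide, so it suffices to show that one of the three conditions $\alpha_0-\alpha_1=0$, $2\alpha_3\in\Z$, $\alpha_0+\alpha_1\in\Z$ holds (all together with $\alpha_4=0$). If $\alpha_3=0$ then $2\alpha_3=0\in\Z$ and condition (2) holds, so from now on I may assume $\alpha_3\neq 0$.

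Since $z$ has a pole at $t=0$, Proposition \ref{prop:t=0behavior} forces $x,y,w$ to be holomorphic at $t=0$, and then Proposition \ref{prop:t=0-z} gives $a_{0,0}=\alpha_0-\alpha_1$, $b_{0,0}=1/2$ and $w=-(\alpha_3/c_{0,-m})t^{m}+\cdots$; in particular $w\not\equiv 0$, so $s_3$ acts as a genuine B\"acklund transformation on this solution. Applying Corollary \ref{coro:z(n)-s_3} (legitimate because $n\geq 2$ and $\alpha_3\neq 0$) together with Corollary \ref{coro:t=0-z(n)-s_3} (legitimate because $\alpha_3\neq 0$), I obtain that $s_3(x,y,z,w)$ is a rational solution of $B_4^{(1)}(\alpha_0,\alpha_1,\alpha_2+\alpha_3,-\alpha_3,\alpha_3)$ for which $z$ has a pole of order one at $t=\infty$ with $x,y,w$ holomorphic at $t=\infty$, while $x,y,z,w$ are all holomorphic at $t=0$. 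Because $s_3$ leaves $x$ and $y$ untouched, the new solution still satisfies $a_{0,0}=\alpha_0-\alpha_1$ and $b_{0,0}=1/2$.

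If $\alpha_0-\alpha_1=0$ this is conclusion (1). Otherwise $a_{0,0}=\alpha_0-\alpha_1\neq 0$, so Proposition \ref{prop:z(1)-t=0-a_{0,0}nonzero-b_{0,0}-half} applies to $s_3(x,y,z,w)$, whose parameter vector is $(\alpha_0,\alpha_1,\alpha_2+\alpha_3,-\alpha_3,\alpha_3)$. Reading its three possible conclusions off this shifted vector: case (1) gives $\alpha_0+\alpha_1\in\Z$ and $2\alpha_3\in\Z$; case (2) gives $\alpha_0+\alpha_1\in\Z$ (the requirement $2(-\alpha_3)+2\alpha_3=0\in\Z$ being automatic); case (3) gives $\alpha_0+\alpha_1+2(\alpha_2+\alpha_3)\in\Z$ together with $2\alpha_3\in\Z$. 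In every case at least one of $2\alpha_3\in\Z$ and $\alpha_0+\alpha_1\in\Z$ holds, i.e. conclusion (2) or (3), which finishes the argument.

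The only point requiring care is the translation of the parameters under $s_3$: one has to check that each output of Proposition \ref{prop:z(1)-t=0-a_{0,0}nonzero-b_{0,0}-half}, rewritten in terms of the shifted parameters $(\alpha_0,\alpha_1,\alpha_2+\alpha_3,-\alpha_3,\alpha_3)$, collapses to one of the three target conditions; in particular that the vanishing of $2\alpha_3'+2\alpha_4'$ for the $s_3$-image trivializes the ``$2\alpha_3+2\alpha_4\in\Z$'' branch. No genuinely new computation is needed beyond what is already established in Sections 1, 2 and 7.
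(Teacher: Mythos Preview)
Your argument is correct and follows essentially the same route as the paper: assume $\alpha_3\neq 0$, apply $s_3$ (using Corollaries \ref{coro:z(n)-s_3} and \ref{coro:t=0-z(n)-s_3}) to land in the order-one case at $t=\infty$ with all of $x,y,z,w$ holomorphic at $t=0$ and $a_{0,0}=\alpha_0-\alpha_1$, $b_{0,0}=1/2$, then invoke Proposition \ref{prop:z(1)-t=0-a_{0,0}nonzero-b_{0,0}-half}. Your write-up is in fact slightly more careful than the paper's, since you explicitly verify $w\not\equiv 0$ and spell out how each branch of Proposition \ref{prop:z(1)-t=0-a_{0,0}nonzero-b_{0,0}-half}, read at the shifted parameters $(\alpha_0,\alpha_1,\alpha_2+\alpha_3,-\alpha_3,\alpha_3)$, collapses to one of the stated conclusions.
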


\begin{proof}
We may assume that $\alpha_3\neq 0.$ 
It then follows from Corollaries \ref{coro:z(n)-s_3} and \ref{coro:t=0-z(n)-s_3} 
that 
$s_3(x,y,z,w)$ is a rational solution of 
$B_4^{(1)}(\alpha_0,\alpha_1,\alpha_2+\alpha_3,-\alpha_3,\alpha_4+\alpha_3)$ 
such that only $s_3(z)$ has a pole of order one at $t=\infty$ 
and 
all of $s_3(x,y,z,w)$ are holomorphic at $t=0.$ 
Moreover, 
for $s_3(x,y,z,w),$ $a_{0,0}=\alpha_0-\alpha_1, \,b_{0,0}=1/2, \,d_{0,0}=0.$ 
We can now assume that $\alpha_0-\alpha_1\neq 0.$ 
Thus, by Proposition \ref{prop:z(1)-t=0-a_{0,0}nonzero-b_{0,0}-half}, 
we can obtain the necessary conditions. 
\end{proof}

\subsection{The case where $y,w$ have a pole at $t=0$}

\begin{proposition}
{\it
If $\alpha_4=0,$ then 
for $B_4^{(1)}(\alpha_j)_{0\leq j \leq 4},$  
there exists no rational solution 
such that 
$z$ has a pole of order $n \,\,(n\geq 2)$ at $t=\infty$ 
and 
$x,y,w$ are all holomorphic at $t=\infty$ 
and 
$y,w$ have a pole at $t=0$ 
and 
$x,z$ are both holomorphic at $t=0.$ 
}
\end{proposition}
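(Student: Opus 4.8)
The plan is to obtain an immediate contradiction from the local analysis at $t=0$ that has already been carried out in Proposition \ref{prop:t=0-(y,w)}. That proposition asserts that if, for $B_4^{(1)}(\alpha_j)_{0\leq j \leq 4}$, there exists a solution for which $y$ and $w$ both have a pole at $t=0$ while $x$ and $z$ are both holomorphic at $t=0$, then necessarily $\alpha_4(\alpha_3+\alpha_4)\neq 0$; in particular $\alpha_4\neq 0$.

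Hence the only substantive step is the following. Suppose, for contradiction, that $B_4^{(1)}(\alpha_j)_{0\leq j \leq 4}$ with $\alpha_4=0$ admitted a rational solution such that $z$ has a pole of order $n\,(n\geq 2)$ at $t=\infty$, all of $x,y,w$ are holomorphic at $t=\infty$, $y$ and $w$ both have a pole at $t=0$, and $x,z$ are both holomorphic at $t=0$. The behavior near $t=0$ is exactly the situation covered by Proposition \ref{prop:t=0-(y,w)}, so that proposition forces $\alpha_4(\alpha_3+\alpha_4)\neq 0$, hence $\alpha_4\neq 0$. This contradicts the standing hypothesis $\alpha_4=0$, so no such rational solution exists.

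I expect no real obstacle here: the hypotheses at $t=\infty$ (the pole order $n\geq 2$, holomorphy of $x,y,w$) and the rationality of the solution are not actually used, since the obstruction is purely local at $t=0$ and is already recorded in Proposition \ref{prop:t=0-(y,w)}. The only point to keep in mind is to recognize that the prescribed local behavior at $t=0$ is precisely the one that proposition rules out when $\alpha_4=0$.
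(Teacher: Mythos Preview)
Your proposal is correct and matches the paper's own proof exactly: the paper simply states that the proposition follows from Proposition \ref{prop:t=0-(y,w)}, which forces $\alpha_4(\alpha_3+\alpha_4)\neq 0$ whenever $y,w$ have a pole at $t=0$ with $x,z$ holomorphic there. Your observation that the hypotheses at $t=\infty$ and rationality are unused is also accurate.
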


\begin{proof}
This proposition follows from Proposition \ref{prop:t=0-(y,w)}.
\end{proof}

\section{The standard forms of the parameters for rational solutions}

Let us summarize the discussion in Section 7, 8.

\begin{proposition}
{\it
Suppose that 
for $B_4^{(1)}(\alpha_j)_{0\leq j \leq 4},$ 
there exists a rational solution. 
One of the following then occurs:
\newline
{\rm (1)}\quad $\alpha_0-\alpha_1\in\Z, \,\,2\alpha_3+2\alpha_4\in \Z,$
\newline
{\rm (2)}\quad $\alpha_0-\alpha_1\in\Z, \,\,2\alpha_4\in \Z,$
\newline
{\rm (3)}\quad $\alpha_0+\alpha_1\in\Z, \,\,2\alpha_3+2\alpha_4\in \Z,$
\newline
{\rm (4)}\quad $\alpha_0+\alpha_1\in\Z, \,\,2\alpha_4\in \Z,$
\newline
{\rm (5)}\quad $\alpha_0-\alpha_1\in\Z, \,\,\alpha_0+\alpha_1 \in \Z,$
\newline
{\rm (6)}\quad $2\alpha_3\in\Z, \,\,2\alpha_4\in \Z.$
}
\end{proposition}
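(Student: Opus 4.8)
The statement is a clean summary of the case analysis carried out in Sections 7 and 8, so the plan is simply to collect all the necessary conditions that have already been extracted and to observe that they collapse into the six listed alternatives. First I would invoke Proposition~\ref{prop:inf-summary}: any rational solution (being in particular a meromorphic solution near $t=\infty$) has the property that $z$ has a pole of order $n\geq 1$ at $t=\infty$ while $x,y,w$ are holomorphic there. This dichotomizes the argument into the case $n=1$, treated in Section~7, and the case $n\geq 2$, treated in Section~8 (where moreover $\alpha_4=0$ by Corollary~\ref{coro:order-n}, unless one is in the exceptional parameter locus $\alpha_0=\alpha_1=\alpha_2=\alpha_3=0,\ \alpha_4=1/2$, which is handled separately and whose parameters already satisfy alternative (6) or (2)).

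Next I would run through the behavior at $t=0$, again using the trichotomy of Proposition~\ref{prop:t=0behavior}: either $x,y,z,w$ are all holomorphic at $t=0$, or $z$ has a pole there, or $y,w$ have a pole there. For the case $n=1$ at infinity, each of the three $t=0$ subcases is exactly the content of the three propositions in Section~7 (the holomorphic case splitting further according to $a_{0,0}=0$ versus $a_{0,0}=\alpha_0-\alpha_1\neq 0$, and in the latter according to $b_{0,0}=1/2$ versus $b_{0,0}\neq 1/2$, via Propositions~\ref{prop:nec-order1-t=0-holo-a_{0,0}zero}, \ref{prop:z(1)-t=0-a_{0,0}nonzero-b_{0,0}-nonhalf}, \ref{prop:z(1)-t=0-a_{0,0}nonzero-b_{0,0}-half}, and the $z$-pole and $(y,w)$-pole propositions). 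For the case $n\geq 2$ at infinity (hence $\alpha_4=0$), the corresponding three propositions of Section~8 apply. Each of these propositions produces a finite list of conditions of the form ``$\lambda\in\Z$ and $\mu\in\Z$'' where $\lambda,\mu$ are among $\alpha_0-\alpha_1$, $\alpha_0+\alpha_1$, $\alpha_0+\alpha_1+2\alpha_2$, $2\alpha_3$, $2\alpha_3+2\alpha_4$, $2\alpha_4$, together with some degenerate conditions such as $\alpha_0=\alpha_1=0$ or $\alpha_0+\alpha_2=\alpha_1+\alpha_2=0$.

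The final step is bookkeeping: one must check that every condition appearing in the Section~7 and Section~8 propositions implies one of (1)--(6). The degenerate conditions are absorbed easily — e.g. $\alpha_0=\alpha_1=0$ gives $\alpha_0-\alpha_1=\alpha_0+\alpha_1=0\in\Z$ so (5) (or (1)) holds; $\alpha_0+\alpha_2=\alpha_1+\alpha_2=0$ forces $\alpha_0=\alpha_1$, hence $\alpha_0-\alpha_1=0\in\Z$, and subtracting gives more. The condition ``$\alpha_0+\alpha_1+2\alpha_2\in\Z,\ 2\alpha_4\in\Z$'' needs the normalization $\alpha_0+\alpha_1+2\alpha_2+2\alpha_3+2\alpha_4=1$: this rewrites as $1-2\alpha_3\in\Z$, i.e. $2\alpha_3\in\Z$, and combined with $2\alpha_4\in\Z$ this yields $2\alpha_3+2\alpha_4\in\Z$ and $2\alpha_3\in\Z$, so both (6) and a version of (4) hold. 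Similarly one uses the affine relation to trade $2\alpha_3+2\alpha_4$ against $1-\alpha_0-\alpha_1-2\alpha_2$ wherever needed. I expect the only subtlety, and the main place where care is required, to be precisely this last normalization step: making sure that each ``two-integrality'' condition coming out of the case analysis is converted, via $\alpha_0+\alpha_1+2\alpha_2+2\alpha_3+2\alpha_4=1$, into one of the six forms listed — in particular that no genuinely new condition escapes the list. The rest is a mechanical union of finitely many explicitly given cases.
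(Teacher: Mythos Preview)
Your proposal is correct and follows exactly the paper's intended approach: the proposition is presented there without proof as a summary of Sections~7 and~8, and the verification is precisely the bookkeeping you outline. One small clarification worth making explicit: in the degenerate case $\alpha_0+\alpha_2=\alpha_1+\alpha_2=0$ you get $\alpha_0=\alpha_1=-\alpha_2$, and then the affine relation $\alpha_0+\alpha_1+2\alpha_2+2\alpha_3+2\alpha_4=1$ yields $2\alpha_3+2\alpha_4=1\in\Z$, so together with $\alpha_0-\alpha_1=0\in\Z$ this lands in alternative~(1); your phrase ``subtracting gives more'' undersells this step, but you correctly flag the affine relation as the key tool elsewhere.
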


\begin{corollary}
\label{coro-reduction}
{\it
Suppose that 
for $B_4^{(1)}(\alpha_j)_{0\leq j \leq 4},$ 
there exists a rational solution. 
By some B\"acklund transformations, 
the parameters can then be transformed so that 
$\alpha_0-\alpha_1\in\Z, \,\,2\alpha_3+2\alpha_4\in\Z.$
}
\end{corollary}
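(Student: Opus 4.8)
The plan is to start from the preceding Proposition, which tells us that the existence of a rational solution of $B_4^{(1)}(\alpha_j)_{0\leq j \leq 4}$ forces the parameters into one of the six configurations (1)--(6). It then suffices, for each of these six configurations, to write down an explicit word in the B\"acklund transformations $s_0,s_1,s_2,s_3,s_4,\pi_1,\pi_2$ of Section 5 whose effect on the parameters lands them in the target region $\alpha_0-\alpha_1\in\Z$, $2\alpha_3+2\alpha_4\in\Z$. Every verification below is a one-line substitution into the defining formulas of the generators; the only data I need to follow are the four linear forms $P=\alpha_0-\alpha_1$, $Q=\alpha_0+\alpha_1$, $R=2\alpha_3+2\alpha_4$ and $S=2\alpha_4$, and their images under the generators.

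Reading off the definitions, $s_1$ fixes $R$ and $S$ and sends $P\mapsto Q$; $s_3$ fixes $P$ and $Q$ and sends $R\mapsto S$; $s_2$ fixes $P$ and sends $R\mapsto R+2\alpha_2$; and $\pi_2$ sends $(P,Q,R,S)\mapsto(S,R,Q,P)$. Cases (1)--(4) are then immediate: Case (1) is already the target; in Case (2) ($P,S\in\Z$) apply $s_3$, so that the new $R$ is the old $S\in\Z$ while $P$ is unchanged; in Case (3) ($Q,R\in\Z$) apply $s_1$, so that the new $P$ is the old $Q\in\Z$ while $R$ is unchanged; in Case (4) ($Q,S\in\Z$) apply $s_1$ and then $s_3$, the first move landing us in the situation of Case (2).

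Cases (5) and (6) are the only ones requiring a new idea, because the ``permutation-type'' moves $s_1,s_3,\pi_1,\pi_2$ merely shuffle $P,Q,R,S$ up to sign and the configurations $\{P,Q\in\Z\}$ and $\{2\alpha_3\in\Z,\ 2\alpha_4\in\Z\}$ lie outside their reach into the target region. Here I would use the affine relation $\alpha_0+\alpha_1+2\alpha_2+2\alpha_3+2\alpha_4=1$. In Case (5) ($P,Q\in\Z$), apply $s_2$: it fixes $P$ and replaces $R$ by $R+2\alpha_2=2\alpha_2+2\alpha_3+2\alpha_4=1-Q\in\Z$. In Case (6) ($2\alpha_3\in\Z$, $2\alpha_4\in\Z$), apply $\pi_2$ first: the new parameters satisfy $P=2\alpha_4\in\Z$ and $Q=2\alpha_3+2\alpha_4\in\Z$, which is exactly Case (5), and then $s_2$ finishes as above.

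Thus the proof is a finite case check, and the single point that deserves care is the observation that Cases (5) and (6) cannot be settled by the sign/permutation transformations alone and must be routed through $s_2$ together with the normalization relation (using $\pi_2$ to funnel Case (6) into Case (5)). I would also note, invoking Proposition \ref{prop:backlund}, that whenever one of the divisors $y-1$, $y$, $x-z$, $w$ is identically zero on the given rational solution the corresponding $s_i$ is to be read as the identity, so that all the words written above make sense on that solution.
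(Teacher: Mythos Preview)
Your argument is correct and is exactly the kind of verification the paper leaves implicit: the paper states the corollary immediately after the six-case proposition without proof, so you are supplying the details. Your tracking of the four linear forms $P,Q,R,S$ under $s_1,s_2,s_3,\pi_2$ is accurate, and the moves you choose for each case do the job.

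One small point worth making explicit. In the degenerate situations you flag at the end (where some $s_i$ becomes the identity because the divisor vanishes), it is not enough to say the word ``makes sense'': one must also check that the parameters nonetheless land in the target region. They do. For instance, in Case~(5) if $x\equiv z$ then $\alpha_2=0$ by Proposition~\ref{prop:backlund}, and the affine relation gives $R=2\alpha_3+2\alpha_4=1-Q\in\Z$ directly, so no move is needed. Similarly, in Case~(6) if $x\equiv 0$ then by Proposition~\ref{prop:exam2} either $\alpha_0-\alpha_1=\alpha_3+\alpha_4=0$ or $\alpha_0=\alpha_1=1/2$, and in both subcases $P=0$ and $R=2\alpha_3+2\alpha_4\in\Z$ already; if instead $x\equiv z$ after $\pi_2$ then $\alpha_2=0$ and the same affine-relation trick finishes as in Case~(5). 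So your strategy is complete once these corner cases are noted.
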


\subsection{Shift operators}

Following Sasano \cite{Sasano-6}, 
we obtain shift operators of the parameters.

\begin{proposition}
{\it
$T_1, T_2, T_3$ and $T_4$ are defined by 
$$
T_1=s_4\pi_1s_1s_2s_4s_3s_4s_3s_2s_1, \,\,
T_2=s_0T_1s_0, \,\,
T_3=s_2T_2s_2, \,\,
T_4=s_3T_3s_3,
$$
respectively. 
Then, 
\begin{align*}
T_1(\alpha_0, \alpha_1, \alpha_2, \alpha_3, \alpha_4)
&= (\alpha_0, \alpha_1, \alpha_2, \alpha_3, \alpha_4)+(1,-1,0,0,0), \\
T_2(\alpha_0, \alpha_1, \alpha_2, \alpha_3, \alpha_4)
&= (\alpha_0, \alpha_1, \alpha_2, \alpha_3, \alpha_4)+(-1,-1,1,0,0), \\
T_3(\alpha_0, \alpha_1, \alpha_2, \alpha_3, \alpha_4)
&= (\alpha_0, \alpha_1, \alpha_2, \alpha_3, \alpha_4)+(0,0,-1,1,0), \\
T_4(\alpha_0, \alpha_1, \alpha_2, \alpha_3, \alpha_4)
&= (\alpha_0, \alpha_1, \alpha_2, \alpha_3, \alpha_4)+(0,0,0,-1,1).
\end{align*}

}
\end{proposition}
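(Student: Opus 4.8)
The plan is to reduce the proposition to a linear-algebra verification on the parameter space. As one reads off from the definitions of $s_0,s_1,s_2,s_3,s_4,\pi_1,\pi_2$ in Section~5, each of these acts on $(\alpha_0,\alpha_1,\alpha_2,\alpha_3,\alpha_4)$ by a \emph{linear} involution of $\C^5$ that preserves the linear form $L(\alpha):=\alpha_0+\alpha_1+2\alpha_2+2\alpha_3+2\alpha_4$; on any solution one has $L(\alpha)=1$. Since a composition of B\"acklund transformations is again a B\"acklund transformation, $T_1,\dots,T_4$ are automatically B\"acklund transformations, and only their induced parameter maps must be computed.

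First I would verify the formula for $T_1$. Composing, in the order dictated by the word $s_4\pi_1s_1s_2s_4s_3s_4s_3s_2s_1$, the ten linear parameter maps yields a linear map of $\C^5$; restricting it to the hyperplane $\{L(\alpha)=1\}$ and using that relation to simplify, one finds that it is the translation $\alpha\mapsto\alpha+(1,-1,0,0,0)$. Two quick checks accompany this: the only generators that reverse the sign of $t$ are $s_4$ and $\pi_1$, and they occur a total of four times in the word, so $T_1$ genuinely fixes $t$; and $(1,-1,0,0,0)\in\ker L$, so the translation indeed preserves $\{L(\alpha)=1\}$.

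The other three identities then follow at once. Since $T_1$ acts on $\{L(\alpha)=1\}$ by $\alpha\mapsto\alpha+\delta_1$ with $\delta_1=(1,-1,0,0,0)$, and $s_0$ is linear with $s_0^2=\mathrm{id}$ and preserves $L$, conjugation gives $T_2=s_0T_1s_0\colon\alpha\mapsto\alpha+s_0(\delta_1)$ with $s_0(1,-1,0,0,0)=(-1,-1,1,0,0)$. In the same way $T_3=s_2T_2s_2\colon\alpha\mapsto\alpha+s_2(-1,-1,1,0,0)=\alpha+(0,0,-1,1,0)$ and $T_4=s_3T_3s_3\colon\alpha\mapsto\alpha+s_3(0,0,-1,1,0)=\alpha+(0,0,0,-1,1)$, the claimed shifts; these conjugation computations do not depend on the chosen convention for the order of composition, since the two outer factors coincide.

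The one real obstacle is the bookkeeping in the tenfold composition defining $T_1$ (including getting the composition-order convention right); everything else is immediate once the linearity of the parameter action is observed. To keep that computation under control I would record each generator as a $5\times5$ matrix on the coordinates $\alpha_0,\dots,\alpha_4$ and multiply, or equivalently push a symbolic parameter vector through the word one reflection at a time, applying $L(\alpha)=1$ only at the end.
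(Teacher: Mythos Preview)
Your proposal is correct. The paper itself gives no proof of this proposition at all: it simply states the result, attributing the shift operators to Sasano \cite{Sasano-6}, and leaves the verification implicit. What you outline is exactly the straightforward check one would carry out, so your approach and the paper's (absent) proof agree in spirit.

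Your conjugation observation for $T_2,T_3,T_4$ is a genuine economy that the paper does not mention: once $T_1$ is known to act as the translation $\alpha\mapsto\alpha+\delta_1$ on the affine hyperplane $\{L(\alpha)=1\}$, linearity of each $s_i$ gives $s_iT_1s_i\colon\alpha\mapsto\alpha+s_i(\delta_1)$, and the three remaining shift vectors drop out from the single evaluations $s_0(1,-1,0,0,0)$, $s_2(-1,-1,1,0,0)$, $s_3(0,0,-1,1,0)$. This reduces three further tenfold compositions to three one-line reflection computations. The only caution, which you already flag, is the composition-order convention for $T_1$: reading the word one way yields the shift $(1,-1,0,0,0)$ and the other way its negative. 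Since the paper uses the $T_i$ only to move parameters within the integer lattice directions they span, either sign suffices for the applications in Section~9.2, but to match the stated formulas you must fix the convention consistently with the paper.
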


\subsection{Standard forms of the parameters for rational solutions}

\begin{proposition}
\label{prop:standard forms}
{\it
Suppose that 
for $B_4^{(1)}(\alpha_j)_{0\leq j \leq 4},$ 
there exists a rational solution. 
By some B\"acklund transformations, 
the parameters can then be transformed so that 
one of the following occurs: 
{\rm (1)}\quad $\alpha_0-\alpha_1=0, \,\,\alpha_3+\alpha_4=0, \alpha_4\neq 0,$ 
{\rm (2)}\quad $\alpha_0-\alpha_1=0, \,\,\alpha_3+\alpha_4=1/2. $ 
The cases {\rm (1)} and {\rm (2)} denote the standard forms I and II, respectively. 
\par
Especially, 
the parameters can be transformed into the standard form I, 
if they satisfy one of the conditions in our main theorem.  
}
\end{proposition}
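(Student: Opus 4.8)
The plan is to begin from Corollary~\ref{coro-reduction}, which says that a rational solution of $B_4^{(1)}(\alpha_j)$ can, after B\"acklund transformations, be assumed to have parameters with $\alpha_0-\alpha_1\in\Z$ and $2\alpha_3+2\alpha_4\in\Z$, and then to keep moving inside the B\"acklund orbit until the parameters land in one of the two standard forms. The first step is bookkeeping: one tabulates how the relevant linear forms behave under the generators. The shift $T_1$ adds $2$ to $\alpha_0-\alpha_1$ and fixes $2\alpha_3+2\alpha_4$; the shift $T_3$ adds $1$ to $\alpha_3+\alpha_4$ and fixes $\alpha_0\pm\alpha_1$; $\pi_1$ negates $\alpha_0-\alpha_1$ and fixes $\alpha_0+\alpha_1,\alpha_3,\alpha_4$; $s_0$ and $s_1$ interchange $\alpha_0-\alpha_1$ with $\pm(\alpha_0+\alpha_1)$; $s_3$ interchanges $2\alpha_4$ with $2\alpha_3+2\alpha_4$; $s_4$ fixes both $\alpha_0-\alpha_1$ and $\alpha_3+\alpha_4$; and the corrected $\pi_2$ of Section~5 interchanges $\alpha_0-\alpha_1$ with $2\alpha_4$ and $\alpha_0+\alpha_1$ with $2(\alpha_3+\alpha_4)$. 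Each generator also moves $\alpha_2$ in a controlled way, constrained by the normalization $\alpha_0+\alpha_1+2\alpha_2+2\alpha_3+2\alpha_4=1$. In particular these transformations permute the six necessary conditions preceding Corollary~\ref{coro-reduction} among themselves and, since they respect the attached congruences, they also permute the six conditions of the main theorem among themselves.

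Using $T_1$, $\pi_1$ and $T_3$ I would first bring $\alpha_0-\alpha_1$ into $\{0,1\}$ and $\alpha_3+\alpha_4$ (a half-integer, since $2\alpha_3+2\alpha_4\in\Z$) into $\{0,1/2\}$; these moves are non-degenerate and preserve both integrality conditions. If $\alpha_0-\alpha_1=0$ and $\alpha_3+\alpha_4=1/2$ we are in standard form II, and if $\alpha_0-\alpha_1=0$, $\alpha_3+\alpha_4=0$ and $\alpha_4\neq0$ we are in standard form I. What remains is to dispose of (a) the degenerate locus $\alpha_0-\alpha_1=0$, $\alpha_3=\alpha_4=0$ and (b) the case $\alpha_0-\alpha_1=1$. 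On locus (a) the normalization forces $\alpha_0=\alpha_1$ and $\alpha_2=1/2-\alpha_0$, and I would escape it by $s_2$ (a genuine transformation when $\alpha_2\neq0$, by Proposition~\ref{prop:backlund}) followed by $s_3$, arriving at standard form I when $\alpha_0\neq1/2$ and, in the residual points $\alpha_0\in\{0,1/2\}$, using $\pi_2$ (or a second $s_2$) to reach a standard form. In case (b) I would push $\alpha_0+\alpha_1$ through $s_2$ and then apply $s_1$, so that $\alpha_0-\alpha_1$ takes on the (suitably arranged) value of $\alpha_0+\alpha_1$, after which a further $s_2$ restores $\alpha_3+\alpha_4$ — this is the braid move $s_2 s_1 s_2$ — landing again in standard form I or II, the subcase being pinned down by which of the six necessary conditions holds and hence by whether the relevant combination is actually an integer. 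At each step one checks, using Proposition~\ref{prop:backlund}, that the transformation applied is non-degenerate for the rational solution in hand; the exceptional configurations $w\equiv0$ with $\alpha_3\neq0$, and $x\equiv0$, where a generic transformation would produce an infinite solution, are handled directly by Propositions~\ref{prop:exam1} and~\ref{prop:exam2}, and the non-existence of solutions with $z\equiv0$ (Proposition~\ref{prop:backlund}(4)) guarantees that $\pi_2$ never degenerates.

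For the ``especially'' clause I would run the same reduction while tracking parities. Since every move respects the congruence of whichever of the six main-theorem conditions holds, the branch landing in standard form II is never entered and one ends in standard form I. Concretely, after normalizing (using the permutation of the conditions) to $\alpha_0-\alpha_1\in\Z$, $2\alpha_3+2\alpha_4\in\Z$ with $\alpha_0-\alpha_1\equiv 2\alpha_3+2\alpha_4\pmod{2}$, the even/even case runs straight into form I, while in the odd/odd case the normalization gives $\alpha_0+\alpha_1=-2\alpha_2$, so $s_2$ sends $\alpha_0+\alpha_1$ to $0$, $s_1$ then turns $\alpha_0-\alpha_1$ into $0$, and a final $s_2$ returns $\alpha_3+\alpha_4$ to $0$ with $\alpha_4\neq0$.

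The step I expect to be the real obstacle is not any single computation but making the case analysis genuinely exhaustive: many generators collapse to the identity, or send a rational solution to an infinite one, on the loci where some $\alpha_i=0$ (or $x\equiv z$, $w\equiv0$, $x\equiv0$), so one must repeatedly invoke Proposition~\ref{prop:backlund} (and Propositions~\ref{prop:exam1}, \ref{prop:exam2}) and check that the integrality and congruence data are preserved along the chosen chain of transformations. The conceptual content — that the shift lattice together with the finite reflection group acts transitively enough to collapse every admissible parameter onto standard form I or II, and onto form I alone once the congruences are imposed — is clear; verifying that no subcase escapes the net is the delicate part.
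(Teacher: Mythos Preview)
Your overall strategy—start from Corollary~\ref{coro-reduction}, then use shifts and reflections to force $\alpha_0-\alpha_1\in\{0,1\}$ and $\alpha_3+\alpha_4\in\{0,1/2\}$—is the paper's approach, and your treatment of the degenerate locus (a) and of the ``especially'' clause is basically sound (for (a) the paper simply uses $T_4$, which shifts $(\alpha_3,\alpha_4)$ by $(-1,1)$ and so moves $\alpha_4$ off zero while preserving $\alpha_3+\alpha_4=0$, avoiding your case split at $\alpha_2=0$).

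The gap is in case (b), specifically the sub-case $\alpha_0-\alpha_1=1$, $\alpha_3+\alpha_4=0$. On parameters one computes
\[
s_2s_1s_2:(\alpha_0,\alpha_1,\alpha_2,\alpha_3,\alpha_4)\longmapsto(\alpha_0+\alpha_1+\alpha_2,\,-\alpha_2,\,-\alpha_1,\,\alpha_1+\alpha_2+\alpha_3,\,\alpha_4),
\]
so the new $\alpha_0-\alpha_1=\alpha_0+\alpha_1+2\alpha_2=1-2(\alpha_3+\alpha_4)=1$; and since the normalization $\alpha_0-\alpha_1=1$, $\alpha_3+\alpha_4=0$ together with the constraint forces $\alpha_1+\alpha_2=0$, the new $\alpha_3+\alpha_4=\alpha_1+\alpha_2=0$ as well. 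Thus $s_2s_1s_2$ \emph{fixes} the pair $(1,0)$, and since $(s_2s_1s_2)^2=1$ iterating gains nothing. Your ``suitably arranged value of $\alpha_0+\alpha_1$'' is not at your disposal: after the first $s_2$ it is pinned to $1-2(\alpha_3+\alpha_4)$. The paper handles this parity class with $\pi_2s_3s_1$, which sends $(\alpha_0-\alpha_1,\alpha_3+\alpha_4)=(1,0)$ to $(0,1/2)$, i.e.\ to standard form~II. You recorded the relevant action of $\pi_2$ in your bookkeeping but never deploy it in case~(b); without it the reduction is incomplete. By contrast, in the other sub-case $(1,1/2)$ your braid move \emph{does} work and lands in form~I, which is exactly the paper's case~(4) computation.
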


\begin{proof}
By Corollary \ref{coro-reduction}, 
we may assume that $\alpha_0-\alpha_1\in\mathbb{Z}$ and $2\alpha_3+2\alpha_4\in\mathbb{Z}.$ 
One of the following cases then occurs:
\begin{align*}
&{\rm (1)}\quad \alpha_0-\alpha_1\equiv2\alpha_3+2\alpha_4\equiv 0 & &{\rm mod} \, 2, & 
&{\rm (2)}\quad \alpha_0-\alpha_1\equiv0, \,\,2\alpha_3+2\alpha_4\equiv 1 & &{\rm mod} \, 2, \\
&{\rm (3)}\quad \alpha_0-\alpha_1\equiv1, \,\,2\alpha_3+2\alpha_4\equiv 0 & &{\rm mod} \, 2, & 
&{\rm (4)}\quad \alpha_0-\alpha_1\equiv2\alpha_3+2\alpha_4\equiv 1 & &{\rm mod} \, 2.
\end{align*}
\par
If case (1) occurs,  
using $T_2, T_3,$ 
we have $\alpha_0-\alpha_1=\alpha_3+\alpha_4=0.$ 
\par
If case (2) occurs,  
using $T_2, T_3,$ 
we obtain $\alpha_0-\alpha_1=0, \,\alpha_3+\alpha_4=1/2.$  
\par
If case (3) occurs,  
using $T_2, T_3,$ 
we have $\alpha_0-\alpha_1=1, \,\alpha_3+\alpha_4=0.$ 
Furthermore, 
by $\pi_2s_3s_1,$ 
we obtain $\alpha_0-\alpha_1=0, \,\alpha_3+\alpha_4=1/2.$
\par
If case (4) occurs,  
using $T_2, T_3,$ 
we have $\alpha_0-\alpha_1=\alpha_3+\alpha_4=1,$
which implies that 
$$
(\alpha_0,\alpha_1,\alpha_2, \alpha_3, \alpha_4)=
(-\alpha_2+1/2,-\alpha_2-1/2,\alpha_2,\alpha_3,-\alpha_3+1/2).
$$ 
By $s_2s_1s_2T_3,$ 
we obtain 
$$
(-\alpha_2+1/2,-\alpha_2-1/2,\alpha_2,\alpha_3,-\alpha_3+1/2)
\longrightarrow
(-\alpha_2,-\alpha_2,\alpha_2-1/2,\alpha_3+1/2,-\alpha_3+1/2).
$$
\par
If $\alpha_0-\alpha_1=\alpha_3+\alpha_4=0$, 
by $T_4,$ 
we may assume that $\alpha_0-\alpha_1=\alpha_3+\alpha_4=0$ and $\alpha_4\neq 0.$ 

\end{proof}

If $\alpha_0-\alpha_1=0$ and $\alpha_3+\alpha_4=0,$ 
it follows 
from Corollary 
\ref{coro:solution-1} 
that 
for $B_4^{(1)}(\alpha_j)_{0\leq j \leq 4},$ 
there exists a unique rational solution which is expressed by 
\begin{equation*}
x\equiv 0, \,y\equiv 1/2, \, z=1/\{2\alpha_4\}\cdot t, \,w\equiv 0.
\end{equation*}
We have then to only 
treat the standard form II  
in order to 
classify the rational solutions of $B_4^{(1)}(\alpha_j)_{0\leq j \leq 4}.$

\section{The standard form II $\cdots$ (1)}

\subsection{The case where $z$ has a pole of order $n \,\,(n\geq 2)$ at $t=\infty$}

By Corollary \ref{coro:order-n}, 
we can prove the following proposition:

\begin{proposition}
{\it
Suppose that 
$\alpha_0-\alpha_1=0, \,\,\alpha_3+\alpha_4=1/2$ 
and 
for $B_4^{(1)}(\alpha_j)_{0\leq j \leq 4},$ 
there exists a rational solution. 
Moreover, 
assume that $z$ has a pole of order $n \,\,(n\geq 2)$ at $t=\infty$ 
and $x,y,w $ are all holomorphic at $t=\infty.$ 
One of the following then occurs:
\newline
{\rm (1)}\quad $\alpha_0-\alpha_1=0, \,\,\alpha_3=1/2, \,\,\alpha_4=0,$
\newline
{\rm (2)}\quad $\alpha_0=\alpha_1=\alpha_2=\alpha_3=0, \,\,\alpha_4=1/2.$
}
\end{proposition}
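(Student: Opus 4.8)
The plan is to derive this proposition directly from Corollary~\ref{coro:order-n}, and then intersect the two alternatives it produces with the standing hypotheses $\alpha_0-\alpha_1=0$ and $\alpha_3+\alpha_4=1/2$ that define the standard form~II. No new computation with the differential equations or the Hamiltonian should be needed here; all the analytic work has already been carried out in Section~1 (Proposition~\ref{prop:inf-summary}, Corollaries~\ref{coro:z(n)-s_3} and \ref{coro:order-n}).

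First I would observe that a rational solution is in particular a meromorphic solution near $t=\infty$. Hence the assumption that $z$ has a pole of order $n\,(n\geq 2)$ at $t=\infty$ while $x,y,w$ are all holomorphic at $t=\infty$ places us exactly in the hypothesis of Corollary~\ref{coro:order-n}. Applying that corollary, one of the following holds:
\begin{equation*}
\textrm{(i)}\quad \alpha_4=0, \qquad\qquad \textrm{(ii)}\quad \alpha_0=\alpha_1=\alpha_2=\alpha_3=0,\ \alpha_4=\tfrac12.
\end{equation*}

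Next I would treat the two cases separately. In case (ii), the conclusion $\alpha_0=\alpha_1=\alpha_2=\alpha_3=0,\ \alpha_4=1/2$ is precisely alternative (2) of the proposition, and it is consistent with the standard form~II since then $\alpha_0-\alpha_1=0$ and $\alpha_3+\alpha_4=1/2$. In case (i), substituting $\alpha_4=0$ into the standing relation $\alpha_3+\alpha_4=1/2$ forces $\alpha_3=1/2$; together with the standing hypothesis $\alpha_0-\alpha_1=0$ this is exactly alternative (1). Since cases (i) and (ii) are exhaustive, the proposition follows.

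I do not expect any genuine obstacle: the only point requiring a line of care is to verify that the two alternatives coming out of Corollary~\ref{coro:order-n} are compatible with the constraints $\alpha_0-\alpha_1=0$ and $\alpha_3+\alpha_4=1/2$, which they are, and that the hypothesis "rational" is used only through "meromorphic at $t=\infty$", so that Corollary~\ref{coro:order-n} applies without modification.
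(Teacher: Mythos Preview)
Your proposal is correct and is exactly the paper's approach: the paper's proof consists solely of the line ``By Corollary~\ref{coro:order-n}, we can prove the following proposition,'' and your argument spells out precisely that deduction, intersecting the two alternatives of Corollary~\ref{coro:order-n} with the standing hypotheses $\alpha_0-\alpha_1=0$ and $\alpha_3+\alpha_4=1/2$.
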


\subsection{The case where $z$ has a pole of order one at $t=\infty$}

\begin{proposition}
{\it
Suppose that 
$\alpha_0-\alpha_1=0, \,\,\alpha_3+\alpha_4=1/2$ 
and 
for $B_4^{(1)}(\alpha_j)_{0\leq j \leq 4},$ 
there exists a rational solution 
such that 
$z$ has a pole of order one at $t=\infty$ 
and $x,y,w $ are all holomorphic at $t=\infty.$ 
One of the following then occurs:
\newline
{\rm (1)}\quad $x,y,z,w$ are all holomorphic at $t=0$ and $a_{0,0}=0, \,\,c_{0,0}\neq 0,$
\newline
{\rm (2)}\quad $z$ has a pole at $t=0$ and $x,y,w$ are all holomorphic at $t=0,$
\newline
{\rm (3)}\quad $y,w$ both have a pole of order one at $t=0$ and $x,z$ are holomorphic at $t=0.$
}
\end{proposition}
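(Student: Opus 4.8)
The plan is to combine the classification of local behaviour at $t=0$ obtained in Section 2 with the B\"acklund transformations of Section 5. By Proposition \ref{prop:t=0behavior}, a meromorphic solution at $t=0$ falls into exactly one of three cases: (i) $x,y,z,w$ are all holomorphic at $t=0$; (ii) $z$ has a pole of order $n\geq 1$ at $t=0$ and $x,y,w$ are holomorphic there; (iii) $y,w$ both have a pole at $t=0$ and $x,z$ are holomorphic there. Case (ii) is exactly alternative (2) of the statement, so there is nothing to do. In case (iii), Lemma \ref{lem:t=0-(y,w)-4} already tells us that $y$ and $w$ both have a pole of order precisely one, which is alternative (3). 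Hence the entire content is to analyse case (i).

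So suppose $x,y,z,w$ are all holomorphic at $t=0$. Proposition \ref{prop:a0-determine} gives $a_{0,0}\in\{0,\alpha_0-\alpha_1\}$, and since $\alpha_0-\alpha_1=0$ by the standing hypothesis this forces $a_{0,0}=0$. It then remains to show $c_{0,0}\neq 0$, and I would argue by contradiction: assume $c_{0,0}=0$. By Lemma \ref{lem:holo-1} this forces $\alpha_4\neq 0$ (which we already know here, since $z$ has a pole of order one at $t=\infty$, by Proposition \ref{prop:z-inf}(2)) and $c_{0,1}=1/\{2\alpha_4\}$; in particular $z$ has a simple zero at $t=0$. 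Now Proposition \ref{prop:t=0holo-a_{0,0}=0} enumerates all parameter patterns compatible with $a_{0,0}=0$, and intersecting that list with $\alpha_0=\alpha_1$, $\alpha_3+\alpha_4=1/2$, $\alpha_4\neq 0$ and $c_{0,0}=0$ leaves only a short finite list (roughly: $\alpha_0=\alpha_1=0$; or $\alpha_0=\alpha_1\neq0$ with $\alpha_3=0,\ \alpha_4=1/2$; or $\alpha_0=\alpha_1\neq 0$ with $\alpha_3\neq0$, $\alpha_4\neq1/2$ and $d_{0,0}=0$). For each such pattern I would apply a well-chosen B\"acklund transformation and read off the contradiction from an already-proved structural result. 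The transformation doing the real work is $\pi_2$: it sends $x\mapsto t/z$ and $z\mapsto t/x$ and carries the parameter vector to $(2\alpha_4+\alpha_3,\alpha_3,\alpha_2,\alpha_1,(\alpha_0-\alpha_1)/2)$, whose last entry is $0$. Using the simple zero of $z$ at $t=0$ and the expansion of $x$ at $t=\infty$ from Proposition \ref{prop:inf-summary}(2) (where $a_{\infty,0}=\alpha_0-\alpha_1=0$, so $x=O(t^{-1})$, more precisely $x=-2\alpha_4(2\alpha_2+1)t^{-1}+\cdots$ because $2\alpha_3+2\alpha_4=1$), one sees that $\pi_2$ produces a rational solution with $\alpha_4=0$ and with $z$ having a pole of order $\geq 2$ at $t=\infty$; comparison with Corollary \ref{coro:order-n} and with the necessary conditions assembled in Sections 7--8 then closes the argument. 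The degenerate possibilities ($\alpha_2=-1/2$, which must be treated by going further in the expansion of $x$, and the case $x\equiv 0$) are disposed of separately using Propositions \ref{prop:exam2} and \ref{prop:zinf}.

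The step I expect to be the main obstacle is precisely this last one: because $\pi_2$ is only birational, a zero of $x$ or of $z$ becomes a pole of the transformed coordinate, so one needs the exact orders of those zeros — not merely $a_{0,0}=c_{0,0}=0$ — in order to locate the poles of $\pi_2(x,y,z,w)$ at both $t=0$ and $t=\infty$ and thereby identify which earlier necessary condition is being violated. Pinning down these orders (via the Laurent-coefficient recursions, or by excluding $x\equiv 0$ with Proposition \ref{prop:exam2}) and matching the outcome against the bookkeeping of Section 9 is the delicate part; once the orders are known, each sub-case is routine.
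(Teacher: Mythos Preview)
Your reduction to case (i) via Proposition~\ref{prop:t=0behavior} and Lemma~\ref{lem:t=0-(y,w)-4}, and the derivation of $a_{0,0}=0$ from Proposition~\ref{prop:a0-determine}, are correct and match the paper exactly. The divergence is in how you rule out $c_{0,0}=0$.

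Your proposed route via $\pi_2$ and the necessary conditions of Sections~7--8 does not close. Concretely: with $\alpha_0=\alpha_1$, $\alpha_3+\alpha_4=1/2$, $\alpha_4\neq 0$, and $a_{0,0}=c_{0,0}=0$, Lemma~\ref{lem:holo-1} gives $c_{0,1}=1/(2\alpha_4)$, so near $t=0$ one finds $\pi_2(x)=t/z=2\alpha_4+O(t)$ and $\pi_2(y)=-\alpha_3/(2\alpha_4)+O(t)$. The transformed parameters are $(\alpha_0',\alpha_1',\alpha_2',\alpha_3',\alpha_4')=(\alpha_4+\tfrac12,\ \tfrac12-\alpha_4,\ \alpha_2,\ \alpha_1,\ 0)$, hence $a_{0,0}'=2\alpha_4=\alpha_0'-\alpha_1'\neq 0$ and $b_{0,0}'=-\alpha_3/(2\alpha_4)=-\alpha_1'/(\alpha_0'-\alpha_1')\neq 1/2$. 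This lands squarely in Proposition~\ref{prop:nec-z(n)-t=0holo-a_{0,0}nonzero-nonhalf}, whose conclusion is $\alpha_0'+\alpha_1'\in\mathbb{Z}$; but $\alpha_0'+\alpha_1'=1$ is automatic. So no contradiction is produced, and the argument does not terminate.

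The paper instead uses the Hamiltonian residue calculus of Section~4, which you do not invoke at all. Under the standing hypotheses, Proposition~\ref{prop:hinf}(2) gives
\[
h_{\infty,0}=\tfrac14(\alpha_0-\alpha_1)^2+(\alpha_3+\alpha_4)^2-(\alpha_3+\alpha_4)=0+\tfrac14-\tfrac12=-\tfrac14,
\]
while Proposition~\ref{prop:hzero-1}(1)-(i) gives $h_{0,0}=0$ when $a_{0,0}=c_{0,0}=0$. Then $h_{\infty,0}-h_{0,0}=-1/4\notin\mathbb{Z}$, contradicting Proposition~\ref{rational-necessary-2}(2). This one-line argument is what you are missing.
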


\begin{proof}
If $x,y,z,w$ are all holomorphic at $t=0,$ 
it follows from Proposition \ref{prop:a0-determine} 
that 
$a_{0,0}=0,$ because $\alpha_0-\alpha_1=0.$ 
Moreover, 
\begin{equation*}
h_{0,0}=
\begin{cases}
0, \,\, &{\rm if} \,\,c_{0,0}=0,   \\
\alpha_3(\alpha_3+2\alpha_4-1), \,\, &{\rm if} \,\,c_{0,0}\neq 0.
\end{cases}
\end{equation*}
\par
On the other hand, $h_{\infty,0}=-1/4.$ Thus, 
$c_{0,0}\neq 0$ because $h_{\infty,0}-h_{0,0}\in\Z.$

\end{proof}

\subsubsection{The case where $x,y,z,w$ are all holomorphic at $t=0$}

By Proposition \ref{prop:nec-order1-holozero-zero-nonzero}, 
we have the following proposition:

\begin{proposition}
{\it
Suppose that 
$\alpha_0-\alpha_1=0, \,\,\alpha_3+\alpha_4=1/2$ 
and 
for $B_4^{(1)}(\alpha_j)_{0\leq j \leq 4},$ 
there exists a rational solution 
such that 
$z$ has a pole of order one at $t=\infty$ 
and $x,y,w $ are all holomorphic at $t=\infty.$ 
Moreover, 
assume that 
$x,y,z,w$ are all holomorphic at $t=0.$ 
Then, 
$\alpha_0-\alpha_1=0, \,\,\alpha_3+\alpha_4=1/2, \,\,2\alpha_4\in\Z.$
}
\end{proposition}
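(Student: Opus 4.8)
The plan is to reduce the statement directly to results already established in Sections~2, 4 and 7, so that this proposition becomes essentially a bookkeeping corollary packaged in the standard-form-II language. First I would invoke the immediately preceding proposition: under the stated hypotheses — standard form II ($\alpha_0-\alpha_1=0$, $\alpha_3+\alpha_4=1/2$), a rational solution with $z$ having a simple pole at $t=\infty$ and $x,y,w$ holomorphic there — together with the \emph{additional} assumption that $x,y,z,w$ are all holomorphic at $t=0$, one is forced into case (1) of that proposition, namely $a_{0,0}=0$ and $c_{0,0}\neq 0$. Concretely, Proposition~\ref{prop:a0-determine} gives $a_{0,0}=0$ since $\alpha_0-\alpha_1=0$, and $c_{0,0}\neq 0$ is forced because in the present situation $h_{\infty,0}=-1/4$ while $h_{0,0}=0$ when $c_{0,0}=0$, contradicting the integrality $h_{\infty,0}-h_{0,0}\in\Z$ of Proposition~\ref{rational-necessary-2}.

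Next I would apply Proposition~\ref{prop:nec-order1-holozero-zero-nonzero} verbatim: it asserts precisely that for a rational solution with $z$ having a simple pole at $t=\infty$, with $x,y,w$ holomorphic at $t=\infty$, and with $x,y,z,w$ holomorphic at $t=0$ satisfying $a_{0,0}=0$ and $c_{0,0}\neq 0$, one has $\alpha_0-\alpha_1\in\Z$ and $2\alpha_4\in\Z$. Combining this with the hypothesis $\alpha_0-\alpha_1=0$ (which trivially gives $\alpha_0-\alpha_1\in\Z$), we conclude $2\alpha_4\in\Z$; the two equalities $\alpha_0-\alpha_1=0$ and $\alpha_3+\alpha_4=1/2$ are simply carried over from the hypotheses of the proposition, so the conclusion follows.

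Since both ingredients are already available, there is essentially no obstacle here. The only point requiring care is the very first step: one must be sure that adding the holomorphy assumption at $t=0$ really lands us in case (1) of the preceding proposition rather than in its cases (2) or (3) — but those alternatives explicitly postulate a pole at $t=0$, so they are excluded by hypothesis, and the value $h_{\infty,0}=-1/4$ computed via Proposition~\ref{prop:hinf}(2) (using $\alpha_0-\alpha_1=0$, $\alpha_3+\alpha_4=1/2$) is what pins down $c_{0,0}\neq 0$ inside case (1). Thus the proof is a short chain: preceding proposition $\Rightarrow$ $(a_{0,0},c_{0,0})=(0,\neq 0)$, then Proposition~\ref{prop:nec-order1-holozero-zero-nonzero} $\Rightarrow$ $2\alpha_4\in\Z$.
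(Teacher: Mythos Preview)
Your proposal is correct and follows exactly the paper's approach: the paper states that the proposition follows directly from Proposition~\ref{prop:nec-order1-holozero-zero-nonzero}, with the preceding proposition supplying the prerequisite that $a_{0,0}=0$ and $c_{0,0}\neq 0$ (via the $h_{\infty,0}-h_{0,0}\in\Z$ argument you reproduced). You have filled in the details faithfully.
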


\subsubsection{The case where $z$ has a pole at $t=0$}

\begin{proposition}
{\it
Suppose that 
$\alpha_0-\alpha_1=0, \,\,\alpha_3+\alpha_4=1/2$ 
and 
for $B_4^{(1)}(\alpha_j)_{0\leq j \leq 4},$ 
there exists a rational solution 
such that 
$z$ has a pole of order one at $t=\infty$ 
and $x,y,w $ are all holomorphic at $t=\infty.$ 
Moreover, 
assume that 
$z$ has a pole at $t=0$ and $x,y,w$ are all holomorphic at $t=0.$
One of the following then occurs:
\newline
{\rm (1)}\quad 
$\alpha_0=\alpha_1=\alpha_2=0, \,\,\alpha_3+\alpha_4=1/2, $
\newline
{\rm (2)}\quad 
$\alpha_0-\alpha_1=0, \,\,\alpha_3+\alpha_4=1/2, \,\,\alpha_4\in\Z.$
}
\end{proposition}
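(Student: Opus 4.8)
The plan is to pull an integrality condition on $\alpha_3$ out of the Hamiltonian, and then either upgrade it to conclusion (2) or, in the residual case, land in conclusion (1), by transporting the solution via $s_3$ and $\pi_2$ into the situation treated in Section~8.

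First I would evaluate the two Hamiltonian invariants under the standing assumptions $\alpha_0-\alpha_1=0$ and $\alpha_3+\alpha_4=1/2$. Since $z$ has a pole of order one at $t=\infty$, Proposition~\ref{prop:hinf}(2) gives $h_{\infty,0}=-1/4$; since $z$ has a pole at $t=0$ with $x,y,w$ holomorphic there, Proposition~\ref{prop:hzero-2} gives $h_{0,0}=\alpha_3(\alpha_3+2\alpha_4-1)=-\alpha_3^2$. Proposition~\ref{rational-necessary-2}(2) then forces $\alpha_3^2-\tfrac14\in\Z$; in particular $\alpha_3\neq0$, and the subcase $\alpha_3=\tfrac12$ is exactly $\alpha_4=0\in\Z$, i.e.\ conclusion (2). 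So from now on I would assume $\alpha_3\notin\{0,\tfrac12\}$.

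Next, using $\alpha_3\neq0$, I would replace the solution by $s_3(x,y,z,w)$, which by Corollary~\ref{coro:t=0-z(n)-s_3} is a rational solution of $B_4^{(1)}(\alpha_0,\alpha_1,\alpha_2+\alpha_3,-\alpha_3,\tfrac12)$ that is holomorphic at $t=0$ with $a_{0,0}=\alpha_0-\alpha_1=0$, and still has $z$ with a pole of order one at $t=\infty$. Comparing $h_{\infty,0}=-\tfrac14$ with the two possible values of $h_{0,0}$ from Proposition~\ref{prop:hzero-1}(1) rules out $c_{0,0}\neq0$, so $c_{0,0}=0$ for this solution. I would then apply $\pi_2$: the image is a rational solution of $B_4^{(1)}(1-\alpha_3,-\alpha_3,\alpha_2+\alpha_3,\alpha_1,0)$, for which $\alpha_4=0$, and — reading the leading behaviour of $s_3(x)$ at $t=\infty$ from Proposition~\ref{prop:z-inf}(2) and at $t=0$ from Proposition~\ref{prop:t=0-z}, together with $s_3(z)\sim t$ at $t=\infty$ and $s_3(z)=O(t)$ at $t=0$ — its $z$-component $t/s_3(x)$ has a pole of order two at $t=\infty$ with $x,y,w$ holomorphic there, $a_{0,0}=1$ and $b_{0,0}=\alpha_3$. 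The Section~8 necessary conditions (Propositions~\ref{prop:nec-z(n)-t=0holo-a_{0,0}=0}, \ref{prop:nec-z(n)-t=0holo-a_{0,0}nonzero-nonhalf}, \ref{prop:nec-z(n)-t=0holo-a_{0,0}nonzero-half} when the $\pi_2$-image is holomorphic at $t=0$, and the Section~8 proposition on solutions with $z$ singular at $t=0$ otherwise) then yield $2\alpha_3\in\Z$, or else, in the residual branch of the latter, $\alpha_0+\alpha_1\in\Z$. Since $\alpha_3^2-\tfrac14\in\Z$ together with $2\alpha_3\in\Z$ force $2\alpha_3$ to be odd, the first alternative gives $\alpha_4=\tfrac12-\alpha_3\in\Z$, i.e.\ conclusion (2).

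The hard part is the case analysis packaged into that last step. One must decide, from the order of the pole of $z$ at $t=0$ for the original solution, whether the $\pi_2$-image is holomorphic or singular at $t=0$; one must replace $\pi_2$ by $s_1s_0$ in the degenerate situation $\alpha_2+\alpha_3=0$, where the $s_3$-image has $\alpha_2=0$; and in the residual branch $\alpha_0+\alpha_1\in\Z$ with $2\alpha_3\notin\Z$ one has to use the shift operators $T_2,T_3$ to reduce $\alpha_2$ to $0$, which is where conclusion (1) $\alpha_0=\alpha_1=\alpha_2=0$ comes from (and where one checks that $\alpha_0=\alpha_1=\tfrac12$ is impossible, e.g.\ via Lemma~\ref{lem:holo-2}). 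Keeping the Laurent coefficients at $t=0$ accurate through these compositions, rather than relying on any single estimate, is the real obstacle.
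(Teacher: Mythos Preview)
Your overall strategy matches the paper's exactly: extract $\alpha_3^2-\tfrac14\in\Z$ from the Hamiltonian, apply $s_3$ to make the solution holomorphic at $t=0$ with $a_{0,0}=c_{0,0}=0$, then push through $\pi_2$ into the $\alpha_4=0$ setting of Section~8. The paper writes $h_{\infty,0}-h_{0,0}=\alpha_4^2-\alpha_4$, which is the same quantity, and draws the same immediate consequences ($\alpha_3\neq0$, and $\alpha_3\notin\Z$).

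There is one real gap. Your ``residual branch $\alpha_0+\alpha_1\in\Z$ with $2\alpha_3\notin\Z$'' is illusory, and your proposal to handle it with the shift operators $T_2,T_3$ is a logical error: those operators change the parameters, so they cannot establish that the \emph{original} $\alpha_2$ equals~$0$. If that branch actually occurred the proposition would simply be false. What eliminates the branch is Lemmas~\ref{lem:holo-1} and~\ref{lem:holo-2}: once you know $a_{0,0}=c_{0,0}=0$ for the $s_3$-image with new $\alpha_2=\alpha_2+\alpha_3\neq0$, those lemmas give $c_{0,1}=1$ and $a_{0,1}=\alpha_4/(\alpha_2+\tfrac12)\neq0$ explicitly. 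Hence $\pi_2(z)=t/s_3(x)$ is holomorphic at $t=0$ with $c_{0,0}\neq0$, and $\pi_2(x)=t/s_3(z)$ gives $a_{0,0}=1$, $b_{0,0}=\alpha_3\neq\tfrac12$. Proposition~\ref{prop:nec-z(n)-t=0holo-a_{0,0}nonzero-nonhalf} then applies directly and forces $1-2\alpha_3\in\Z$; combined with $\alpha_3\notin\Z$ this gives $\alpha_4\in\Z$, conclusion~(2), with no alternative. The degenerate case $\alpha_2+\alpha_3=0$ is handled (when $\alpha_0=\alpha_1\neq0$) by first applying $s_0s_1$ to the $s_3$-image to make the new $\alpha_2$ nonzero and then running the same $\pi_2$ argument; when $\alpha_0=\alpha_1=0$ the parameter relation $\alpha_0+\alpha_1+2\alpha_2+2\alpha_3+2\alpha_4=1$ together with $\alpha_2+\alpha_3=0$ forces $\alpha_2=0$, which is conclusion~(1). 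Two minor corrections: $\pi_2(z)$ has a pole of order $n\geq2$ at $t=\infty$, not necessarily exactly two; and the remark about excluding $\alpha_0=\alpha_1=\tfrac12$ is not needed.
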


\begin{proof}
We may assume that $\alpha_0=\alpha_1\neq 0$ and $\alpha_3\neq 1/2.$ 
Moreover, 
from Propositions \ref{prop:hinf} and \ref{prop:hzero-2}, 
it follows that 
$h_{\infty,0}-h_{0,0}=\alpha_4^2-\alpha_4\in\Z,$ 
which implies that $\alpha_3\not\in \Z.$ 
\par
$s_3(x,y,z,w)$ is then a rational solution of $B_4^{(1)}(\alpha_0,\alpha_1,\alpha_2+\alpha_3,-\alpha_3,1/2)$ 
such that $s_3(z)$ has a pole of order one at $t=\infty$ and all of $s_3(x,y,z,w)$ are 
holomorphic at $t=0$ and $a_{0,0}=0.$ 
If $c_{0,0}\neq 0$ for $s_3(x,y,z,w),$ 
it follows from Propositions \ref{prop:hinf} and \ref{prop:hzero-1} 
that $h_{\infty,0}-h_{0,0}=-1/4 \in\Z,$ 
which is impossible. 
Thus, we have $a_{0,0}=c_{0,0}=0$ for $s_3(x,y,z,w).$
\par
Now, let us assume that $\alpha_2+\alpha_3\neq 0.$ 
It then follows from Lemmas \ref{lem:holo-1} and \ref{lem:holo-2} 
that 
$s_3\pi_2(x,y,z,w)$ is a rational solution of 
$B_4^{(1)}(1-\alpha_3,-\alpha_3,\alpha_2+\alpha_3,\alpha_1,0)$ 
such that $s_3\pi_2(z)$ has a pole of order $n \,(n\geq 2)$ at $t=\infty$ 
and 
all of $s_3\pi_2(x,y,z,w)$ are holomorphic at $t=0.$ 
Moreover, 
for $s_3\pi_2(x,y,z,w),$ $a_{0,0}=1, \,b_{0,0}=\alpha_3, \,c_{0,0}\neq 0. $ 
Thus, by Proposition \ref{prop:nec-z(n)-t=0holo-a_{0,0}nonzero-nonhalf}, 
we find that $1-2\alpha_3\in\Z,$ which implies that $\alpha_4\in\Z$ 
because $\alpha_3+\alpha_4=1/2$ and $\alpha_3\not\in \Z.$ 
\par
Let us treat the case where $\alpha_2+\alpha_3=0.$ 
$s_0s_1s_3(x,y,z,w)$ is then a rational solution of $B_4^{(1)}(-\alpha_0,-\alpha_1,2\alpha_0,-\alpha_3,1/2)$ 
such that 
$s_0s_1s_3(z)$ has a pole of order one at $t=\infty$ and all of $s_1s_0s_3(x,y,z,w)$ 
are holomorphic at $t=0.$ 
Furthermore, for $s_0s_1s_3(x,y,z,w),$ $a_{0,0}=c_{0,0}=0.$ 
Thus, by $\pi_2$, we have $\alpha_4\in\Z.$ 
\end{proof}

\subsubsection{The case where $y,w$ both have a pole of order one at $t=0$   }

\begin{proposition}
{\it
Suppose that 
$\alpha_0-\alpha_1=0, \,\,\alpha_3+\alpha_4=1/2$ 
and 
for $B_4^{(1)}(\alpha_j)_{0\leq j \leq 4},$ 
there exists a rational solution 
such that 
$z$ has a pole of order one at $t=\infty$ 
and $x,y,w $ are all holomorphic at $t=\infty.$ 
Moreover, 
assume that 
$y,w$ both have a pole of order one at $t=0$ and $x,z$ are both holomorphic at $t=0.$ 
Then, 
$\alpha_0-\alpha_1=0, \,\,\alpha_3+\alpha_4=1/2, \,\,\alpha_0+\alpha_1\in\Z.$
}
\end{proposition}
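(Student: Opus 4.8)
The plan is the following. First I would record the leading behaviour of the solution. By Proposition~\ref{prop:t=0-(y,w)} the hypothesis forces $\alpha_4(\alpha_3+\alpha_4)\neq 0$, hence $\alpha_4\neq 0$ (recall $\alpha_3+\alpha_4=1/2$), and it gives the explicit expansions of $x,y,z,w$ at $t=0$; in particular $x=\frac{2\alpha_2+1}{2\alpha_4}t+\cdots$ and $z=\frac{1}{2\alpha_4}t+\cdots$ there. By Proposition~\ref{prop:inf-summary}(2), using $\alpha_0=\alpha_1$ and $2\alpha_3+2\alpha_4=1$, near $t=\infty$ one has $x=-2\alpha_4(2\alpha_2+1)t^{-1}+\cdots$ and $z=\frac{1}{2\alpha_4}t+\cdots$, with $y,w$ holomorphic. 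Since $\alpha_0+\alpha_1+2\alpha_2+2\alpha_3+2\alpha_4=1$ and $2\alpha_3+2\alpha_4=1$, we get $\alpha_0+\alpha_1=-2\alpha_2$, so it suffices to prove $2\alpha_2\in\Z$.

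The main step is to apply $\pi_2$. Assume first $2\alpha_2+1\neq 0$; then $x\not\equiv 0$, since by Proposition~\ref{prop:exam2} (the case $\alpha_3+\alpha_4=0$ being excluded) $x\equiv 0$ would force $\alpha_0=\alpha_1=1/2$ and hence $\alpha_2=-1/2$. The leading terms above then show that at $t=\infty$ the function $x$ has a simple zero while $z$ has a simple pole, and at $t=0$ both $x$ and $z$ have a simple zero. A direct Laurent-series computation from the definition of $\pi_2$ shows that $\pi_2(x,y,z,w)$ is a rational solution of $B_4^{(1)}(2\alpha_4+\alpha_3,\alpha_3,\alpha_2,\alpha_1,0)$ (the last parameter being $(\alpha_0-\alpha_1)/2=0$) for which $\pi_2(z)$ has a pole of order two at $t=\infty$, $\pi_2(x),\pi_2(y),\pi_2(w)$ are holomorphic at $t=\infty$, all four components are holomorphic at $t=0$, and there $a_{0,0}=\pi_2(x)|_{t=0}=2\alpha_4=(2\alpha_4+\alpha_3)-\alpha_3\neq 0$ and $b_{0,0}=\pi_2(y)|_{t=0}=1/2$. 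Since the $\alpha_4$-parameter of the new system vanishes, Proposition~\ref{prop:nec-z(n)-t=0holo-a_{0,0}nonzero-half} applies. Its alternative~(1) would require the new $\alpha_0+\alpha_1=2\alpha_4+2\alpha_3=2(\alpha_3+\alpha_4)=1$ to vanish, which is impossible; hence its alternative~(2) holds, so the quantity $\alpha_0+\alpha_1+2\alpha_2$ of the new system, which equals $1+2\alpha_2$, lies in $\Z$. Thus $2\alpha_2\in\Z$, and therefore $\alpha_0+\alpha_1=-2\alpha_2\in\Z$.

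It remains to treat the degenerate case $2\alpha_2+1=0$, i.e.\ $\alpha_2=-1/2$: there $\alpha_0+\alpha_1=-2\alpha_2=1\in\Z$ immediately, and this case also absorbs the possibility $x\equiv 0$. I expect the main obstacle to be the bookkeeping in the middle step: checking that each of the four components of $\pi_2(x,y,z,w)$ has exactly the claimed pole order — in particular that $\pi_2(z)=t/x$ produces a pole of order precisely two at $t=\infty$ and stays holomorphic at $t=0$ — and that no new pole is introduced at a finite point $t=c\in\C^{*}$. This is the same type of Laurent-expansion computation used throughout Sections~1--3 and in the preceding propositions of this section, and it should go through routinely once the leading terms recorded in the first paragraph are tracked carefully.
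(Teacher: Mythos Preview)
Your proposal is correct and follows essentially the same route as the paper's proof: after disposing of the degenerate case $\alpha_2=-\tfrac12$ (equivalently $\alpha_2+\alpha_3+\alpha_4=0$) and of $x\equiv 0$ via Proposition~\ref{prop:exam2}, you apply $\pi_2$ to land in $B_4^{(1)}(2\alpha_4+\alpha_3,\alpha_3,\alpha_2,\alpha_1,0)$ with $\pi_2(z)$ having a higher-order pole at $t=\infty$, all components holomorphic at $t=0$, $a_{0,0}=2\alpha_4\neq 0$, $b_{0,0}=1/2$, and then invoke Proposition~\ref{prop:nec-z(n)-t=0holo-a_{0,0}nonzero-half}. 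The only differences are cosmetic: you pin down the pole order of $\pi_2(z)$ at $t=\infty$ as exactly two (correct under $2\alpha_2+1\neq 0$), whereas the paper simply says $n\geq 2$; and your worry about new poles at finite $c\in\C^{*}$ is unnecessary, since Proposition~\ref{prop:nec-z(n)-t=0holo-a_{0,0}nonzero-half} only constrains the behaviour at $t=0,\infty$ and $\pi_2$ preserves rationality once $x\not\equiv 0$ and $z\not\equiv 0$.
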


\begin{proof}
We may assume that $\alpha_2+\alpha_3+\alpha_4\neq 0.$ 
Moreover, 
by Proposition \ref{prop:exam2}, we can suppose that $x\not\equiv 0.$ 
\par
$\pi_2(x,y,z,w)$ is then a rational solution of 
$B_4^{(1)}(2\alpha_4+\alpha_3,\alpha_3,\alpha_2,\alpha_1,0)$ 
such that 
$\pi_2(z)$ has a pole of order $n \,(n\geq 2)$ at $t=\infty$ 
and 
all of $\pi_2(x,y,z,w)$ are holomorphic at $t=0.$ 
Moreover, 
for $\pi_2(x,y,z,w),$ 
$a_{0,0}=2\alpha_4, b_{0,0}=1/2.$ 
By Proposition \ref{prop:nec-z(n)-t=0holo-a_{0,0}nonzero-half}, 
we then obtain the necessary condition.
\end{proof}

\subsection{Summary}

\begin{proposition}
\label{prop:standard form II (1)}
{\it
Suppose that 
$\alpha_0-\alpha_1=0, \,\,\alpha_3+\alpha_4=1/2$ 
and 
for $B_4^{(1)}(\alpha_j)_{0\leq j \leq 4},$ 
there exists a rational solution. 
One of the following then occurs:
\newline
{\rm (1)}\quad $\alpha_0-\alpha_1=0, \,\,\alpha_3+\alpha_4=1/2, \,\,2\alpha_4\in\Z,$
\newline
{\rm (2)}\quad $\alpha_0-\alpha_1=0, \,\,\alpha_3+\alpha_4=1/2, \,\,\alpha_0+\alpha_1\in\Z.$ 
}
\end{proposition}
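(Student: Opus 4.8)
The plan is to assemble, organized by the pole order of $z$ at $t=\infty$, the case analysis already carried out in the three preceding subsections. By Proposition \ref{prop:inf-summary}, every rational solution of $B_4^{(1)}(\alpha_j)_{0\leq j\leq4}$ has $z$ with a pole of some order $n\geq1$ at $t=\infty$ while $x,y,w$ are holomorphic there. Hence it suffices to treat the two cases $n\geq2$ and $n=1$ separately and to check that, under the standing hypotheses $\alpha_0-\alpha_1=0$ and $\alpha_3+\alpha_4=1/2$, each branch lands in conclusion (1) or conclusion (2).

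First I would dispose of the case $n\geq2$. The first proposition of this section, which rests on Corollary \ref{coro:order-n}, shows that then either $\alpha_0-\alpha_1=0,\ \alpha_3=1/2,\ \alpha_4=0$, or $\alpha_0=\alpha_1=\alpha_2=\alpha_3=0,\ \alpha_4=1/2$. In the first possibility $2\alpha_4=0\in\Z$ and in the second $2\alpha_4=1\in\Z$, so both are instances of conclusion (1).

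Next I would handle $n=1$. The second proposition of this section splits the local behaviour at $t=0$ into the three mutually exclusive cases: (a) $x,y,z,w$ are all holomorphic at $t=0$ with $a_{0,0}=0$ and $c_{0,0}\neq0$; (b) $z$ has a pole at $t=0$ and $x,y,w$ are holomorphic there; (c) $y,w$ both have a pole of order one at $t=0$ and $x,z$ are holomorphic there. For case (a) the corresponding subsubsection proposition, via Proposition \ref{prop:nec-order1-holozero-zero-nonzero}, yields $2\alpha_4\in\Z$, hence conclusion (1). For case (b) the corresponding proposition gives either $\alpha_0=\alpha_1=\alpha_2=0$, whence $\alpha_0+\alpha_1=0\in\Z$ and we are in conclusion (2), or $\alpha_4\in\Z$, whence $2\alpha_4\in\Z$ and we are in conclusion (1). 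For case (c) the corresponding proposition gives directly $\alpha_0+\alpha_1\in\Z$, conclusion (2). Taking the union over all branches produces exactly the stated dichotomy.

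Since every individual branch has already been proved in the preceding subsections, there is no serious obstacle in this proposition; the only real work is bookkeeping. The main point to verify carefully is that the finitely many exceptional parameter tuples produced in the $n\geq2$ analysis and in case (b) of the $n=1$ analysis do satisfy $2\alpha_4\in\Z$ or $\alpha_0+\alpha_1\in\Z$, and that the three possibilities at $t=0$ for $n=1$ genuinely exhaust all cases — which is precisely the content of the second proposition of this section, so nothing new needs to be established.
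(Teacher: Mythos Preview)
Your proposal is correct and follows exactly the approach implicit in the paper: the proposition is placed in a subsection entitled ``Summary'' and is precisely the union of the case-by-case results proved in the preceding three subsections (pole order $n\geq2$ at $t=\infty$; and for $n=1$, the three subcases at $t=0$). Your bookkeeping---checking that the two exceptional tuples from the $n\geq2$ branch satisfy $2\alpha_4\in\Z$, and that $\alpha_0=\alpha_1=\alpha_2=0$ in case~(b) gives $\alpha_0+\alpha_1=0\in\Z$---matches what the paper does implicitly.
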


\begin{corollary}
\label{coro:standard form II (1)}
{\it
Suppose that 
$\alpha_0-\alpha_1=0, \,\,\alpha_3+\alpha_4=1/2$ 
and 
for $B_4^{(1)}(\alpha_j)_{0\leq j \leq 4},$ 
there exists a rational solution. 
By some B\"acklund transformations, 
the parameters can then be transformed 
so that one of the following occurs:
\newline
{\rm (1)}\quad $\alpha_0-\alpha_1=0, \,\,\alpha_3+\alpha_4=0,\,\alpha_4\neq 0,$
\newline
{\rm (2)}\quad $\alpha_0=\alpha_1=\alpha_2=0, \,\,\alpha_3+\alpha_4=1/2. $ 
\par
Especially, 
the parameters can be transformed into those of the standard form I 
if they satisfy one of the following:
\newline
(i)\quad $\alpha_0-\alpha_1=0, \,\,\alpha_3+\alpha_4=1/2, \,\,\alpha_4\in\Z,$ 
\newline
(ii)\quad $\alpha_0-\alpha_1=0, \,\,\alpha_3+\alpha_4=1/2, \,\,\alpha_0+\alpha_1\in\Z, \,\alpha_0+\alpha_1 \equiv 1 \,\mathrm{mod} \, 2.$
\par
If the above cases do not occur, 
the parameters can be transformed into those of {\rm (2)}. 
}
\end{corollary}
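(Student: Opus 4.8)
The plan is to start from Proposition \ref{prop:standard form II (1)}: any rational solution in standard form II has parameters with $\alpha_0=\alpha_1$, $\alpha_3+\alpha_4=1/2$ and, in addition, either (A) $2\alpha_4\in\Z$ or (B) $\alpha_0+\alpha_1\in\Z$. I then reduce each case using the shift operators $T_1,T_2,T_3,T_4$ together with $s_2,s_3,\pi_2$. The mechanism is purely at the level of the parameter lattice: $T_2$ translates $(\alpha_0+\alpha_1,\alpha_2)$ by $(-2,1)$ while fixing $\alpha_0-\alpha_1$ and $\alpha_3+\alpha_4$; $T_4$ translates $(\alpha_3,\alpha_4)$ by $(-1,1)$ while fixing $\alpha_0,\alpha_1,\alpha_2$ and the sum $\alpha_3+\alpha_4$; $s_3$ sends $\alpha_3+\alpha_4\mapsto\alpha_4$; and $s_2$ shifts $\alpha_3+\alpha_4$ by $\alpha_2$ and $\alpha_0+\alpha_1$ by $2\alpha_2$. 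Note that the defining relation forces $\alpha_2=-\alpha_0$ throughout standard form II.

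If $\alpha_4\in\Z$ (this is case (i)), a power of $T_4$ normalizes $\alpha_4=0$, $\alpha_3=1/2$; then $s_3$ is a genuine transformation (since $\alpha_3=1/2\neq 0$, and $w\not\equiv 0$ by Proposition \ref{prop:exam1}, as $\alpha_3+\alpha_4\neq 0$ and $\alpha_3\neq -1/2$) and carries the parameters to $(\alpha_0,\alpha_0,1/2-\alpha_0,-1/2,1/2)$, i.e. $\alpha_0-\alpha_1=0$, $\alpha_3+\alpha_4=0$, $\alpha_4=1/2\neq 0$ --- standard form I. If $\alpha_0+\alpha_1\in\Z$ is odd (this is case (ii)), a power of $T_2$ normalizes $\alpha_0=\alpha_1=1/2$, $\alpha_2=-1/2$; then $s_2$ is genuine (since $\alpha_2\neq 0$ gives $x\not\equiv z$, Proposition \ref{prop:backlund}) and carries the parameters to $(0,0,1/2,\alpha_3-1/2,\alpha_4)$, so $\alpha_0-\alpha_1=0$ and $\alpha_3+\alpha_4=0$; if the resulting $\alpha_4$ vanishes, one more $T_4$ makes $\alpha_4\neq 0$, giving standard form I. If $\alpha_0+\alpha_1\in\Z$ is even, a power of $T_2$ normalizes $\alpha_0=\alpha_1=\alpha_2=0$, which is standard form II(2).

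The only remaining possibility is case (A) with $\alpha_4\notin\Z$, i.e. $2\alpha_4$ odd and $\alpha_3\in\Z$; here $T_4$ normalizes $\alpha_4=1/2$, $\alpha_3=0$, so the parameters read $(\alpha_0,\alpha_0,-\alpha_0,0,1/2)$, which is standard form II(2) when $\alpha_0=0$. To finish I would revisit the case analysis of the earlier subsections of this section: in the branch where $z$ has a pole of order one at $t=\infty$ and $x,y,z,w$ are holomorphic at $t=0$, the residue identity $h_{\infty,0}-h_{0,0}\in\Z$ of Proposition \ref{rational-necessary-2} combined with $h_{\infty,0}=-1/4$ forces $\alpha_3^2\in 1/4+\Z$, which upgrades $2\alpha_4\in\Z$ to $\alpha_4\in\Z$ and returns us to case (i); the other branches (a pole of order $\ge 2$ at $t=\infty$, a pole at $t=0$, or $y,w$ having poles at $t=0$) already deliver standard form II(2) or case (B). Hence case (A) with $\alpha_4\notin\Z$ either does not occur or is standard form II(2). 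Assembling the four sub-analyses proves that one of (1), (2) holds, with (1) reached exactly under (i) or (ii).

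The main obstacle will be the bookkeeping rather than any single hard step: through every composition of $s_i$, $T_i$, $\pi_i$ one must track the residues of $\alpha_0\pm\alpha_1$ and of $\alpha_3+\alpha_4$ modulo $2\Z$ and $\Z$, verify the relation $\alpha_0+\alpha_1+2\alpha_2+2\alpha_3+2\alpha_4=1$ at each stage, and check (via Proposition \ref{prop:backlund} and Proposition \ref{prop:exam1}) that each B\"acklund transformation invoked is genuine rather than degenerating to the identity or to an infinite solution. Those verifications are routine given the explicit formulas for the transformations.
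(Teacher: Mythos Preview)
Your argument is correct, and since the paper offers no proof of this corollary (it is merely stated after Proposition~\ref{prop:standard form II (1)}), your reconstruction is essentially what the paper leaves to the reader. The parameter-shift arguments in your first two paragraphs are the routine part; the substance lies in your third paragraph, and that step deserves to be stated more sharply.

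The point is this: Proposition~\ref{prop:standard form II (1)} records only the disjunction $2\alpha_4\in\Z$ \emph{or} $\alpha_0+\alpha_1\in\Z$, but the underlying branch-by-branch analysis of Section~10 actually yields the stronger disjunction $\alpha_4\in\Z$ \emph{or} $\alpha_0+\alpha_1\in\Z$. Your residue computation shows this for the holomorphic-at-$0$ branch (combining $\alpha_4^2-\alpha_4\in\Z$ with $2\alpha_4\in\Z$ forces $\alpha_4\in\Z$), and the remaining branches (Propositions~10.1, 10.4, 10.5) already give $\alpha_4\in\Z$, or $\alpha_0+\alpha_1\in\Z$, or the parameter point $(0,0,0,0,1/2)$, which has $\alpha_0+\alpha_1=0\in\Z$. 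Once this strengthening is in hand, the trichotomy $\alpha_4\in\Z$ / $\alpha_0+\alpha_1$ odd / $\alpha_0+\alpha_1$ even corresponds exactly to (i), (ii), and case~(2), and your shift-operator reductions finish cleanly. I would recommend isolating this strengthened form of Proposition~\ref{prop:standard form II (1)} as a separate remark before invoking the transformations; it makes the logic of ``otherwise go to (2)'' transparent, since without it there is no purely group-theoretic way to move a generic point $(\alpha_0,\alpha_0,-\alpha_0,0,1/2)$ into the locus $\alpha_0=\alpha_1=\alpha_2=0$.
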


\section{Standard forms II$\cdots$ (2)}

In this section, 
we treat the case where 
$\alpha_0=\alpha_1=\alpha_2=0, \,\,\alpha_3+\alpha_4=1/2. $

\subsection{The case where $z$ has a pole of order $n \,(n\geq 2)$ at $t=\infty$}
By Proposition \ref{coro:order-n}, 
we can first prove the following proposition:

\begin{proposition}
{\it
Suppose that 
$\alpha_0=\alpha_1=\alpha_2=0, \,\,\alpha_3+\alpha_4=1/2$ 
and 
for $B_4^{(1)}(\alpha_j)_{0\leq j \leq 4},$ 
there exists a rational solution 
such that $z$ has a pole of order $n \,(n\geq 2)$ at $t=\infty$ 
and all of $x,y,w$ are holomorphic at $t=\infty.$ 
One of the following then occurs:
\newline
{\rm (1)}\quad $\alpha_0=\alpha_1=\alpha_2=\alpha_3=0, \,\,\alpha_4=1/2,$
\newline
{\rm (2)}\quad $\alpha_0=\alpha_1=\alpha_2=0, \,\alpha_3=1/2, \,\alpha_4=0.$
}
\end{proposition}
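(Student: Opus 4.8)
The plan is to reduce immediately to Corollary \ref{coro:order-n}. That corollary already establishes that whenever $B_4^{(1)}(\alpha_j)_{0\leq j\leq 4}$ admits a solution for which $z$ has a pole of order $n\geq 2$ at $t=\infty$ while $x,y,w$ are all holomorphic there, the parameters must satisfy either $\alpha_4=0$ or $\alpha_0=\alpha_1=\alpha_2=\alpha_3=0$ and $\alpha_4=1/2$. Since a rational solution is in particular such a meromorphic solution near $t=\infty$, the hypotheses of the present proposition place us squarely in that dichotomy, and the rest is bookkeeping against the standing assumptions $\alpha_0=\alpha_1=\alpha_2=0$ and $\alpha_3+\alpha_4=1/2$.

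Concretely, I would split into the two cases coming from Corollary \ref{coro:order-n}. If $\alpha_4=0$, then combining this with $\alpha_3+\alpha_4=1/2$ forces $\alpha_3=1/2$, and together with $\alpha_0=\alpha_1=\alpha_2=0$ this is exactly alternative {\rm (2)} of the statement. If instead $\alpha_0=\alpha_1=\alpha_2=\alpha_3=0$ and $\alpha_4=1/2$, this is literally alternative {\rm (1)}, and it is consistent with $\alpha_3+\alpha_4=1/2$. In either case one of the two listed parameter configurations holds, which proves the proposition.

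There is essentially no obstacle: all the analytic content — the determination of the leading Laurent behaviour of $(x,y,z,w)$ at $t=\infty$ in Proposition \ref{prop:z-inf}, the use of the B\"acklund transformation $s_3$ (together with $s_0,s_1,s_2$) in Corollary \ref{coro:z(n)-s_3} to lower the pole order to one, and the resulting parameter restrictions extracted in Corollary \ref{coro:order-n} — has already been carried out. The only step specific to this proposition is the elementary observation that $\alpha_4=0$ and $\alpha_3+\alpha_4=1/2$ give $\alpha_3=1/2$, which is immediate. Accordingly I would present this as a short corollary-style proof that simply cites Corollary \ref{coro:order-n} and reads off the two cases.
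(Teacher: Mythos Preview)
Your proposal is correct and matches the paper's own approach exactly: the paper also proves this proposition simply by invoking Corollary~\ref{coro:order-n} and reading off the two cases against the standing assumptions $\alpha_0=\alpha_1=\alpha_2=0$, $\alpha_3+\alpha_4=1/2$.
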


\subsection{The case where $z$ has a pole of order one at $t=\infty$}

In order to treat the case where $z$ has a pole of order one at $t=\infty,$ 
we prove the following lemma:

\begin{lemma}
{\it
Suppose that 
$\alpha_0=\alpha_1=\alpha_2=0, \,\,\alpha_3+\alpha_4=1/2$ 
and 
for $B_4^{(1)}(\alpha_j)_{0\leq j \leq 4},$ 
there exists a rational solution 
such that 
$z$ has a pole of order one at $t=\infty$ 
and 
$x,y,w$ are all holomorphic at $t=\infty.$ 
Moreover, 
assume that 
all of $x,y,w$ are holomorphic at $t=\infty.$ 
One of the following then occurs:
\newline
{\rm (1)}\quad $x,y,z,w$ are all holomorphic at $t=0$ and $c_{0,0}\neq0,$
\newline
{\rm (2)}\quad only $z$ has a pole at $t=0.$
}
\end{lemma}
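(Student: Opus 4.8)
The plan is to combine the three local possibilities at $t=0$ listed in Proposition~\ref{prop:t=0behavior} with the integrality constraint $h_{\infty,0}-h_{0,0}\in\Z$ of Proposition~\ref{rational-necessary-2}. First I would compute $h_{\infty,0}$: since $z$ has a pole of order one at $t=\infty$, Proposition~\ref{prop:hinf}(2) gives
\begin{equation*}
h_{\infty,0}=\frac14(\alpha_0-\alpha_1)^2+(\alpha_3+\alpha_4)^2-(\alpha_3+\alpha_4),
\end{equation*}
which under the standing hypotheses $\alpha_0=\alpha_1$ and $\alpha_3+\alpha_4=1/2$ equals $-1/4$. Since the solution is rational, $h_{\infty,0}-h_{0,0}\in\Z$, so $h_{0,0}$ is not an integer; in particular $h_{0,0}\neq 0$.

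Next I would eliminate two of the three cases of Proposition~\ref{prop:t=0behavior}. In the case where $y,w$ both have a pole at $t=0$ and $x,z$ are holomorphic, Proposition~\ref{prop:hzero-3} gives $h_{0,0}=\alpha_2(\alpha_0+\alpha_1+\alpha_2)$, which vanishes because $\alpha_2=0$; this contradicts $h_{0,0}\neq 0$, so this case does not occur. In the case where $x,y,z,w$ are all holomorphic at $t=0$, Proposition~\ref{prop:a0-determine} gives $a_{0,0}=0$ or $a_{0,0}=\alpha_0-\alpha_1=0$, so $a_{0,0}=0$; then Proposition~\ref{prop:hzero-1}(1) states that $h_{0,0}=0$ if $c_{0,0}=0$ and $h_{0,0}=\alpha_3(\alpha_3+2\alpha_4-1)$ if $c_{0,0}\neq 0$. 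The first alternative again contradicts $h_{0,0}\neq 0$, so necessarily $c_{0,0}\neq 0$, which is conclusion~(1).

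Finally, the only remaining possibility in Proposition~\ref{prop:t=0behavior} is that $z$ has a pole at $t=0$ while $x,y,w$ are holomorphic there, which is conclusion~(2); since the trichotomy of Proposition~\ref{prop:t=0behavior} is then exhausted, the lemma follows. I do not anticipate any real obstacle: the argument is a bookkeeping exercise using the already-computed constants $h_{\infty,0}$ and $h_{0,0}$. The only point requiring care is substituting $\alpha_4=1/2-\alpha_3$ consistently so that the value $h_{\infty,0}=-1/4$, and hence the non-integrality of $h_{0,0}$, is correctly pinned down.
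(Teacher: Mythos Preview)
Your proposal is correct and follows essentially the same route as the paper: compute $h_{\infty,0}=-1/4$ from Proposition~\ref{prop:hinf}(2), invoke $h_{\infty,0}-h_{0,0}\in\Z$ from Proposition~\ref{rational-necessary-2}, and then use Propositions~\ref{prop:hzero-1} and~\ref{prop:hzero-3} to rule out the $(y,w)$-pole case and the holomorphic case with $c_{0,0}=0$. The paper's proof is terser but identical in substance.
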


\begin{proof}
From Proposition \ref{prop:hinf}, 
we find that $h_{\infty,0}=-1/4.$ 
Thus, the lemma follows from Propositions \ref{prop:hzero-1}, \ref{prop:hzero-2}, \ref{prop:hzero-3} and \ref{rational-necessary-2}.

\end{proof}

\subsubsection{The case where $x,y,z,w$ are all holomorphic at $t=0$}

By Proposition \ref{prop:nec-order1-holozero-zero-nonzero}, 
we obtain the following proposition:

\begin{proposition}
{\it
Suppose that 
$\alpha_0=\alpha_1=\alpha_2=0, \,\,\alpha_3+\alpha_4=1/2$ 
and 
for $B_4^{(1)}(\alpha_j)_{0\leq j \leq 4},$ 
there exists a rational solution 
such that 
$z$ has a pole of order one at $t=\infty$ 
and $x,y,w$ are all holomorphic at $t=\infty.$ 
Moreover, 
assume that 
$x,y,z,w$ are all holomorphic at $t=0$ and $c_{0,0}\neq0.$ 
Then, $\alpha_0=\alpha_1=\alpha_2=0, \,\,\alpha_3+\alpha_4=1/2, \,\,2\alpha_4\in\Z.$
}
\end{proposition}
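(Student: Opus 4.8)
The plan is to deduce the proposition directly from Proposition~\ref{prop:nec-order1-holozero-zero-nonzero}, so the only work is checking that its hypotheses are in force. First I would note that the standing assumption $\alpha_0=\alpha_1=\alpha_2=0$ gives in particular $\alpha_0-\alpha_1=0$. Since we are assuming that $x,y,z,w$ are all holomorphic at $t=0$, Proposition~\ref{prop:a0-determine} tells us $a_{0,0}\in\{0,\ \alpha_0-\alpha_1\}$, and both values coincide with $0$ here; hence $a_{0,0}=0$ automatically. Thus the coefficient hypothesis ``$a_{0,0}=0$'' of Proposition~\ref{prop:nec-order1-holozero-zero-nonzero} holds, and the remaining hypothesis ``$c_{0,0}\neq0$'' is precisely what is being assumed by hand in the present statement.

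Next I would observe that the other hypotheses match verbatim: we are given a rational solution of $B_4^{(1)}(\alpha_j)_{0\leq j\leq4}$ for which $z$ has a pole of order one at $t=\infty$ with $x,y,w$ all holomorphic there, and for which $x,y,z,w$ are all holomorphic at $t=0$. Therefore Proposition~\ref{prop:nec-order1-holozero-zero-nonzero} applies and yields $\alpha_0-\alpha_1\in\Z$ and $2\alpha_4\in\Z$. The first conclusion is vacuous in our situation because $\alpha_0-\alpha_1=0$, while $2\alpha_4\in\Z$ is the genuinely new information.

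Finally I would simply collect this with the standing relations: we keep $\alpha_0=\alpha_1=\alpha_2=0$ and $\alpha_3+\alpha_4=1/2$ unchanged, and we have now adjoined $2\alpha_4\in\Z$, which is exactly the asserted conclusion. There is essentially no obstacle at this level: the entire analytic content (the residue identities of Section~4, the shift by $s_4$ sending $\alpha_4\mapsto-\alpha_4$, and the resulting pair of integrality conditions whose difference gives $2\alpha_4\in\Z$) is already packaged inside Proposition~\ref{prop:nec-order1-holozero-zero-nonzero}. The one point that must be made explicit, and the only place a careless reader could stumble, is verifying that the hypothesis $a_{0,0}=0$ of that proposition is met; this is handled by Proposition~\ref{prop:a0-determine} together with $\alpha_0-\alpha_1=0$.
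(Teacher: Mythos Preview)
Your proposal is correct and is exactly the paper's approach: the paper deduces this proposition directly from Proposition~\ref{prop:nec-order1-holozero-zero-nonzero}. Your additional explicit verification that $a_{0,0}=0$ (via Proposition~\ref{prop:a0-determine} and $\alpha_0-\alpha_1=0$) is the only detail the paper leaves implicit, and you have handled it correctly.
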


\subsubsection{The case where $z$ has a pole at $t=0$}

\begin{proposition}
{\it
Suppose that 
$\alpha_0=\alpha_1=\alpha_2=0, \,\,\alpha_3+\alpha_4=1/2$ 
and 
for $B_4^{(1)}(\alpha_j)_{0\leq j \leq 4},$ 
there exists a rational solution 
such that 
$z$ has a pole of order one at $t=\infty$ 
and $x,y,w$ are all holomorphic at $t=\infty.$ 
Moreover, 
assume that 
only $z$ has a pole at $t=0.$ 
Then, 
$\alpha_0=\alpha_1=\alpha_2=0, \,\,\alpha_3+\alpha_4=1/2, \,\alpha_4\in\Z.$
}
\end{proposition}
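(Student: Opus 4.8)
The plan is to follow the proof of the corresponding proposition of Section~10, specialised to $\alpha_0=\alpha_1=\alpha_2=0$; the simplification here is that the auxiliary sub-case ``$\alpha_2+\alpha_3=0$'' of that argument cannot occur, since $\alpha_2+\alpha_3=\alpha_3$. First I would dispose of the case $\alpha_4=0$, where the conclusion $\alpha_4\in\Z$ is immediate, so that from now on $\alpha_4\neq 0$ and hence $\alpha_3=1/2-\alpha_4\neq 1/2$ (in particular $w\not\equiv 0$, else Proposition~\ref{prop:exam1} would force $z$ to be a polynomial, contradicting that $z$ has a pole at $t=0$). Next I would collect the Hamiltonian information: since $z$ has a pole of order one at $t=\infty$ and $\alpha_0-\alpha_1=0$, $\alpha_3+\alpha_4=1/2$, Proposition~\ref{prop:hinf}~(2) gives $h_{\infty,0}=-1/4$; since $z$ has a pole at $t=0$ with $x,y,w$ holomorphic there, Proposition~\ref{prop:hzero-2} together with $\alpha_3+\alpha_4=1/2$ gives $h_{0,0}=-\alpha_3^2$. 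Hence Proposition~\ref{rational-necessary-2} yields $\alpha_3^2-1/4\in\Z$, and in particular $\alpha_3\notin\Z$ (so $\alpha_3\neq 0$).

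Since $\alpha_3\neq 0$, the next step is to apply $s_3$. By Corollary~\ref{coro:t=0-z(n)-s_3}, $s_3(x,y,z,w)$ is a rational solution of $B_4^{(1)}(0,0,\alpha_3,-\alpha_3,1/2)$ which is holomorphic at $t=0$; since $x,y,w$ are unchanged and still holomorphic at $t=\infty$, Proposition~\ref{prop:inf-summary} forces $s_3(z)$ to have a pole at $t=\infty$, and Corollary~\ref{coro:order-n} rules out order $\geq 2$ (neither $\alpha_4=0$ nor $\alpha_0=\alpha_1=\alpha_2=\alpha_3=0$ holds for the new parameters), so $s_3(z)$ has a pole of order one at $t=\infty$. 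By Proposition~\ref{prop:a0-determine} the constant term of $s_3(x)$ at $t=0$ is $a_{0,0}=0$; and if $c_{0,0}\neq 0$ for $s_3(x,y,z,w)$, then Propositions~\ref{prop:hinf}~(2) and \ref{prop:hzero-1}~(1)-(ii) would give $h_{\infty,0}-h_{0,0}=-1/4\in\Z$, contradicting Proposition~\ref{rational-necessary-2}; hence $c_{0,0}=0$ as well.

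Now I would apply $\pi_2$ to obtain a rational solution of $B_4^{(1)}(1-\alpha_3,-\alpha_3,\alpha_3,0,0)$. Because $\alpha_2=\alpha_3\neq 0$ for the post-$s_3$ system and $a_{0,0}=c_{0,0}=0$ there, Lemmas~\ref{lem:holo-1} and \ref{lem:holo-2} determine the leading Laurent coefficients of $s_3(x,y,z,w)$ at $t=0$ (in particular $c_{0,1}=1$ and $a_{0,1}=2\alpha_4\neq 0$), while Proposition~\ref{prop:inf-summary}~(2) gives $a_{\infty,-1}=-2\alpha_4\neq 0$ for $s_3(x)$ at $t=\infty$. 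Substituting these into the explicit formulas for $\pi_2$, one checks that $\pi_2 s_3(z)=t/x$ has a pole of order exactly two at $t=\infty$ with $\pi_2 s_3(x,y,w)$ holomorphic there (Proposition~\ref{prop:inf-summary}), and that $\pi_2 s_3(x,y,z,w)$ is holomorphic at $t=0$ with $a_{0,0}=1$ and $b_{0,0}=\alpha_3$; for the new parameters these are exactly $\alpha_0-\alpha_1\,(=1\neq 0)$ and $-\alpha_1/(\alpha_0-\alpha_1)\,(=\alpha_3\neq 1/2)$. Since the new $\alpha_4$ equals $0$, Proposition~\ref{prop:nec-z(n)-t=0holo-a_{0,0}nonzero-nonhalf} now applies and gives $\alpha_0+\alpha_1=1-2\alpha_3\in\Z$, i.e. $2\alpha_3\in\Z$; combined with $\alpha_3\notin\Z$ this forces $\alpha_3\in 1/2+\Z$, whence $\alpha_4=1/2-\alpha_3\in\Z$.

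The hard part is the bookkeeping of the third paragraph: one must carry the Laurent expansions carefully through $s_3$ and then through the rational map $\pi_2$, verify that no component of $\pi_2 s_3(x,y,z,w)$ degenerates to $\infty$ (so that Propositions~\ref{prop:inf-summary} and \ref{prop:nec-z(n)-t=0holo-a_{0,0}nonzero-nonhalf} genuinely apply), and confirm that the pole order of $\pi_2 s_3(z)$ at $t=\infty$ is at least two and not accidentally one — this last point is precisely where the reduction $\alpha_4\neq 0$ (equivalently $\alpha_3\neq 1/2$, giving $a_{\infty,-1}=-2\alpha_4\neq 0$) is essential. Everything else is a routine application of the cited results.
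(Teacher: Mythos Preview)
Your proposal is correct and follows essentially the same route as the paper's proof: reduce to $\alpha_4\neq 0$, use the Hamiltonian residue to get $\alpha_3\notin\Z$, apply $s_3$ and argue via the Hamiltonian again that the resulting solution has $a_{0,0}=c_{0,0}=0$ at $t=0$, then apply $\pi_2$ (with Lemmas~\ref{lem:holo-1} and \ref{lem:holo-2} feeding the relevant coefficients) and invoke Proposition~\ref{prop:nec-z(n)-t=0holo-a_{0,0}nonzero-nonhalf}. You supply more detail than the paper --- in particular, you justify that $s_3(z)$ has a simple pole at $t=\infty$ via Corollary~\ref{coro:order-n}, and you pin down the pole order of $\pi_2 s_3(z)$ as exactly two --- whereas the paper simply asserts these facts; the only cosmetic point is that your parenthetical $w\not\equiv 0$ via Proposition~\ref{prop:exam1} tacitly uses $\alpha_3\neq 0$, which you establish a few lines later, so it would read more cleanly moved to just before the application of $s_3$.
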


\begin{proof}
We may assume that $\alpha_3\neq 1/2.$ 
Moreover, 
from Propositions \ref{prop:hinf} and \ref{prop:hzero-2}, it follows that 
$h_{\infty,0}-h_{0,0}=\alpha_4^2-\alpha_4\in\Z,$ 
which implies that $\alpha_3\not\in\Z$ because $\alpha_3+\alpha_4=1/2.$
\par
$s_3(x,y,z,w)$ is then a rational solution of $B_4^{(1)}(0,0,\alpha_3,-\alpha_3,1/2)$ 
such that 
$s_3(z)$ has a pole of order one at $t=\infty$ 
and all of $s_3(x,y,z,w)$ are holomorphic at $t=0.$ 
Furthermore, 
for $s_3(x,y,z,w),$ $a_{0,0}=0.$ 
If $c_{0,0}\neq0$ for $s_3(x,y,z,w),$ 
it follows  
from Propositions \ref{prop:hinf} and \ref{prop:hzero-1} 
that $h_{\infty,0}-h_{0,0}=-1/4\in\Z,$ which is impossible. 
Thus, for $s_3(x,y,z,w),$ $a_{0,0}=c_{0,0}=0.$ 
Therefore, 
from Lemmas \ref{lem:holo-1} and \ref{lem:holo-2}, 
we find that 
$s_3\pi_2(x,y,z,w)$ is a rational solution of $B_4^{(1)}(1-\alpha_3,-\alpha_3,\alpha_3,0,0)$ 
such that 
$s_3\pi_2(z)$ has a pole of order $n \,(n\geq 2)$ at $t=\infty$ and 
all of $s_3\pi_2(x,y,z,w)$ are holomorphic at $t=0.$ 
Moreover, for $\pi_2s_3(x,y,z,w),$ $a_{0,0}=1, b_{0,0}=\alpha_3,$ 
which implies that $1-2\alpha_3\in\Z,$ that is, $\alpha_4\in\Z$ from Proposition \ref{prop:nec-z(n)-t=0holo-a_{0,0}nonzero-nonhalf}. 
\end{proof}

\subsection{Summary}

Let us summarize the results in this section.

\begin{proposition}
\label{prop:standard form II (2)}
{\it
Suppose that 
$\alpha_0=\alpha_1=\alpha_2=0, \,\,\alpha_3+\alpha_4=1/2$ 
and 
for $B_4^{(1)}(\alpha_j)_{0\leq j \leq 4},$ 
there exists a rational solution. 
Then, $\alpha_0=\alpha_1=\alpha_2=0, \,\,\alpha_3+\alpha_4=1/2, \,2\alpha_4\in\Z.$
}
\end{proposition}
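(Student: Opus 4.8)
The plan is to prove Proposition~\ref{prop:standard form II (2)} by reducing it to the two cases already analyzed in the preceding subsections, namely the case where $z$ has a pole of order $n\geq 2$ at $t=\infty$ and the case where $z$ has a pole of order one at $t=\infty$. By Proposition~\ref{prop:inf-summary}, for any rational solution of $B_4^{(1)}(\alpha_j)_{0\leq j\leq 4}$ the function $z$ must have a pole of some order $n\geq 1$ at $t=\infty$ while $x,y,w$ are all holomorphic there, so these two cases are exhaustive. Thus the strategy is simply to assemble the conclusions of the two immediately preceding propositions under the running hypothesis $\alpha_0=\alpha_1=\alpha_2=0$, $\alpha_3+\alpha_4=1/2$.

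First I would dispose of the case where $z$ has a pole of order $n\geq 2$ at $t=\infty$. The proposition in Subsection~12.1 states that then either (1) $\alpha_0=\alpha_1=\alpha_2=\alpha_3=0,\ \alpha_4=1/2$ or (2) $\alpha_0=\alpha_1=\alpha_2=0,\ \alpha_3=1/2,\ \alpha_4=0$. In both cases $\alpha_4\in\{0,1/2\}$, so $2\alpha_4\in\Z$ and the desired conclusion holds. Next I would treat the case where $z$ has a pole of order one at $t=\infty$ with $x,y,w$ holomorphic at $t=\infty$. By the lemma in Subsection~12.2, under the present hypotheses the behavior at $t=0$ is one of: (1) $x,y,z,w$ all holomorphic at $t=0$ with $c_{0,0}\neq 0$, or (2) only $z$ has a pole at $t=0$. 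In subcase (1), the proposition in Subsubsection~12.2.1 gives $2\alpha_4\in\Z$ directly. In subcase (2), the proposition in Subsubsection~12.2.2 gives $\alpha_4\in\Z$, which of course implies $2\alpha_4\in\Z$. Combining all branches yields $2\alpha_4\in\Z$ in every case, which is precisely the claim.

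The proof is therefore essentially a bookkeeping argument: there are no new calculations, only a case split governed by the pole order of $z$ at $t=\infty$ (by Proposition~\ref{prop:inf-summary}) and, within the order-one case, a further split governed by the local behavior at $t=0$ (by the lemma cited above, which itself rests on the Hamiltonian computation $h_{\infty,0}=-1/4$ of Proposition~\ref{prop:hinf} together with Propositions~\ref{prop:hzero-1}, \ref{prop:hzero-2}, \ref{prop:hzero-3} and \ref{rational-necessary-2}). The only point requiring a small amount of care is to verify that the hypothesis lists of the cited propositions are compatible with $\alpha_0=\alpha_1=\alpha_2=0$, $\alpha_3+\alpha_4=1/2$ — but each of those propositions was proved under exactly this standing assumption, so no adjustment is needed.

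The main (and only) obstacle worth flagging is ensuring completeness of the case analysis: one must check that Proposition~\ref{prop:inf-summary} indeed forces $z$ (and not $x$, $y$, or $w$) to carry the pole at $t=\infty$, and that the lemma of Subsection~12.2 correctly enumerates \emph{all} admissible configurations at $t=0$ when $h_{\infty,0}=-1/4$. Granting those two structural inputs, the conclusion $2\alpha_4\in\Z$ follows immediately by collecting the already-established necessary conditions. I would close with the remark that this proposition, together with the results of Section~11, shows that the standard form II case ultimately reduces (via the shift operators $T_i$ and the transformations $s_i,\pi_i$) to the standard form I, so that all rational solutions fall into the single orbit described in Theorem~\ref{them:B4}.
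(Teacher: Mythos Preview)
Your proposal is correct and follows exactly the same approach as the paper: the proposition is stated in the paper as a summary of the immediately preceding subsections, and your proof is precisely the intended bookkeeping --- split on the pole order of $z$ at $t=\infty$ via Proposition~\ref{prop:inf-summary}, invoke Corollary~\ref{coro:order-n} for $n\geq 2$, and for $n=1$ use the lemma on the $t=0$ behavior followed by the two sub-propositions. The only cosmetic issue is that your subsection numbers are off by one (this material is Section~11 in the paper, not Section~12), but the mathematical content and the logical structure match the paper exactly.
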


\begin{corollary}
\label{coro:standard form II (2)}
{\it
Suppose that 
$\alpha_0=\alpha_1=\alpha_2=0, \,\,\alpha_3+\alpha_4=1/2$ 
and 
for $B_4^{(1)}(\alpha_j)_{0\leq j \leq 4},$ 
there exists a rational solution. 
By some B\"acklund transformations, 
the parameters can then be transformed so that 
one of the following occurs:
\newline
{\rm (1)}\quad $\alpha_0-\alpha_1=0, \,\,\alpha_3+\alpha_4=0,\,\alpha_4\neq 0,$
\newline
{\rm (2)}\quad $\alpha_0=\alpha_1=\alpha_2=\alpha_3=0, \,\,\alpha_4=1/2. $ 
\par
Especially, 
the parameters can be transformed into the standard form I, 
if $\alpha_0=\alpha_1=\alpha_2=0, \,\,\alpha_3+\alpha_4=1/2, \,\alpha_4\in\Z.$ 
Otherwise, 
the parameters can be transformed into those of case (2). 
}
\end{corollary}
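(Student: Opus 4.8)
Under the hypotheses of the corollary, Proposition~\ref{prop:standard form II (2)} already supplies the arithmetic constraint $2\alpha_4\in\Z$ in addition to $\alpha_0=\alpha_1=\alpha_2=0$ and $\alpha_3+\alpha_4=1/2$; equivalently $\alpha_3=1/2-\alpha_4$ with either $\alpha_4\in\Z$ or $\alpha_4\in\tfrac12+\Z$. Everything that remains is a bookkeeping argument for the action of the transformations $s_0,\dots,s_4,\pi_1,\pi_2$ of Section~5 and of the shift operators $T_1,\dots,T_4$ of the preceding subsection on the parameter $5$-tuple, and I would organise it according to these two cases.

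If $\alpha_4\in\Z$, so $\alpha_3\in\tfrac12+\Z$, I would start from $(\alpha_0,\alpha_1,\alpha_2,\alpha_3,\alpha_4)=(0,0,0,\tfrac12-\alpha_4,\alpha_4)$ and apply $\pi_2$. Since $2\alpha_4+\alpha_3=\alpha_4+\tfrac12$ and $\alpha_0-\alpha_1=0$, the image is $(\alpha_4+\tfrac12,\ \tfrac12-\alpha_4,\ 0,\ 0,\ 0)$, for which $\alpha_0-\alpha_1=2\alpha_4$ is an even integer. A suitable (possibly trivial) power of $T_1$, which shifts $(\alpha_0,\alpha_1,\alpha_2,\alpha_3,\alpha_4)$ by $(1,-1,0,0,0)$, brings this to $(\tfrac12,\tfrac12,0,0,0)$, and one application of $T_4$, which shifts by $(0,0,0,-1,1)$, gives $(\tfrac12,\tfrac12,0,-1,1)$. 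This is the standard form~I ($\alpha_0-\alpha_1=0$, $\alpha_3+\alpha_4=0$, $\alpha_4=1\neq0$), whose rational solution is the one displayed in Corollary~\ref{coro:solution-1}.

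If instead $\alpha_4\in\tfrac12+\Z$, so $\alpha_3=\tfrac12-\alpha_4\in\Z$, I would start from $(0,0,0,\alpha_3,\alpha_4)$ and apply $T_4^{\alpha_3}$ (using the inverse of $T_4$ when $\alpha_3<0$); since $T_4$ preserves $\alpha_3+\alpha_4=\tfrac12$ and moves $\alpha_3$ to $\alpha_3-1$, this lands on $(0,0,0,0,\tfrac12)$, i.e. case~(2) of the statement. Putting the two cases together gives the claimed dichotomy, and feeding case~(2) into the non-existence result of Section~12 is what will eventually remove it when the main theorem is assembled.

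The only point that needs a little care is the standard one for this type of reduction: along the chains above a rational solution must be carried to a rational solution (or, in degenerate instances, to one of the infinite solutions classified in Section~6) rather than becoming undefined, so that the phrase ``the parameters can be transformed'' is legitimate. This is harmless here because $z\not\equiv0$ for any solution by Proposition~\ref{prop:backlund}, and $\pi_2$, the $s_i$ and the $T_j$ are rational in $x,y,z,w,t$; thus the purely combinatorial computation above suffices, and I do not expect any obstacle beyond checking signs and exponents in the two short transformation chains.
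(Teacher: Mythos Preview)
Your argument is correct and is precisely in the spirit of the shift-operator reductions the paper carries out in Proposition~\ref{prop:standard forms}; the paper states this corollary without proof, and your two-case computation with $\pi_2$, $T_1$, $T_4$ is the natural way to fill it in. One small point: in your final paragraph the relevant check for $\pi_2$ is not only $z\not\equiv 0$ (which guards $\pi_2(x)=t/z$) but also $x\not\equiv 0$ (which guards $\pi_2(z)=t/x$); this follows here from Proposition~\ref{prop:exam2}, since at $(0,0,0,\tfrac12-\alpha_4,\alpha_4)$ neither of the two exceptional parameter conditions there is met.
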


\section{Standard form II$\cdots$ (3)}
In this section, 
we treat the case where 
$\alpha_0=\alpha_1=\alpha_2=\alpha_3=0, \,\alpha_4=1/2. $ 
For this purpose, 
let us first prove the following lemma:

\begin{lemma}
\label{lem:point}
{\it
Suppose that 
for $B_4^{(1)} (0,0,0,0,1/2),$ 
there exists a rational solution. 
$z$ then has a pole of order $n \,\,(n\geq 2)$ at $t=\infty.$ 
}
\end{lemma}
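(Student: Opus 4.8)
The plan is to argue by contradiction: assume that $B_4^{(1)}(0,0,0,0,1/2)$ has a rational solution for which $z$ does \emph{not} have a pole of order $n\ge 2$ at $t=\infty$, and derive a contradiction. By Proposition \ref{prop:inf-summary}, for any meromorphic solution at $t=\infty$ the variable $z$ has a pole of some order $n\ge 1$ at $t=\infty$ while $x,y,w$ are holomorphic there; so the only remaining possibility to rule out is $n=1$. Thus the whole task reduces to showing that $B_4^{(1)}(0,0,0,0,1/2)$ has no rational solution with $z$ having a pole of order exactly one at $t=\infty$.

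\textbf{Key steps.} First I would invoke Proposition \ref{prop:inf-summary}(2): with $\alpha_4=1/2\neq 0$ the hypothesis $n=1$ is consistent, and the solution is \emph{uniquely} expanded near $t=\infty$ with $x=(\alpha_0-\alpha_1)+\cdots=0+\cdots$, $y=\tfrac12+\cdots$, $z=t+\cdots$, $w=\cdots$, since $\alpha_0-\alpha_1=0$ and $\alpha_3+\alpha_4=1/2$. Next I would turn to the behaviour at $t=0$: by Proposition \ref{prop:t=0behavior} one of three cases occurs (all of $x,y,z,w$ holomorphic; $z$ has a pole; or $y,w$ have a pole). In each case I compute $h_{0,0}$ via Propositions \ref{prop:hzero-1}, \ref{prop:hzero-2}, \ref{prop:hzero-3}. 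Meanwhile Proposition \ref{prop:hinf}(2) gives $h_{\infty,0}=\tfrac14(\alpha_0-\alpha_1)^2+(\alpha_3+\alpha_4)^2-(\alpha_3+\alpha_4)=0+\tfrac14-\tfrac12=-\tfrac14$. The necessary condition $h_{\infty,0}-h_{0,0}\in\Z$ from Proposition \ref{rational-necessary-2} then forces $h_{0,0}\equiv -\tfrac14 \pmod 1$, i.e. $h_{0,0}\in -\tfrac14+\Z$. But with all parameters zero except $\alpha_4=1/2$, each formula for $h_{0,0}$ evaluates to a value that is \emph{not} congruent to $-1/4$: the formulas give $h_{0,0}\in\{0,\ \alpha_3(\alpha_3+2\alpha_4-1),\ \tfrac14(\alpha_0-\alpha_1)^2-\tfrac14(2\alpha_4-1)^2,\ -\alpha_0\alpha_1,\ \alpha_2(\alpha_0+\alpha_1+\alpha_2),\dots\}$, all of which are $0$ (using $\alpha_0=\alpha_1=\alpha_2=\alpha_3=0$, $\alpha_4=1/2$). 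Hence $h_{0,0}=0\notin -\tfrac14+\Z$, a contradiction. One caveat: when $z$ has a pole at $t=0$ the relevant formula is Proposition \ref{prop:hzero-2}, giving again $\tfrac14(\alpha_0-\alpha_1)^2+\alpha_3(\alpha_3+2\alpha_4-1)=0$, and the $(y,w)$-pole case gives $\alpha_2(\alpha_0+\alpha_1+\alpha_2)=0$; both are likewise non-congruent to $-1/4$.

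\textbf{Main obstacle.} The delicate point is the case analysis at $t=0$: one must be sure that \emph{every} admissible local behaviour at $t=0$ has been enumerated (Proposition \ref{prop:t=0behavior} and its refinements in Section 2) and that in each the value of $h_{0,0}$ really lands in $\Z$ rather than in $-\tfrac14+\Z$. A secondary subtlety is handling the possibility that the purported order-one solution at $t=\infty$ might simultaneously coincide with an infinite solution (some variable $\equiv\infty$) or with one of the explicit examples of Section 6; but since we are assuming a genuine rational solution and since Proposition \ref{prop:exam2}/\ref{prop:exam1} already pin down those degenerate cases (none of which has the parameters $(0,0,0,0,1/2)$ with $z$ of pole order one), this is quickly dispatched. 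Once the parity obstruction $h_{\infty,0}-h_{0,0}\in\Z$ fails in all cases, the order-one possibility is eliminated, and by Proposition \ref{prop:inf-summary} only $n\ge 2$ remains, which is exactly the assertion of the lemma.
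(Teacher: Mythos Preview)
Your proposal is correct and follows essentially the same argument as the paper: assume $n=1$, compute $h_{\infty,0}=-1/4$ via Proposition~\ref{prop:hinf}(2), check that every admissible behaviour at $t=0$ (Propositions~\ref{prop:hzero-1}, \ref{prop:hzero-2}, \ref{prop:hzero-3}) yields $h_{0,0}=0$ for these parameters, and contradict Proposition~\ref{rational-necessary-2}. Your extra remarks about uniqueness and infinite solutions are harmless but unnecessary for the argument.
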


\begin{proof}
Suppose that $z$ has a pole of order one at $t=\infty.$ 
By Proposition \ref{prop:hinf}, 
we then see that $h_{\infty,0}=-1/4.$ 
\par
On the other hand, 
by Propositions \ref{prop:hzero-1}, \ref{prop:hzero-2}, and \ref{prop:hzero-3}, 
we find that $h_{0,0}=0.$ 
Thus, 
it follows that $h_{\infty,0}-h_{0,0}=-1/4 \not\in\Z,$ 
which contradicts Proposition \ref{rational-necessary-2}.

\end{proof}

We can then prove the following proposition:

\begin{proposition}
\label{prop:standard form II (3)}
{\it
For $B_4^{(1)}(0,0,0,0,1/2),$ 
there exists no rational solution. 
}
\end{proposition}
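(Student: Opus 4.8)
The plan is to show that $B_4^{(1)}(0,0,0,0,1/2)$ admits no rational solution by combining Lemma \ref{lem:point} with the $t=0$ analysis and the remaining B\"acklund-transformation bookkeeping. By Lemma \ref{lem:point}, any rational solution must have $z$ with a pole of order $n\geq 2$ at $t=\infty$ (and $x,y,w$ holomorphic there). I would then invoke Proposition \ref{prop:inf-summary}(1), which forces the leading behaviour at $t=\infty$ and, since $\alpha_3=0$ here, gives $w=O(t^{-n})$ with $w$ holomorphic; in particular the ``$\alpha_4=0$'' branch (i) of that proposition is excluded because $\alpha_4=1/2\neq 0$, so we are in branch (ii), which is exactly $\alpha_0=\alpha_1=\alpha_2=\alpha_3=0,\ \alpha_4=1/2$ — consistent, so no contradiction yet from $t=\infty$ alone.

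The next step is to use the $t=0$ and $t=c$ information together with the Hamiltonian identity. First I would classify the behaviour at $t=0$ via Proposition \ref{prop:t=0behavior}: either $x,y,z,w$ are all holomorphic at $t=0$, or $z$ has a pole at $t=0$ with $x,y,w$ holomorphic, or $y,w$ both have a pole at $t=0$ with $x,z$ holomorphic. In each case Propositions \ref{prop:hzero-1}, \ref{prop:hzero-2}, \ref{prop:hzero-3} give $h_{0,0}$ in terms of the parameters; with $\alpha_0=\alpha_1=\alpha_2=\alpha_3=0$ every one of those formulas evaluates to $h_{0,0}=0$. On the $t=\infty$ side, Proposition \ref{prop:hinf}(1) (the $n\geq 2$ case) gives $h_{\infty,0}=\tfrac14(\alpha_0-\alpha_1)^2+\alpha_3(\alpha_3+2\alpha_4-1)=0$ as well. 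So the residue obstruction $h_{\infty,0}-h_{0,0}\in\Z$ from Proposition \ref{rational-necessary-2} is satisfied and does not by itself kill this case — which is why a more refined argument is needed.

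The decisive step, and the one I expect to be the main obstacle, is to rule out the $n\geq 2$ solution directly. I would apply $\pi_2$, which sends $B_4^{(1)}(0,0,0,0,1/2)$ to $B_4^{(1)}(2\alpha_4+\alpha_3,\alpha_3,\alpha_2,\alpha_1,(\alpha_0-\alpha_1)/2)=B_4^{(1)}(1,0,0,0,0)$, and track where the pole of $z$ at $t=\infty$ goes. Since $\alpha_4$ becomes $0$ after $\pi_2$, Proposition \ref{prop:inf-summary} for the transformed system forbids $z$ from having a simple pole at $\infty$, so $\pi_2(z)$ again has a pole of order $m\geq 2$; meanwhile $x\equiv 0$ is incompatible with $\alpha_0-\alpha_1\neq 0$ after the transform, so by Proposition \ref{prop:exam2} one may assume $x\not\equiv 0$ and the transformed solution has $a_{0,0}=(2\alpha_4+\alpha_3)-\alpha_3=2\alpha_4$, etc. The goal is to land in the hypotheses of Proposition \ref{prop:nec-z(n)-t=0holo-a_{0,0}nonzero-nonhalf} or \ref{prop:nec-z(n)-t=0holo-a_{0,0}nonzero-half} (applied to $B_4^{(1)}(1,0,0,0,0)$, which has $\alpha_4=0$), forcing a parity/integrality condition on $\alpha_0+\alpha_1=1$ that is inconsistent with the $\alpha_3$-value carried along. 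Concretely, I would chase the chain $s_3\pi_2$ (or $s_2s_3\pi_2$) exactly as in Section 11, showing that the image solution would force $1-2\cdot 0\in 2\Z+1$ to fail, i.e.\ a contradiction. Assembling these pieces — Lemma \ref{lem:point} to get $n\geq 2$, then the $\pi_2$-transform into a system with $\alpha_4=0$, then the necessary conditions of Section 8 — yields that no rational solution can exist, proving the proposition. The delicate point is making sure every intermediate B\"acklund image genuinely has $z$ with a pole of order $\geq 2$ at $\infty$ and the required holomorphy at $t=0$, so that the Section 8 propositions apply verbatim.
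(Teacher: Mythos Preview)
Your proposal has a genuine gap: the $\pi_2$-route does not produce a contradiction. After $\pi_2$ you land in $B_4^{(1)}(1,0,0,0,0)$, and you correctly note that the new $\alpha_4=0$ forces the transformed $z$ to have a pole of order $m\geq 2$ at $t=\infty$. But now check the Section 8 necessary conditions for these parameters: $\alpha_0-\alpha_1=1\in\Z$, $\alpha_0+\alpha_1=1\in\Z$, $\alpha_0+\alpha_1+2\alpha_2=1\in\Z$, so Propositions \ref{prop:nec-z(n)-t=0holo-a_{0,0}=0}, \ref{prop:nec-z(n)-t=0holo-a_{0,0}nonzero-nonhalf}, \ref{prop:nec-z(n)-t=0holo-a_{0,0}nonzero-half} are all \emph{satisfied}, not violated. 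Your suggested parity clash ``$1-2\cdot 0\in 2\Z+1$ to fail'' does not fail. Moreover, your chain $s_3\pi_2$ (or $s_2s_3\pi_2$) collapses to $\pi_2$ here, because after $\pi_2$ the new $\alpha_3=\alpha_2=0$, so $s_3$ and $s_2$ act trivially on parameters. In effect $\pi_2$ merely exchanges the problem for an equivalent one without solving it.

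The paper's argument is different and short. Starting from a hypothetical solution with $z$ of pole order $n\geq 2$ at $t=\infty$ (Lemma \ref{lem:point}), it applies $s_4$ (taking the parameters to $(0,0,0,1,-1/2)$, so now $\alpha_3=1\neq 0$) and then $s_3$; by Corollary \ref{coro:z(n)-s_3} the image $z$ has a \emph{simple} pole at $t=\infty$, and the parameters become $(0,0,1,-1,1/2)$. For a simple pole, Proposition \ref{prop:hinf}(2) gives
\[
h_{\infty,0}=\tfrac14(\alpha_0-\alpha_1)^2+(\alpha_3+\alpha_4)^2-(\alpha_3+\alpha_4)=0+\tfrac14+\tfrac12=\tfrac34,
\]
while Propositions \ref{prop:hzero-1}--\ref{prop:hzero-3} evaluated at $(0,0,1,-1,1/2)$ give $h_{0,0}\in\{0,1\}\subset\Z$ in every possible $t=0$ case. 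Then $h_{\infty,0}-h_{0,0}\notin\Z$, contradicting Proposition \ref{rational-necessary-2}. The key idea you are missing is to transform into a system with $\alpha_4\neq 0$ so that Corollary \ref{coro:z(n)-s_3} drops the pole order to one, where the Hamiltonian formula picks up the nonintegral $(\alpha_3+\alpha_4)^2-(\alpha_3+\alpha_4)$ term.
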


\begin{proof}
Suppose that 
for $B_4^{(1)} (0,0,0,0,1/2),$ 
there exists a rational solution. 
By Lemma \ref{lem:point}, 
we can then assume that $z$ has a pole of order $n \,\,(n\geq 2)$ at $t=\infty.$ 
\par
$s_4s_3(x,y,z,w)$ is a rational solution of 
$B_4^{(1)}(0,0,1,-1,1/2)$ such that $z$ has a pole of order one at $t=\infty.$
It then follows that for $s_4s_3(x,y,z,w),$ 
$h_{\infty,0}=3/4.$ 
On the other hand, for $s_4s_3(x,y,z,w),$ 
$h_{0,0}\in\Z.$ 
Thus, it follows that $h_{\infty,0}-h_{0,0}=3/4\not\in\Z,$ 
which contradicts Proposition \ref {rational-necessary-2}.

\end{proof}

\section{Proof of main theorem}
In this section, 
we prove the main theorem.

\begin{proof}
Suppose that 
for $B_4^{(1)}(\alpha_j)_{0\leq j \leq 4}$ 
there exists a rational solution. 
From Proposition \ref{prop:standard forms},  and Corollaries \ref{coro:standard form II (1)} and \ref{coro:standard form II (2)},
it follows that 
the parameters can be transformed so that 
one of the following occurs: 
\newline
\hspace{15mm}
{\rm (1)} \quad $\alpha_0-\alpha_1=0, \,\alpha_3+\alpha_4=0, \,\alpha_4\neq 0,$ 
{\rm (2)} \quad $\alpha_0=\alpha_1=\alpha_2=\alpha_3=0, \,\alpha_4=1/2.$
\newline
Especially, 
from Proposition \ref{prop:standard forms}, and Corollaries \ref{coro:standard form II (1)} and \ref{coro:standard form II (2)}, 
we see that 
the parameters can be transformed into the standard form I 
if they satisfy one of the conditions in this theorem. 
Otherwise, 
the parameters can be transformed into those of (2). 
\par
If $\alpha_0-\alpha_1=0, \,\alpha_3+\alpha_4=0, \,\alpha_4\neq 0,$ 
then 
from Corollary \ref{coro:solution-1}, 
we observe that 
for $B_4^{(1)}(\alpha_j)_{0\leq j \leq 4}$ 
there exists a rational solution and $
x\equiv 0, \,y\equiv 1/2, \,
z=1/\{2\alpha_4\}\cdot t, \,
w\equiv 0
$ 
and it is unique. 
\par
From Proposition \ref{prop:standard form II (3)}, 
we see that 
for $B_4^{(1)}(0,0,0,0,1/2),$ 
there exists no rational solution.
\end{proof}

\appendix
\section{Rational solutions of the Sasano system of type $D_4^{(1)}$}
Following Sasano \cite{Sasano-2}, 
we introduce the Sasano system of type $D_4^{(1)},$ 
which is defined by 
\begin{equation*}
D_4^{(1)}(\alpha_j)_{0\leq j \leq 4}
\begin{cases}
tx^{\prime}=2x^2y-x^2+(\alpha_0+\alpha_1)x-2w+t, \\
ty^{\prime}=-2xy^2+2xy-(\alpha_0+\alpha_1)y+\alpha_1, \\
tz^{\prime}=2z^2w-tz^2-(1-\alpha_3-\alpha_4)z+1-2y, \\
tw^{\prime}=-2zw^2+2tzw+(1-\alpha_3-\alpha_4)w+\alpha_3t,  \\
\alpha_0+\alpha_1+2\alpha_2+\alpha_3+\alpha_4=1.
\end{cases}
\end{equation*}
\par
$D_4^{(1)}(\alpha_j)_{0\leq j \leq 4}$ has the B\"acklund transformations, 
$s_0, s_1, s_2, s_3, s_4, \pi_1, \pi_2, \pi_3, \pi_4,$ 
which are given by 
\begin{align*}
&s_0: \,\,
(x,y,z,w,t;\alpha_0,\alpha_1,\alpha_2,\alpha_3,\alpha_4)
\rightarrow
\left(
x+\frac{\alpha_0}{y-1},y,z,w,t;
-\alpha_0, \alpha_1, \alpha_2+\alpha_0, \alpha_3, \alpha_4
\right),              \\
&s_1: \,\,
(x,y,z,w,t;\alpha_0,\alpha_1,\alpha_2,\alpha_3,\alpha_4)
\rightarrow
\left(
x+\frac{\alpha_1}{y},y,z,w,t;
\alpha_0,-\alpha_1,\alpha_2+\alpha_1,\alpha_3,\alpha_4
\right),              \\
&s_2: \,\,
(x,y,z,w,t;\alpha_0,\alpha_1,\alpha_2,\alpha_3,\alpha_4)
\rightarrow \\
&\hspace{30mm}
\left(
x, y-\frac{\alpha_2z}{xz-1}, z,w-\frac{\alpha_2x}{xz-1}, t; 
\alpha_0+\alpha_2, \alpha_1+\alpha_2, -\alpha_2, \alpha_3+\alpha_2, \alpha_4+\alpha_2
\right), \\
&s_3: \,\,
(x,y,z,w,t;\alpha_0,\alpha_1,\alpha_2,\alpha_3,\alpha_4)
\rightarrow
\left(
x,y,z+\frac{\alpha_3}{w},w,t;
\alpha_0,\alpha_1,\alpha_2+\alpha_3,-\alpha_3,\alpha_4
\right),              \\
&s_4: \,\,
(x,y,z,w,t;\alpha_0,\alpha_1,\alpha_2,\alpha_3,\alpha_4)
\rightarrow
\left(
x,y,z+\frac{\alpha_4}{w-t},w,t;
\alpha_0,\alpha_1,\alpha_2+\alpha_4,\alpha_3,-\alpha_4
\right),              \\
&\pi_1: \,\,
(x,y,z,w,t;\alpha_0,\alpha_1,\alpha_2,\alpha_3,\alpha_4)
\rightarrow
(-x,1-y,-z,-w,-t; 
\alpha_1, \alpha_0, \alpha_2, \alpha_3, \alpha_4
), \\
&\pi_2: \,\,
(x,y,z,w,t;\alpha_0,\alpha_1,\alpha_2,\alpha_3,\alpha_4)
\rightarrow
(x,y,z,w-t,-t;\alpha_0,\alpha_1,\alpha_2,\alpha_4,\alpha_3),  \\
&\pi_3: \,\,
(x,y,z,w,t;\alpha_0,\alpha_1,\alpha_2,\alpha_3,\alpha_4)
\rightarrow
\left(
tz, \frac{w}{t}, \frac{x}{t}, ty,t; 
\alpha_4, \alpha_3, \alpha_2, \alpha_1, \alpha_0
\right), \\
&\pi_4: \,\,
(x,y,z,w,t;\alpha_0,\alpha_1,\alpha_2,\alpha_3,\alpha_4)
\rightarrow
\left(
-tz, \frac{t-w}{t}, -\frac{x}{t}, t-ty, t; 
\alpha_3, \alpha_4, \alpha_2, \alpha_0, \alpha_1
\right). 
\end{align*}

The B\"acklund transformation group 
$\langle s_0, s_1, s_2, s_3, s_4, \pi_1, \pi_2, \pi_3, \pi_4 \rangle$ is isomorphic to 
$\tilde{W}(D_4^{(1)}).$

\subsection{The properties of the B\"acklund transformations}

By direct calculation, 
we obtain the following proposition:

\begin{proposition}
\label{prop:D4-transformation}
{\it
{\rm (0)}\quad If $y\equiv 1$ for $D_4^{(1)}(\alpha_j)_{0\leq j \leq 4},$ then $\alpha_0=0.$ 
\newline
{\rm (1)}\quad If $y\equiv 0$ for $D_4^{(1)}(\alpha_j)_{0\leq j \leq 4},$ then $\alpha_1=0.$
\newline
{\rm (2)}\quad If $xz\equiv 1$ for $D_4^{(1)}(\alpha_j)_{0\leq j \leq 4},$ then $\alpha_2=0.$
\newline
{\rm (3)}\quad If $w\equiv 0$ for $D_4^{(1)}(\alpha_j)_{0\leq j \leq 4},$ then $\alpha_3=0.$
\newline
{\rm (4)}\quad If $w=t$ for $D_4^{(1)}(\alpha_j)_{0\leq j \leq 4},$ then $\alpha_4=0.$
}
\end{proposition}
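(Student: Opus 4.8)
The plan is to follow the proof of Proposition~\ref{prop:backlund}: in each case one substitutes the stated hypothesis into the defining system $D_4^{(1)}(\alpha_j)_{0\leq j \leq 4}$ and reads off the constraint on the parameters that must then hold. Four of the five assertions will be immediate from a single equation, and only (2) will require combining two equations together with the normalization $\alpha_0+\alpha_1+2\alpha_2+\alpha_3+\alpha_4=1.$

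For (0), I would suppose $y\equiv 1,$ so $y'\equiv 0$; the second equation $ty'=-2xy^2+2xy-(\alpha_0+\alpha_1)y+\alpha_1$ then collapses to $0=-2x+2x-(\alpha_0+\alpha_1)+\alpha_1=-\alpha_0,$ so $\alpha_0=0.$ For (1), suppose $y\equiv 0$; the same equation gives $0=\alpha_1.$ For (3), suppose $w\equiv 0,$ so $w'\equiv 0,$ and the fourth equation $tw'=-2zw^2+2tzw+(1-\alpha_3-\alpha_4)w+\alpha_3 t$ reduces to $0=\alpha_3 t,$ hence $\alpha_3=0.$ For (4), suppose $w\equiv t,$ so $w'\equiv 1$; substituting into the fourth equation gives $t=(1-\alpha_3-\alpha_4)t+\alpha_3 t=(1-\alpha_4)t,$ whence $\alpha_4=0.$

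The only case needing more than a one-line substitution is (2). Here I would assume $xz\equiv 1$ (so in particular $x,z\not\equiv 0$) and differentiate to get $t(xz)'\equiv 0,$ i.e. $z\,(tx')+x\,(tz')\equiv 0.$ Multiplying the first equation by $z$ and the third by $x$ and repeatedly using $xz\equiv 1$ (so $x^2z=x,$ $xz^2=z,$ and so on) yields
$$
z\,(tx')=2xy-x+(\alpha_0+\alpha_1)-2wz+tz,\qquad x\,(tz')=2zw-tz-(1-\alpha_3-\alpha_4)+x-2xy.
$$
Adding these, the terms $2xy,$ $x,$ $2wz=2zw$ and $tz$ all cancel, leaving $0=(\alpha_0+\alpha_1)-(1-\alpha_3-\alpha_4)=\alpha_0+\alpha_1+\alpha_3+\alpha_4-1,$ which by the normalization equals $-2\alpha_2.$ Hence $\alpha_2=0.$

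The main (and essentially the only) obstacle is the bookkeeping in case (2): one must insert $xz\equiv 1$ into every monomial before the cancellations become visible, and one must bring in the parameter relation at the very end. Everything else is a routine comparison of terms, entirely parallel to the corresponding arguments already carried out for $B_4^{(1)}$ in Proposition~\ref{prop:backlund}.
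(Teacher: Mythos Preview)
Your proof is correct and follows exactly the approach indicated in the paper (``by direct calculation''), namely substituting each hypothesis into the defining equations of $D_4^{(1)}(\alpha_j)_{0\leq j\leq 4}$ and reading off the resulting parameter constraint, just as was done for $B_4^{(1)}$ in Proposition~\ref{prop:backlund}. The handling of case~(2) via differentiating $xz\equiv 1$, combining the first and third equations, and invoking the normalization $\alpha_0+\alpha_1+2\alpha_2+\alpha_3+\alpha_4=1$ is precisely the intended computation.
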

By Proposition \ref{prop:D4-transformation}, 
we do not have to consider the infinite solutions of $D_4^{(1)}(\alpha_j)_{0\leq j \leq 4}.$

\subsection{Main theorem for $D_4^{(1)}(\alpha_j)_{0\leq j \leq 4}$}

Sasano \cite{Sasano-2} proved that $D_4^{(1)}(\alpha_j)_{0\leq j \leq 4}$ is equivalent to $B_4^{(1)}(\alpha_j)_{0\leq j \leq 4}.$ 
\begin{proposition}
\label{prop:equivalece-D_4-B_4}
{\it
Suppose that $(x,y,z,w)$ is a solution of $D_4^{(1)}(\alpha_j)_{0\leq j \leq 4}$ and 
\begin{align*}
&X=x, Y=y, Z=\frac{1}{z}, W=-(zw+\alpha_3)z, \\
&A_0=\alpha_0, A_1=\alpha_1, A_2=\alpha_2, A_3=\alpha_3, A_4=\frac{\alpha_4-\alpha_3}{2}.
\end{align*}
$(X,Y,Z,W)$ is then a solution of $B_4^{(1)}(A_j)_{0\leq j \leq 4}.$
}
\end{proposition}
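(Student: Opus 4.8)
The plan is to prove Proposition \ref{prop:equivalece-D_4-B_4} by direct substitution: show that the change of variables $X=x,\ Y=y,\ Z=1/z,\ W=-(zw+\alpha_3)z$ together with the parameter identification $A_0=\alpha_0,\ A_1=\alpha_1,\ A_2=\alpha_2,\ A_3=\alpha_3,\ A_4=(\alpha_4-\alpha_3)/2$ carries the system $D_4^{(1)}(\alpha_j)$ into the system $B_4^{(1)}(A_j)$. First I would record the inverse substitution, $x=X,\ y=Y,\ z=1/Z,\ w=-Z W-\alpha_3 Z$ (equivalently $zw=-W/Z-\alpha_3$), and verify the affine relation: from $\alpha_0+\alpha_1+2\alpha_2+\alpha_3+\alpha_4=1$ one gets $A_0+A_1+2A_2+2A_3+2A_4=\alpha_0+\alpha_1+2\alpha_2+2\alpha_3+(\alpha_4-\alpha_3)=\alpha_0+\alpha_1+2\alpha_2+\alpha_3+\alpha_4=1$, which matches the normalization of $B_4^{(1)}$. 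This is the cheap consistency check that should be done before the differential computation.

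Next I would push the derivatives through. Since $X=x$ and $Y=y$ are unchanged, the first two equations of $B_4^{(1)}(A_j)$ should follow from the first two equations of $D_4^{(1)}(\alpha_j)$ once the term $-2w$ appearing in $tx'$ is rewritten: $-2w = -2(-ZW-\alpha_3 Z) = 2ZW+2\alpha_3 Z = 2Z^2 W + 2\alpha_3 Z$ is \emph{not} literally right because $w=-ZW-\alpha_3 Z$ gives $-2w=2ZW+2\alpha_3Z$, and one needs this to equal $2\alpha_3 Z + 2Z^2 W$; comparing, $2ZW$ versus $2Z^2W$ — so I must be careful that the correct relation is $w = -(zw+\alpha_3)z/z \cdots$; concretely from $W=-(zw+\alpha_3)z$ and $Z=1/z$ we get $zw+\alpha_3 = -W/z = -WZ$, hence $w = -WZ/z - \alpha_3/z = -WZ^2 \cdot(1/Z)\cdots$. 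The clean way: $zw = -WZ - \alpha_3$, so $2\alpha_3 z + 2z^2 w = 2\alpha_3/Z + 2z(zw) = 2\alpha_3/Z + 2(1/Z)(-WZ-\alpha_3) = 2\alpha_3/Z - 2W - 2\alpha_3/Z = -2W$. This is exactly the term $-2w$ in $D_4^{(1)}$ with $X=x$, and the holomorphic/parameter coefficients $(\alpha_0+\alpha_1)x = (1-2\alpha_2-\alpha_3-\alpha_4)x$ must be checked against $(1-2A_2-2A_3-2A_4)X = (1-2\alpha_2-2\alpha_3-(\alpha_4-\alpha_3))X=(1-2\alpha_2-\alpha_3-\alpha_4)X$ — agreement. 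So the $X,Y$ equations transfer by a short algebraic match.

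The substantive part is the $Z,W$ pair. I would compute $tZ' = t(1/z)' = -z'/z^2 \cdot t = -(tz')/z^2$ and substitute the $D_4^{(1)}$ expression for $tz'$, then express everything in $Z,W$; similarly $tW' = -t\,d/dt\big((zw+\alpha_3)z\big) = -\big((tz')w z + z (tw') z + (zw+\alpha_3)(tz')\big)$, substitute $tz'$ and $tw'$ from $D_4^{(1)}$, and simplify. The goal is to land on
\begin{equation*}
tZ' = 2Z^2W - Z^2 + (1-2A_4)Z + 2YZ^2 + t,\qquad tW' = -2ZW^2 + 2ZW - (1-2A_4)W - 2A_3 Y - 4YZW + A_3,
\end{equation*}
with $A_3=\alpha_3$, $A_4=(\alpha_4-\alpha_3)/2$ so that $1-2A_4 = 1-\alpha_4+\alpha_3$. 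I expect the main obstacle to be bookkeeping in the $tW'$ computation: the product rule produces three groups of terms, each of which after substitution is a degree-two or degree-three polynomial in $z,w,y,t$, and the cancellations that collapse these into the compact $B_4^{(1)}$ form (in particular the emergence of the single linear term $A_3$ and the mixed term $-4YZW$) are where an arithmetic slip is most likely. I would organize this by first clearing $z$ in favor of $1/Z$ and substituting $zw = -WZ-\alpha_3$ everywhere \emph{before} expanding, which keeps the intermediate expressions polynomial of bounded degree; then collect by monomial in $(Y,Z,W,t)$ and match coefficients against the target. Once the four equations and the affine relation all check, the proposition follows, and (as the appendix then indicates) Theorem \ref{them:B4} transfers to $D_4^{(1)}$ via this birational equivalence.
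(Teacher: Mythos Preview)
Your approach is correct and is the natural one: the proposition is a birational change of variables, and the only way to prove it is by direct substitution and coefficient matching, which you outline adequately. The paper itself does not supply a proof of this proposition---it simply states the result and attributes it to Sasano \cite{Sasano-2}---so there is no alternative argument to compare against.

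One remark on presentation: in your middle paragraph you slip between the uppercase $(X,Y,Z,W)$ and lowercase $(x,y,z,w)$ conventions in a way that will confuse a reader (e.g.\ writing ``$2\alpha_3 z + 2z^2 w$'' when you mean the $B_4^{(1)}$ expression $2A_3 Z + 2Z^2 W$, and concluding ``$=-2W$'' when you mean $-2w$). The computation itself is right: with $Z=1/z$ and $W=-(zw+\alpha_3)z$ one has $2A_3 Z + 2Z^2 W = 2\alpha_3/z - 2(zw+\alpha_3)/z = -2w$, which is exactly the $D_4^{(1)}$ term. Clean this up before writing out the full $tZ'$ and $tW'$ verifications, since as you correctly anticipate, the $tW'$ computation is where a notational slip would most easily become a genuine error.
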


\begin{theorem}
\label{thm:D4}
{\it
Suppose that $D_4^{(1)}(\alpha_j)_{0\leq j \leq 4}$ has a rational solution. 
By some B\"acklund transformations, 
the parameters and solution can then be transformed so that 
$$
\alpha_0-\alpha_1=\alpha_3+\alpha_4=0, \,\,{\it and} \,\,
(x,y,z,w)=(0,1/2,2\alpha_4/t, t/2).
$$
Moreover, $D_4^{(1)}(\alpha_j)_{0\leq j \leq 4}$ has a rational solution 
if and only if one of the following occurs: 
\begin{align*}
&{\rm (1)} &  &\alpha_0-\alpha_1\in\Z, & &\alpha_3+\alpha_4\in \Z, & &\alpha_0-\alpha_1\equiv \alpha_3+\alpha_4  & &\mathrm{ mod} \,\,2,   \\
&{\rm (2)} & &\alpha_0-\alpha_1\in\Z, & &\alpha_3-\alpha_4\in \Z,               &  &\alpha_0-\alpha_1\equiv \alpha_3-\alpha_4 & &\mathrm{ mod} \,\,2,  \\
&{\rm (3)} & &\alpha_0+\alpha_1\in\Z, & &\alpha_3+\alpha_4\in \Z,  &  &\alpha_0+\alpha_1\equiv \alpha_3+\alpha_4 & &\mathrm{ mod} \,\,2,  \\
&{\rm (4)} & &\alpha_0+\alpha_1\in\Z, & &\alpha_3-\alpha_4\in \Z,               &   &\alpha_0+\alpha_1\equiv \alpha_3-\alpha_4 & &\mathrm{ mod} \,\,2,  \\
&{\rm (5)} & &\alpha_0-\alpha_1\in\Z, & &\alpha_0+\alpha_1 \in \Z,   &    &\alpha_0-\alpha_1\not\equiv \alpha_0+\alpha_1 & &\mathrm{ mod} \,\,2,  \\
&{\rm (6)}  &  &\alpha_3-\alpha_4\in\Z, & &\alpha_3+\alpha_4\in \Z,                      &     &\alpha_3-\alpha_4 \not\equiv \alpha_3+\alpha_4 & &\mathrm{ mod} \,\,2.
\end{align*}
}
\end{theorem}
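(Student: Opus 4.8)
The plan is to derive Theorem \ref{thm:D4} from Theorem \ref{them:B4} by transporting everything through the birational equivalence of Proposition \ref{prop:equivalece-D_4-B_4}, which identifies $D_4^{(1)}(\alpha_j)$ with $B_4^{(1)}(A_j)$ for $A_0=\alpha_0$, $A_1=\alpha_1$, $A_2=\alpha_2$, $A_3=\alpha_3$, $A_4=(\alpha_4-\alpha_3)/2$ via $(X,Y,Z,W)=(x,y,1/z,-(zw+\alpha_3)z)$, whose inverse $(x,y,z,w)=(X,Y,1/Z,-Z(WZ+\alpha_3))$ has the same shape. Since $Z=1/z$ and $W=-(zw+\alpha_3)z$ are rational in $t$ whenever $z\not\equiv 0$, a rational solution of $D_4^{(1)}(\alpha_j)$ with $z\not\equiv 0$ yields a genuine (finite) rational solution of $B_4^{(1)}(A_j)$; conversely a genuine rational solution of $B_4^{(1)}(A_j)$ — which by Proposition \ref{prop:backlund}(4) never has $Z\equiv 0$ — yields a rational solution of $D_4^{(1)}(\alpha_j)$. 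So the first step is to record this bijection, together with the fact that the two B\"acklund groups are intertwined by Proposition \ref{prop:equivalece-D_4-B_4}, so that a B\"acklund reduction on one side descends to one on the other.

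The second step is the parameter dictionary: from the formulas above $A_0\pm A_1=\alpha_0\pm\alpha_1$, $2A_3=2\alpha_3$, $2A_4=\alpha_4-\alpha_3$, and $2A_3+2A_4=\alpha_3+\alpha_4$. Substituting these into the six alternatives of Theorem \ref{them:B4} and using that $\alpha_4-\alpha_3\equiv\alpha_3-\alpha_4\pmod 2$ for integers, one checks line by line that they become exactly conditions (1)–(6) of Theorem \ref{thm:D4}; in particular $B_4$-condition (6), namely $2A_3,2A_4\in\Z$ with $2A_3$ odd, is equivalent to $\alpha_3\pm\alpha_4\in\Z$ with opposite parities, which is $D_4$-condition (6). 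The same substitution turns the $B_4$ normal form $X\equiv 0$, $Y\equiv 1/2$, $Z=t/(2A_4)$, $W\equiv 0$ with $A_0=A_1$, $A_3=-A_4$, $A_4\neq 0$ into $x\equiv 0$, $y\equiv 1/2$, $z=2\alpha_4/t$, $w=t/2$ with $\alpha_0-\alpha_1=\alpha_3+\alpha_4=0$, which is precisely the normal form asserted here, the relevant B\"acklund transformations lying in $\langle s_0,\dots,\pi_4\rangle\cong\tilde{W}(D_4^{(1)})$.

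The one genuinely new case is a rational solution of $D_4^{(1)}(\alpha_j)$ with $z\equiv 0$, which under the equivalence corresponds to a solution of $B_4^{(1)}(A_j)$ with $Z\equiv\infty$, i.e.\ to one of the infinite solutions classified in Proposition \ref{prop:zinf}. Substituting $z\equiv 0$ directly into $D_4^{(1)}(\alpha_j)$ forces $y\equiv 1/2$, $x\equiv\alpha_0-\alpha_1$, $w=t/2+(\alpha_0^2-\alpha_1^2)/2$, and then $\alpha_3=\alpha_4$ together with either $\alpha_0=\pm\alpha_1$ or $\alpha_3=\alpha_4=1/2$; equivalently one reads this off Proposition \ref{prop:zinf} through the dictionary. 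In every such tuple one of $D_4$-conditions (2), (4) or (6) already holds, so this case contributes nothing new to the list. Combining it with the $z\not\equiv 0$ case gives the ``if and only if,'' and the normal-form statement follows from the $z\not\equiv 0$ analysis.

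I expect the main obstacle to be bookkeeping rather than anything conceptual: one must verify carefully that the change of variables is an honest bijection on rational (as opposed to merely locally meromorphic) solutions, that its exceptional locus is exactly the $z\equiv 0 \leftrightarrow Z\equiv\infty$ exchange treated above, and that each B\"acklund move used in the reductions behind Theorem \ref{them:B4} really does conjugate into a $D_4^{(1)}$ B\"acklund move; all of this was essentially set up by Sasano, but it must be checked explicitly for the parameter congruences in (1)–(6) to come out correctly.
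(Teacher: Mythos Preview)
Your proposal is correct and is exactly the paper's approach: the paper gives no self-contained proof of Theorem~\ref{thm:D4} at all, it simply states Proposition~\ref{prop:equivalece-D_4-B_4} and then states the theorem, relying on the reader to carry out precisely the parameter translation and normal-form transport you describe.

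Two remarks on points you flagged as needing care. First, the B\"acklund intertwining does work out cleanly: under the change of variables one checks that $B_4^{(1)}$'s $s_0,s_1,s_2,s_3,\pi_1$ go to $D_4^{(1)}$'s $s_0,s_1,s_2,s_3,\pi_1$, while $B_4^{(1)}$'s $s_4$ becomes $D_4^{(1)}$'s $\pi_2$ and $B_4^{(1)}$'s $\pi_2$ becomes $D_4^{(1)}$'s $\pi_3$, so every $B_4^{(1)}$ reduction indeed lands inside $\langle s_0,\dots,s_4,\pi_1,\dots,\pi_4\rangle$. Second, your last sentence about the normal form is slightly too quick: a $D_4^{(1)}$ solution with $z\equiv 0$ corresponds to a $B_4^{(1)}$ \emph{infinite} solution, and to include it in the normal-form statement you need the content of Section~6 (that such infinite solutions lie in the B\"acklund orbit of finite ones), not just the $z\not\equiv 0$ analysis. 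Once you use that, the argument closes.
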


{\bf Remark}\quad 
In Theorem \ref{thm:D4}, we can assume that $\alpha_1, \alpha_4\neq 0.$ 
For this purpose, following Sasano \cite{Sasano-2}, we define 
\begin{align*}
&T_1:=s_3s_0s_2s_4s_1s_2\pi_4, & &T_2:=s_4s_1s_2s_3s_0s_2\pi_4, \\
&T_3:=s_3s_2s_0s_1s_2s_3\pi_1\pi_2, & &T_4:=s_4s_3s_2s_1s_0s_2\pi_1\pi_2,
\end{align*}
which implies that
\begin{align*}
&T_1(\alpha_0, \alpha_1, \alpha_2, \alpha_3, \alpha_4)=(\alpha_0, \alpha_1, \alpha_2, \alpha_3, \alpha_4)+(1,0,-1,1,0),  \\
&T_2(\alpha_0, \alpha_1, \alpha_2, \alpha_3, \alpha_4)=(\alpha_0, \alpha_1, \alpha_2, \alpha_3, \alpha_4)+(0,1,-1,0,1), \\
&T_3(\alpha_0, \alpha_1, \alpha_2, \alpha_3, \alpha_4)=(\alpha_0, \alpha_1, \alpha_2, \alpha_3, \alpha_4)+(0,0,0,1,-1), \\
&T_4(\alpha_0, \alpha_1, \alpha_2, \alpha_3, \alpha_4)=(\alpha_0, \alpha_1, \alpha_2, \alpha_3, \alpha_4)+(0,0,-1,1,1).
\end{align*}
By $T_3,$ we may suppose that $\alpha_4\neq 0.$ 
By $T_1T_2T_4^{-1},$ we can assume that $\alpha_1\neq 0.$

\section{Rational solutions of the Sasano system of type $D_5^{(2)}$}

\begin{equation*}
D_5^{(2)}(\alpha_j)_{0\leq j \leq 4}
\begin{cases}
tx^{\prime}=2x^2y-tx^2-2\alpha_0x+1-2x^2z(zw+\alpha_3), \\
ty^{\prime}=-2xy^2+2txy+2\alpha_0y+\alpha_1t+2z(zw+\alpha_3)(2xy+\alpha_1),  \\
tz^{\prime}=2z^2w-z^2+(1-2\alpha_4)z+t-2xz^2(xy+\alpha_1),  \\
tw^{\prime}=-2zw^2+2zw-(1-2\alpha_4)w+\alpha_3+2x(xy+\alpha_1)(2zw+\alpha_3), \\
\alpha_0+\alpha_1+\alpha_2+\alpha_3+\alpha_4=1/2.
\end{cases}
\end{equation*}

\begin{align*}
&s_0: (x,y,z,w,t;\alpha_0, \alpha_1, \alpha_2, \alpha_3, \alpha_4) 
\longrightarrow 
\left(
-x,-y+\frac{2\alpha_0}{x}-\frac{1}{x^2},-z,-w,-t; 
-\alpha_0,\alpha_1+2\alpha_0, \alpha_2, \alpha_3, \alpha_4
\right), \\
&s_1: (x,y,z,w,t;\alpha_0, \alpha_1, \alpha_2, \alpha_3, \alpha_4)  
\longrightarrow  
\left(
x+\frac{\alpha_1}{y}, y, z, w,t;\alpha_0+\alpha_1, -\alpha_1, \alpha_2+\alpha_1, \alpha_3, \alpha_4
\right), \\
&s_2: (x,y,z,w,t;\alpha_0, \alpha_1, \alpha_2, \alpha_3, \alpha_4)  
\longrightarrow 
\left(
x, y-\frac{\alpha_2z}{xz-1}, z, w-\frac{\alpha_2x}{xz-1}, t; 
\alpha_0, \alpha_1+\alpha_2, -\alpha_2, \alpha_3+\alpha_2, \alpha_4
\right), \\
&s_3: (x,y,z,w,t;\alpha_0, \alpha_1, \alpha_2, \alpha_3, \alpha_4)  
\longrightarrow 
\left(
x,y,z+\frac{\alpha_3}{w}, w,t; 
\alpha_0, \alpha_1, \alpha_2+\alpha_3, -\alpha_3, \alpha_4+\alpha_3
\right), \\
&s_4: (x,y,z,w,t;\alpha_0, \alpha_1, \alpha_2, \alpha_3, \alpha_4) 
\longrightarrow 
\left(
x,y,z,w-\frac{2\alpha_4}{z}+\frac{t}{z^2},-t;
\alpha_0, \alpha_1, \alpha_2, \alpha_3+2\alpha_4, -\alpha_4
\right),\\
&\psi: (x,y,z,w,t;\alpha_0, \alpha_1, \alpha_2, \alpha_3, \alpha_4) 
\longrightarrow 
\left(
\frac{z}{t}, tw, tx, \frac{y}{t},t; \alpha_4, \alpha_3, \alpha_2, \alpha_1, \alpha_0
\right). 
\end{align*}
{\bf Remark}\quad 
We correct the definition of $s_4$ by Sasano \cite{Sasano-2}, 
which is given by 
$$
s_4: \quad (x,y,z,w,t)\longrightarrow 
\left(
x,y,z,w-\frac{2\alpha_4}{w}+\frac{t}{z^2}, -t
\right). 
$$

\subsection{The properties of the B\"acklund transformations}

\begin{proposition}
\label{prop:D5^{2}-transformation}
{\it
{\rm (0)}\quad For $D_5^{(2)}(\alpha_j)_{0\leq j \leq 4},$ there exists no solution such that $x\equiv 0.$  
\newline
{\rm (1)}\quad If $y\equiv 0$ for $D_5^{(2)}(\alpha_j)_{0\leq j \leq 4},$ then $\alpha_1=0$ or $t+2z(zw+\alpha_3)=0.$ 
\newline
{\rm (2)}\quad If $xz\equiv 1$ for $D_5^{(2)}(\alpha_j)_{0\leq j \leq 4},$ then $\alpha_2=0.$ 
\newline
{\rm (3)}\quad If $w\equiv 0$ for $D_5^{(2)}(\alpha_j)_{0\leq j \leq 4},$ then $\alpha_3=0$ or $1+2x(xy+\alpha_1)=0.$ 
\newline
{\rm (4)}\quad For $D_5^{(2)}(\alpha_j)_{0\leq j \leq 4},$ there exists no solution such that $z\equiv 0.$  
}
\end{proposition}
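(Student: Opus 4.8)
The plan is to establish each of the five assertions by substituting the hypothesis directly into one (or, for part (2), two) of the four differential equations defining $D_5^{(2)}(\alpha_j)_{0\leq j \leq 4}$ and reading off the conclusion; all of these are short computations, and the only one requiring two equations simultaneously is part (2).

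First I would dispose of (0) and (4). If $x\equiv 0$, then $tx^{\prime}\equiv 0$, while the right-hand side of the first equation
\[
tx^{\prime}=2x^2y-tx^2-2\alpha_0x+1-2x^2z(zw+\alpha_3)
\]
collapses to the constant $1$, giving $0=1$, a contradiction; hence no such solution exists. Symmetrically, if $z\equiv 0$ then the third equation forces $0=t$, which is impossible. Next I would treat (1) and (3). Substituting $y\equiv 0$ into the second equation makes its right-hand side equal to $\alpha_1\bigl(t+2z(zw+\alpha_3)\bigr)$, which must vanish identically, so $\alpha_1=0$ or $t+2z(zw+\alpha_3)\equiv 0$. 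Likewise, substituting $w\equiv 0$ into the fourth equation leaves $\alpha_3\bigl(1+2x(xy+\alpha_1)\bigr)\equiv 0$, giving $\alpha_3=0$ or $1+2x(xy+\alpha_1)\equiv 0$.

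The substantive case is (2). Assume $xz\equiv 1$; by part (0) we know $x\not\equiv 0$, so $z=1/x$ as meromorphic functions and $z^{\prime}=-x^{\prime}/x^2$. Putting $z=1/x$ into the first equation of the system and simplifying $2x^2z(zw+\alpha_3)=2w+2\alpha_3x$ yields
\[
tx^{\prime}=2x^2y-tx^2-2\alpha_0x+1-2w-2\alpha_3x.
\]
Putting $z=1/x$ into the third equation, using $tz^{\prime}=-tx^{\prime}/x^2$ and simplifying $2xz^2(xy+\alpha_1)=2y+2\alpha_1/x$, then clearing $-x^2$, yields
\[
tx^{\prime}=2x^2y-tx^2+2\alpha_1x+1-2w-(1-2\alpha_4)x.
\]
Equating the two expressions and cancelling $x\not\equiv 0$ forces $-2\alpha_0-2\alpha_3=2\alpha_1-(1-2\alpha_4)$, i.e. $\alpha_0+\alpha_1+\alpha_3+\alpha_4=1/2$; together with the parameter relation $\alpha_0+\alpha_1+\alpha_2+\alpha_3+\alpha_4=1/2$ this gives $\alpha_2=0$.

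I expect the only real obstacle to be the algebra in part (2): one must correctly simplify the terms $2x^2z(zw+\alpha_3)$ and $2xz^2(xy+\alpha_1)$ under $z=1/x$ and carry the substitution $z^{\prime}=-x^{\prime}/x^2$ through the third equation. Everything else is a direct one-line substitution into a single equation, so no further ideas are needed.
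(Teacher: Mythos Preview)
Your proposal is correct and mirrors exactly what the paper does: the paper states this proposition as following ``by direct calculation'' without writing out details, and the analogous proof it gives for $B_4^{(1)}$ (Proposition~\ref{prop:backlund}) handles the $x\equiv z$ case by the same device of comparing the first and third equations. Your algebra in part~(2) is clean and the use of the parameter relation $\alpha_0+\alpha_1+\alpha_2+\alpha_3+\alpha_4=1/2$ to extract $\alpha_2=0$ is exactly the intended step.
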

By Proposition \ref{prop:D5^{2}-transformation}, 
we have to consider the infinite solution such that $x\equiv \infty,$ or $z\equiv \infty.$ 
For this purpose, 
we first treat the case where $y\equiv 0$ and $\alpha_1\neq 0,$ and the case where $w\equiv 0$ and $\alpha_3\neq 0.$

\begin{proposition}
\label{prop:D_5^2-y=0-alpha_1neq0}
{\it
Suppose that $y\equiv 0$ and $\alpha_1\neq 0$ for $D_5^{(2)}(\alpha_j)_{0\leq j \leq 4}.$ 
It then follows that $\alpha_4\neq 0$ and either of the following occurs:
\newline
{\rm (1)}\quad $\alpha_0+\alpha_1=\alpha_3+\alpha_4=0$ and 
$$
x=-\frac{1}{2\alpha_1}, \,y=0, \,z=\frac{t}{2\alpha_4}, \,w=0, 
$$
{\rm (2)}\quad $\alpha_0=1/2, \,\alpha_1=-1/2$ and 
$$
x=1+\frac{4\alpha_4(\alpha_3+\alpha_4)}{t}, \,y=0, \,z=\frac{t}{2\alpha_4}, \,w=-\frac{2\alpha_4(\alpha_3+\alpha_4)}{t}.
$$
}
\end{proposition}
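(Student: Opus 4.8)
The plan is to turn the hypothesis $y\equiv 0$ into algebraic and differential relations that successively pin down $z$, then $x$ and $w$, and finally the parameters.

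First I would substitute $y\equiv 0$ (so $y'\equiv 0$) into the $y'$-equation of $D_5^{(2)}(\alpha_j)_{0\leq j\leq 4}$. Since $\alpha_1\neq 0$, this collapses to the identity $t+2z(zw+\alpha_3)=0$, which is the alternative in Proposition \ref{prop:D5^{2}-transformation}(1); and since $z\not\equiv 0$ by Proposition \ref{prop:D5^{2}-transformation}(4) we may write $w=-\frac{t}{2z^2}-\frac{\alpha_3}{z}$. Substituting $2z^2w=-t-2\alpha_3 z$ into the $x'$-equation causes the $-tx^2$ and $-2x^2z(zw+\alpha_3)$ terms to cancel, leaving the scalar linear equation $tx'=1-2\alpha_0 x$; the same substitution turns the $z'$-equation into $tz'=-z^2(1+2\alpha_1 x)+(1-2\alpha_3-2\alpha_4)z$.

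The hard part will be the consistency step. Differentiating $R:=t+2z^2w+2\alpha_3 z\equiv 0$ and eliminating $z'$ and $w'$ by means of the $z'$- and $w'$-equations, then using $R\equiv 0$ once more to remove $w$, a somewhat lengthy but routine simplification reduces everything to the single relation $2\alpha_4 t=t^2/z$. This forces $\alpha_4\neq 0$ and $z=\frac{t}{2\alpha_4}$. (Equivalently, one may substitute $w$, $w'$ and $tz'$ directly into the $w'$-equation; the bookkeeping is the same.) Inserting $z=\frac{t}{2\alpha_4}$, so that $tz'=z$, back into the $z'$-equation and cancelling $z$ gives $(1+2\alpha_1 x)z=-2(\alpha_3+\alpha_4)$, hence $1+2\alpha_1 x=-\frac{4\alpha_4(\alpha_3+\alpha_4)}{t}$; this determines $x$, and then $w=-\frac{t}{2z^2}-\frac{\alpha_3}{z}=-\frac{2\alpha_4(\alpha_3+\alpha_4)}{t}$.

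It remains to feed this $x$ into $tx'=1-2\alpha_0 x$ and read off the parameters. Matching the $t^0$-coefficients forces $\alpha_0+\alpha_1=0$ in all cases. If $\alpha_3+\alpha_4=0$, then $1+2\alpha_1 x\equiv 0$, so $x\equiv-\frac{1}{2\alpha_1}$ and $w\equiv 0$, which is case (1). If $\alpha_3+\alpha_4\neq 0$, equivalently $\alpha_4(\alpha_3+\alpha_4)\neq 0$, matching the $t^{-1}$-coefficients forces $4\alpha_0=2$, so $\alpha_0=\frac12$ and $\alpha_1=-\frac12$, and the formulas above become those of case (2). The $w'$-equation then holds automatically for the quadruple produced, since it was already used (through $R'\equiv 0$) to determine $z$ and the coefficient $2z^2$ of $w'$ in that relation is nonzero. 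This exhausts all cases and proves the proposition.
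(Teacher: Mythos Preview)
Your proof is correct and follows essentially the same direct-computation approach the paper uses throughout (the paper states this proposition without an explicit proof, but your argument mirrors the style of the adjacent proofs for Propositions~\ref{prop:D5-x=inf} and~\ref{prop:D5-z=inf}). The key consistency step---differentiating $R=t+2z^2w+2\alpha_3 z\equiv 0$, substituting the $z'$- and $w'$-equations, and reducing to $2\alpha_4 t=t^2/z$---is correctly carried out, and your observation that the $w'$-equation is then automatically satisfied (since $2z^2\neq 0$ in the relation $tR'=0$) closes the argument cleanly.
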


Proposition \ref{prop:D_5^2-y=0-alpha_1neq0} implies that 
we have to consider the infinite solution such that $x\equiv \infty.$

\begin{proposition}
\label{prop:D_5^2-w=0-alpha_3neq0}
{\it
Suppose that $w\equiv 0$ and $\alpha_3\neq 0$ for $D_5^{(2)}(\alpha_j)_{0\leq j \leq 4}.$ 
It then follows that $\alpha_0\neq 0,$ and either of the following occurs: 
\newline
{\rm (1)}\quad $\alpha_0+\alpha_1=\alpha_3+\alpha_4=0$ and 
$$
x=\frac{1}{2\alpha_0}, \, y=0, z=-\frac{t}{2\alpha_3}, w=0,
$$
{\rm (2)}\quad $\alpha_3=-1/2, \,\alpha_4=1/2$ and 
$$
x=\frac{1}{2\alpha_0}, y=-2\alpha_0(\alpha_0+\alpha_1), z=t+4\alpha_0(\alpha_0+\alpha_1), w=0.
$$
}
\end{proposition}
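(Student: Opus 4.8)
The plan is to exploit the strong rigidity that the condition $w\equiv 0$ imposes: it turns two of the four equations of $D_5^{(2)}(\alpha_j)_{0\le j\le 4}$ into algebraic relations, and the compatibility of those relations with the remaining two equations will determine $x$, $y$ and $z$ up to a discrete choice of parameters. First I would put $w\equiv 0$, hence $w'\equiv 0$, into the fourth equation; since $2zw+\alpha_3=\alpha_3$ when $w\equiv 0$, that equation collapses to $0=\alpha_3\bigl(1+2x(xy+\alpha_1)\bigr)$, and as $\alpha_3\neq 0$ we get the algebraic relation
\[
x(xy+\alpha_1)=-\tfrac12 .
\]
In particular $x$ is nowhere zero (consistent with Proposition~\ref{prop:D5^{2}-transformation}), so we may solve $y=-\dfrac{1+2\alpha_1 x}{2x^2}$. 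Substituting $w\equiv 0$ and this relation into the third equation, the term $-2xz^2(xy+\alpha_1)$ becomes $z^2$ and cancels the $-z^2$ term, leaving the linear equation $tz'=(1-2\alpha_4)z+t$; and substituting $w\equiv 0$ together with $2x^2y=-1-2\alpha_1 x$ into the first equation and factoring out $x$ gives
\[
tx'=-x\bigl[\,2(\alpha_0+\alpha_1)+x(t+2\alpha_3 z)\,\bigr].
\]

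The decisive step uses the second equation together with the fact that $y$ is now an explicit function of $x$. Differentiating $y=-(1+2\alpha_1 x)/(2x^2)$ yields $ty'=(tx')(1+\alpha_1 x)/x^3$, and inserting the formula for $tx'$ above makes this a rational expression in $x$, $z$, $t$. On the other hand, after eliminating $y$ and using $2xy+\alpha_1=-\alpha_1-1/x$, the right-hand side of the second equation is also rational in $x$, $z$, $t$. Equating the two expressions for $ty'$ and clearing denominators, one finds that the terms containing $t$, the terms containing $z$, and the top-degree terms in $x$ all cancel, leaving the single identity
\[
2\alpha_0 x\equiv 1 .
\]
Hence $\alpha_0\neq 0$ and $x\equiv 1/(2\alpha_0)$ is constant, which then gives $y\equiv -2\alpha_0(\alpha_0+\alpha_1)$.

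With $x$ constant the equation for $tx'$ forces $2(\alpha_0+\alpha_1)+x(t+2\alpha_3 z)=0$, i.e. $z=-\dfrac{t+4\alpha_0(\alpha_0+\alpha_1)}{2\alpha_3}$. Substituting this into $tz'=(1-2\alpha_4)z+t$ and comparing the coefficients of $t$ and the constant terms yields $\alpha_3+\alpha_4=0$ together with $(\alpha_0+\alpha_1)(1-2\alpha_4)=0$. Since $\alpha_0\neq 0$, either $\alpha_0+\alpha_1=0$, in which case $z=-t/(2\alpha_3)$ and $y\equiv 0$ — this is case (1) — or $\alpha_4=1/2$, whence $\alpha_3=-1/2$ and $z=t+4\alpha_0(\alpha_0+\alpha_1)$ — this is case (2). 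A direct substitution back into the system confirms that each of these is indeed a solution.

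The only genuinely delicate point is the cancellation that produces $2\alpha_0 x\equiv 1$: after clearing denominators one must check that the coefficients of $tx^3$, $tx^2$, $zx^3$, $zx^2$ and $x^2$ match on the two sides, so that only a relation between the coefficient of $x$ and the constant term remains. This is pure bookkeeping, but it has to be done carefully since the whole argument rests on it; everything after that (solving the linear equation for $z$ and splitting into the two parameter cases) is routine.
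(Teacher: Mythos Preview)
Your proof is correct. The paper states this proposition without proof (it is in the appendix, in the same ``can be easily checked'' spirit as the analogous Proposition~\ref{prop:exam1} for $B_4^{(1)}$), so your direct-substitution argument is precisely what such a verification looks like when fully written out; the key cancellation you flag does indeed reduce to $2\alpha_0 x=1$, and the remaining split into the two parameter cases via $\alpha_3+\alpha_4=0$ and $(1-2\alpha_4)(\alpha_0+\alpha_1)=0$ is exactly right.
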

Proposition \ref{prop:D_5^2-w=0-alpha_3neq0} implies that 
we have to consider the infinite solution such that $z\equiv \infty.$

\subsection{The infinite solution}
\subsubsection{The case where $x\equiv \infty$}

In order to determine the solution such that $x\equiv\infty,$ 
following Sasano \cite{Sasano-2}, 
we introduce the coordinate transformation $r_1,$ 
which is given by 
$$
r_1:\quad x_1=1/x, \,y_1=-(xy+\alpha_1)x, \,z_1=z, \,w_1=w.
$$

\begin{proposition}
\label{prop:D5-x=inf}
{\it
Suppose that $x\equiv \infty$ for $D_5^{(2)}(\alpha_j)_{0\leq j \leq 4}.$ 
It then follows that 
$\alpha_4\neq 0$ and either of the following occurs:
\newline
{\rm (1)}\quad $\alpha_0=\alpha_3+\alpha_4=0$
$$
x_1=0, \,y_1=\frac12, \,z_1=\frac{t}{2\alpha_4}, \, w_1=0,
$$
that is, $x=\infty, \,y=0, \,z=t/\{2\alpha_4\}, \,w=0,$
\newline
{\rm (2)}\quad $\alpha_0=0, \,\alpha_1=1/2$ and 
$$
x_1=0, y_1=\frac12+\frac{2\alpha_4(\alpha_3+\alpha_4)}{t}, \,z_1=\frac{t}{2\alpha_4}, \,w_1=-\frac{2\alpha_4(\alpha_3+\alpha_4)}{t},
$$
that is, $x=\infty, \,y=0, \,z=t/\{2\alpha_4\}, \,w=-2\alpha_4(\alpha_3+\alpha_4)t^{-1}.$
}
\end{proposition}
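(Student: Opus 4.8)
The plan is to transport the statement to the already-proved Proposition~\ref{prop:exam2} via the coordinate transformation $r_1$, which desingularises the locus $x\equiv\infty$, and which I claim conjugates $D_5^{(2)}$ to a system of type $B_4^{(1)}$.

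First I would compute, by direct substitution, the system satisfied by $(x_1,y_1,z_1,w_1)=(1/x,\,-(xy+\alpha_1)x,\,z,\,w)$. From $x_1=1/x$ one has $tx_1'=-x_1^2(tx')$; inserting the $x$-equation of $D_5^{(2)}$ and using $x_1^2x^2=1$, $x_1^2x=x_1$ and $y=-y_1x_1^2-\alpha_1x_1$ gives $tx_1'=2x_1^2y_1-x_1^2+2(\alpha_0+\alpha_1)x_1+2z_1^2w_1+2\alpha_3z_1+t$. Differentiating $y_1=-(xy+\alpha_1)x$ and feeding in both the $x$- and $y$-equations of $D_5^{(2)}$, the terms in $z(zw+\alpha_3)$ cancel (because $-xy+2xy+\alpha_1=xy+\alpha_1$) and one is left with $ty_1'=-2x_1y_1^2+2x_1y_1-2(\alpha_0+\alpha_1)y_1+\alpha_1$. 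Since $z_1=z$, $w_1=w$ and $xy+\alpha_1=-y_1x_1$, $x=1/x_1$, the substitutions $-2xz^2(xy+\alpha_1)\mapsto 2y_1z_1^2$ and $2x(xy+\alpha_1)(2zw+\alpha_3)\mapsto -4y_1z_1w_1-2\alpha_3y_1$ turn the remaining two equations into the standard $z$- and $w$-equations with parameters $\alpha_4,\alpha_3$. Finally, from $\alpha_0+\alpha_1+\alpha_2+\alpha_3+\alpha_4=1/2$ one checks $1-2\alpha_2-2\alpha_3-2\alpha_4=2(\alpha_0+\alpha_1)$ and $(2\alpha_0+\alpha_1)+\alpha_1+2\alpha_2+2\alpha_3+2\alpha_4=1$, so $r_1$ conjugates $D_5^{(2)}(\alpha_0,\alpha_1,\alpha_2,\alpha_3,\alpha_4)$ to $B_4^{(1)}(2\alpha_0+\alpha_1,\alpha_1,\alpha_2,\alpha_3,\alpha_4)$.

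Next, $x\equiv\infty$ for $D_5^{(2)}(\alpha_j)$ means exactly $x_1\equiv0$ for this $B_4^{(1)}$ system, so Proposition~\ref{prop:exam2} applies with $(A_0,A_1,A_2,A_3,A_4)=(2\alpha_0+\alpha_1,\alpha_1,\alpha_2,\alpha_3,\alpha_4)$. It yields $A_4=\alpha_4\neq0$ together with two cases: $-A_0+A_1=0=A_3+A_4$, i.e. $\alpha_0=0$ and $\alpha_3+\alpha_4=0$, with $(x_1,y_1,z_1,w_1)=(0,\tfrac12,\tfrac{t}{2\alpha_4},0)$; or $A_0=A_1=\tfrac12$, i.e. $\alpha_0=0$ and $\alpha_1=\tfrac12$, with $(x_1,y_1,z_1,w_1)=\bigl(0,\tfrac12+\tfrac{2\alpha_4(\alpha_3+\alpha_4)}{t},\tfrac{t}{2\alpha_4},-\tfrac{2\alpha_4(\alpha_3+\alpha_4)}{t}\bigr)$. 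Undoing $r_1$ — so $z=z_1$, $w=w_1$, $x=1/x_1=\infty$, and $y=-y_1x_1^2-\alpha_1x_1\to0$ — reproduces exactly cases (1) and (2) of the statement.

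The only genuine labour is the bookkeeping in the middle step: checking that $r_1$ really produces a $B_4^{(1)}$ system with the parameter shift $\alpha_0\mapsto2\alpha_0+\alpha_1$, and in particular that the $y_1$-equation emerges free of any residual $z(zw+\alpha_3)$-dependence; everything after that is a mechanical translation of Proposition~\ref{prop:exam2}. If one instead wanted to stay inside $D_5^{(2)}$, the same conclusion follows by substituting $x_1\equiv0$ directly into the transformed system — its $x_1$-equation then forces $2z_1(z_1w_1+\alpha_3)+t=0$, its $y_1$-equation forces $y_1\equiv\tfrac12$ or $y_1$ to be a prescribed function of $1/t$, and the remaining two equations pin down $z_1$, $w_1$ and the parameter constraints — but routing through $B_4^{(1)}$ is appreciably shorter.
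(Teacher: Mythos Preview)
Your argument is correct and takes a genuinely different route from the paper's own proof. The paper also applies $r_1$ and writes down the transformed system $(*)$ for $(x_1,y_1,z_1,w_1)$, but it does not identify $(*)$ with a $B_4^{(1)}$ system; instead it argues directly: setting $x_1\equiv0$ in the $x_1$-equation gives $t+2z_1^2w_1+2\alpha_3z_1=0$, differentiating this and combining with the $z_1$- and $w_1$-equations forces $z_1w_1=-\alpha_3-\alpha_4$, whence $z_1=t/(2\alpha_4)$ and $w_1=-2\alpha_4(\alpha_3+\alpha_4)/t$; the $z_1$-equation then pins down $y_1$, and the $y_1$-equation yields the dichotomy on the parameters. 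This is exactly the ``stay inside $D_5^{(2)}$'' alternative you sketch at the end.

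Your approach is cleaner: once you check that $r_1$ sends $D_5^{(2)}(\alpha_0,\alpha_1,\alpha_2,\alpha_3,\alpha_4)$ to $B_4^{(1)}(2\alpha_0+\alpha_1,\alpha_1,\alpha_2,\alpha_3,\alpha_4)$ (and your verification of the parameter shift and of the cancellation in the $y_1$-equation is correct), Proposition~\ref{prop:exam2} does all the remaining work. The paper's direct computation is self-contained but essentially redoes, in the $D_5^{(2)}$ coordinates, the same analysis already carried out for $B_4^{(1)}$ in Proposition~\ref{prop:exam2}. Your route makes the equivalence $D_5^{(2)}\leftrightarrow B_4^{(1)}$ via $r_1$ explicit, which is conceptually nicer and avoids duplication.
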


\begin{proof}
For $D_5^{(2)}(\alpha_j)_{0\leq j \leq 4},$ 
$r_1$ transforms the system of $(x,y,z,w)$ into the system of $(x_1, y_1, z_1, w_1),$ 
which is given by
\begin{equation*}
(*)
\begin{cases}
tx_1^{\prime}=2x_1^2y_1+2\alpha_1x_1+t+2\alpha_0x_1-x_1^2+2z_1(z_1w_1+\alpha_3),  \\
ty_1^{\prime}=-2x_1y_1^2-2\alpha_0y_1+2x_1y_1-2\alpha_1y_1+\alpha_1, \\
tz_1^{\prime}=2z_1^2w_1-z_1^2+(1-2\alpha_4)z_1+t+2y_1z_1^2, \\
tw_1^{\prime}=-2z_1w_1^2+2z_1w_1-(1-2\alpha_4)w_1+\alpha_3-2y_1(2z_1w_1+\alpha_3).
\end{cases}
\end{equation*}
Setting $x_1\equiv 0,$ we have 
\begin{equation}
\label{eqn:x=inf-(1)}
t+2z_1^2w_1+2\alpha_3z_1=0,
\end{equation}
which implies that 
\begin{equation}
\label{eqn:x=inf-(2)}
1+4z_1w_1z_1^{\prime}+2z_1^2w_1^{\prime}+2\alpha_3z_1^{\prime}=0.
\end{equation}
From $(*),$ (\ref{eqn:x=inf-(1)}) and (\ref{eqn:x=inf-(2)}), 
it then follows that $z_1w_1=-\alpha_3-\alpha_4.$ Thus, considering (\ref{eqn:x=inf-(1)}), 
we find that $\alpha_4\neq 0$ and  
\begin{equation}
\label{eqn:x=inf-(3)}
z_1=\frac{t}{2\alpha_4}, \,w_1=-\frac{2\alpha_4(\alpha_3+\alpha_4)}{t}.
\end{equation}
\par
Substituting (\ref{eqn:x=inf-(3)}) into 
$$
tz_1^{\prime}=2z_1^2w_1-z_1^2+(1-2\alpha_4)z_1+t+2y_1z_1^2, 
$$
we obtain 
\begin{equation}
\label{eqn:x=inf-(4)}
y_1=\frac12+\frac{2\alpha_4(\alpha_3+\alpha_4)}{t}.
\end{equation}
\par
Moreover, substituting (\ref{eqn:x=inf-(3)}), (\ref{eqn:x=inf-(4)}) and $x_1\equiv 0$ into 
$$
ty_1^{\prime}=-2x_1y_1^2-2\alpha_0y_1+2x_1y_1-2\alpha_1y_1+\alpha_1, 
$$
we have 
$$
\{1-2(\alpha_0+\alpha_1)\}\frac{2\alpha_4(\alpha_3+\alpha_4)}{t}-\alpha_0=0,  \,\,(\alpha_4\neq 0),
$$
which proves the proposition.
\end{proof}

{\bf Remark} 
In both cases of Proposition \ref{prop:D5-x=inf}, 
we can express the solution by 
$$
x_1\equiv 0, \, 
y_1=\frac12+\frac{2\alpha_4(\alpha_3+\alpha_4)}{t}, \,
z_1=\frac{t}{2\alpha_4}, \,
w_1=-\frac{2\alpha_4(\alpha_3+\alpha_4)}{t},
$$
and 
$$
x=\infty, \, y=0, \,z=\frac{t}{2\alpha_4}, \,w=-\frac{2\alpha_4(\alpha_3+\alpha_4)}{t}.
$$

\subsubsection{The case where $z\equiv \infty$}

In order to determine the solution such that $z\equiv\infty,$ 
following Sasano \cite{Sasano-2}, 
we introduce the coordinate transformation $r_3,$ 
which is given by 
$$
r_3: \quad x_3=x, \,y_3=y, \, z_3=1/z, \,w_3=-z(zw+\alpha_3).
$$

\begin{proposition}
\label{prop:D5-z=inf}
{\it
Suppose that $z\equiv \infty$ for $D_5^{(2)}(\alpha_j)_{0\leq j \leq 4}.$ 
It then follows that $\alpha_0\neq 0$ and either of the following occurs:
\newline
{\rm (1)}\quad $\alpha_0+\alpha_1=\alpha_4=0$ and 
$$
x_3=\frac{1}{2\alpha_0}, \,y_3=0, \, z_3=0, \,w_3=\frac{t}{2},
$$
that is, $x=1/\{2\alpha_0\}, \,y=0, \,z=\infty, \, w=0,$ 
\newline
{\rm (2)}\quad $\alpha_3=1/2, \,\alpha_4=0$ and 
$$
x_3=\frac{1}{2\alpha_0}, \, y_3=-2\alpha_0(\alpha_0+\alpha_1), \, z_3=0, \,w_3=\frac{t}{2}+2\alpha_0(\alpha_0+\alpha_1),
$$
that is, $x=1/\{2\alpha_0\}, \,y=-2\alpha_0(\alpha_0+\alpha_1), \, z=\infty, \,w=0.$
}
\end{proposition}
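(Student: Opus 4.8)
The plan is to follow exactly the pattern of the proof of Proposition~\ref{prop:D5-x=inf}, using the coordinate change $r_3$ introduced just above. First I would apply $r_3$ to $D_5^{(2)}(\alpha_j)_{0\leq j\leq 4}$ and write out the resulting polynomial system for $(x_3,y_3,z_3,w_3)$; the substitution is clean because $z(zw+\alpha_3)=-w_3$ and $w=-w_3z_3^2-\alpha_3z_3$, so the $x$- and $y$-equations become polynomial in $z_3$ and $w_3$. Then I set $z_3\equiv 0$. Substituting into the $z_3$-equation gives an algebraic relation of the form $1+2x_3(x_3y_3+\alpha_1)=0$, which in particular forces $x_3\not\equiv 0$ and expresses $y_3$ in terms of $x_3$. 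Differentiating this relation in $t$ and eliminating $x_3'$ and $y_3'$ through the $x_3$- and $y_3$-equations should collapse it to the statement that $x_3$ is constant, and comparing back with the relation pins down $\alpha_0\neq 0$ and $x_3\equiv 1/(2\alpha_0)$. The $x_3$-equation then yields $w_3=t/2-y_3$, and substituting the now-known $x_3,y_3,w_3$ into the $w_3$-equation produces an identity in $t$ whose vanishing splits into the two parameter cases $\alpha_0+\alpha_1=\alpha_4=0$ and $\alpha_3=1/2,\ \alpha_4=0$, with the corresponding values of $y_3$ (namely $0$ and $-2\alpha_0(\alpha_0+\alpha_1)$). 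Undoing $r_3$ recovers the displayed solutions with $z\equiv\infty$.

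There is a shorter route that I would actually use: the B\"acklund transformation $\psi$ sends $x\mapsto z/t$, so it carries any solution of $D_5^{(2)}(\alpha_0,\alpha_1,\alpha_2,\alpha_3,\alpha_4)$ with $z\equiv\infty$ to a solution of $D_5^{(2)}(\alpha_4,\alpha_3,\alpha_2,\alpha_1,\alpha_0)$ with $x\equiv\infty$, and $\psi$ is an involution. Applying Proposition~\ref{prop:D5-x=inf} to $\psi(x,y,z,w)$ and reading its conclusion off through the parameter permutation $(\alpha_0,\alpha_1,\alpha_2,\alpha_3,\alpha_4)\mapsto(\alpha_4,\alpha_3,\alpha_2,\alpha_1,\alpha_0)$ gives $\alpha_0\neq 0$ together with $\alpha_4=\alpha_0+\alpha_1=0$ or $\alpha_4=0,\ \alpha_3=1/2$, i.e.\ precisely cases (1) and (2); and tracking the explicit solution of Proposition~\ref{prop:D5-x=inf} back through $\psi$ and $r_1$ reproduces the stated values $x_3=1/(2\alpha_0)$, $y_3=-2\alpha_0(\alpha_0+\alpha_1)$, $z_3=0$, $w_3=t/2+2\alpha_0(\alpha_0+\alpha_1)$.

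The only real obstacle in either approach is bookkeeping. For the direct approach it is getting the $r_3$-transformed system — especially the $w_3$-equation — exactly right, and then disentangling which equation determines which unknown, since (in contrast to the $x\equiv\infty$ case) the relation coming from $z_3\equiv 0$ contains no explicit $t$, so the constancy of $x_3$ must be extracted by differentiation and substitution rather than read off directly. For the $\psi$-route the obstacle is simply keeping the two parameter relabellings — that of $\psi$ and the one implicit in $r_1$ versus $r_3$ — consistent. In both cases no genuinely new idea beyond Proposition~\ref{prop:D5-x=inf} and the structure already set up in this appendix is needed.
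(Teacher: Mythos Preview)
Your first (direct) approach is exactly what the paper does: it passes to the $r_3$-chart, writes out the transformed system, sets $z_3\equiv 0$, obtains $1+2x_3^2y_3+2\alpha_1x_3=0$ from the $z_3$-equation, differentiates and substitutes the $x_3$- and $y_3$-equations to get the conservation law $x_3y_3=-\alpha_0-\alpha_1$ (whence $\alpha_0\neq 0$, $x_3=1/(2\alpha_0)$, $y_3=-2\alpha_0(\alpha_0+\alpha_1)$), reads off $w_3$ from the $x_3$-equation, and finally extracts the parameter dichotomy from the $w_3$-equation. Your description of that route is accurate.

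Your second route via $\psi$ is a genuine alternative not in the paper and is cleaner: since $\psi$ is an involution with $\psi(x)=z/t$ and parameter action $(\alpha_0,\dots,\alpha_4)\mapsto(\alpha_4,\alpha_3,\alpha_2,\alpha_1,\alpha_0)$, any $z\equiv\infty$ solution is carried to an $x\equiv\infty$ solution, and Proposition~\ref{prop:D5-x=inf} applied there returns precisely $\alpha_0\neq 0$ and the two cases $\alpha_4=\alpha_0+\alpha_1=0$ or $\alpha_4=0,\ \alpha_3=1/2$, with the explicit solution pulled back correctly via $\psi^{-1}=\psi$. This buys you the result essentially for free once Proposition~\ref{prop:D5-x=inf} is in hand; the paper's direct computation, by contrast, is self-contained and makes the $(x_3,y_3,z_3,w_3)$ form of the infinite solution explicit without reference to the $x\equiv\infty$ case.
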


\begin{proof}
For $D_5^{(2)}(\alpha_j)_{0\leq j \leq 4},$ 
$r_3$ transforms the system of $(x,y,z,w)$ into the system of $(x_1, y_1, z_1, w_1),$ 
which is given by
\begin{equation*}
(***)
\begin{cases}
tx_3^{\prime}=2x_3^2y_3-tx_3^2-2\alpha_0x_3+1+2x_3^2w_3, \\
ty_3^{\prime}=-2x_3y_3^2+2tx_3y_3+2\alpha_0y_3+\alpha_1t-2w_3(2x_3y_3+\alpha_1), \\
tz_3^{\prime}=2z_3^2w_3+2\alpha_3z_3+1-(1-2\alpha_4)z_3-tz_3^2+2x_3(x_3y_3+\alpha_1), \\
tw_3^{\prime}=-2z_3w_3^2-2tz_3w_3+(1-2\alpha_3-2\alpha_4)w_3+\alpha_3t. 
\end{cases}
\end{equation*}
Setting $z_3\equiv 0,$ we have 
\begin{equation}
\label{eqn:z=inf-(1)}
1+2x_3^2y_3+2\alpha_1x_3=0,
\end{equation}
which implies that 
\begin{equation}
\label{eqn:z=inf-(2)}
4x_3y_3x_3^{\prime}+2x_3^2y_3^{\prime}+2\alpha_1x_3^{\prime}=0
\end{equation}
From $(***),$ (\ref{eqn:z=inf-(1)}) and (\ref{eqn:z=inf-(2)}), 
it then follows that $x_3y_3=-\alpha_0-\alpha_1.$
Thus, considering (\ref{eqn:z=inf-(1)}), 
we find that $\alpha_0\neq 0$ and  
\begin{equation}
\label{eqn:z=inf-(3)}
x_3=\frac{1}{2\alpha_0}, \,y_3=-2\alpha_0(\alpha_0+\alpha_1).
\end{equation}
\par
Substituting (\ref{eqn:z=inf-(3)}) into 
$$
tx_3^{\prime}=2x_3^2y_3-tx_3^2-2\alpha_0x_3+1+2x_3^2w_3, 
$$
we obtain 
\begin{equation}
\label{eqn:z=inf-(4)}
w_3=\frac{t}{2}+2\alpha_0(\alpha_0+\alpha_1).
\end{equation}
\par
Moreover, substituting (\ref{eqn:z=inf-(3)}), (\ref{eqn:z=inf-(4)}) and $z_3\equiv 0$ into
$$
tw_3^{\prime}=-2z_3w_3^2-2tz_3w_3+(1-2\alpha_3-2\alpha_4)w_3+\alpha_3t,
$$
we have 
$$
-\alpha_4t+2\alpha_0(\alpha_0+\alpha_1)(1-2\alpha_3-2\alpha_4)=0,\,\,(\alpha_0\neq 0),
$$
which proves the proposition. 
\end{proof}

{\bf Remark} In both cases of Proposition \ref{prop:D5-z=inf}, 
we can express the solution by 
$$
x_3=\frac{1}{2\alpha_0}, \,y_3=-2\alpha_0(\alpha_0+\alpha_1), \,z_3=0, \,w_3=\frac{t}{2}+2\alpha_0(\alpha_0+\alpha_1),
$$
and 
$$
x=\frac{1}{2\alpha_0}, \,y=-2\alpha_0(\alpha_0+\alpha_1), \,z=\infty, \,y=0.
$$

\subsubsection{The case where $x=z\equiv \infty$}

In order to determine the solution such that $x=z=\infty,$ 
we define $r_5=r_1r_3=r_3r_1$ by 
$$
r_5: \quad x_5=1/x, \,y_5=-(xy+\alpha_1)x, \,z_5=1/z, \, w_5=-(zw+\alpha_3)z.
$$

\begin{proposition}
\label{prop:D5-x,z=inf}
{\it
Suppose that $x=z\equiv \infty$ for $D_5^{(2)}(\alpha_j)_{0\leq j \leq 4}.$ 
Then, $\alpha_0=\alpha_4=0$ and 
$$
x_5=0, \, y_5=1/2, \,z_5=0, w_5=t/2, 
$$
that is, $x=\infty, \,y=0, \,z=\infty, \, w=0.$ 
}
\end{proposition}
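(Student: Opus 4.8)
The plan is to imitate the proofs of Propositions \ref{prop:D5-x=inf} and \ref{prop:D5-z=inf}, using the coordinate change $r_5=r_1r_3=r_3r_1$ introduced above. Since $r_1$ affects only $(x,y)$ and $r_3$ affects only $(z,w)$, these two transformations commute, and $r_5$ carries $D_5^{(2)}(\alpha_j)_{0\le j\le 4}$ into the system obtained by combining the $(x_1,y_1)$--equations of the system $(*)$ in the proof of Proposition \ref{prop:D5-x=inf} with the $(z_3,w_3)$--equations of the system $(***)$ in the proof of Proposition \ref{prop:D5-z=inf}, after the substitutions $2z_1(z_1w_1+\alpha_3)=-2w_5$ and $2x_3(x_3y_3+\alpha_1)=-2y_5$, which come from $w_5=-z(zw+\alpha_3)$ and $y_5=-x(xy+\alpha_1)$. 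Writing $(x_5,y_5,z_5,w_5)=(1/x,\,-(xy+\alpha_1)x,\,1/z,\,-(zw+\alpha_3)z)$, this transformed system reads
\begin{align*}
&tx_5^{\prime}=2x_5^2y_5+2(\alpha_0+\alpha_1)x_5-x_5^2-2w_5+t,\\
&ty_5^{\prime}=-2x_5y_5^2+2x_5y_5-2(\alpha_0+\alpha_1)y_5+\alpha_1,\\
&tz_5^{\prime}=2z_5^2w_5-tz_5^2-(1-2\alpha_3-2\alpha_4)z_5+1-2y_5,\\
&tw_5^{\prime}=-2z_5w_5^2-2tz_5w_5+(1-2\alpha_3-2\alpha_4)w_5+\alpha_3t.
\end{align*}

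Next I would impose $x_5\equiv 0$ and $z_5\equiv 0$ simultaneously. The $x_5$--equation then forces $t-2w_5=0$, i.e.\ $w_5=t/2$ (and this is consistent, since $tw_5^{\prime}=t/2$ matches differentiating $t-2w_5=0$); the $z_5$--equation forces $1-2y_5=0$, i.e.\ $y_5=1/2$. Substituting $x_5=0$ and $y_5=1/2$ into the $y_5$--equation gives $0=-2(\alpha_0+\alpha_1)\cdot\tfrac12+\alpha_1=-\alpha_0$, so $\alpha_0=0$. Substituting $z_5=0$ and $w_5=t/2$ into the $w_5$--equation gives $t/2=(1-2\alpha_3-2\alpha_4)(t/2)+\alpha_3t=t/2-\alpha_4t$, whence $\alpha_4 t=0$ and therefore $\alpha_4=0$. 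Finally one checks that, with $\alpha_0=\alpha_4=0$, the quadruple $(x_5,y_5,z_5,w_5)=(0,1/2,0,t/2)$ indeed satisfies all four equations; by the definition of $r_5$ this is precisely the solution $x=\infty,\ y=0,\ z=\infty,\ w=0$, which is the assertion.

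The computation is entirely routine; the only point that needs care is the correct assembly of the $r_5$--transformed system out of $(*)$ and $(***)$---in particular keeping track of the signs in $w_5=-z(zw+\alpha_3)$ and $y_5=-x(xy+\alpha_1)$, and observing that once $x_5$ and $z_5$ are set identically to zero the right-hand sides of the $(x_5,y_5)$--equations and of the $(z_5,w_5)$--equations decouple. I do not expect any genuine obstacle: as in the two preceding propositions, everything comes down to comparing the coefficients of $1$ and of $t$. The constraint $\alpha_0+\alpha_1+\alpha_2+\alpha_3+\alpha_4=1/2$ plays no further role beyond being preserved.
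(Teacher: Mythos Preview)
Your proof is correct and follows essentially the same approach as the paper: transform via $r_5$, set $x_5=z_5\equiv 0$, read off $w_5=t/2$ and $y_5=1/2$ from the $x_5$- and $z_5$-equations, and then extract $\alpha_0=0$ and $\alpha_4=0$ from the $y_5$- and $w_5$-equations. Your additional remark on assembling the $r_5$-system from $(*)$ and $(***)$ is a helpful gloss, and the sign of the $2tz_5w_5$ term (where your version differs from the paper's display) is irrelevant once $z_5\equiv 0$.
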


\begin{proof}
For $D_5^{(2)}(\alpha_j)_{0\leq j \leq 4},$ 
$r_5$ then transforms the system of $(x,y, z, w)$ 
into the system of $(x_5, y_5, z_5, w_5),$ which is given by 
\begin{equation*}
\begin{cases}
tx_5^{\prime}=2x_5^2y_5+-x_5^2+(2\alpha_0+2\alpha_1)x_5+t-2w_5,  \\
ty_5^{\prime}=-2x_5y_5^2-(2\alpha_0+2\alpha_1)y_5+2x_5y_5+\alpha_1,  \\
tz_5^{\prime}=2z_5^2w_5-(1-2\alpha_3-2\alpha_4)z_5+1-tz_5^2-2y_5,  \\
tw_5^{\prime}=-2z_5w_5^2+(1-2\alpha_3-2\alpha_4)w_5+2tz_5w_5+\alpha_3t. 
\end{cases}
\end{equation*}
Setting $x_5=z_5\equiv 0$ in 
\begin{equation*}
\begin{cases}
tx_5^{\prime}=2x_5^2y_5+-x_5^2+(2\alpha_0+2\alpha_1)x_5+t-2w_5,  \\
tz_5^{\prime}=2z_5^2w_5-(1-2\alpha_3-2\alpha_4)z_5+1-tz_5^2-2y_5, 
\end{cases}
\end{equation*}
we obtain $w_5=t/2, \,y_5=1/2.$ 
\par
Substituting $y_5=1/2, \,w_5=t/2$ into 
\begin{equation*}
\begin{cases}
ty_5^{\prime}=-2x_5y_5^2-(2\alpha_0+2\alpha_1)y_5+2x_5y_5+\alpha_1,\\
tw_5^{\prime}=-2z_5w_5^2+(1-2\alpha_3-2\alpha_4)w_5+2tz_5w_5+\alpha_3t,
\end{cases}
\end{equation*}
we have $\alpha_0=\alpha_4=0.$ 

\end{proof}

\subsection{The B\"acklund transformations and the infinite solutions}

\begin{proposition}
{\it
Suppose that 
$D_5^{(2)}(\alpha_j)_{0\leq j \leq 4}$ has an infinite solution such that $x\equiv \infty.$ 
The actions of the B\"acklund transformations are then as follows:
\newline
{\rm (0)}\quad $s_0(\infty, 0, t/\{2\alpha_4\}, -2\alpha_4(\alpha_3+\alpha_4)t^{-1})=(\infty, 0, t/\{2\alpha_4\}, -2\alpha_4(\alpha_3+\alpha_4)t^{-1}), $ 
\newline
{\rm (1)-(i)}\quad if $\alpha_1\neq 0,$ 
\begin{equation*}
s_1(\infty, 0, t/\{ 2\alpha_4 \}, -2\alpha_4 (\alpha_3+\alpha_4) t^{-1})= 
(1/\{ 2\alpha_1\}+2\alpha_4(\alpha_3+\alpha_4)/\{ \alpha_1t \}, 0, t/\{ 2\alpha_4 \}, -2\alpha_4(\alpha_3+\alpha_4)t^{-1}), 
\end{equation*}
{\rm (1)-(ii)}\quad if $\alpha_1=0,$ 
\begin{equation*}
s_1(\infty, 0, t/\{2\alpha_4\}, -2\alpha_4(\alpha_3+\alpha_4)t^{-1})=(\infty, 0, t/\{2\alpha_4\}, -2\alpha_4(\alpha_3+\alpha_4)t^{-1}),
\end{equation*}
{\rm (2)}\quad $s_2(\infty, 0, t/\{2\alpha_4\}, -2\alpha_4(\alpha_3+\alpha_4)t^{-1})=(\infty, 0, t/\{2\alpha_4\}, -2\alpha_4(\alpha_2+\alpha_3+\alpha_4)t^{-1}), $ 
\newline
{\rm (3)-(i)}\quad 
if $\alpha_3+\alpha_4\neq 0,$ 
\begin{equation*}
s_3(\infty, 0, t/\{2\alpha_4\}, -2\alpha_4(\alpha_3+\alpha_4)t^{-1})= (\infty, 0, t/\{2(\alpha_3+\alpha_4)\}, -2\alpha_4(\alpha_3+\alpha_4)t^{-1}),
\end{equation*}
{\rm (3)-(ii)}\quad 
if $\alpha_3+\alpha_4= 0,$ 
\begin{equation*}
s_3(\infty, 0, t/\{2\alpha_4\}, -2\alpha_4(\alpha_3+\alpha_4)t^{-1})=
(\infty, 0, \infty, 0),
\end{equation*}
{\rm (4)}\quad $s_4(\infty, 0, t/\{2\alpha_4\}, -2\alpha_4(\alpha_3+\alpha_4)t^{-1})=(\infty, 0, t/\{2(-\alpha_4)\}, -2(-\alpha_4)(\alpha_3+\alpha_4)t^{-1}),$
\newline
{\rm (5)}\quad $\psi(\infty, 0, t/\{2\alpha_4\}, -2\alpha_4(\alpha_3+\alpha_4)t^{-1})=(1/\{2\alpha_4\}, -2\alpha_4(\alpha_3+\alpha_4), \infty, 0).$ 
}
\end{proposition}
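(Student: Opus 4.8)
The plan is to transport the infinite solution of Proposition~\ref{prop:D5-x=inf} through each generator by conjugating with the coordinate chart $r_1$, and to pass to $r_3$ or to $r_5=r_1r_3$ whenever a second coordinate runs off to infinity. By the Remark following Proposition~\ref{prop:D5-x=inf}, every solution with $x\equiv\infty$ is expressed uniformly in the $r_1$-chart by
\begin{equation*}
x_1\equiv 0,\qquad y_1=\frac12+\frac{2\alpha_4(\alpha_3+\alpha_4)}{t},\qquad z_1=\frac{t}{2\alpha_4},\qquad w_1=-\frac{2\alpha_4(\alpha_3+\alpha_4)}{t}\qquad(\alpha_4\neq 0,\ \alpha_0=0),
\end{equation*}
so for each $g\in\{s_0,s_1,s_2,s_3,s_4,\psi\}$ it suffices to compute the birational map $r_1\,g\,r_1^{-1}$ on the $(x_1,y_1,z_1,w_1)$-system, evaluate it on this solution, and read the answer back in the original coordinates.

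First I would record the conjugated forms: $r_1s_1r_1^{-1}$ acts by $x_1\mapsto x_1+\alpha_1/y_1$ and fixes $y_1,z_1,w_1$; $r_1s_2r_1^{-1}$ fixes $x_1$ and applies the usual $s_2$-shifts to $y_1,w_1$ rewritten in the chart; $s_3$ and $s_4$ act only on $z_1$ and on $w_1$ respectively; and $r_1s_0r_1^{-1}$ is the time-reversing map $(x_1,y_1,z_1,w_1,t)\mapsto(-x_1,1-y_1,-z_1,-w_1,-t)$ with $(\alpha_0,\alpha_1)\mapsto(-\alpha_0,\alpha_1+2\alpha_0)$. Each of these is regular at $x_1=0$, and since $\alpha_0=0$ on every such solution the $s_0$-parameter change is vacuous, so each case reduces to a short algebraic identity. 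For (0) one notes the displayed solution is invariant under $r_1s_0r_1^{-1}$; for (1)-(i) one gets $s_1(x_1)=\alpha_1\bigl(\tfrac12+2\alpha_4(\alpha_3+\alpha_4)/t\bigr)^{-1}$, whose reciprocal is $1/(2\alpha_1)+2\alpha_4(\alpha_3+\alpha_4)/(\alpha_1 t)$; for (2) the correction $-\alpha_2/z$ in $w$ produces $-2\alpha_4(\alpha_2+\alpha_3+\alpha_4)/t$; for (3)-(i) the identity $1-\alpha_3/(\alpha_3+\alpha_4)=\alpha_4/(\alpha_3+\alpha_4)$ turns $z+\alpha_3/w$ into $t/(2(\alpha_3+\alpha_4))$; and for (4) the polar parts $-2\alpha_4/z$ and $t/z^2$ cancel in $s_4(w)$, while $t\mapsto-t$ produces the stated $z,w$ with $\alpha_4$ replaced by $-\alpha_4$.

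The two boundary cases are those where the image leaves the $r_1$-chart. If $\alpha_3+\alpha_4=0$ then $w_1\equiv 0$; since then $\alpha_3=-\alpha_4\neq 0$, the formula $s_3(z)=z+\alpha_3/w$ has a pole, so $s_3$ of the solution has $x\equiv z\equiv\infty$, and Proposition~\ref{prop:D5-x,z=inf} identifies it with $(\infty,0,\infty,0)$, giving (3)-(ii). For $\psi$, the formula $\psi(z)=tx$ gives $\psi(z)\equiv\infty$, while $\psi(x)=z/t=1/(2\alpha_4)$, $\psi(y)=tw=-2\alpha_4(\alpha_3+\alpha_4)$ and $\psi(w)=y/t=0$; so the image has $z\equiv\infty$, and by the Remark after Proposition~\ref{prop:D5-z=inf} applied with parameters permuted to $(\alpha_4,\alpha_3,\alpha_2,\alpha_1,\alpha_0)$ it equals $(1/(2\alpha_4),-2\alpha_4(\alpha_3+\alpha_4),\infty,0)$, which is (5). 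Collecting these computations yields all of (0)--(5).

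I expect the only genuine difficulty to be bookkeeping: keeping straight which chart ($r_1$, $r_3$, or $r_5$) is in force at each stage, so that expressions such as $x+\alpha_1/y$ are interpreted as limits along $x_1\to 0$ rather than as indeterminate forms; synchronizing the simultaneous parameter substitution with the solution formula; and handling the time-reversing generators $s_0$ and $s_4$ correctly. Once the chart conventions are fixed, every individual verification is an elementary manipulation of rational functions of $t$.
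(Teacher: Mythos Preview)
The paper states this proposition without proof, so there is nothing to compare against; your approach is correct and is exactly the kind of direct verification the paper implicitly has in mind. Your key computational claims check out: in particular, $r_1s_1r_1^{-1}$ really does act by $x_1\mapsto x_1+\alpha_1/y_1$ fixing $y_1,z_1,w_1$ (with the new $\alpha_1$), and $r_1s_0r_1^{-1}$ is indeed $(x_1,y_1,z_1,w_1,t)\mapsto(-x_1,1-y_1,-z_1,-w_1,-t)$; since $\alpha_0=0$ on any such solution, the $s_0$-verification reduces to invariance under this involution together with $t\mapsto -t$, which is immediate. For $s_2$ you are using the limit $x/(xz-1)\to 1/z$ as $x\to\infty$, which justifies the ``$-\alpha_2/z$'' correction in $w$; for $s_4$ the cancellation $-2\alpha_4/z+t/z^2=0$ on $z=t/(2\alpha_4)$ is exact. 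The passage to the $r_5$-chart in case (3)-(ii) and to the $r_3$-chart in case (5) is handled correctly via Propositions~\ref{prop:D5-x,z=inf} and~\ref{prop:D5-z=inf}.
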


\begin{proposition}
{\it
Suppose that 
$D_5^{(2)}(\alpha_j)_{0\leq j \leq 4}$ has an infinite solution such that $z\equiv \infty.$ 
The actions of the B\"acklund transformations are then as follows:
\newline
{\rm (0)}\quad $s_0(1/\{2\alpha_0\}, -2\alpha_0(\alpha_0+\alpha_1), \infty, 0)=(-1/\{2\alpha_0\}, 2\alpha_0(\alpha_0+\alpha_1), \infty, 0),$ 
\newline
{\rm (1)-(i)}\quad if $\alpha_0+\alpha_1\neq 0,$ 
$$
s_1(1/\{2\alpha_0\}, -2\alpha_0(\alpha_0+\alpha_1), \infty, 0)=(1/\{2(\alpha_0+\alpha_1)\}, -2\alpha_0(\alpha_0+\alpha_1), \infty, 0),
$$ 
{\rm(1)-(ii)}\quad if $\alpha_0+\alpha_1= 0,$
$$
s_1(1/\{2\alpha_0\}, -2\alpha_0(\alpha_0+\alpha_1), \infty, 0)=(\infty, 0, \infty, 0),
$$
{\rm (2)}\quad $s_2(1/\{2\alpha_0\}, -2\alpha_0(\alpha_0+\alpha_1), \infty, 0)=(1/\{2\alpha_0\}, -2\alpha_0(\alpha_0+\alpha_1+\alpha_2), \infty, 0),$
\newline
{\rm (3)-(i)}\quad if $\alpha_3\neq 0,$ 
$$
s_3(1/\{2\alpha_0\}, -2\alpha_0(\alpha_0+\alpha_1), \infty, 0)=(1/\{2\alpha_0\}, -2\alpha_0(\alpha_0+\alpha_1), t/\{2\alpha_3\}+2\alpha_0(\alpha_0+\alpha_1)/\alpha_3, 0), 
$$
{\rm (3)-(ii)}\quad if $\alpha_3= 0,$ 
$$
s_3(1/\{2\alpha_0\}, -2\alpha_0(\alpha_0+\alpha_1), \infty, 0)=(1/\{2\alpha_0\}, -2\alpha_0(\alpha_0+\alpha_1), \infty, 0),
$$
{\rm (4)}\quad $s_4(1/\{2\alpha_0\}, -2\alpha_0(\alpha_0+\alpha_1), \infty, 0)=(1/\{2\alpha_0\}, -2\alpha_0(\alpha_0+\alpha_1), \infty, 0),$
\newline
{\rm (5)}\quad $\psi(1/\{2\alpha_0\}, -2\alpha_0(\alpha_0+\alpha_1), \infty, 0)=(\infty, 0, t/\{2\alpha_0\}, -2\alpha_0(\alpha_0+\alpha_1)t^{-1}). $
}
\end{proposition}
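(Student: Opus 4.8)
The plan is to carry out every computation in the regularizing chart $r_3$ of Proposition \ref{prop:D5-z=inf}, in which the infinite solution with $z\equiv\infty$ becomes the honest rational point
\[
(x_3,y_3,z_3,w_3)=\Bigl(\tfrac{1}{2\alpha_0},\;-2\alpha_0(\alpha_0+\alpha_1),\;0,\;\tfrac t2+2\alpha_0(\alpha_0+\alpha_1)\Bigr),
\]
recalling from that proposition that necessarily $\alpha_0\neq 0$ and $\alpha_4=0$. First I would dispose of $s_0,s_1,s_2$, which in the original variables are rational in $x$ (and, for $s_2$, see $z$ only through $z/(xz-1)$). For $s_0$ one substitutes $z\mapsto -z=\infty$, $w\mapsto -w=0$, and computes $-(-2\alpha_0(\alpha_0+\alpha_1))+2\alpha_0(2\alpha_0)-(2\alpha_0)^2=2\alpha_0(\alpha_0+\alpha_1)$ for the new $y$; comparing with the normal form attached to the shifted parameters $(-\alpha_0,\alpha_1+2\alpha_0,\alpha_2,\alpha_3,\alpha_4)$ gives (0). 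For $s_2$ one uses $z/(xz-1)\to 1/x_3$ as $z\to\infty$, so that $y\mapsto y-\alpha_2/x_3=-2\alpha_0(\alpha_0+\alpha_1+\alpha_2)$ while $w\mapsto 0$ and $z\mapsto\infty$, which is (2). For $s_1$ the coordinate $y=-2\alpha_0(\alpha_0+\alpha_1)$ is nonzero exactly when $\alpha_0+\alpha_1\neq 0$, in which case $x\mapsto x+\alpha_1/y=1/\{2(\alpha_0+\alpha_1)\}$, giving (1)-(i); when $\alpha_0+\alpha_1=0$ one has $y\equiv 0$ and $\alpha_1=-\alpha_0\neq 0$, so $s_1(x)\equiv\infty$, and since the $s_1$-image of $\alpha_0$ is $\alpha_0+\alpha_1=0$ while $\alpha_4=0$ already, Proposition \ref{prop:D5-x,z=inf} confirms the output $(\infty,0,\infty,0)$, which is (1)-(ii).

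The second step treats $s_3$ and $s_4$, where direct substitution into the original formulas produces the indeterminate forms $\alpha_3/w$ with $w=0$, or $w_3=-z(zw+\alpha_3)$ with $z=\infty$; I would instead compute the conjugates $r_3\circ s_3\circ r_3^{-1}$ and $r_3\circ s_4\circ r_3^{-1}$ in the $(x_3,y_3,z_3,w_3)$ variables. Using $r_3^{-1}\colon(x_3,y_3,z_3,w_3)\mapsto(x_3,y_3,1/z_3,-w_3z_3^2-\alpha_3z_3)$ together with the identities $(z+\alpha_3/w)w-\alpha_3=zw$ and $wz+\alpha_3=-w_3z_3$, one finds that $r_3\circ s_3\circ r_3^{-1}$ keeps $w_3$ fixed and sends $z_3\mapsto z_3+\alpha_3/w_3$ (with parameter change $\alpha_3\mapsto-\alpha_3$); evaluating on the explicit solution gives $z_3\mapsto \alpha_3\big/\bigl(\tfrac t2+2\alpha_0(\alpha_0+\alpha_1)\bigr)$, i.e. $z\mapsto t/\{2\alpha_3\}+2\alpha_0(\alpha_0+\alpha_1)/\alpha_3$ when $\alpha_3\neq 0$ and $z\mapsto\infty$ when $\alpha_3=0$, which are (3)-(i) and (3)-(ii), the $w$-component staying $0$ because $z_3=0$. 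Likewise, since $\alpha_4=0$, $s_4$ reduces to $w\mapsto w+t/z^2$, $t\mapsto -t$, and $r_3\circ s_4\circ r_3^{-1}$ is $(x_3,y_3,z_3,w_3,t)\mapsto(x_3,y_3,z_3,w_3-t,-t)$; as $\alpha_0,\alpha_1$ are $s_4$-invariant, $w_3=\tfrac t2+2\alpha_0(\alpha_0+\alpha_1)$ is carried to $-\tfrac t2+2\alpha_0(\alpha_0+\alpha_1)=\tfrac{\tilde t}{2}+2\alpha_0(\alpha_0+\alpha_1)$ with $\tilde t=-t$, so the solution is unchanged, which is (4).

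Finally, for $\psi$ one substitutes directly: $x\mapsto z/t=\infty$, $y\mapsto tw=0$, $z\mapsto tx=t/\{2\alpha_0\}$, $w\mapsto y/t=-2\alpha_0(\alpha_0+\alpha_1)t^{-1}$; since the $\psi$-image of the parameter vector has new $\alpha_3+\alpha_4$ equal to the old $\alpha_0+\alpha_1$ and new $\alpha_0$ equal to the old $\alpha_4=0$, the resulting $\bigl(\infty,0,t/\{2\alpha_0\},-2\alpha_0(\alpha_0+\alpha_1)t^{-1}\bigr)$ is precisely the $x\equiv\infty$ infinite solution furnished by Proposition \ref{prop:D5-x=inf} for the transformed system, which simultaneously certifies that it solves $D_5^{(2)}$ and gives (5). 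The only genuinely delicate point throughout is the handling of these $\infty\cdot 0$ indeterminacies: once one agrees to pass to the $r_3$-chart for $s_3,s_4$ (and to read the $\psi$-output through the $r_1$-chart of Proposition \ref{prop:D5-x=inf}), everything that remains is a routine rational identity, so I expect no further obstacle.
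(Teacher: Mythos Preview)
Your proposal is correct and follows essentially the same approach as the paper: the analogous proposition for $B_4^{(1)}$ in Section~6 is proved by passing to the regularizing chart $m_3$ and computing each B\"acklund transformation there, exactly as you do with $r_3$ here (the paper states the present $D_5^{(2)}$ proposition without proof, implicitly relying on the same method). One minor remark: for $s_3$ the cleanest reason that the $w$-component stays $0$ is simply that $s_3(w)=w$ by definition, so no appeal to $z_3=0$ is needed.
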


\begin{proposition}
{\it
Suppose that 
$D_5^{(2)}(\alpha_j)_{0\leq j \leq 4}$ has an infinite solution such that $x=z\equiv \infty.$ 
The actions of the B\"acklund transformations are then as follows:
\newline
{\rm (0)}\quad $s_0(\infty, 0, \infty, 0)=(\infty, 0, \infty, 0),$ 
\newline
{\rm (1)-(i)}\quad if $\alpha_1\neq 0,$
$$
s_1(\infty, 0, \infty, 0)=(1/\{2\alpha_1\}, 0, \infty, 0),
$$
{\rm (1)-(ii)}\quad if $\alpha_1=0,$
$$
s_1(\infty, 0, \infty, 0)=(\infty, 0, \infty, 0), 
$$
{\rm (2)}\quad $s_2(\infty, 0, \infty, 0)=(\infty, 0, \infty, 0),$ 
\newline
{\rm (3)-(i)}\quad if $\alpha_3\neq 0,$
$$
s_3(\infty, 0, \infty, 0)=(\infty, 0,t/\{2\alpha_3\}, 0),
$$
{\rm (3)-(ii)}\quad if $\alpha_3= 0,$
$$
s_3(\infty, 0, \infty, 0)=(\infty, 0, \infty, 0), 
$$
{\rm (4)}\quad $s_4(\infty, 0, \infty, 0)=(\infty, 0, \infty, 0), $
\newline
{\rm (5)}\quad $\psi(\infty, 0, \infty, 0)=(\infty, 0, \infty, 0). $

}
\end{proposition}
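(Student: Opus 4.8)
The plan is to turn the statement into a finite substitution in the coordinate chart $r_5$. By Proposition~\ref{prop:D5-x,z=inf}, an infinite solution of $D_5^{(2)}(\alpha_j)_{0\leq j \leq 4}$ with $x=z\equiv\infty$ forces $\alpha_0=\alpha_4=0$ and, in the $r_5$-coordinates $x_5=1/x,\ y_5=-(xy+\alpha_1)x,\ z_5=1/z,\ w_5=-(zw+\alpha_3)z$, is the single explicit point $(x_5,y_5,z_5,w_5)=(0,1/2,0,t/2)$. So what has to be computed is only the action of each generator $g\in\{s_0,s_1,s_2,s_3,s_4,\psi\}$ on this one point, once $g$ is rewritten in the $(x_5,y_5,z_5,w_5)$-chart.

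First I would express each $g$ in $r_5$-coordinates by conjugation, $g\mapsto r_5\circ g\circ r_5^{-1}$, using the inversion formulas $x=1/x_5$, $z=1/z_5$, $xy+\alpha_1=-x_5y_5$, $zw+\alpha_3=-z_5w_5$. For $s_0$ and $s_1$, which involve only $(x,y)$, it suffices to conjugate by the sub-chart $r_1$; for $s_3$ and $s_4$, which involve only $(z,w)$, by $r_3$; for $s_2$ and $\psi$ the full chart $r_5$ is needed. Each conjugate is a birational map regular at $(0,1/2,0,t/2)$ as long as the parameter appearing in the relevant simple fraction does not vanish, and then one just substitutes. For instance, with $\alpha_0=0$ one gets $s_0:(x_5,y_5,z_5,w_5)\mapsto(-x_5,\,-y_5+(\text{regular}),\,-z_5,\,-w_5)$ together with $t\mapsto-t$, so $s_0$ fixes the point; a short computation gives $\psi:(x_5,y_5,z_5,w_5)\mapsto(tz_5,\,w_5/t,\,x_5/t,\,ty_5)$, which also fixes it; and $s_2$, $s_4$ change only $\alpha_2$, resp.\ $\alpha_4$ (already $0$), leaving the four coordinates at the point unchanged.

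The only delicate point is the dichotomy in cases (1) and (3): whether after applying $s_1$ (resp.\ $s_3$) the result still has $x\equiv\infty$ (resp.\ $z\equiv\infty$) or has been resolved to a finite value. This is decided by whether the correction term — a multiple of $\alpha_1/y$ in $s_1$, of $\alpha_3/w$ in $s_3$ — cancels the leading pole. In the $r_1$-chart, $s_1$ sends $x_5$ to an expression of the form $x_5+2\alpha_1\cdot(\text{regular at the point})$, which stays $0$ precisely when $\alpha_1=0$ and otherwise takes the finite value $1/(2\alpha_1)$; the parallel computation in the $r_3$-chart turns $z\equiv\infty$ into the order-one pole $z=t/(2\alpha_3)$ when $\alpha_3\neq0$. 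So I would split each of (1) and (3) into the sub-cases $\alpha_1=0$ vs.\ $\alpha_1\neq0$, resp.\ $\alpha_3=0$ vs.\ $\alpha_3\neq0$, and read the two stated formulas directly off the substitution. Assembling the six computations gives the table. I expect the only real obstacle to be the bookkeeping of pole orders in the $\alpha_1\neq0$ and $\alpha_3\neq0$ sub-cases — making sure one correctly identifies which coordinate becomes finite and which remains infinite after the shift — but nothing conceptual is needed beyond Proposition~\ref{prop:D5-x,z=inf} and the definitions of $r_1,r_3,r_5$.
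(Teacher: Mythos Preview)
Your proposal is correct and follows exactly the approach the paper uses for the analogous statements: pass to the finite chart $r_5$ where the infinite solution becomes the single point $(x_5,y_5,z_5,w_5)=(0,1/2,0,t/2)$ with $\alpha_0=\alpha_4=0$, conjugate each generator through $r_5$ (or the sub-charts $r_1$, $r_3$ when only $(x,y)$ or $(z,w)$ are involved), and evaluate. The paper does not write out a proof for this particular proposition, but the explicit computation it gives for the $B_4^{(1)}$ infinite solution (using the chart $m_3$) is the same procedure you describe.

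One small imprecision: your remark that ``$s_2$, $s_4$ change only $\alpha_2$, resp.\ $\alpha_4$'' is not literally right for $s_2$, which also shifts $\alpha_1$ and $\alpha_3$; what matters, and what your computation actually uses, is that $s_2$ and $s_4$ preserve the conditions $\alpha_0=\alpha_4=0$ and fix the point $(0,1/2,0,t/2)$ in the $r_5$-chart. With that correction the argument goes through as written.
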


\subsection{Main theorem for $D_5^{(2)}(\alpha_j)_{0\leq j \leq 4}$}

Sasano \cite{Sasano-2} proved that $D_4^{(1)}(\alpha_j)_{0\leq j \leq 4}$ is equivalent to $D_5^{(2)}(\alpha_j)_{0\leq j \leq 4}.$

\begin{proposition}
\label{prop:D_4=D_5}
{\it 
Suppose that $(x,y,z,w)$ is a solution of $D_4^{(1)}(\alpha_j)_{0\leq j \leq 4},$ 
and 
\begin{align*}
&X=\frac{1}{x}, \,Y=-(xy+\alpha_1)x, \,Z=\frac{1}{z}, \,W=-(zw+\alpha_3)z, \\
&A_0=\frac{\alpha_0-\alpha_1}{2}, \,A_1=\alpha_1, \,A_2=\alpha_2, \,A_3=\alpha_3, \, A_4=\frac{\alpha_4-\alpha_3}{2}.
\end{align*}
$(X,Y,Z,W)$ is then a solution of $D_5^{(2)}(A_j)_{0\leq j \leq 4}.$
}
\end{proposition}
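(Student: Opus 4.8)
The statement is a direct verification: one substitutes the birational change of variables into the $D_4^{(1)}$ system and checks that the $D_5^{(2)}$ system with the shifted parameters $A_j$ results, exactly in the spirit of Proposition \ref{prop:equivalece-D_4-B_4}. The plan is first to record the elementary identities the map produces. With $X=1/x$, $Y=-(xy+\alpha_1)x$, $Z=1/z$, $W=-(zw+\alpha_3)z$ and $A_1=\alpha_1$, $A_3=\alpha_3$, one computes directly
\begin{equation*}
X(XY+A_1)=-y,\quad Z(ZW+A_3)=-w,\quad 2XY+A_1=-2xy-\alpha_1,\quad 2ZW+A_3=-2zw-\alpha_3 .
\end{equation*}
These are what collapse the apparently complicated coupling terms of $D_5^{(2)}$ — the terms $-2X^2Z(ZW+A_3)$, $2Z(ZW+A_3)(2XY+A_1)$, $-2XZ^2(XY+A_1)$ and $2X(XY+A_1)(2ZW+A_3)$ — into $2X^2w$, $-2w(2xy+\alpha_1)$, $2y/z^2$ and $-2y(2zw+\alpha_3)$ respectively, i.e.\ into terms of the same shape as those actually present in $D_4^{(1)}$.

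Next I would check the parameter normalization: from $A_0=(\alpha_0-\alpha_1)/2$, $A_4=(\alpha_4-\alpha_3)/2$, $A_i=\alpha_i$ for $i=1,2,3$, and the $D_4^{(1)}$ constraint $\alpha_0+\alpha_1+2\alpha_2+\alpha_3+\alpha_4=1$, one gets $A_0+A_1+A_2+A_3+A_4=\tfrac12(\alpha_0+\alpha_1+2\alpha_2+\alpha_3+\alpha_4)=\tfrac12$, so the defining constraint of $D_5^{(2)}$ holds. Then I would carry out the four differentiations. Since $tX'=-tx'/x^2$ and $tZ'=-tz'/z^2$, inserting the $D_4^{(1)}$ right-hand sides, dividing by $x^2$ (resp.\ $z^2$), and applying the identities above reproduces the $X$-equation (resp.\ $Z$-equation) of $D_5^{(2)}$; matching the linear-in-$X$ term forces $A_0=(\alpha_0-\alpha_1)/2$, and matching the linear-in-$Z$ term forces $A_4=(\alpha_4-\alpha_3)/2$, in agreement with the stated values. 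For $Y=-x^2y-\alpha_1x$ one uses $tY'=-2xy\,(tx')-x^2\,(ty')-\alpha_1\,(tx')$ and substitutes $tx',ty'$ from $D_4^{(1)}$; for $W=-z^2w-\alpha_3z$ one uses $tW'=-2zw\,(tz')-z^2\,(tw')-\alpha_3\,(tz')$ and substitutes $tz',tw'$. After simplification and re-expression in $X,Y,Z,W$ these yield precisely the $Y$- and $W$-equations of $D_5^{(2)}(A_j)$.

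Alternatively one may organize the computation in two steps, since the map factors as the substitution $(z,w)\mapsto(1/z,-(zw+\alpha_3)z)$ — which by Proposition \ref{prop:equivalece-D_4-B_4} carries $D_4^{(1)}$ into $B_4^{(1)}$ — followed by the analogous substitution $(x,y)\mapsto(1/x,-(xy+\alpha_1)x)$; but as the excerpt records no separate $B_4^{(1)}\!\to\!D_5^{(2)}$ lemma for the second step, the direct one-shot verification is cleaner. I expect the only real obstacle to be the bookkeeping in the $Y$- and $W$-equations, where the chain rule produces three terms apiece and several cancellations among the coupling terms must be tracked carefully; once the four identities above are in hand there is no conceptual difficulty, and the match is forced term by term.
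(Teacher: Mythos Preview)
Your approach is correct and is exactly the natural way to establish such an equivalence: record the algebraic identities the substitution produces, check the parameter constraint, and then push the four equations through the chain rule. The identities you list are right, the parameter count $A_0+A_1+A_2+A_3+A_4=\tfrac12$ checks out, and the remaining work is indeed pure bookkeeping.

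The paper, however, does not supply its own proof of this proposition. It simply attributes the result to Sasano \cite{Sasano-2} (``Sasano \cite{Sasano-2} proved that $D_4^{(1)}(\alpha_j)_{0\leq j \leq 4}$ is equivalent to $D_5^{(2)}(\alpha_j)_{0\leq j \leq 4}$'') and states the proposition without argument, just as it does for the companion Proposition~\ref{prop:equivalece-D_4-B_4}. So there is nothing to compare against beyond the citation; your direct verification is the standard way one would fill in that gap.
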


By Theorem \ref{thm:D4} and Proposition \ref{prop:D_4=D_5}, 
we obtain the following theorem:

\begin{theorem}
\label{thm:D_5}
{\it
Suppose that for $D_5^{(2)}(\alpha_j)_{0\leq j \leq 4},$ 
there exists a rational solution. 
By some B\"acklund transformations, 
the parameters and solution can then be transformed so that 
$$
\alpha_0=\alpha_3+\alpha_4=0, \,\alpha_1, \alpha_4\neq 0 \,\,{\it and } \,\,(x,y,z,w)=(\infty, 0, t/\{2\alpha_4\}, 0). 
$$
Moreover, $D_5^{(2)}(\alpha_j)_{0\leq j \leq 4}$ has a rational solution 
if and only if one of the following occurs: 
\begin{align*}
&{\rm (1)} &  &2\alpha_0\in\Z, & &2\alpha_3+2\alpha_4\in \Z, & &2\alpha_0\equiv 2\alpha_3+2\alpha_4  & &\mathrm{ mod} \,\,2,   \\
&{\rm (2)} & &2\alpha_0\in\Z, & &2\alpha_4\in \Z,               &  &2\alpha_0\equiv 2\alpha_4 & &\mathrm{ mod} \,\,2,  \\
&{\rm (3)} & &2\alpha_0+2\alpha_1\in\Z, & &2\alpha_3+2\alpha_4\in \Z,  &  &2\alpha_0+2\alpha_1\equiv 2\alpha_3+2\alpha_4 & &\mathrm{ mod} \,\,2,  \\
&{\rm (4)} & &2\alpha_0+2\alpha_1\in\Z, & &2\alpha_4\in \Z,               &   &2\alpha_0+2\alpha_1\equiv 2\alpha_4 & &\mathrm{ mod} \,\,2,  \\
&{\rm (5)} & &2\alpha_0\in\Z, & &2\alpha_1 \in \Z,   &    &2\alpha_1 \equiv 1 & &\mathrm{ mod} \,\,2,  \\
&{\rm (6)}  &  &2\alpha_3\in\Z, & &2\alpha_4\in \Z,                      &     &2\alpha_3 \equiv 1 & &\mathrm{ mod} \,\,2.
\end{align*}
}
\end{theorem}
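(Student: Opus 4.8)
The plan is to deduce Theorem~\ref{thm:D_5} entirely from Theorem~\ref{thm:D4} by means of the birational equivalence of Proposition~\ref{prop:D_4=D_5}. The map
\[
(x,y,z,w)\longmapsto (X,Y,Z,W)=\Bigl(\tfrac1x,\,-(xy+\alpha_1)x,\,\tfrac1z,\,-(zw+\alpha_3)z\Bigr)
\]
is birational with coefficients rational in $t$, and its inverse $x=1/X$, $y=-YX^2-A_1X$, $z=1/Z$, $w=-WZ^2-A_3Z$ is of the same kind; hence it carries rational solutions of $D_4^{(1)}(\alpha_j)$ to rational solutions of $D_5^{(2)}(A_j)$ and back, provided we allow the ``infinite'' solutions of $D_5^{(2)}$ (those with $x\equiv\infty$ or $z\equiv\infty$) that arise as images of $D_4^{(1)}$-solutions with a vanishing coordinate; these are precisely the solutions handled in the charts $r_1,r_3,r_5$ of the appendix. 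Moreover, under this equivalence the B\"acklund transformations of $D_4^{(1)}$ correspond to those of $D_5^{(2)}$ (as one checks by conjugating the generators $s_0,\dots,s_4,\psi$ through the map), so B\"acklund orbits correspond. Consequently $D_5^{(2)}(A_j)$ has a rational solution if and only if $D_4^{(1)}(\alpha_j)$ does, where the parameters are tied by $A_0=(\alpha_0-\alpha_1)/2$, $A_1=\alpha_1$, $A_2=\alpha_2$, $A_3=\alpha_3$, $A_4=(\alpha_4-\alpha_3)/2$.

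Next I would translate the six conditions of Theorem~\ref{thm:D4} into the variables $A_j$. Inverting the dictionary gives $\alpha_0-\alpha_1=2A_0$, $\alpha_0+\alpha_1=2A_0+2A_1$, $\alpha_3+\alpha_4=2A_3+2A_4$ and $\alpha_3-\alpha_4=-2A_4$. Substituting these into conditions (1)--(4) of Theorem~\ref{thm:D4} and using $-2A_4\equiv 2A_4\pmod 2$ reproduces verbatim conditions (1)--(4) of Theorem~\ref{thm:D_5}. For condition (5), $\alpha_0-\alpha_1,\alpha_0+\alpha_1\in\Z$ becomes $2A_0,2A_0+2A_1\in\Z$, which forces $2A_1\in\Z$, while $\alpha_0-\alpha_1\not\equiv\alpha_0+\alpha_1\pmod 2$ becomes $2A_1\equiv 1\pmod 2$; likewise condition (6) becomes $2A_3,2A_4\in\Z$ with $2A_3\equiv1\pmod2$. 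These are exactly conditions (5) and (6) of Theorem~\ref{thm:D_5}, so the ``if and only if'' part follows.

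For the normalisation statement, I would start from the standard form supplied by Theorem~\ref{thm:D4} and its remark: any rational solution of $D_4^{(1)}$ can be moved by B\"acklund transformations to one with $\alpha_0-\alpha_1=\alpha_3+\alpha_4=0$, $\alpha_1,\alpha_4\neq0$, and $(x,y,z,w)=(0,1/2,2\alpha_4/t,t/2)$. Applying Proposition~\ref{prop:D_4=D_5}, the parameters become $A_0=0$, $A_3+A_4=0$, $A_1=\alpha_1\neq0$, $A_4=(\alpha_4-\alpha_3)/2=\alpha_4\neq0$ (using $\alpha_3=-\alpha_4$), and the solution becomes $X=1/x=\infty$, $Y=-(xy+\alpha_1)x=0$, $Z=1/z=t/(2A_4)$, $W=-(zw+\alpha_3)z=-(\alpha_3+\alpha_4)\cdot(2\alpha_4/t)=0$; that is, $(x,y,z,w)=(\infty,0,t/\{2\alpha_4\},0)$, which is precisely the claimed normal form and agrees with Proposition~\ref{prop:D5-x=inf}(1).

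The one genuinely non-routine point is making the transfer airtight across the point at infinity: $D_4^{(1)}$ has no infinite solutions (Proposition~\ref{prop:D4-transformation}) whereas $D_5^{(2)}$ does (Propositions~\ref{prop:D5-x=inf}, \ref{prop:D5-z=inf}, \ref{prop:D5-x,z=inf}), so one must verify that every infinite rational solution of $D_5^{(2)}$ is the image of a $D_4^{(1)}$-solution with $x\equiv0$ and/or $z\equiv0$ (and conversely), and that the B\"acklund actions on these solutions computed in the appendix match those predicted by conjugating the $D_4^{(1)}$ transformations. Once this bookkeeping — for which the chart maps $r_1,r_3,r_5$ and the tables in the appendix are designed — is in place, the theorem is a mechanical consequence of Theorem~\ref{thm:D4}.
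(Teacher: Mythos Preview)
Your proposal is correct and follows exactly the paper's approach: the paper states Theorem~\ref{thm:D_5} immediately after Proposition~\ref{prop:D_4=D_5} with the single sentence ``By Theorem~\ref{thm:D4} and Proposition~\ref{prop:D_4=D_5}, we obtain the following theorem,'' so your detailed dictionary translation and normal-form computation are precisely the omitted verification. Your caution about the infinite solutions is well placed and is handled by the chart computations in Propositions~\ref{prop:D5-x=inf}--\ref{prop:D5-x,z=inf} and the subsequent tables of B\"acklund actions, exactly as you indicate.
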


By $s_1,$ we can obtain the following corollary:

\begin{corollary}
{\it
Suppose that for $D_5^{(2)}(\alpha_j)_{0\leq j \leq 4},$ 
there exists a rational solution. 
By some B\"acklund transformations, 
the parameters and solution can then be transformed so that 
$$
\alpha_0+\alpha_1=\alpha_3+\alpha_4=0, \,\alpha_0,\alpha_4\neq 0 \,\,{\it and } \,\,(x,y,z,w)=(1/\{2\alpha_0\}, 0, t/\{2\alpha_4\}, 0). 
$$
}
\end{corollary}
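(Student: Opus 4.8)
The plan is to obtain this corollary as an immediate consequence of Theorem~\ref{thm:D_5} together with a single application of $s_1$. By Theorem~\ref{thm:D_5}, if $D_5^{(2)}(\alpha_j)_{0\le j\le 4}$ has a rational solution then, after suitable B\"acklund transformations, we may assume $\alpha_0=\alpha_3+\alpha_4=0$, $\alpha_1,\alpha_4\ne 0$, and $(x,y,z,w)=(\infty,\,0,\,t/\{2\alpha_4\},\,0)$. This is exactly the infinite solution of Proposition~\ref{prop:D5-x=inf} in the case $\alpha_0=\alpha_3+\alpha_4=0$: the general formula there has fourth coordinate $-2\alpha_4(\alpha_3+\alpha_4)t^{-1}$, which vanishes when $\alpha_3+\alpha_4=0$. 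So it remains only to move the pole of $x$ to a finite value without disturbing $y$, $z$, $w$ or the linear relations among the parameters.

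First I would invoke the proposition recording the action of the B\"acklund transformations on the infinite solution with $x\equiv\infty$. Since $\alpha_1\ne 0$, its case (1)-(i) gives
$$
s_1\bigl(\infty,\,0,\,t/\{2\alpha_4\},\,-2\alpha_4(\alpha_3+\alpha_4)t^{-1}\bigr)
=\bigl(1/\{2\alpha_1\}+2\alpha_4(\alpha_3+\alpha_4)/\{\alpha_1 t\},\,0,\,t/\{2\alpha_4\},\,-2\alpha_4(\alpha_3+\alpha_4)t^{-1}\bigr),
$$
and substituting $\alpha_3+\alpha_4=0$ collapses the right-hand side to $(1/\{2\alpha_1\},\,0,\,t/\{2\alpha_4\},\,0)$. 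On the level of parameters, $s_1$ sends $(0,\alpha_1,\alpha_2,\alpha_3,\alpha_4)$ to $(\alpha_1,-\alpha_1,\alpha_2+\alpha_1,\alpha_3,\alpha_4)$.

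The last step is pure bookkeeping: writing $(\alpha_0',\alpha_1',\alpha_2',\alpha_3',\alpha_4')$ for the transformed parameters, we have $\alpha_0'=\alpha_1\ne 0$, $\alpha_4'=\alpha_4\ne 0$, $\alpha_0'+\alpha_1'=\alpha_1-\alpha_1=0$ and $\alpha_3'+\alpha_4'=\alpha_3+\alpha_4=0$, while $1/\{2\alpha_1\}=1/\{2\alpha_0'\}$ and $t/\{2\alpha_4\}=t/\{2\alpha_4'\}$; hence the transformed solution is $(x,y,z,w)=(1/\{2\alpha_0'\},\,0,\,t/\{2\alpha_4'\},\,0)$, which is the asserted normal form. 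I do not anticipate any real obstacle here; the only point deserving a word of care is that $s_1$ must be read through the chart $r_1$ rather than through the naive formula $x\mapsto x+\alpha_1/y$ (which is singular on $y\equiv 0$), but this is precisely what the cited proposition on the action on infinite solutions with $x\equiv\infty$ already encodes, so nothing new is required.
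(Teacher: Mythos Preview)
Your proposal is correct and follows exactly the approach indicated in the paper, which simply states ``By $s_1$'' before the corollary. You have supplied all the details the paper omits: invoking the proposition on the action of $s_1$ on the infinite solution with $x\equiv\infty$ in the case $\alpha_1\neq 0$, using $\alpha_3+\alpha_4=0$ to simplify, and verifying the parameter relations after the shift $(\alpha_0,\alpha_1,\alpha_2,\alpha_3,\alpha_4)\mapsto(\alpha_1,-\alpha_1,\alpha_2+\alpha_1,\alpha_3,\alpha_4)$.
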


\end{document}